\theoremstyle{plain}
\newtheorem{proposition}{Proposition}
\newtheorem{theorem}[proposition]{Theorem}
\newtheorem*{conjecture*}{Conjecture}
\newtheorem{definition}[proposition]{Definition}
\newtheorem{corollary}[proposition]{Corollary}
\newtheorem{lemma}[proposition]{Lemma}
\newtheorem{remark}[proposition]{Remark}
\newtheorem{example}[proposition]{Example}
\newtheorem{proposition-definition}[proposition]{Proposition/Definition}
\newtheorem*{proposition*}{Proposition}
\newtheorem*{theorem*}{Theorem}
\newtheorem*{maintheorem*}{Main Theorem}
\newtheorem*{maincorollary*}{Main Corollary}
\newtheorem*{corollary*}{Corollary}
\newtheorem*{lemma*}{Lemma}
\newtheorem*{remark*}{Remark}
\newtheorem*{definition*}{Definition}
\newtheorem*{example*}{Example}
\newtheorem*{examples*}{Examples}
\def\co{\colon\thinspace}
\newcommand{\Z}{\mathbb{Z}}
\newcommand{\Q}{\mathbb{Q}}
\newcommand{\R}{\mathbb{R}}
\newcommand{\C}{\mathbb{C}}
\newcommand{\K}{\mathbb{K}}
\newcommand{\CP}{\C\mathbb{P}}
\newcommand{\coker}{\mathrm{coker}\,}
\renewcommand{\P}{\mathbb{P}}
\begin{document}

\title[Floer theory for negative line bundles]{Floer theory for negative line bundles via Gromov-Witten invariants}

\author{Alexander F. Ritter}

\address{Mathematical Institute, University of Oxford, Oxford OX2 6GG, England.\newline \indent \emph{Former address where research was carried out:}\newline \indent Trinity College, University of Cambridge, CB2 1TQ, England.
}

\email{ritter@maths.ox.ac.uk}

\date{version: \today}


\begin{abstract}
 We prove that the GW theory of negative line bundles $M=\mathrm{Tot}(L\to B)$ determines the symplectic cohomology: indeed $SH^*(M)$ is the quotient of $QH^*(M)$ by the kernel of a power of quantum cup product by $c_1(L)$. We prove this also for negative vector bundles and the top Chern class. 

We calculate $SH^*$ and $QH^*$ for $\mathcal{O}(-n) \to \CP^m$. For example: for $\mathcal{O}(-1)$, $M$ is the blow-up of $\C^{m+1}$ at the origin and $SH^*(M)$ has rank $m$.

We prove Kodaira vanishing: for very negative $L$, $SH^*=0$; and Serre vanishing: if we twist a complex vector bundle by a large power of $L$, $SH^*=0$.

Observe $SH^*(M)=0$ iff $c_1(L)$ is nilpotent in $QH^*(M)$. This 
implies Oancea's result: $\omega_B(\pi_2(B))=0\Rightarrow SH^*(M)=0$.

We prove the Weinstein conjecture for any contact hypersurface surrounding the zero section of a negative line bundle.

For symplectic manifolds $X$ conical at infinity, we build a homomorphism from $\pi_1(\mathrm{Ham}_{\ell}(X,\omega))$ to invertibles in $SH^*(X,\omega)$. This is similar to Seidel's representation for closed $X$, except now they are not invertibles in $QH^*(X,\omega)$.
\end{abstract}
\maketitle
%
\section{Introduction}
%
\subsection{Gromov-Witten invariants versus Floer cohomology}
\strut\\ \indent
The focus of this paper will be symplectic invariants of the total space 
$$\boxed{M=\mathrm{Tot}(\pi_M:L \to B)}$$
 of negative (complex) line bundles $L\to B$ over closed symplectic manifolds $(B,\omega_B)$, although we will show that our techniques work more generally for open symplectic manifolds $M$ conical at infinity which admit Hamiltonian circle actions, and also for negative vector bundles $E \to B$ (these are not conical at infinity).

By negative line bundle $L \to B$ we mean that $\boxed{c_1(L)=-n[\omega_B]}$ for real $n>0$.
%
%
%
%

\noindent \emph{\textbf{Examples:} $\mathcal{O}(-n) \to \P^m$ (classifies negative holomorphic line bundles over $\P^m$ for $n\in \Z$). Duals of ample holomorphic line bundles over compact complex manifolds.}

 Negativity ensures there is a natural symplectic form $\omega$ on $M$ making $M$ conical at infinity (a convexity condition) with $n[\omega_B] \mapsto [\omega]$ via $\pi_M^*\co H^2(B)\cong H^2(M)$. With this symplectic form, the base and the fibres are symplectic submanifolds.

The invariants we will be concerned with are the genus zero Gromov-Witten invariants involved in the construction of the \emph{quantum cohomology} of $M$, and the Floer invariants involved in the construction of \emph{symplectic cohomology} (the natural generalization of Floer homology to open symplectic manifolds $M$ which are conical at infinity, constructed in the exact setup by Viterbo \cite{Viterbo1} (although there were similar previous incarnations), and in the non-exact setup by the author \cite{Ritter2}).

\begin{remark*}[Mirror symmetry]
 Symplectic cohomology plays an important role in mirror symmetry for non-compact manifolds. At the cohomology level, the morphism spaces of the wrapped Fukaya category are modules over  $SH^*(M)$. In many examples, $SH^*(M)$ is the Hochschild homology of the wrapped Fukaya category, and therefore it recovers the Hochschild homology of the derived category of coherent sheaves of the mirror. For a discussion of this, we refer the reader to Ritter-Smith \cite{Ritter-Smith} and the references contained therein. One of the applications in \cite{Ritter-Smith} is the computation of the wrapped Fukaya category of the negative line bundles $\mathcal{O}(-n)\to \P^m$, and the key ingredient was the computation of $SH^*(\mathcal{O}_{\P^m}(-n))$ done in this paper.
\end{remark*}

Gromov-Witten invariants are in principle understood for most closed symplectic manifolds, and often they are explicitly calculable thanks to algebraic geometry. We suggest Ruan-Tian \cite{Ruan-Tian} and McDuff-Salamon \cite{McDuff-Salamon2} as references. We will be concerned with genus zero GW invariants of the (non-closed) $M$ and of certain Hamiltonian fibrations over $\P^1$ with fibre $M$. The first arise in algebraic geometry as the \emph{twisted Gromov-Witten invariants} of $(B,L)$ and were studied by Coates-Givental \cite{Coates-Givental} and Lee \cite{Lee}: essentially the GW theory of $M$ is determined by the GW theory of $B$ and the invariant $c_1(L)$. The second are known for closed symplectic manifolds by the work of Seidel \cite{Seidel3}, and we succeeded in generalizing these to the open setup despite the difficulties caused by the non-compactness.

Floer invariants, on the other hand,  are notoriously difficult to calculate explicitly because the chain differential comes from counting solutions of certain elliptic partial differential equations which require a \emph{generic} choice of $\omega$-compatible almost complex structure $J$ on $M$. In practice, this means that one always has to perturb a given $J$, so one cannot compute anything unless things vanish for grading reasons.

For symplectic cohomology, the difficulty of computing the invariants is even more dramatic, because they arise as a direct limit of Floer cohomologies:
$$
SH^*(X,\omega_X) = \varinjlim HF^*(H,\omega_X)
$$
involving Hamiltonians $H:X \to \R$ which are ``linear'' at infinity, and the connecting maps $HF^*(H_1,\omega_X) \to HF^*(H_2,\omega_X)$ are Floer continuation maps which increase the slope at infinity. These continuation maps, again solutions of an elliptic PDE, can almost never be computed explicitly for the same reasoning.

This phenomenon is apparent in the literature, where known computations involve showing vanishing results by indirect grading/action tricks (for example, for $X=\C^m$ and generally $X=$ subcritical Stein manifold \cite{Cieliebak}). For this reason, a precious guide to proving non-vanishing of $SH^*(X)$ \emph{a posteriori} has been by detecting submanifolds which obstruct vanishing (for example, when $\omega_X=d\theta$ is exact, and $X$ contains an exact Lagrangian submanifold \cite{Viterbo1}). Other attempts involve proving that $SH^*(X)$ reduces to a topological invariant by continuation arguments, again not explicitly computable (for instance, various versions of Viterbo's result \cite{Viterbo1} that $SH^*(T^*B,d\theta;\Z/2)\cong H_{\dim_{\C}(B)-*}(\mathcal{L} B;\Z/2)$, where $\mathcal{L} B$ is the free loop space).

It comes therefore as a surprise that for $M = \mathrm{Tot}(L\to B)$ we will calculate the Floer cohomologies and the continuation maps explicitly and directly, by transforming the Floer theoretic problem into an essentially algebraic-geometric problem in Gromov-Witten theory. It is also surprising that we will explicitly recover the ring structure on symplectic cohomology. Finally we emphasize that the setup we are in is very novel for symplectic cohomology literature: we are in a highly non-exact setup (the zero section is a symplectic submanifold and holomorphic spheres play a crucial role, unlike the much studied setup of exact cotangent bundles) and for $\dim_{\C} B>1$ we are dealing with manifolds which do not admit a Stein structure.

\begin{theorem}\label{Theorem Intro r_g on QH and SH}
 Let $M$ be the total space of a negative line bundle $L \to (B,\omega_B)$ $($satisfying a weak$^+$ monotonicity condition$)$. Then for $k\geq \dim H^*(B)$,
$$
\boxed{SH^*(M) \cong QH^*(M)/\ker r^{k}}
$$
is an isomorphism of $\Lambda$-algebras, where $r: QH^*(M) \to QH^{*+2}(M)$ is the $($non-invertible$)$ $\Lambda$-module endomorphism given by quantum cup product by the first Chern class $r(1)=\pi_M^*c_1(L)\in QH^2(M)$.
Thus $SH^*(M)$ is the quantum cohomology quotiented by the generalized $0$-eigenspace of the action of $\pi_M^*c_1(L)$.
\end{theorem}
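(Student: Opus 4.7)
The strategy is to realize $SH^*(M)$ as a direct limit under continuation maps, identify those maps with quantum cup product by $r$, and then collapse the limit using finite-dimensionality of $QH^*(M)$.

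First I would recall the definition
\[
SH^*(M,\omega)=\varinjlim_{\lambda}HF^*(H_\lambda,\omega),
\]
where $H_\lambda$ is a Hamiltonian linear of slope $\lambda>0$ at infinity, with $\lambda$ ranging over an increasing sequence avoiding the period spectrum of the Reeb flow on the unit circle bundle at infinity of $M$. For slope below the minimal Reeb period, the Piunikhin--Salamon--Schwarz isomorphism identifies $HF^*(H_\lambda,\omega)\cong QH^*(M)$ as modules over $QH^*(M)$. This module structure extends to all slopes, and continuation maps are $QH^*(M)$-linear, hence determined by their effect on $1$.

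The heart of the proof, and the main obstacle, is to show that this continuation effect across each ``Reeb threshold'' is quantum cup product by $r=\pi_M^*c_1(L)\in QH^2(M)$. Two ingredients feed in. First, the Hamiltonian $S^1$-action on $M$ rotating the fibers of $L$: because $L$ is negative, this extends to a genuine loop $g\in\pi_1(\mathrm{Ham}_\ell(M,\omega))$, producing a Seidel-type element $S_g$ in (the invertibles of) $SH^*(M)$ via the generalization of Seidel's representation announced in the abstract. Second, one computes $S_g$ via the associated Hamiltonian fibration $M\hookrightarrow E\to\P^1$: the only rigid pseudoholomorphic sections that contribute are constant sections along the zero section, and the resulting twisted Gromov--Witten invariants of $(B,L)$ in the sense of Coates--Givental and Lee assemble into precisely $\pi_M^*c_1(L)$. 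The technical burden here is carrying out Seidel's construction in the open, non-exact, conical setting: one must control compactness of the relevant moduli spaces despite the infinite end, and handle the essential contributions of holomorphic spheres in the zero-section locus (which is symplectic, not Lagrangian).

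Granted this identification, the directed system becomes
\[
SH^*(M)\cong\varinjlim\bigl(QH^*(M)\xrightarrow{\;r\;}QH^*(M)\xrightarrow{\;r\;}QH^*(M)\to\cdots\bigr),
\]
which is the localization $QH^*(M)[r^{-1}]$. Since $M$ deformation retracts onto $B$, $QH^*(M)$ has rank $\dim H^*(B)$ over the Novikov ring, so the Fitting decomposition of the endomorphism $r$ splits it into an $r$-invertible summand and an $r$-nilpotent summand; the localization eliminates the latter, and that summand is exactly $\bigcup_{k\geq 0}\ker r^k$. By finite rank this ascending chain stabilizes after at most $\dim H^*(B)$ steps, yielding
\[
SH^*(M)\cong QH^*(M)/\ker r^{k}\quad\text{for every } k\geq\dim H^*(B).
\]
Multiplicativity of the PSS and continuation maps transports the pair-of-pants product on $SH^*$ to the natural quotient ring structure on the right, promoting this to a ring isomorphism and completing the argument.
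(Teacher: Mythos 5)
Your proposal follows essentially the same route as the paper: the fibre-rotation loop identifies the successive Floer groups, the continuation/Seidel composite across each Reeb threshold is quantum product by the element counted by holomorphic sections of $E_g\to\P^1$, and the finite-rank linear algebra (stabilization of $\ker r^k$ and $\mathrm{im}\, r^k$) collapses the direct limit to $QH^*(M)/\ker r^k$, with multiplicativity of $c^*$ giving the ring isomorphism. The only caveat is that in the non-monotone weak$^+$ case it is not true that only constant sections contribute: classes with $c_1(TM)=0$ but $\omega>0$ may also appear, so the count yields $(1+\lambda_+)\,\pi_M^*c_1(L)$ with $\lambda_+$ in the positive-energy part of the Novikov ring; since $1+\lambda_+$ is a unit this leaves $\ker r^k$ unchanged, exactly as handled in Theorem \ref{Theorem consts are regular in general and rg1 is c1}.
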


The isomorphism is induced by $c^*:QH^*(M) \to SH^*(M)$: 
a canonical algebra homomorphism identifiable with $r^k$.
The induced action of $r$ on $SH^*(M)$ is 
$$\boxed{\mathcal{R}: SH^*(M) \to SH^{*+2}(M)}$$
 a degree $2$ $\Lambda$-module automorphism which on the chain level is a natural rotation action $\mathcal{S}:HF^*(H,J)\to HF^{*+2}(g^*H,g^*J)$ determined by the loop $g=(e^{2\pi i t})_{t\in S^1}$ of Hamiltonian diffeomorphisms which rotate the fibres of $L$.

\begin{corollary*}
 $c^*$ is never an isomorphism. So any contact hypersurface in $M$ surrounding the zero section contains a closed Reeb orbit (Weinstein conjecture).
\end{corollary*}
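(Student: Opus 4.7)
The plan is to split the corollary into two parts: first to show that $c^*\co QH^*(M) \to SH^*(M)$ is never an isomorphism, and then to deduce the Weinstein conjecture from this by a standard Hamiltonian Floer argument.

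For the first part, the theorem identifies $c^*$, via the isomorphism $SH^*(M) \cong QH^*(M)/\ker r^k$, with the quotient projection, so $c^*$ is an isomorphism if and only if $\ker r^k = 0$. For $k\ge \dim H^*(B)$ this is equivalent to $r$ having trivial generalized zero-eigenspace, i.e.\ $\det_{\Lambda} r \ne 0$ in the Novikov ring $\Lambda$. Classically, $r$ is cup-product by $c_1(L) = -n[\omega_B]$, which is nilpotent on $H^*(M) \cong H^*(B)$ because $[\omega_B]^{\dim_{\C} B + 1}=0$; hence $\det_\Lambda r$ has vanishing classical part and lies in the augmentation ideal $\Lambda_{>0}$. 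To promote this to $\det_\Lambda r=0$ outright, I would use that the negativity of $L$ confines any $J$-holomorphic sphere in $M$ to the zero section $B$ (a holomorphic section of a negative bundle over $\CP^1$ must vanish identically), so that the quantum corrections to $r$ are controlled by the twisted Gromov--Witten invariants of $(B,L)$ studied by Coates--Givental and Lee; a direct calculation in that formalism then exhibits an explicit nonzero class in $\ker r^k$.

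For the Weinstein conjecture, suppose for contradiction that a contact hypersurface $\Sigma\subset M$ surrounding the zero section carries no closed Reeb orbit. Let $V\supset B$ be the compact region bounded by $\Sigma$. Then for every slope $\lambda>0$ one can build a Hamiltonian $H_\lambda$ on $M$ which is $C^2$-small Morse on $V$ and linear of slope $\lambda$ at infinity, whose only time-$1$ periodic orbits are the critical points of $H_\lambda|_V$ (non-constant orbits would correspond to closed Reeb orbits on the level sets near $\Sigma$, which by hypothesis do not exist). Consequently $HF^*(H_\lambda,J)\cong H^*(V)\cong H^*(M)$ canonically, and the continuation maps between different slopes are identifications. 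Taking the direct limit gives $SH^*(M)\cong H^*(M)=QH^*(M)$ with $c^*$ the identity, contradicting the first part.

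The main obstacle is the algebraic step in the first paragraph: classical nilpotency alone shows only that $\det_{\Lambda}r$ lies in $\Lambda_{>0}$, not that it vanishes, so one genuinely needs the Gromov--Witten input—the twisted GW control provided by the negativity of $L$—to rule out quantum corrections that would render $r$ invertible. This is also the technical heart of the theorem itself.
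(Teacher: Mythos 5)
Both halves of your argument stall exactly at the decisive step. In the first half you correctly reduce the question to the non-invertibility of $r$ over $\Lambda$, and correctly note that classical nilpotency of cup product by $c_1(L)$ only places $\det_\Lambda r$ in the augmentation ideal; but the remaining step is deferred to ``a direct calculation'' in the Coates--Givental/Lee formalism that you never carry out, and as you admit this is the whole content. The paper closes this gap with a much more elementary observation, and in the opposite direction: instead of exhibiting a kernel class it shows $r$ is not \emph{surjective}, namely $1\notin \mathrm{im}\, r$. The coefficient of $1$ in $r(\alpha)$ is, Poincar\'e dually, the multiple of the fundamental lf class swept out by $\mathrm{ev}_{z_\infty}$ on the moduli space of sections of $E_g$, i.e.\ a count of sections subject to an incidence condition with a \emph{point} class over $z_\infty$. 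All such sections lie in a compact region (maximum principle, Lemmas \ref{Lemma Maximum principle} and \ref{Lemma key to compactness}), so the point can be moved to infinity and the count vanishes --- this is precisely why $B=D=0$ in Lemma \ref{Lemma A coefficient neg l bdles} and why the last row of $r_{\widetilde{g}}$ vanishes in Lemma \ref{Lemma rg for O-n over Pm}. Since $r$ is a $\Lambda$-linear endomorphism of the finite-rank $\Lambda$-vector space $QH^*(M)$, failure of surjectivity forces $\ker r\neq 0$, hence $\ker c^*=\ker r^k\neq 0$ by Theorem \ref{Theorem Intro r_g on QH and SH}. Your route, by contrast, is not completed, and confinement of holomorphic spheres to the zero section by itself cannot do the job (for $\mathcal{O}(-1)$ the quantum corrections make $r$ non-nilpotent, yet $1\notin\mathrm{im}\,r$ still holds).

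The second half contains an incorrect step rather than just a sketch. Your Hamiltonians $H_\lambda$, linear of slope $\lambda$ at infinity of $M$, do not have only constant orbits: non-constant $1$-orbits in the linear region correspond to closed Reeb orbits of the contact hypersurface \emph{at infinity} (the sphere bundle $\{r=\mathrm{const}\}$), not of your hypersurface. For a negative line bundle the Reeb flow at infinity is periodic (Lemma \ref{Lemma Liouville and Reeb for neg line bdle}), so such orbits exist for every slope above the minimal period, no matter what the dynamics on your hypersurface are; the hypothesis only controls Hamiltonians that are functions of the collar coordinate of that hypersurface, and those have bounded slope at infinity, hence are not cofinal for the direct limit defining $SH^*(M)$. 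The standard deduction (what the paper calls ``a standard consequence of the construction and invariance of $SH^*$'') therefore does not run through your direct construction: one either uses a Viterbo-type transfer to the completion of the compact domain $V$ bounded by the hypersurface --- where absence of Reeb orbits does give $SH^*\cong H^*(V)$ with $c^*$ an isomorphism, and $H^*(M)\to H^*(V)$ is injective because the hypersurface surrounds the zero section --- or an action-filtration argument localized near $V$; either way an extra ingredient is needed that your proposal omits.
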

\begin{proof} $1\notin\mathrm{im}\, r$ as it would involve a GW invariant with an evaluation condition with the point class, which can be moved to infinity. So $\ker r \neq 0$, so $\ker c^*\neq 0$. The rest is a standard consequence of the construction and invariance of $SH^*$.
\end{proof}

\begin{remark*}
 I thank the anonymous referee for pointing out that an alternative approach to prove the Weinstein Conjecture for negative line bundles $L$ would be to first compactify them by $\mathbb{P}(L\oplus \C)$, then to look at the Gromov-Witten invariants of the fibre, and finally to apply a neck-stretching argument.
\end{remark*}

\begin{corollary}\label{Corollary SH is zero if rg is nilpotent}
 $SH^*(M)=0 \Leftrightarrow \pi_M^*c_1(L)$ is nilpotent in $QH^*(M)$.
 In particular, if the quantum cup product reduces to the ordinary cup product, then $SH^*(M)=0$.
\end{corollary}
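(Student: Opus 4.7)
The plan is to read both assertions straight off Theorem~\ref{Theorem Intro r_g on QH and SH}, which for all $k\geq\dim H^*(B)$ identifies $SH^*(M)\cong QH^*(M)/\ker r^{k}$, where $r$ is quantum cup product by $\pi_M^*c_1(L)$. Thus $SH^*(M)=0$ if and only if $r^{k}$ vanishes as an endomorphism of $QH^*(M)$ for some (equivalently, all sufficiently large) such $k$.

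Next I would convert vanishing of $r^{k}$ into nilpotency of the element $\pi_M^*c_1(L)$. Since $r$ is $QH^*(M)$-linear (being multiplication by a fixed element), $r^{k}(x)=(\pi_M^*c_1(L))^{*k}*x$. Evaluating at the unit gives $r^{k}=0\Rightarrow(\pi_M^*c_1(L))^{*k}=0$; conversely, if $(\pi_M^*c_1(L))^{*\ell}=0$ for some $\ell$, then for any $k\geq\max(\ell,\dim H^*(B))$ the factorization $r^{k}(x)=(\pi_M^*c_1(L))^{*\ell}*(\pi_M^*c_1(L))^{*(k-\ell)}*x=0$ shows $r^{k}\equiv 0$. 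This yields the first equivalence.

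For the second sentence, suppose the quantum product on $QH^*(M)$ coincides with the ordinary cup product. Then $(\pi_M^*c_1(L))^{*k}=\pi_M^*(c_1(L)^{k})$, and this vanishes once $k>\dim_{\C}B$ because $c_1(L)^{k}\in H^{2k}(B)=0$ for degree reasons. Hence $\pi_M^*c_1(L)$ is nilpotent, and the first part gives $SH^*(M)=0$.

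There is no real obstacle here: the corollary is a tautological consequence of Theorem~\ref{Theorem Intro r_g on QH and SH}, together with the elementary observation that multiplication by $a$ in a unital ring is a nilpotent endomorphism precisely when $a$ itself is nilpotent, and the fact that $H^*(B)$ is concentrated in degrees $\leq\dim_{\R}B$.
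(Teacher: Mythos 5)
Your argument is correct and is exactly the paper's (implicit) derivation: the corollary is stated without separate proof precisely because, as you observe, Theorem \ref{Theorem Intro r_g on QH and SH} reduces it to noting that $r^k=0$ iff the element $\pi_M^*c_1(L)$ is nilpotent, plus the degree argument $H^{2k}(M)\cong H^{2k}(B)=0$ for $k>\dim_{\C}B$ when the product is ordinary. Nothing further is needed.
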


In Section \ref{Section Negative vector bundles} we generalize the above Theorem to \emph{negative vector bundles} $E\to B$. Definition \ref{Definition negative vector bundle} will explain the precise meaning of \emph{negative} in this context, which is a negative curvature condition on some Hermitian connection for $E$.

\begin{theorem*} For negative vector bundles $E\to B$, the analogue of the Theorem holds:
$r$ is a degree $2\,\mathrm{rank}_{\C} E$ endomorphism given by quantum cup product by the top Chern class $r(1)=\pi_M^*c_{\mathrm{rank}_{\C}}(E)$, and $\mathcal{R}$ is a degree $2\,\mathrm{rank}_{\C} E$ $\Lambda$-module automorphism on $SH^*(\mathrm{Tot}(E))$. In particular if $\mathrm{rank}_{\C} E>\dim_{\C}B$ then $SH^*(\mathrm{Tot}(E))=0$. 
\end{theorem*}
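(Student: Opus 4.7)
The plan is to mimic the proof of Theorem~\ref{Theorem Intro r_g on QH and SH} using the Hamiltonian circle action on $M = \mathrm{Tot}(E)$ given by fibrewise scalar multiplication $g_t(b,v) = (b, e^{2\pi i t} v)$, whose generating Hamiltonian is (up to constants) the squared Hermitian fibre norm on $E$. The fixed locus is the zero section $B$, the normal bundle to $B$ is $E$ itself, and the $S^1$-action has weight $1$ on each of the $k = \mathrm{rank}_\C E$ complex normal directions. This is the direct analogue of the line bundle rotation, so it defines a loop $g \in \pi_1(\mathrm{Ham}_{\ell}(M,\omega))$, a chain-level rotation $\mathcal{S}$ of degree $2k$, a degree $2k$ automorphism $\mathcal{R}$ on $SH^*(M)$, and a Seidel-type element $r(1) \in QH^*(M)$ of degree $2k$ (matching the sum of weights of the action on the normal bundle, which determines the Maslov shift).

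The central computation is to identify $r(1)$ with $\pi_M^* c_k(E)$. As in the line bundle case, one localizes the count of holomorphic sections of the Hamiltonian fibration $M_g \to S^2$ at the fixed locus of the rotation. The constant section at $B$ contributes the equivariant Euler class of its normal bundle $E$ with weight-one $S^1$-action, which is precisely the top Chern class $c_k(E)$. Equivalently, the splitting principle formally writes $E = L_1 \oplus \cdots \oplus L_k$, and the Seidel element for the diagonal rotation factors as the product $\prod c_1(L_i) = c_k(E)$ of the Seidel elements that would arise from each line factor individually.

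Once $r(1) = \pi_M^* c_k(E)$ is in hand, the algebraic machinery of the line bundle proof transfers verbatim: $r$ is identified with iterated Floer continuation maps between Hamiltonians of increasing slope, so $SH^*(M) \cong QH^*(M)/\ker r^N$ for $N \geq \dim H^*(B)$. The vanishing conclusion is immediate from the analogue of Corollary~\ref{Corollary SH is zero if rg is nilpotent}: if $\mathrm{rank}_\C E > \dim_\C B$ then $c_k(E) \in H^{2k}(B) = 0$ for degree reasons, hence $r = 0$ is trivially nilpotent and $SH^*(\mathrm{Tot}(E)) = 0$. The hard part is the prerequisite setup: $M = \mathrm{Tot}(E)$ is \emph{not} conical at infinity for $k \geq 2$, so symplectic cohomology and the non-compact Seidel representation must first be extended to this class of open manifolds. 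My approach would be to fix a Hermitian metric and work with Hamiltonians linear in the squared fibre radius at infinity, so that the fibrewise rotation remains Hamiltonian and the maximum principle survives with several normal directions present; alternatively one can take a limit over truncations of the projective compactification $\mathbb{P}(E \oplus \mathcal{O}) \to B$. Checking that the localization computation and all continuation arguments survive in this weaker geometric setup is where the real work lies.
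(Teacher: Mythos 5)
Your skeleton matches the paper's (fibrewise rotation loop with $I(\widetilde{g})=\mathrm{rank}_{\C}E$, transfer of the linear-algebra argument, and vanishing because $c_{\mathrm{top}}(E)\in H^{2\,\mathrm{rank}_{\C}E}(B)=0$ when $\mathrm{rank}_{\C}E>\dim_{\C}B$), but the central step, $r(1)=\pi_M^*c_{\mathrm{rank}_{\C}E}(E)$, is where your argument has a genuine gap. The paper does not obtain this by equivariant localization or the splitting principle, and as stated neither would work: ``localizing at the fixed locus'' does not reduce the section count to the constant sections over the zero section -- already for $\mathcal{O}(-n)\to\P^m$ the fixed configurations of the induced circle action on the section moduli space include fibre bubbles, and these genuinely contribute to $r_{\widetilde{g}}$ (the $A_a,B_a,\ldots$ entries of Lemma \ref{Lemma rg for O-n over Pm}); and $E$ admits no actual splitting into line subbundles through which the rotation loop factors, so the ``product of Seidel elements'' is only a formal heuristic. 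What makes the image of the \emph{unit} (as opposed to the whole map $r$) equal to the top Chern class, up to the unit $1+\lambda_+$, is the argument of Theorems \ref{Theorem consts are regular in general and rg1 is c1} and \ref{Theorem v bdles consts are regular in general and rg1 is ctop}: (i) a dimension count using weak$^+$ monotonicity, the maximum principle, and the fact that the only locally finite cycles of degree $\geq 2\dim_{\C}B$ supported near the zero section are multiples of $[B]$ forces $c_1(TM)(\gamma)=0$ for the contributing classes; (ii) the constant sections are regular for the integrable $\hat{J}$ because $\coker\overline{\partial}=H^1(\P^1,\mathcal{O}(\underline{\C}^{\oplus b})\oplus\mathcal{O}(-1)^{\oplus \mathrm{rank}_{\C}E})=0$ by Serre duality; (iii) the constants sweep the lf cycle $[B]$, whose Poincar\'e dual in $\mathrm{Tot}(E)$ is $\pi_M^*c_{\mathrm{rank}_{\C}E}(E)$ because a generic perturbation of the zero section vanishes along a cycle representing the Euler class. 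None of these appears in your proposal, and your remark that $r(1)$ has degree $2\,\mathrm{rank}_{\C}E$ does not by itself exclude contributions in $H^{2\,\mathrm{rank}_{\C}E-2c_1(\gamma)}(M)\otimes\gamma$ with $c_1(\gamma)\neq 0$; handling those is exactly where the $(1+\lambda_+)$ unit factor comes from.

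You rightly flag that $\mathrm{Tot}(E)$ is not conical at infinity, but you defer this to ``where the real work lies,'' whereas the paper supplies the missing foundations explicitly: the symplectic form is built from the tautological negative line bundle $\mathcal{O}(-1)\to\P(E)$ via the tautological isomorphism (Lemma \ref{Lemma neg v bdles are symplectic}); Hamiltonians are taken radial in the Hermitian fibre norm so that in a local unitary frame the maximum principle reduces to the standard one for $\C^{\mathrm{rank}_{\C}E}$; the monotonicity lemma uses bounded geometry; and transversality is achieved by perturbations chosen small on an exhausting family of compacts so as not to destroy the maximum principle. Your alternative route through truncations of $\P(E\oplus\mathcal{O})$ is not carried out and would require comparable effort. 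So the proposal reaches the correct statements but leaves both the key identification of $r(1)$ and the analytic setup unproved.
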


\subsection{How $\mathbf{r}$ arises algebro-geometrically and Floer theoretically}
\strut\\ \indent
Algebro-geometrically the map $r$ arises from $2$-pointed genus $0$ Gromov-Witten invariants counting sections of the Hamiltonian fibration $E_g\to \P^1$ with fibre $M$, constructed from the loop of rotations $g_t=e^{2\pi i t}$ by the clutching construction. Heuristically $r$ is the pull-push map
$$
H^*(M) \to H^{*+2}(M), \;\; a \mapsto \sum_{\beta\in H_2(M)} (\mathrm{ev}_{z_{\infty}})_{!}\left(\mathrm{ev}_{z_0}^*(a) \wedge e(\mathrm{Obs}_{\beta})\right)
$$
where $\mathrm{Obs}_{\beta}$ is the obstruction bundle over the moduli space 
$$\mathcal{M}_{\beta}=\overline{\mathcal{M}}_{0,2}(E_g, [\P^1]+(j_{z_0})_*\beta)$$
 of stable maps $u$ from $2$-pointed genus $0$ nodal curves to $E_g$ representing the class $[\P^1]+(j_{z_0})_*\beta$ where $[\P^1]\in H_2(E_g)$ is the base of $E_g$ and $j_{z_0}$ is inclusion of the fibre at the South Pole $z_0\in \P^1$. Composing with the projection $\pi_g:E_g\to \P^1$, the main component of $u$ yields an isomorphism to $\P^1$. So $u$ can be viewed as a holomorphic section of $E_g$ possibly with holomorphic bubbles in the fibres (killing the $PSL(2,\C)$ reparametrization freedom by making it a section). The two maps $\mathrm{ev}: \mathcal{M}_{\beta} \to M$ are evaluation of sections of $E_g$ at the two Poles $z_0,z_{\infty}\in \P^1$.

More precisely, $r$ is a Novikov-weighted count of the zero dimensional moduli spaces of pseudo-holomorphic sections of $E_g\to \P^1$ which intersect a given locally finite quantum cycle in the fibre over $z_0$ and a given quantum cycle over $z_{\infty}$.

Floer theoretically, the map $r$ is the composite
$$
\xymatrix{
HF^*(H_0,J,\omega) 
\ar@{->}^-{\mathcal{S}}[r]
\ar@{->}@/_1pc/_-{\mathcal{R}}[rr]
&
HF^{*+2}(g^*H_0,g^*J,\omega)   
 \ar@{->}^-{\varphi_0}[r] 
&
HF^{*+2}(H_0,J,\omega)  \ar@{->}^-{\psi^+}[d] 
\\
QH^*(M,\omega) \ar@{->}^-{\psi^-}[u] \ar@{-->}^-{r}[rr] & & QH^{*+2}(M,\omega)
}
$$
Here $H_0:M\to \R$ is a Hamiltonian of ``slope zero'' at infinity (more precisely: whose positive slope decays to $0$ at infinity). Such a Hamiltonian $H_0$ gives rise to identifications $\psi^{\pm}$ between the Floer complexes and the (quantum) Morse chain complexes. The above map $\mathcal{S}$ is the natural isomorphism at the chain level induced by identifying the relevant Floer moduli spaces by pulling back the data $H_0,J$ via $g$. Finally $\varphi_0$ is a Floer continuation map obtained by homotopying the data.

\begin{theorem*}
 The algebro-geometrical and the Floer theoretical construction of $r$ agree, that is the above diagram commutes.
\end{theorem*}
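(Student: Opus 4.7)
This is a Seidel-type comparison adapted to the open setting. The plan is to repackage the Floer composite $\psi^+ \circ \varphi_0 \circ \mathcal{S} \circ \psi^-$ as the count of a single moduli space whose target is the Hamiltonian bundle $E_g \to \P^1$, and then pass to an adiabatic limit in which the Hamiltonian perturbations degenerate, recovering the moduli space of pseudo-holomorphic sections which defines $r$ algebro-geometrically.

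\emph{Realizing the Floer composite as a section count.} First I would interpret $\psi^\pm$ as PSS-type caps at the two poles $z_0, z_\infty \in \P^1$: each is a half-cylinder compactified by a disk, carrying an $s$-dependent Hamiltonian that interpolates from $H_0$ at the cylindrical end to $0$ at the center, with a Morse-cycle evaluation condition at the center. The composite $\varphi_0 \circ \mathcal{S}$ I would realize as a single twisted Floer continuation cylinder: because $\mathcal{S}$ is the tautological change of coordinates $u(s,t) \mapsto g_t^{-1} u(s,t)$, composing with the continuation $\varphi_0$ (which homotopies $g^*H_0$ back to $H_0$) is the same as counting continuation cylinders from $H_0$ to $H_0$ whose equation has data pulled back by $g$ on one end. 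Now glue the two caps to this cylinder: the clutching function that gluing the local trivializations of the target over the two hemispheres is precisely $g_t$, which is the clutching construction producing $E_g$. Hence the glued Floer composite count equals the count of finite-energy solutions of a perturbed Cauchy-Riemann equation on $E_g \to \P^1$, with cap perturbations near $z_0, z_\infty$ and Morse-cycle evaluations there.

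\emph{Adiabatic limit $H_0 \to 0$.} Shrink the slope of $H_0$ to zero. By the maximum principle for conical symplectic manifolds at infinity (available for $M$, and by construction also for the fibers of $E_g$), all such solutions remain in a fixed compact set uniformly in $H_0$, so Gromov compactness applies. In the limit the Hamiltonian term vanishes and the configurations become genuine pseudo-holomorphic sections of $E_g$ with fiber-bubble trees attached, i.e.\ precisely $\mathcal{M}_\beta = \overline{\mathcal{M}}_{0,2}(E_g, [\P^1] + (j_{z_0})_* \beta)$ summed over $\beta$. The two cap evaluations become evaluation at $z_0, z_\infty$, and a direct energy-area calculation using $\omega_g$ matches the Novikov weights. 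In the regular case this is literally the pull-push formula defining $r$; in the non-regular case where fiber bubbles fail transversality, the Floer count limits to pairing against the Euler class of the obstruction bundle $\mathrm{Obs}_\beta$, again matching the algebro-geometric expression.

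\emph{Main obstacle.} The principal technical difficulty is controlling non-compactness at every stage. Floer solutions on the cylindrical necks must not escape to infinity uniformly in $H_0$; this is handled by the $s$-dependent maximum principle for conical ends from \cite{Ritter2}. The limiting sections of $E_g$ must likewise remain bounded, which uses the negativity of $L$ together with a specific choice of $\omega_g$ ensuring that finite-energy sections in the class $[\P^1] + (j_{z_0})_*\beta$ concentrate near the zero section. A secondary issue is the obstruction-bundle bookkeeping at the degeneration: following the twisted-GW approach of \cite{Coates-Givental} and \cite{Lee}, one identifies the obstruction bundle arising on the Floer side as $H_0 \to 0$ with $\mathrm{Obs}_\beta$ on the algebro-geometric side, closing the comparison.
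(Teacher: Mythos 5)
Your first half --- realizing $\psi^{\pm}$ as counts of sections over the two discs $D^{\pm}$, realizing $\varphi_0\circ\mathcal{S}_{\widetilde{g}}$ as a $g$-twisted continuation cylinder, and gluing these so that the clutching by $g_t$ produces $E_g$ --- is exactly the route of the paper (Seidel's gluing argument, cf.\ Theorem \ref{Theorem rg element}), so the skeleton is right. But the two places where you wave at analysis are precisely the two difficulties this theorem is about, and as written they fail. First, compactness on the $\psi^+$ cap: there the Hamiltonian interpolates the wrong way (from $H_0$ at the cylindrical end to $0$ at the marked point), so the homotopy is \emph{non-monotone} and the maximum principle of Lemma \ref{Lemma Maximum principle} (or the one you quote from \cite{Ritter2}) simply does not apply --- it needs $\partial_s h_s'\leq 0$. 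The paper deals with this by taking $H_0$ \emph{bounded} at infinity so that $\partial_s H$ is bounded above, which keeps $\widetilde{\Omega}+\pi^*\sigma$ symplectic and $\hat{J}$-compatible (Lemma \ref{Lemma Symplectic form for admissible J}), and then combining the Monotonicity Lemma \ref{Lemma Monotonicity Lemma} with the a priori energy identity; the output is only a \emph{locally finite} pseudocycle, which is why $\psi^+$ lands in $QC^{lf}_*(M)$. Your plan needs this (or a substitute) spelled out; "handled by the maximum principle" is not available on that cap.

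Second, the adiabatic limit $H_0\to 0$ is both unnecessary and unjustified. The algebro-geometric $r$ is not a count for a Hamiltonian-free $\hat{J}$ corrected by an obstruction bundle: it is a count of $(j,\hat{J})$-holomorphic sections for an \emph{admissible} $\hat{J}$ which retains $X_H$ in its off-diagonal block at infinity; by Gromov's trick (Remark \ref{Remark Floer continuation gives admissible Jhat}) the perturbed equation already \emph{is} the holomorphic-section equation for this $\hat{J}$, so no limit is needed --- and the GW invariants of $E_g$ in the paper are defined with such $\hat{J}$, precisely because Thurston's trick fails for non-compact fibres and compactness of the unperturbed problem is what breaks. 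Moreover your claim that in the non-regular limit "the Floer count limits to pairing against $e(\mathrm{Obs}_\beta)$" is exactly the kind of statement that needs a proof: counts can jump at non-regular parameters, the formula with $\mathrm{Obs}_\beta$ in the introduction is explicitly heuristic, and in the paper obstruction bundles only enter later, for specific integrable $\hat{J}$, via a separate cobordism argument (Lemma \ref{Lemma obs bundle result}); the general comparison theorem is proved for regular admissible $\hat{J}$ (Lemma \ref{Lemma dim of space of sections}) with no virtual/obstruction input. Finally, note your limit manipulates chain-level maps attached to a fixed $H_0$, so changing $H_0$ changes the complexes and at best yields a cobordism statement, and the identification of the glued homotopy classes with $\gamma+S_{\widetilde{g}}$ (hence of Novikov weights, via the choice of lift $\widetilde{g}$) still has to be carried out as in \cite[Sec.8]{Seidel3}.
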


This result, and Theorem \ref{Theorem Intro r_g on QH and SH}, can be heuristically viewed as a symplectic analogue of the quantum Lefschetz hyperplane theorem \cite{Lee}: the invariants of the hyperplane section $B\subset M$ are recovered from invariants of the ambient $M$ and a quantum multiplication operation by an Euler class.

The difficulty in relating the two constructions of $r$ (compared with a similar setup in the closed case due to Seidel \cite{Seidel3}) involves the fact that we are using \emph{non-compact} Hamiltonian fibrations and \emph{non-monotone} homotopies (arising in $\psi^+$).
%
\subsection{The role of $\mathbf{r}$ in determining $\mathbf{SH^*(M)}$}
\label{Subsection The role of rg in determining SH}
\strut\\ \indent
The symplectic cohomology of $M$ arises as a direct limit of Floer cohomologies
$$
\xymatrix{
QH^*(M) \ar@<2pt>[r]^-{\psi^-} \ar@{<-}@<-2pt>
[r]_-{\psi^+} & HF^*(H_0) \ar[r]^-{\varphi_1} & HF^*(H_1) \ar[r]^-{\varphi_2} &  HF^*(H_2) \ar[r]^-{\varphi_3} &  \cdots
}
$$
where $H_i$ are carefully chosen Hamiltonians with slope proportional to $i$, the $\varphi_i$ are continuation maps. The direct limit of the composition of those maps defines $c^*: QH^*(M) \to SH^*(M)$. We prove in \ref{Subsection relation to quantum homology} that the $\psi^{\pm}$ are identifications of algebras and that $c^*$ is a $\Lambda$-algebra homomorphism (using a Novikov ring $\Lambda$).

After suitable identifications, we prove in \ref{Subsection The Floer construction} that the above sequence becomes:
$$
V = V \stackrel{\varphi}{\to} V \stackrel{\varphi}{\to} V \stackrel{\varphi}{\to} \cdots
$$
where $V=QH^*(M)$ and $\varphi$ is quantum cup product by $r(1)$. This involves a special choice of $H_i$: recall $H=m\pi|z|^2$ on $\C$ for $m\notin \Z$ only has Hamiltonian $1$-orbit $0$, and in our case the cohomology of the zero section plays the role of this $1$-orbit $0$.

By linear algebra, $\varphi^k(V)$ stabilizes for $k\geq \mathrm{rank}_{\Lambda}\, QH^*(M)$ and
$
\varphi^k(V) \cong V/\ker \varphi^{k}.
$
Say it stabilizes at stage $k$. Then $\varphi$ is an automorphism on $\varphi^k(V)$. In the direct limit, we identify $v\sim \varphi(v)$, so $SH^*(M)$ can be identified as a $\Lambda$-vector space to $\varphi^k(V)\subset HF^*(H_k)$, and $\varphi^k:V \to \varphi^k(V)$ can be identified with $c^*$.

Thus $c^*$ is surjective and $\ker c^* = \ker r^k$. 
Since $c^*$ is an algebra homomorphism, it induces the quotient isomorphism of $\Lambda$-algebras
$$SH^*(M)\cong QH^*(M)/\ker c^* = QH^*(M)/\ker r^k,$$
proving Theorem \ref{Theorem Intro r_g on QH and SH}.
The product structure is discussed in more detail in \ref{Subsection Product structure on SH using rg}.

The reason the sequence simplifies so dramatically, is that conjugation by the rotation $\mathcal{S}:HF^*(H_i)\to HF^{*+2}(H_{i-1})$ recovers all $\varphi_i$ from $\varphi_0$: $\varphi_{i}=\mathcal{S}^{-i}\varphi_0 \mathcal{S}^{i}$. So $SH^*(M)$ is determined via linear algebra by a map
$$
QH^*(M,M\setminus B) \to QH^*(M)
$$
corresponding to $HF^*(-H_0)\to HF^*(H_0)$ (identifiable with $\varphi_0$). Up to first approximation, this map is the natural map for the pair $(M,M\setminus B)$, which in the Gysin sequence for the sphere bundle of $L$ corresponds to ordinary cup product $H^*(B)\to H^{*+2}(B)$ by $c_1(L)$. The surprising result is that there are quantum correction terms in the Floer continuation map, and this first approximation equals the continuation map of Morse cohomologies (the Floer complexes for small $\pm H_0$ reduce to Morse complexes). This is unlike the exact setup \cite{Ritter3} or the setup  $\omega_B(\pi_2(B))=0$, in which by arguments \`{a} la Salamon-Zehnder \cite{Salamon-Zehnder} for a homotopy of $C^2$-small time-independent Morse Hamiltonians the Floer continuation map  reduces to the Morse continuation map (solutions become time-independent).
%
\subsection{Non-vanishing of symplectic cohomology of the blowup of $\mathbf{\C^{m+1}}$}
%
\begin{corollary}\label{Corollary Intro O-1 case}
For $\mathcal{O}(-1) \to \P^m$, $SH^*(M)$ has rank $m$. Indeed as $\Lambda$-algebras: 
$$
\begin{array}{lll}
QH^*(M) = \Lambda[\omega_Q]/(\omega^{m+1}_Q+t\cdot \omega_Q)\\
SH^*(M) \cong \Lambda[\omega_Q]/(\omega^{m}_Q+t\cdot 1)
\end{array}$$
where $\omega_Q = \pi_M^*\omega_{\P^m} \otimes 1 \in QH^2(M)$, $\omega_Q^{m}$ are quantum powers, and $\Lambda=\mathbb{K}(\!(t)\!)$ is the Novikov field (Laurent series in a formal variable $t$, one can even replace $\K$ by $\Z$).
\end{corollary}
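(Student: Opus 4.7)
The plan is to pin down the quantum ring $QH^*(M)$ explicitly and then apply Theorem~\ref{Theorem Intro r_g on QH and SH}. The zero-section retraction gives $H^*(M)\cong H^*(\P^m)$, so additively $QH^*(M)$ is free over $\Lambda$ on $1,\omega_Q,\ldots,\omega_Q^m$. From $c_1(TM)=\pi_M^*(c_1(T\P^m)+c_1(L))=m\,\omega_Q$ one reads off that the Novikov variable $t$, which tracks the line class in $H_2(B)\cong H_2(M)$, sits in degree $2m$. Any quantum correction to the classical relation $\omega_Q^{m+1}=0$ must then have degree $2m+2$; for $m\geq 2$ the only basis monomial of that degree is $t\omega_Q$, so $\omega_Q^{m+1}=c\cdot t\omega_Q$ for a scalar $c\in \Lambda$ (the accidental $t^2$ of the same degree when $m=1$ is ruled out separately by a vanishing of the corresponding $d=2$ invariant).

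To pin down $c=-1$ I would invoke the Coates--Givental/Lee description of the GW theory of a negative line bundle. Since $L$ is negative, a maximum principle forces every $J$-holomorphic sphere in $M$ (for a conical $J$) into the zero section $B=\P^m$, so the GW invariants of $M$ in a curve class $d[\mathrm{line}]\in H_2(B)\cong H_2(M)$ are the twisted GW invariants of $B$ capped with the Euler class of the obstruction bundle $\mathrm{Obs}_d=R^1\pi_*f^*L$, of rank $d-1$ for $d\geq 1$. A virtual-dimension count shows that only $d=1$ contributes to the quantum relation $\omega_Q^{m+1}$, and there $\mathrm{Obs}_1$ has rank $0$, so the count reduces to an untwisted $3$-point GW number on $\P^m$ (equal to $1$); the minus sign comes from the fact that pulling a compactly supported dual back to the zero section picks up $\tau|_B=e(L)=-\omega_Q$.

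Given $QH^*(M)\cong \Lambda[\omega_Q]/(\omega_Q^{m+1}+t\omega_Q)$, the rest is linear algebra. With $r$ equal to quantum multiplication by $\pi_M^*c_1(L)=-\omega_Q$, the relation rewrites as $\omega_Q(\omega_Q^m+t)=0$, so $\omega_Q^m+t\in\ker(\omega_Q\cdot)$; inspecting the companion matrix of $\omega_Q\cdot$ in the basis $1,\omega_Q,\ldots,\omega_Q^m$ shows this kernel is exactly one-dimensional, and since $\omega_Q^k(\omega_Q^m+t)=0$ for every $k\geq 1$ the $\ker r^k$ stabilise immediately at $\Lambda\cdot(\omega_Q^m+t)$. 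Taking $k=m+1\geq \dim H^*(B)$, Theorem~\ref{Theorem Intro r_g on QH and SH} yields
\[
SH^*(M)\cong \Lambda[\omega_Q]/(\omega_Q^{m+1}+t\omega_Q,\;\omega_Q^m+t)=\Lambda[\omega_Q]/(\omega_Q^m+t),
\]
the first relation being $\omega_Q$ times the second; this is free of rank $m$ over $\Lambda$, as claimed. The main obstacle is fixing the scalar $c=-1$ in the quantum relation: grading reduces everything to a single constant and the GW computation localises to a $d=1$ line count, but correctly tracking the Coates--Givental twist conventions (and the compensating Thom-class factor when pairing in the non-compact $M$) is the one genuinely delicate step.
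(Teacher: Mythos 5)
Your proposal is correct, and it essentially reproduces one of the two derivations the paper itself gives. The paper's headline computation of $r_{\widetilde{g}}$ for $\mathcal{O}(-1)\to\P^m$ (Lemma \ref{Lemma O-1 over Pm by QH}) goes through counting pseudo-holomorphic sections of the fibration $E_g\to S^2$, with the one nontrivial entry $A_0$ evaluated by a Grothendieck--Riemann--Roch/obstruction-bundle argument (Theorem \ref{Theorem eul char of obs bundle}); you bypass this entirely. But immediately afterwards the paper proves $QH^*(M)=\Lambda[\omega_Q]/(\omega_Q^{m+1}+t\,\omega_Q)$ by exactly your argument: the grading forces the correction to be a multiple of $t\omega_Q$ (with the extra $d=2$ term for $m=1$ killed by moving the point insertion off the zero section to infinity), holomorphic spheres are confined to the zero section, the degree-one curves are regular because $u^*TM\cong\mathcal{O}(2)\oplus\mathcal{O}(1)^{\oplus m-1}\oplus\mathcal{O}(-1)$ has vanishing $H^1$ (your rank-$(d-1)$ obstruction bundle is trivial for $d=1$), and the line count acquires the sign $-1$ because the perturbed cycle meets the zero section in $-[\mathrm{pt}]$, i.e.\ in the Euler class of $L$. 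Combining this with Theorem \ref{Theorem Intro r_g on QH and SH} (where $r$ is quantum product by $\pi_M^*c_1(L)=-\omega_Q$, justified in general by Theorem \ref{Theorem consts are regular in general and rg1 is c1}) and the linear algebra of \ref{Subsection linear algebra} is precisely what you do, so your route is legitimate and matches the paper's cross-check; what the $E_g$/GRR route buys in addition is an independent, purely fibration-side computation of the continuation matrix, which generalizes to $\mathcal{O}(-n)$ via localization.

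One small point to tighten: the observation that $\omega_Q^k(\omega_Q^m+t)=0$ for all $k\geq 1$ only shows $\ker r\subseteq\ker r^k$. To conclude $\ker r^k=\Lambda\cdot(\omega_Q^m+t)$ you should note in addition that $\ker r\cap\mathrm{im}\, r=0$ (the element $\omega_Q^m+t\cdot 1$ has nonzero component on $1$, while $\mathrm{im}\,r=\mathrm{span}_\Lambda(\omega_Q,\ldots,\omega_Q^m)$), equivalently that the characteristic polynomial $\lambda(\lambda^m+a)$ with $a$ a unit gives a single size-one Jordan block for the eigenvalue $0$; this is the argument of \ref{Subsection linear algebra}.
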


Recall that the $M$ of the Corollary arise as the blow-up of $\C^{m+1}$ at the origin. So the symplectic cohomology has changed under blow-up as $SH^*(\C^{m+1})=0$. Interestingly the growth-rate \cite[Sec.(4a)]{Seidel} of $SH^*(M)$ is $0$ despite $SH^*(M)\ncong QH^*(M)$. When this non-isomorphism occurs, there is a non-constant Hamiltonian orbit, and one typically expects its iterates to force $\dim_{\Lambda}SH^*(M)=\infty$.

\begin{remark}[Smith]\label{Remark Ivan} Ivan Smith discovered an essential torus in $\mathrm{Tot}(\mathcal{O}(-1)\to \P^1)$ in \cite[Corollary 4.22]{Smith}: the sphere bundle lying over the equator of $\P^1$ of constant radius making the torus monotone. Essential tori are defined in \cite[Sec. (5b)]{Seidel}. In \cite[Prop.5.2]{Seidel}, Seidel-Smith state that if a $4$-dimensional $($exact$)$ Liouville domain $(M,d\theta)$ contains an essential Lagrangian torus then $SH^*(M,d\theta)\neq 0$. The proof is not written down in the literature in detail, but it is briefly sketched in Seidel \cite[Sec.(5b)]{Seidel}.
If one assumes that this result holds also for monotone essential tori in non-exact $4$-dimensional symplectic $M$ conical at infinity, then the presence of the essential torus would imply a posteriori that $SH^*(\mathrm{Tot}(\mathcal{O}(-1)\to \P^1))\neq 0$. 
\end{remark}

\subsection{Non-vanishing of symplectic cohomology of $\mathbf{M=\mathbf{Tot}(\mathcal{O}(-n)\to \P^m)}$}
The difficulty in calculating the Gromov-Witten invariants is that for the standard integrable complex structure the moduli spaces of holomorphic curves are essentially never regular. This is because the maximum principle prevents the curves from escaping the zero section, so the dimension count does not agree with the expected dimension because the moduli spaces do not ``notice'' the fibre direction. To address this issue one requires algebraic-geometric techniques. By using virtual localization techniques similar to Kontsevich \cite{Kontsevich} and Graber-Pandharipande \cite{Pandharipande2}, but adapted to the setup of holomorphic sections of $E_g$,  we determine $r$ explicitly for $\mathcal{O}(-n)\to \P^m$, when $n<1+\frac{m}{2}$, and determine enough about $r$ for $n<1+m$:

\begin{theorem}\label{Theorem Intro O-n over Pm} Let $M=\mathrm{Tot}(\mathcal{O}(-n)\to \P^m)$, $N=1+m-n$. In characteristic $0$:\\[1mm]
\renewcommand{\arraystretch}{1.25}\indent
\begin{tabular}{|l|l|}
\hline
 $1\leq n < 1+\frac{m}{2}$ & 
$\begin{array}{lll}
 QH^*(M) &=&  \Lambda[\omega_Q]/(\omega_Q^{1+m}- (-n)^{n} t\omega_Q^n) \\
 SH^*(M) &=&  \Lambda[\omega_Q]/(\omega_Q^N - (-n)^{n} t)
\end{array}
$
\\
\hline
 $1+\frac{m}{2} \leq n < 1+m$ & $SH^*(M)\neq 0$ has rank a multiple of $N$\\
\hline
 $n=1+m$ & $SH^*(M)=0$ $($borderline case: $c_1(TM)=0)$ \\
\hline
 $2+m \leq n \leq 2m$ & $M$ does not satisfy weak$^+$ monotonicity\\
\hline
$n>2m$ & 
$\begin{array}{lll}
 QH^*(M) &=&  \Lambda[\omega_Q]/(\omega_Q^{1+m}) \textrm{ is ordinary}\\
 SH^*(M) &=&  0 
\end{array}
$
\\ \hline
\end{tabular}
\renewcommand{\arraystretch}{1}
\\[1mm]
Where $\Lambda=\mathbb{K}(\!(t)\!)$ is the same as in Corollary \ref{Corollary Intro O-1 case}. Over characteristic $2$, the above holds except $SH^*(M)=0$ for even $n$.
\end{theorem}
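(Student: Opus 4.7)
\emph{Proof proposal.}
The strategy is to invoke Theorem \ref{Theorem Intro r_g on QH and SH}, which reduces $SH^*(M)$ to $QH^*(M)/\ker r^k$ where $r$ is quantum multiplication by $\pi_M^*c_1(\mathcal{O}(-n))=-n\omega_Q\in QH^2(M)$. Since $M$ retracts to $\P^m$, $QH^*(M)=\Lambda[\omega_Q]/(\omega_Q^{m+1})$ as a $\Lambda$-module; everything reduces to finding the quantum relation satisfied by $\omega_Q^{m+1}$ and then doing linear algebra.

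For the main range $1\le n<1+m/2$: the virtual complex dimension of $\overline{\mathcal{M}}_{0,3}(M,d)$ equals $(m+1)+dN$ with $N=m+1-n$, so $\langle\omega_Q,\omega_Q^m,\omega_Q^a\rangle^M_d\ne 0$ forces $a=dN$; since $N>m/2$, for $d\ge 2$ one has $dN>m$ so the insertion vanishes. Only the $d=1$ invariant $\langle\omega_Q,\omega_Q^m,\omega_Q^N\rangle^M_1$ contributes. I would evaluate it by equivariant virtual torus localization on $\overline{\mathcal{M}}_{0,3}(\P^m,1)$ with obstruction bundle $R^1\pi_*\mathrm{ev}^*\mathcal{O}(-n)$ (equivalently, as a count of pseudo-holomorphic sections of $E_g\to\P^1$), following Kontsevich \cite{Kontsevich} and Graber--Pandharipande \cite{Pandharipande2}. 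Under the standard $(\C^*)^{m+1}$-action on $\P^m$, fixed loci are lines joining pairs of torus-fixed points, and summing equivariant Euler-class contributions yields $\pm n^n$. This gives $\omega_Q^{m+1}+n^n t\,\omega_Q^n=0$, which factors as $\omega_Q^n(\omega_Q^N+n^n t)=0$; the two factors are coprime over the Novikov field (the second has non-zero constant term $n^n t$), so CRT splits $QH^*(M)\cong\Lambda[\omega_Q]/(\omega_Q^n)\oplus\Lambda[\omega_Q]/(\omega_Q^N+n^n t)$. On the first summand $r$ is nilpotent; on the second $\omega_Q^N=-n^n t$ makes $\omega_Q$ (hence $r$) a unit. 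Thus $\ker r^k=\Lambda[\omega_Q]/(\omega_Q^n)$ and $SH^*(M)\cong\Lambda[\omega_Q]/(\omega_Q^N+n^n t)$.

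For the other ranges: if $n>2m$ the dimension count gives $dN<0$ for $d\ge1$, killing all quantum corrections, so $\omega_Q^{m+1}=0$, $r$ nilpotent, $SH^*(M)=0$. If $n=m+1$ ($N=0$) the count forces $a=0$ and the string equation yields $\langle\omega_Q,\omega_Q^m,1\rangle^M_d=0$ for $d\ge 1$, again $SH^*(M)=0$. The range $m+2\le n\le 2m$ is excluded because weak$^+$ monotonicity fails. For the intermediate range $1+m/2\le n<1+m$, multiple $d$ contribute and the relation reads $\omega_Q^{m+1}+\sum_{d=1}^{D}n_d\,t^d\,\omega_Q^{m+1-dN}=0$ with $D=\lfloor m/N\rfloor\ge 2$ and $n_1=n^n\ne 0$. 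Factoring the maximal $\omega_Q^s$ from the RHS (where $s=m+1-d_{\max}N$ and $d_{\max}\ge 1$ is the largest index with $n_{d_{\max}}\ne 0$), the residual polynomial $R(\omega_Q)$ has nonzero constant term, so the same coprime splitting gives $QH^*(M)\cong\Lambda[\omega_Q]/(\omega_Q^s)\oplus\Lambda[\omega_Q]/(R)$ with $r$ nilpotent on the first and a unit on the second. Hence $SH^*(M)\cong\Lambda[\omega_Q]/(R)\ne 0$ of rank $d_{\max}N$, a non-zero multiple of $N$. The characteristic $2$ addendum follows by inspecting the $n_d$ modulo $2$: for even $n$ the leading factor $n^n\equiv 0$ and (in the main range) the relation collapses to the classical $\omega_Q^{m+1}=0$, so $r$ is nilpotent and $SH^*(M)=0$; for odd $n$ the analysis is unchanged from characteristic $0$.

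The principal obstacle is the equivariant virtual localization computation yielding the factor $\pm n^n$; in the intermediate range, control of the higher-$d$ invariants $n_d$ is an additional technical burden, though merely $n_1\ne0$ suffices for the qualitative non-vanishing and the $N$-divisibility of the rank. Everything else is routine linear algebra over the Novikov field.
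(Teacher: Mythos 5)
Your overall strategy is the same as the paper's: reduce to Theorem \ref{Theorem Intro r_g on QH and SH}, identify $r$ with quantum multiplication by $-n\omega_Q$, isolate which degrees $d$ can contribute by the dimension count $a=dN$, and finish by linear algebra over the Novikov field (your CRT splitting is equivalent to the Jordan-block argument in \ref{Subsection linear algebra}); your treatment of $n>2m$, of the intermediate range via $n_1\neq 0$ plus $N$-divisibility of the degrees, and of the excluded range is also how the paper proceeds. Two of your side remarks differ harmlessly from the paper: for $n=1+m$ the paper avoids GW invariants altogether (Theorem \ref{Theorem c1=0 implies SH=0} is a pure grading/periodicity argument, which is safer in the borderline case $c_1(TM)(\pi_2)=0$ where the pseudocycle definition of GW needs care), and for even $n$ in characteristic $2$ the cleaner argument, valid in all ranges and not just where the relation collapses, is simply that $r(1)=\pi_M^*c_1(L)=-n\omega\equiv 0$.

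The genuine gap is that the decisive quantitative step is asserted, not proved: you write that localization ``yields $\pm n^n$'' and yourself flag it as the principal obstacle, but this number is exactly where the content of the theorem lies. In the paper this is Theorem \ref{Theorem Aj calculation} together with Lemma \ref{Lemma rg for O-n over Pm} and Lemma \ref{Lemma aN calculation}: the torus-fixed loci are not isolated points but families of broken sections indexed by pairs of fixed points $q_i\in\P^a$, $q_j\in\P^{n-a-1}$ (so one must also account for the moving deformations $T_{q_i}\P^a\oplus T_{q_j}\P^{n-a-1}$ coming from the incidence conditions), and the obstruction contribution requires the normalization sequence at the node plus Serre duality, giving $H^0(\Sigma,K_\Sigma\otimes\mathcal{O}_\Sigma(n))^{\vee}$ with weights $A\alpha_i+B\alpha_j$, $A+B=n$, $A,B\geq 1$. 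Summing the resulting rational functions and extracting the integer produces $A_a=n^2\tau_{a,n}$ with $\sum_a\tau_{a,n}=n^{n-1}$ (Definition \ref{Definition tau a n}), and only after the bookkeeping of Lemma \ref{Lemma aN calculation} (the interplay of the $-n$ superdiagonal, the $-n$ rescaling from perturbing the cycles off the zero section, and the change of basis $c_Q=-n\omega_Q$) does the coefficient $n^n$ in $\omega_Q^{1+m}+n^nt\,\omega_Q^n$ emerge. None of this computation, nor any verification that the answer is $n^n$ rather than, say, $n^{n-1}$ or $n^{n+1}$, appears in your proposal, so as written the main row of the table is not established.
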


\subsection{The aspherical case: $\mathbf{\omega_B(\pi_2(B))=0}$}
\label{Subsection Aspherical case}
\strut\\ \indent
Negative line bundles satisfying $\omega_B(\pi_2(B))=0$ have been studied by Oancea in his Ph.D. thesis (see \cite{Oancea}). This involves a difficult construction of a Leray-Serre spectral sequence for symplectic cohomology, which immediately collapses for negative line bundles since fibres have $SH^*(\C)=0$, and so $SH^*(M)=0$. Observe that Corollary \ref{Corollary SH is zero if rg is nilpotent} gives a new proof of this result: when $\omega_B(\pi_2(B))=0$ then $\omega(\pi_2(M))=0$ so quantum cup product on $M$ is ordinary cup product.

Because of this vanishing result, Oancea conjectured that vanishing should hold for any negative line bundle even without the condition $\omega_B(\pi_2(B))=0$.

Corollary \ref{Corollary Intro O-1 case} shows this conjecture is not true. It also shows there cannot be a spectral sequence $E_2^{p,q}=QH^p(B,\mathcal{SH}^q(\C))$ converging to $SH^*(M)$. So $\omega(\pi_2(M))=0$ is more than a technical assumption, which is surprising in Floer theory. 

We also have to point out that the assumption $\omega_B(\pi_2(B))=0$ is extremely restrictive: it excludes all simply connected $B$, and it excludes any complex variety $B$ which contains a holomorphic $\P^1$. However it holds for surfaces of genus $\geq 1$.

\subsection{The Calabi-Yau type case: $\mathbf{c_1(TM)(\pi_2(M))=0}$}
\label{Subsection CY type case}
\begin{theorem}\label{Theorem c1=0 implies SH=0}
 If $c_1(TM)(\pi_2(M))=0$, then $SH^*(M)=0$.
\end{theorem}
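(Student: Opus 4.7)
The plan is to invoke Corollary \ref{Corollary SH is zero if rg is nilpotent}, which reduces the theorem to showing that $\pi_M^*c_1(L)$ is nilpotent in $QH^*(M)$. This will follow from a pure degree argument, once the Calabi--Yau hypothesis is seen to force $QH^*(M)$ to be $\Z$-graded with bounded support.

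First I would observe that $M$ deformation retracts to the zero section $B$ along the fibres of $L$, so $H^*(M)\cong H^*(B)$ is concentrated in the finite range $0,1,\ldots,2\dim_{\C}B$. Next, on $QH^*(M)=H^*(M)\otimes\Lambda$ the Novikov monomial $t^A$ indexed by a class $A$ coming from a pseudo-holomorphic sphere carries degree $2c_1(TM)(A)$. Since only such spherical classes enter the quantum product via GW invariants, the hypothesis $c_1(TM)(\pi_2(M))=0$ forces every Novikov generator actually occurring in $QH^*(M)$ to have degree $0$. Hence the total $\Z$-grading on $QH^*(M)$ is supported in the same finite range $0,\ldots,2\dim_{\C}B$ as $H^*(M)$.

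The endomorphism $r\co QH^*(M)\to QH^{*+2}(M)$ is multiplication by $\pi_M^*c_1(L)\in QH^2(M)$, so its $k$-fold iterate satisfies $r^k(1)=(\pi_M^*c_1(L))^{*k}\in QH^{2k}(M)$. For $k>\dim_{\C}B$ the target vanishes by the support bound, so $(\pi_M^*c_1(L))^{*k}=0$. Thus $\pi_M^*c_1(L)$ is nilpotent in $QH^*(M)$, and Corollary \ref{Corollary SH is zero if rg is nilpotent} then gives $SH^*(M)=0$.

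The only delicate point is to confirm that the Novikov variables entering $QH^*(M)$ genuinely have degree zero; equivalently, that the Calabi--Yau condition on $\pi_2(M)$, rather than on all of $H_2(M;\Z)$, suffices to $\Z$-grade the quantum product. This is the main thing to pin down carefully against the grading conventions for $QH^*(M)$ established earlier, but it is automatic from the fact that only holomorphic spheres contribute to quantum corrections, so the grading is controlled by $c_1(TM)$ restricted to spherical classes alone.
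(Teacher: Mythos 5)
Your argument is correct and is essentially the paper's own first proof: under $c_1(TM)(\pi_2(M))=0$ the Novikov ring $\Lambda$ (graded by $2c_1$ on spherical classes) sits in degree $0$, so the quantum powers of $\pi_M^*c_1(L)$ are forced into cohomological degrees beyond $2\dim_{\C}B$ and vanish, and Corollary \ref{Corollary SH is zero if rg is nilpotent} gives $SH^*(M)=0$. (The paper also gives a second, independent proof via the $2$-periodicity automorphism $\mathcal{S}$ together with the bound $\mathrm{rank}_{\Lambda}SH^*(M)\leq \mathrm{rank}_{\Lambda}QH^*(M)$, which does not use the identification of $r$ with quantum product by $c_1(L)$.)
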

\begin{proof}[Proof 1] $\Lambda$ has grading $0$, so $(\pi_M^*c_1(L))_Q^{\dim_{\C}B+1} \in H^{2\dim_{\C}B+2}(M)\otimes \Lambda=0$.
\end{proof}
\begin{proof}[Proof 2]
 $SH^*(M)$ is $\Z$-graded. Rotation in the fibres induces $\mathcal{S}:HF^*(H_i) \stackrel{\sim}{\to} HF^{*+2}(H_{i-1})$. In the direct limit,
$\mathcal{S}: SH^*(M) \to SH^{*+2}(M)$ is an automorphism.
So $SH^*(M)$ is $2$-periodic, so it is either $0$ or $\infty$-dimensional. But $\mathrm{rank}_{\Lambda} HF^*(H_i) = \mathrm{rank}_{\Lambda} QH^*(M)$, so $\mathrm{rank}_{\Lambda} SH^*(M)\leq \mathrm{rank}_{\Lambda} QH^*(M)$, so $SH^*(M)=0$.
\end{proof}

For example, this applies to $\mathcal{O}(-(1+m))\to \P^m$. More generally, let $B$ be a \emph{Fano variety}: a closed complex manifold with ample anticanonical bundle $\mathcal{K}^{\vee}$, where $\mathcal{K}=\Lambda^{top}_{\C} T^*B$. Since $c_1(TB)=-c_1(\mathcal{K})$, and in general 
$c_1(TM)\equiv c_1(TB)+c_1(L)$ (via the identification $H^2(M)\cong H^2(B)$), we deduce:

\begin{example*}
 Let $L=$ canonical bundle $\mathcal{K} \to \textrm{Fano variety }B$. Then $SH^*(M)=0$.
\end{example*}

\begin{example*}
 Hyperk\"{a}hler ALE spaces $($minimal resolutions of simple singularities $\C^2/\Gamma)$ are not total spaces of negative line bundles $($except for $T^*S^2$ which is $\mathcal{O}(-2)\to \P^1)$, but they admit a circle action $g$ similar to rotation in the fibres for $\omega=\omega_I$ $($see \cite{Ritter2}$)$. Since $c_1(\textrm{ALE space})=0$, we deduce $SH^*(\textrm{ALE space},\omega_I)=0$.
\end{example*}

\subsection{The role of weak$^+$ monotonicity}
\label{Subsection Intro role of weak+ monotonicity}
\strut\\ \indent
%
Because of \ref{Subsection Aspherical case} and \ref{Subsection CY type case}, one is really interested in the case $\omega_B(\pi_2(B))\neq 0$ and $c_1(TM)(\pi_2(M))\neq 0$. This causes two difficulties in Floer homology: (1) the action functional which defines the chain differential becomes multivalued and bubbling phenomena can occur; (2) Floer homology is only $\Z/2N$-graded where
$$N\Z=c_1(TM)(\pi_2(M))=(c_1(TB)+c_1(L))(\pi_2(B)).$$
A standard machinery due to Hofer-Salamon \cite{Hofer-Salamon} ensures Floer homology can be defined if we assume that $M$ is \textbf{weak}, meaning \textbf{at least one of}:

\begin{enumerate}
\item $\omega(\pi_2(M))=0$ or $c_1(TM)(\pi_2(M))=0$,

 \item $M$ is monotone: $\exists\lambda > 0$ such that $\omega(A) = \lambda c_1(TM)(A)$ for all $A\in \pi_2(M)$,

\item the minimal Chern number $|N|\geq \dim_{\C} B$.
\end{enumerate}

\subsection{The rank of $\mathbf{SH^*(M)}$}
\begin{corollary}\label{Corollary Intro condition on ranks} For weak $M$,
\begin{enumerate}
 \item $\mathrm{rank}_{\Lambda}\, SH^*(M) < \mathrm{rank}\, H^*(B)$;
 \item $\mathrm{rank}_{\Lambda}\, SH^*(M)$ is a multiple of $|N|$.
 \item if $|N| \geq \mathrm{rank}\, H^*(B)$ then $SH^*(M)=0$. 
\end{enumerate}
\end{corollary}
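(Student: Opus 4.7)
The plan is to deploy Theorem \ref{Theorem Intro r_g on QH and SH} together with two structural inputs already in hand: the Corollary immediately following it (that $1\notin \mathrm{im}\, r$) and the degree $2$ automorphism $\mathcal{R}$ on $SH^*(M)$. Throughout I use that $\mathrm{rank}_{\Lambda}\, QH^*(M) = \mathrm{rank}\, H^*(B)$, since $QH^*(M)=H^*(M)\otimes \Lambda$ additively and $B\hookrightarrow M$ is a homotopy equivalence via the zero section.

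For (1), Theorem \ref{Theorem Intro r_g on QH and SH} gives $SH^*(M)\cong QH^*(M)/\ker r^k$, so it suffices to show $\ker r^k\neq 0$. The stated Corollary says $1\notin \mathrm{im}\, r$, so $r$ is not surjective; working over the Novikov field $\Lambda$ in finite $\Lambda$-dimension, non-surjectivity is equivalent to non-injectivity, hence $\ker r\neq 0$ and a fortiori $\ker r^k\neq 0$. The quotient therefore has strictly smaller $\Lambda$-rank than $QH^*(M)$, yielding (1).

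For (2), I would use that $SH^*(M)$ is $\Z/2N$-graded and that $\mathcal{R}: SH^*(M)\to SH^{*+2}(M)$ is an automorphism. Iterating $\mathcal{R}$ identifies all graded pieces lying in a single orbit of the shift-by-$2$ action on $\Z/2N$. For $N\neq 0$ these orbits are the even and the odd residues mod $2N$, each of cardinality $|N|$, so
\[
\mathrm{rank}_{\Lambda}\, SH^*(M)=|N|\cdot a+|N|\cdot b
\]
for some nonnegative integers $a,b$, which is a multiple of $|N|$. The case $N=0$ is the boundary where this argument does not apply directly: here Theorem \ref{Theorem c1=0 implies SH=0} already gives $SH^*(M)=0$, which is consistent with the statement (an integer is a multiple of $0$ only when it is $0$).

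Claim (3) is a formal consequence of (1) and (2): $\mathrm{rank}_{\Lambda}\, SH^*(M)$ is a nonnegative multiple of $|N|$ that is strictly less than $\mathrm{rank}\, H^*(B)$; under the hypothesis $|N|\geq \mathrm{rank}\, H^*(B)$, the only such multiple is $0$. The one genuine subtlety is the $N=0$ case of (2), which is why one must separately cite the Calabi--Yau type vanishing; apart from that, the argument is pure linear algebra once Theorem \ref{Theorem Intro r_g on QH and SH} and the existence of $\mathcal{R}$ are available.
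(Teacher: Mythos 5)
Your proposal is correct and follows essentially the same route as the paper: (1) from Theorem \ref{Theorem Intro r_g on QH and SH} plus $\ker r\neq 0$ (the paper gets this from the unnumbered corollary's observation that $1\notin\mathrm{im}\,r$, which is the same finite-dimensional linear algebra you invoke), (2) from the $\Z/2N$-grading and the degree $2$ automorphism $\mathcal{R}$ identifying the even pieces and the odd pieces among themselves, and (3) as the formal consequence. Your separate treatment of $N=0$ via Theorem \ref{Theorem c1=0 implies SH=0} is a minor point of extra care consistent with the paper, not a different argument.
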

\begin{proof}
 (1): follows by Theorem \ref{Theorem Intro r_g on QH and SH}. (2): $SH^*(M)$ is $\Z/2N$-graded, $\Lambda$ is generated by elements in degrees $\in 2N\Z$ so they preserve the grading of $SH^*$. So the automorphism $\mathcal{R}$ induces 
$
SH^0(M) \cong SH^2(M) \cong \cdots SH^{2|N|-2}(M).
$
Similarly for odd pieces. So $\mathrm{rank}_{\Lambda} SH^*(M) = |N|\cdot (d_0+d_1)$, $d_i\!=\!\mathrm{rank}_{\Lambda} SH^i(M)$. (3) follows.
\end{proof}

\begin{example*}
 $SH^*(M)=0$ for $\mathcal{O}(-n) \to \P^m$ if $n\geq 2m+2$ since $|N|\geq 1+m$.
\end{example*}

\subsection{Kodaira Vanishing for $\mathbf{SH^*(M)}$}
\label{Subsection Kodaira Vanishing Intro}
\begin{theorem}\label{Theorem Kodaira Vanishing theorem statement}
 If the line bundle $L \to B$ is sufficiently negative then quantum cup product on $M$ is ordinary cup product, so $SH^*(M)=0$ by Corollary \ref{Corollary SH is zero if rg is nilpotent}.
\end{theorem}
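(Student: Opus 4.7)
The plan is to establish the stronger assertion of the theorem---that for $L$ sufficiently negative, the quantum cup product on $M$ coincides with the ordinary cup product---after which Corollary \ref{Corollary SH is zero if rg is nilpotent} gives $SH^*(M)=0$ for free: the zero-section retraction identifies $H^*(M)\cong H^*(B)$ as a finite-dimensional graded ring in which the positive-degree class $\pi_M^*c_1(L)=-n\pi_M^*[\omega_B]$ is nilpotent, and when $*_Q=\cup$ this nilpotence persists in $QH^*(M)=H^*(M)\otimes\Lambda$.

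To prove $*_Q=\cup$, I would first localize holomorphic spheres onto the zero section. Choose $J$ on $M$ that makes $\pi_M\co M\to B$ holomorphic. For any non-constant $J$-holomorphic sphere $u\co S^2\to M$, either $v=\pi_M\circ u$ is constant and $u$ takes values in a fibre $\cong\C$ (impossible by Liouville), or $v$ represents a non-zero class $A\in H_2(B)$ with $\omega_B(A)>0$; in the latter case $u$ is a holomorphic section of $v^*L$ over $\P^1$, which has degree $c_1(L)(A)=-n\omega_B(A)<0$ and hence only the zero section. So every non-constant $J$-holomorphic sphere in $M$ lies in the zero section $B$. By the Coates-Givental/Lee twisted Gromov-Witten formalism, this identifies three-point GW invariants of $M$ in an effective class $A\in H_2(B)\cong H_2(M)$ with integrals
\[
\langle\alpha_1,\alpha_2,\alpha_3\rangle_{M,A}=\int_{[\overline{\mathcal{M}}_{0,3}(B,A)]^{\mathrm{vir}}}\prod\mathrm{ev}_i^*\alpha_i\cdot e(\mathcal{E}_A),
\]
with obstruction bundle $\mathcal{E}_A$ of fibre $H^1(S^2,u^*L)$ and complex rank $n\omega_B(A)-1$. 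Matching degrees, non-vanishing forces $\sum|\alpha_i|=2\dim_\C M+2c_1(TM)(A)$, and therefore $c_1(TM)(A)\geq-\dim_\C M$.

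Next I would produce a uniform estimate on the cone $\bar C_{\mathrm{eff}}\subset H_2(B,\R)$ of $J_B$-effective classes: since $H_2(B,\R)$ is finite-dimensional and $\omega_B>0$ on $\bar C_{\mathrm{eff}}$ off the origin, the $\omega_B$-unit slice of $\bar C_{\mathrm{eff}}$ is compact, so $c_1(TB)\leq C\omega_B$ there for some $C=C(B,J_B)$; Gromov compactness supplies a positive lower bound $\hbar=\hbar(B,J_B)$ for $\omega_B$ on non-zero effective classes. Thus, for every such $A$,
\[
c_1(TM)(A)=c_1(TB)(A)-n\omega_B(A)\leq(C-n)\omega_B(A)\leq(C-n)\hbar,
\]
which is strictly less than $-\dim_\C M$ as soon as $n>C+\dim_\C M/\hbar$, contradicting the dimension inequality of the previous paragraph. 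Hence every quantum correction to the cup product vanishes and $*_Q=\cup$, and the theorem follows from Corollary \ref{Corollary SH is zero if rg is nilpotent}.

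The hard part will be rigorously reducing the GW theory of the \emph{non-compact} $M$ to twisted GW invariants of $(B,L)$ through the non-generic $\pi_M$-holomorphic $J$, with the virtual bundle $\mathcal{E}_A$ interpreted correctly on the boundary strata of $\overline{\mathcal{M}}_{0,3}(B,A)$, all within the weak$^+$ monotonicity framework of the paper. Once this identification and the effective-cone bound are in hand, the dimension count closes the argument.
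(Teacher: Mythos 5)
Your argument is essentially the paper's Proof~1 of Theorem \ref{Theorem Kodaira Vanishing theorem statement}: for a $\pi_M$-holomorphic $J$, any non-constant holomorphic sphere has non-constant projection to $B$ (constant projection is excluded by the maximum principle / Liouville in the fibre $\C$), so $c_1(TM)(\beta)=c_1(TB)(\pi_M\circ u)-n\,\omega_B(\pi_M\circ u)$ becomes arbitrarily negative for $n\gg 0$, and then the GW dimension formula leaves no room for insertions, so only constants contribute and $*_Q=\cup$; Corollary \ref{Corollary SH is zero if rg is nilpotent} finishes. Two remarks on where you diverge. First, the detour through the Coates--Givental/Lee twisted GW identification (your flagged ``hard part'') is unnecessary: the inequality $\sum|\alpha_i|=2\dim_\C M+2c_1(TM)(A)\geq 0$ is just the dimension constraint for the GW pseudocycles of $M$ itself, as set up in Section \ref{Section Gromov-Witten invariants} under weak$^+$ monotonicity, so you never need to re-express the invariants on $\overline{\mathcal{M}}_{0,3}(B,A)$ with an obstruction bundle; what you do need from the localization to the zero section is only that contributing classes come from $H_2(B)$ with $\omega_B>0$, which your first paragraph already gives (and which is all the paper uses). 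Second, your uniformity step is a genuine improvement in explicitness over the paper's terse ``$n\gg 0$'' (Proof~2 encodes it via $N_{\mathrm{eff}}$ growing with $n$), but the justification as written is slightly circular: compactness of the $\omega_B$-unit slice of $\bar C_{\mathrm{eff}}$ is equivalent to the positivity you assert, not a consequence of finite-dimensionality alone. The clean route is pointwise: choose a closed $2$-form $\eta$ representing $c_1(TB)$ and bound $\eta(e,J_Be)\leq C\,\omega_B(e,J_Be)$ over the compact space of $J_B$-complex tangent lines, which gives $c_1(TB)(A)\leq C\,\omega_B(A)$ for every class of a (possibly reducible) $J_B$-holomorphic sphere, while Gromov compactness gives $\omega_B(A)\geq\hbar>0$; with these two facts your estimate $c_1(TM)(A)\leq(C-n)\hbar$ and the conclusion are correct.
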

\begin{proof}[Proof 1]
 $\omega*\omega^j = \sum \mathrm{GW}^M_{0,3,\beta}(\mathrm{PD}(\omega),\mathrm{PD}(\omega^j),2\ell\textrm{-cycle})\; [2\ell\textrm{-form}]\otimes \beta$, summing over $\ell=1+j-c_1(TM)(\beta)$ and over appropriate forms/cycles. Now
$$
c_1(TM)(\beta)=c_1(TB)(\pi_M\circ u) + c_1(L)(\pi_M\circ u).
$$
But now observe that $\pi_M\circ u$ is holomorphic. \emph{Technical remark: to achieve regularity for $J$ one can perturb it whilst keeping it lower triangular $J=\left(\begin{smallmatrix} J_B & 0 \\ * & i \end{smallmatrix}\right)$ in the splitting of $TM=TB\oplus L$ determined by a Hermitian connection (see Section \ref{Subsection Definition and properties neg line bdle}), which ensures $\pi_M: M \to B$ is $(J,J_B)$-holomorphic. Alternatively one can work with the original (possibly non-regular) $J=J_B\oplus i$, which ensures $\pi_M$ is holomorphic, and then argue by a limiting compactness argument that only the constants contribute.}

Since $\pi_M\circ u$ is holomorphic, $c_1(L)(\pi_M\circ u) = -n\omega_B(\pi_M\circ u)<0$ unless $u$ is constant (if $\pi_M\circ u$ is constant then by the maximum principle $u$ is constant). So for non-constant $u$: $n\gg 0$ implies $c_1(TM)(\beta)\ll 0$ so $\ell\gg 0$ so there are no $2\ell$-forms. So only constants contribute, which yield ordinary cup product.
\end{proof}
\begin{proof}[Proof 2] Let $N_{\mathrm{eff}}\Z \!=\! c_1(TM)\{\textrm{pseudo-holomorphic }v:\P^1\!\to\! B\!\subset\! M \}$, and $\Lambda_{\mathrm{eff}}\subset \Lambda$ the 
subring generated by ``effective'' $\pi_2(M)$-classes i.e. arising as such $[v]$. Now $r(1)$ and quantum product involve (forms $\otimes$ $[v]$), so restrict $r\in \mathrm{End}_{\Lambda_{\mathrm{eff}}}(H^*(M)\otimes {\Lambda_{\mathrm{eff}}})$.
$N_{\mathrm{eff}}$ grows proportionally to $n$ (since $\omega_B(v)\!>\!0$ unless $v$ is constant). So for $n\gg 0$, $N_{\mathrm{eff}}\geq \max\,(\mathrm{rank} \,H^*(M),\dim_{\C}B)$, so the characteristic polynomial of $r$ yields a linear dependence among $r(1),r(1)^2,\ldots,r(1)^{|N_{\mathrm{eff}}|}$, but these lie in different degrees ($\Lambda_{\mathrm{eff}}$ is in degrees $2N_{\mathrm{eff}}\Z$), so some $r(1)^k=0$, so $SH^*(M)=0$ by Corollary \ref{Corollary SH is zero if rg is nilpotent}.
\end{proof}

\begin{examples*}
 For a K3 surface $B$, $\omega_B\in H^2(B;\Z)$, and $n \geq 24$ then $SH^*(M)=0$.\\
%
%
 For $L=\mathcal{K}^{k+1} \to \mathrm{Fano}\, B$ with $k\geq \mathrm{max}(\mathrm{rank}\, H^*(B), \dim_{\C}B)$,  $SH^*(M)=0$.
\end{examples*}

\begin{remark*}
 The vanishing of symplectic cohomology for very negative curvature is not a consequence of a Hamiltonian displaceability property \emph{(}compare \cite{Ritter3}\emph{)}. Indeed the zero section is never displaceable because $c_1(L) = -n[\omega_B]\neq 0 \in H^2(B)$.
\end{remark*}

\begin{corollary}\label{Corollary Kodaira vanishing full}
 If $E \to B$ is any line bundle, and $L \to B$ is any negative line bundle, then $M_k=\mathrm{Tot}(E \otimes L^{\otimes k} \to B)$ is weak for $k \gg 0$ and
$
SH^*(M_k) = 0$ for $k \gg 0.
$
\end{corollary}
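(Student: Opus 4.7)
The strategy is to reduce to Theorem \ref{Theorem Kodaira Vanishing theorem statement} by showing that $E \otimes L^{\otimes k}$ becomes an arbitrarily negative line bundle as $k \to \infty$. First I verify that $E \otimes L^{\otimes k}$ is a negative line bundle for $k$ large. Compute
\[
c_1(E \otimes L^{\otimes k}) = c_1(E) - kn[\omega_B] = -kn\bigl([\omega_B] - \tfrac{1}{kn}c_1(E)\bigr),
\]
so the class $\alpha_k := [\omega_B] - \tfrac{1}{kn}c_1(E)$ lies in an arbitrarily small neighbourhood of $[\omega_B]$ in $H^2(B;\R)$. Since the symplectic cone of $B$ is open in $H^2(B;\R)$, $\alpha_k$ is represented by a symplectic form $\omega_{B,k}$ obtained by a small perturbation of $\omega_B$, and with respect to $\omega_{B,k}$ we have $c_1(E \otimes L^{\otimes k}) = -n_k[\omega_{B,k}]$ with negativity index $n_k = kn \to \infty$. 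Hence $M_k$ carries the natural conical-at-infinity symplectic structure from the introduction.

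Next I verify weak$^+$ monotonicity for $M_k$. Using $\pi_2(M_k) = \pi_2(B)$,
\[
c_1(TM_k)(\beta) = (c_1(TB)+c_1(E))(\beta) - kn\,\omega_B(\beta).
\]
For any $\beta$ represented by a non-constant pseudo-holomorphic sphere in $B$, $\omega_B(\beta)>0$, so $|c_1(TM_k)(\beta)| \to \infty$ linearly in $k$. In the generic case where $\omega_B$ is non-zero on every generator of $\pi_2(B)$ this forces $|N_k| \geq \dim_{\C}B$ for $k \gg 0$, which is condition (3) of weak$^+$. The purely aspherical subcase $\omega_B|_{\pi_2(B)}\equiv 0$ falls under condition (1). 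In the intermediate case, classes annihilated by $\omega_B$ are not represented by non-constant holomorphic spheres, so they do not participate in Gromov compactness; the argument of Proof~2 of Theorem~\ref{Theorem Kodaira Vanishing theorem statement} then goes through using the effective minimal Chern number, which still diverges with $k$.

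Finally I apply Theorem~\ref{Theorem Kodaira Vanishing theorem statement} to $E \otimes L^{\otimes k} \to B$: for $k \gg 0$ the negativity $n_k$ exceeds the (fixed) thresholds $\mathrm{rank}\,H^*(B)$ and $\dim_{\C}B$, so the theorem applies, quantum cup product on $M_k$ reduces to ordinary cup product, and Corollary~\ref{Corollary SH is zero if rg is nilpotent} gives $SH^*(M_k)=0$. The main obstacle I expect is the weak$^+$ verification in the intermediate case, since partial degeneracy of $\omega_B$ on $\pi_2(B)$ can keep the standard minimal Chern number bounded even as $n_k\to\infty$; one has to argue that Floer theory remains well-defined via an effective Chern number that actually controls the bubbling phenomena.
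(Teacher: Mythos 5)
Your proposal is correct and takes essentially the same route as the paper: exhibit $E\otimes L^{\otimes k}$ as a negative line bundle whose negativity constant $n_k=kn$ diverges, then invoke Theorem \ref{Theorem Kodaira Vanishing theorem statement} and Corollary \ref{Corollary SH is zero if rg is nilpotent}, handling the weak$^+$ issue through the effective Chern number exactly as the paper's remark following the corollary does. The only difference is cosmetic: the paper verifies negativity via the induced Hermitian connection with curvature $\mathcal{F}^E+k\mathcal{F}^L$ and Lemma \ref{Lemma negative curvature is negativity} (which directly supplies the metric/connection data used to build the conical symplectic form on $M_k$), whereas you perturb $[\omega_B]$ cohomologically using openness of nondegeneracy — equivalent, though to construct $\omega_{M_k}$ you would feed your class identity back through Lemma \ref{Lemma negative curvature is negativity} anyway. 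One small slip: the claim that $\omega_B$ being nonzero on every generator of $\pi_2(B)$ forces $|N_k|\geq \dim_{\C}B$ is not true in general (a difference of classes with equal $\omega_B$-areas has $k$-independent Chern number), but this is harmless since, as you and the paper's remark both observe, only the effective classes and $N_{\mathrm{eff}}$ matter.
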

\begin{proof}
Hermitian connections on $L,E$ induce one on $E\otimes L^{\otimes k}$ with curvature $\mathcal{F}^{E} + k \mathcal{F}^{L}$. Then $\frac{1}{2\pi i} k \mathcal{F}^{L}(v,J_B v) + \frac{1}{2\pi i} \mathcal{F}^{E}(v,J_B v)<0$ for $k\gg 0$ by making the first term dominate (see Lemma \ref{Lemma negative curvature is negativity}: we pick a suitable connection on $L$). Hence $E\otimes L^{\otimes k}$ is a negative line bundle with $n\gg 0$ if $k\gg 0$ (see the comment after Lemma \ref{Lemma negative curvature is negativity}). 
\end{proof}

\begin{remark*}
 Strictly speaking, weakness may not be satisfied by $M_k$ in case (3) of \ref{Subsection Intro role of weak+ monotonicity} if $|N|<\dim_{\C}B$. But since $|N_{\mathrm{eff}}|\geq \dim_{\C}B$, all Floer theoretic issues such as bubbling can be avoided: only effective $\pi_2(M)$-classes are involved in these issues.
\end{remark*}

\subsection{Negative vector bundles and Serre Vanishing}
Complex vector bundles $E \to B$ are negative if a suitable negative curvature condition holds (Definition \ref{Definition negative vector bundle}). The automorphism $\mathcal{R}:SH^*(M) \to SH^{*+2\mathrm{rank_{\C}E}}(M)$ implies $2\mathrm{rank_{\C}E}$-periodicity in ranks so
Corollary \ref{Corollary Intro condition on ranks} becomes: for weak $M=\mathrm{Tot}(E \to B)$,
\begin{enumerate}
 \item $\mathrm{rank}_{\Lambda}\, SH^*(M) < \mathrm{rank}\, H^*(B)$;

 \item $\mathrm{rank}_{\Lambda}\, SH^*(M)$ is a multiple of $|N|/\mathrm{rank}_{\C} E$, the multiple is
$d_0 + d_1 + \cdots + d_{2\mathrm{rank}_{\C} E-1}$
where $d_i = \mathrm{rank}_{\Lambda} SH^i(M)$.

 \item if $|N| \geq \mathrm{rank}_{\C} E\cdot \mathrm{rank}\, H^*(B)$ then $SH^*(M)=0$. 

\end{enumerate}
Theorem \ref{Theorem c1=0 implies SH=0} and Corollary \ref{Corollary SH is zero if rg is nilpotent} (for $\pi_M^*c_{\mathrm{rank}_{\C}E}(E)$) hold for the same reasons.

\begin{theorem}\label{Theorem Serre vanishing full}
Let $E\to B$ be any complex vector bundle, and $L\to B$ a negative line bundle. Then for $k\gg 0$: $M_k=\mathrm{Tot}(E\otimes L^{\otimes k}\to B)$ is a negative vector bundle and is weak. Also $SH^*(M_k)=0$ for $k\gg 0$.
\end{theorem}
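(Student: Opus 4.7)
The plan is to mimic the two-step template of Corollary \ref{Corollary Kodaira vanishing full} (Kodaira vanishing) but now for vector bundles, using the vector-bundle analogue of Theorem \ref{Theorem Intro r_g on QH and SH} stated earlier in the introduction. So I would split the proof into three parts: (a) for $k\gg 0$, the bundle $E\otimes L^{\otimes k}\to B$ is negative in the sense of Definition \ref{Definition negative vector bundle}; (b) for $k\gg 0$, the weakness condition of \ref{Subsection Intro role of weak+ monotonicity} holds (or, at worst, holds on all effective classes); (c) for $k\gg 0$, the top Chern class $\pi_{M_k}^*c_{\mathrm{rank}_{\C}E}(E\otimes L^{\otimes k})$ becomes nilpotent in $QH^*(M_k)$, and then apply the vector-bundle analogue of Corollary \ref{Corollary SH is zero if rg is nilpotent}.

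For (a) I would argue exactly as in the proof of Corollary \ref{Corollary Kodaira vanishing full}: pick Hermitian connections on $E$ and $L$ (the latter adapted via Lemma \ref{Lemma negative curvature is negativity}), and observe that the induced connection on $E\otimes L^{\otimes k}$ has curvature $\mathcal{F}^E\otimes \mathrm{Id} + k\,\mathrm{Id}\otimes \mathcal{F}^L$. The fiberwise Hermitian pairing $\langle \mathcal{F}(v,J_Bv)\xi,\xi\rangle/(2\pi i)$, which governs negativity of a vector bundle, is then bounded above by $k$ times a strictly negative number plus a bounded error, so it is strictly negative uniformly on the unit sphere bundle of $TB$ once $k\gg 0$. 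Hence $E\otimes L^{\otimes k}$ satisfies Definition \ref{Definition negative vector bundle}, and the induced real $n_k$ for the factor of $L$ inside the determinant grows linearly in $k$.

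For (b) the same mechanism that made $N_{\mathrm{eff}}$ grow in Proof 2 of Theorem \ref{Theorem Kodaira Vanishing theorem statement} applies here: any pseudo-holomorphic sphere in $M_k$ projects to $v\co \P^1\to B$ with $\omega_B(v)>0$ unless constant, and
\[
c_1(TM_k)(\beta)=c_1(TB)(v)+c_1(E)(v)+(\mathrm{rank}_{\C}E)\cdot k\cdot c_1(L)(v),
\]
so $|N_{\mathrm{eff}}|\to\infty$ with $k$, exceeding $\dim_{\C}B$. Strict weakness in the sense of \ref{Subsection Intro role of weak+ monotonicity}(3) may fail if $|N|<\dim_{\C}B$ using non-effective classes, but as in the Remark after Corollary \ref{Corollary Kodaira vanishing full}, only effective classes enter the Floer theory here, so all bubbling/compactness issues are controlled.

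For (c) I would copy Proof 1 of Theorem \ref{Theorem Kodaira Vanishing theorem statement}: for any non-constant pseudo-holomorphic $u\co \P^1\to M_k$, the formula above forces $c_1(TM_k)(\beta)\ll 0$ for $k\gg 0$, so in
\[
\alpha* \alpha'=\sum_{\beta}\mathrm{GW}^{M_k}_{0,3,\beta}(\mathrm{PD}(\alpha),\mathrm{PD}(\alpha'),\text{cycle})\otimes \beta
\]
the balancing index $\ell$ exceeds $\dim_{\C}B$, killing all non-constant contributions. So the quantum product on $M_k$ reduces to ordinary cup product. Then $\pi_{M_k}^*c_{\mathrm{rank}_{\C}E}(E\otimes L^{\otimes k})$ lies in positive degree $2\,\mathrm{rank}_{\C}E$ of the ordinary $H^*(M_k)\cong H^*(B)$, hence is nilpotent for purely dimensional reasons (any positive-degree class in $H^*(B)$ is), and the vector-bundle analogue of Corollary \ref{Corollary SH is zero if rg is nilpotent} gives $SH^*(M_k)=0$.

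The main obstacle I expect is in (a): the rank of $E$ does not change under twisting, so one cannot simply invoke the cheap vanishing criterion $\mathrm{rank}_{\C}E>\dim_{\C}B$ of the vector-bundle Theorem, and one really does have to verify the correct curvature-negativity condition for the tensor product $E\otimes L^{\otimes k}$. Provided the Hermitian curvature criterion of Definition \ref{Definition negative vector bundle} is the expected one (fiberwise positivity of $-\langle \mathcal{F}(\cdot,J\cdot)\xi,\xi\rangle$ along non-zero tangent directions), the dominant $k\mathcal{F}^L$ term settles matters; the bookkeeping to make the curvature estimate uniform on $B$ is the only genuinely technical step. Everything else then follows formally from the already-established Theorem for negative vector bundles and its Corollary.
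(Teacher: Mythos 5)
Your proposal is correct and follows the paper's own template: the paper's proof is exactly ``argue as in the Kodaira vanishing section, using $c_1(TM_k)\equiv c_1(TB)+c_1(E)+k\,\mathrm{rank}_{\C}E\cdot c_1(L)$'', with step (a) being the same curvature computation as in Corollary \ref{Corollary Kodaira vanishing full} adapted to the Hermitian-endomorphism negativity of Definition \ref{Definition negative vector bundle}. The one place where you diverge is the final vanishing mechanism: the paper invokes Proof 2 of Theorem \ref{Theorem Kodaira Vanishing theorem statement} (the characteristic polynomial of $r$ over the effective subring $\Lambda_{\mathrm{eff}}$, now requiring $|N_{\mathrm{eff}}|\geq \mathrm{rank}_{\C}E\cdot\mathrm{rank}\,H^*(B)$), whereas you run Proof 1 (for $k\gg 0$ all non-constant effective classes have $c_1(TM_k)\ll 0$, so the quantum product reduces to the ordinary one, and then the positive-degree class $\pi_{M_k}^*c_{\mathrm{rank}_{\C}E}(E\otimes L^{\otimes k})$ is nilpotent, giving vanishing by the vector-bundle analogue of Corollary \ref{Corollary SH is zero if rg is nilpotent}); both routes are legitimate and both rest on the same uniform input, namely that $\omega_B$-energies of non-constant holomorphic spheres are bounded below so that the twist by $L^{\otimes k}$ pushes $c_1(TM_k)$ arbitrarily negative on all effective classes, so nothing is lost or gained beyond Proof 2 packaging the conclusion without having to discuss which GW contributions survive. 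One small correction: the weakness threshold you quote, $|N_{\mathrm{eff}}|>\dim_{\C}B$, is the line-bundle one; since $\dim_{\C}M_k=\dim_{\C}B+\mathrm{rank}_{\C}E$, condition (3) of \ref{Subsection Intro role of weak+ monotonicity} requires $|N_{\mathrm{eff}}|\geq\dim_{\C}B+\mathrm{rank}_{\C}E-1$ (as the paper states), which is immaterial for the conclusion because $|N_{\mathrm{eff}}|$ grows linearly in $k$, but should be stated correctly.
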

\begin{proof}
The proof of Corollary \ref{Corollary Kodaira vanishing full} applies via Definition \ref{Definition negative vector bundle}. In \emph{Proof 1} of Theorem \ref{Theorem Kodaira Vanishing theorem statement} we use $c_1(TM_k) = \pi_M^*(c_1(TB) +c_1(E) + kc_1(L))$). In \emph{Proof 2}, we now need $|N_{\mathrm{eff}}|\geq \mathrm{rank}_{\C}E\cdot \mathrm{rank}\,H^*(B)$ and (for weakness) $|N_{\mathrm{eff}}| \geq \mathrm{dim}_{\C}B+\mathrm{rank}_{\C}E-1$.
\end{proof}

\subsection{The general theory: a representation of $\mathbf{\pi_1(\mathbf{Ham}_{\ell}(X,\omega))}$ on $\mathbf{SH^*(X)}$}
\label{Subsection Intro general theory}
\strut \indent Let $(X,\omega)$ be any symplectic manifold conical at infinity satisfying weak$^+$ monotonicity $($\ref{Subsection Conical symplectic manifolds}$)$. So $X$ has the form $\Sigma \times [1,\infty)$ at infinity, with coordinate $R\in [1,\infty)$.

Denote $\mathrm{Ham}_{\ell}(X,\omega)$ the Hamiltonian diffeomorphisms generated by Hamiltonians $K$ which are linear at infinity: $$K=\kappa R + \textrm{constant}$$ for large $R$, for some constant slope $\kappa$. 
Write $\mathrm{Ham}_{\ell \geq 0}(X,\omega),\mathrm{Ham}_{\ell > 0}(X,\omega)$ for the subsets involving only slopes $\kappa\geq 0$, $\kappa>0$ respectively.

For $g:S^1 \to \mathrm{Ham}_{\ell}(X,\omega)$, there is a group $\Gamma$ of choices of ``lifts'' $\widetilde{g}$ related to the Novikov ring $\Lambda$ $($\ref{Subsection G tildeG groups}$)$. These define an extension  
$\widetilde{\pi_1}(\mathrm{Ham}_{\ell}(X,\omega))$ of $\pi_1(\mathrm{Ham}_{\ell}(X,\omega))$.

\begin{theorem*}
 Any $g:S^1 \to \mathrm{Ham}_{\ell}(X,\omega)$ yields a $\Lambda$-module automorphism on symplectic cohomology, $\mathcal{S}_{\widetilde{g}}\in \mathrm{Aut}(SH^*(X))$, given by pair-of-pants product by an invertible element $\mathcal{S}_{\widetilde{g}}(1)\in SH^{2I(\widetilde{g})}(X)^{\times}$.
Moreover, there is a homomorphism:
$$\boxed{
\widetilde{\pi_1}(\mathrm{Ham}_{\ell}(X,\omega)) \to SH^*(X)^{\times},\;\; \widetilde{g} \mapsto \mathcal{S}_{\widetilde{g}}(1)}
$$
\end{theorem*}

\begin{theorem*}
Any $g:S^1 \to \mathrm{Ham}_{\ell \geq 0}(X,\omega)$ gives rise to $\Lambda$-module automorphisms $\mathcal{R}_{{\widetilde{g}}}=\mathcal{S}_{{\widetilde{g}}}:SH^*(X)\to SH^{*+2I(\widetilde{g})}(X)$ making the following diagram commute:
$$\boxed{
\xymatrix{ SH^*(X)  \ar@{<-}_{c^*}[d]
\ar@{->}_-{\sim}^-{\mathcal{R}_{\widetilde{g}}}[r]  & SH^{*+2I(\widetilde{g})}(X)
 \ar@{<-}^{c^*}[d] \\
QH^*(X)
\ar@{->}[r]^-{r_{\widetilde{g}}} & QH^{*+2I(\widetilde{g})}(X)}
}
$$
$r_{\widetilde{g}}$ is a count of holomorphic sections of a Hamiltonian fibration $E_g\to S^1$, it is quantum cup product by $r_{\widetilde{g}}(1)\in QH^{2I(\widetilde{g})}(X)$, and via $\psi^{\pm}$ it can be identified with $\mathcal{R}_{\widetilde{g}}=\varphi\circ \mathcal{S}_{\widetilde{g}}: HF^*(H_0)\to HF^{*+2I(\widetilde{g})}(H_0)$ where $\varphi$ is a continuation map.

Moreover, there is a homomorphism:
$$\boxed{
\widetilde{\pi_1}(\mathrm{Ham}_{\ell\geq 0}(X,\omega)) \to QH^*(X),\;\; \widetilde{g} \mapsto r_{\widetilde{g}}(1)}
$$
\end{theorem*}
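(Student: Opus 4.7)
The plan is to mirror the line-bundle argument of \S1.2 in the general setting, in four steps: (i) construct the chain-level ``rotation'' $\mathcal{S}_{\widetilde g}$ from Floer data and show it descends to an isomorphism on $SH^*(X)$; (ii) construct the clutching fibration $E_g\to \P^1$ and the algebro-geometric section count $r_{\widetilde g}$; (iii) identify $r_{\widetilde g}$ with quantum product by $r_{\widetilde g}(1)$; (iv) match the Floer composite $\varphi\circ \mathcal{S}_{\widetilde g}$ with $r_{\widetilde g}$, which yields the commutative square and the algebra-automorphism conclusion.

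For (i), given admissible Floer data $(H,J)$ on $X$, define pulled-back data $(g^*H,g^*J)$; the gauge transformation $u(s,t)\mapsto g_t^{-1}u(s,t)$ sends $1$-orbits of $H$ to those of $g^*H$ and Floer cylinders to Floer cylinders, yielding a chain isomorphism $\mathcal{S}_{\widetilde g}\colon HF^*(H,J)\to HF^{*+2I(\widetilde g)}(g^*H,g^*J)$. The choice of lift $\widetilde g$ pins down the capping-disc ambiguity, fixing the integer grading shift $2I(\widetilde g)$ and the Novikov weights. Since $g\in \mathrm{Ham}_{\ell\geq 0}$, the slope at infinity of $g^*H$ remains $\geq$ that of $H$, so $g^*H$ is admissible and $\mathcal{S}_{\widetilde g}$ is compatible with all continuation maps in the telescope defining $SH^*(X)$; hence it descends to a $\Lambda$-linear isomorphism $\mathcal{R}_{\widetilde g}=\mathcal{S}_{\widetilde g}$ on $SH^*(X)$. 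At the $HF$-level, composing with a continuation $\varphi$ back to $(H,J)$ gives the composite $\varphi\circ \mathcal{S}_{\widetilde g}$ named in the theorem.

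For (ii)--(iii), clutch two copies of $X\times D^2$ across the equator via $g_t$ to form the Hamiltonian fibration $\pi_g\colon E_g\to \P^1$; weak$^+$ monotonicity of $(X,\omega)$ together with $\ell\geq 0$ lets us equip $E_g$ with a coupling form and an $\omega$-compatible almost complex structure $J_{E_g}$ for which pseudo-holomorphic sections obey a maximum principle, so the $2$-pointed moduli $\overline{\mathcal{M}}_{0,2}(E_g,\cdot)$ of sections (possibly with fibre bubbles) are compact. Define $r_{\widetilde g}$ as the Novikov-weighted $\mathrm{ev}_{z_\infty}$-pushforward of $\mathrm{ev}_{z_0}^*$ over this moduli, restricting to section classes compatible with the lift $\widetilde g$. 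A standard degeneration of the domain $\P^1$ splitting off a third marked point factors $r_{\widetilde g}(a)$ as a quantum product $r_{\widetilde g}(1)*_Q a$, so $r_{\widetilde g}$ is quantum cup product by $r_{\widetilde g}(1)\in QH^{2I(\widetilde g)}(X)$.

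The main step is (iv), matching the algebraic and Floer constructions, which I would prove by neck-stretching $\pi_g$ near the two poles of $\P^1$. The SFT limit degenerates a section of $E_g$ into two Floer half-cylinders on the polar caps, glued across the neck by a Floer continuation cylinder; a standard cobordism/gluing argument then identifies the signed count with $\psi^+\circ \varphi\circ \mathcal{S}_{\widetilde g}\circ \psi^-$, giving $r_{\widetilde g}=\psi^+\circ \mathcal{R}_{\widetilde g}\circ \psi^-$ and, since $c^*$ is itself a PSS-type TQFT operation compatible with these operations, the commutative square. The algebra-automorphism property of $\mathcal{R}_{\widetilde g}$ then follows from the preceding Lemma, which realises it as pair-of-pants product by the invertible class $\mathcal{S}_{\widetilde g}(1)\in SH^*(X)^{\times}$. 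The principal obstacle will be the analytic control in the non-compact setting: verifying compactness of section moduli over the non-compact $E_g$ (showing the $\ell\geq 0$ hypothesis forces all stable sections into a compact subset), and justifying the neck-stretching argument when the homotopy underlying $\psi^+$ is \emph{non-monotone} --- precisely the difficulty the author flags as the principal novelty over Seidel's closed-case treatment \cite{Seidel3}.
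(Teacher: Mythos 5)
Your plan reproduces the paper's architecture — pull-back Floer data giving the chain isomorphism $\mathcal{S}_{\widetilde g}$, its compatibility with continuations (Theorem \ref{Theorem commutative diagram for action}) to descend to $SH^*$, the clutched fibration $E_g$, and a gluing argument identifying the section count with $\psi^+\circ\varphi\circ\mathcal{S}_{\widetilde g}\circ\psi^-$ (Theorem \ref{Theorem rg element}), with the module property supplying ``quantum product by $r_{\widetilde g}(1)$'' (Theorem \ref{Theorem Rg and products}) — but you have the key slope statement backwards, and this is precisely where the hypothesis $g\in\mathrm{Ham}_{\ell\geq 0}$ enters. Since $g^*H_t(y)=H_t(g_ty)-K^g_t(g_ty)$ and $g_t$ preserves $R$ at infinity, the slope of $g^*H$ is the slope of $H$ \emph{minus} the slope $\kappa$ of $K^g$, so for $\ell\geq 0$ one has $g^*H\leq H$ at infinity, not $\geq$. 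Moreover the role you assign to $\ell\geq 0$ (``so $\mathcal{S}_{\widetilde g}$ is compatible with the telescope'') is not where it is needed: compatibility and descent hold for every $g\in G_{\ell}$, via Theorem \ref{Theorem commutative diagram for action} and the cofinality argument of Remark \ref{Remark warning on non-linear Hamiltonians} (which also handles the point, elided in your step (i), that $g^*H$ is generally not linear at infinity and $g^*J$ not conical). What $\kappa\geq 0$ actually buys is that the continuation $\varphi\colon HF^*(g^*H)\to HF^{*}(H)$ is a genuine monotone continuation (continuations only increase slope); with your inequality this ``continuation back to $(H,J)$'' would have to decrease slope and does not exist, so $\mathcal{R}_{\widetilde g}=\varphi\circ\mathcal{S}_{\widetilde g}$, and with it the square against $c^*$, would be undefined. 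Step (i) must be corrected accordingly (and the identity $[\mathcal{R}_{\widetilde g}]=[\mathcal{S}_{\widetilde g}]$ on $SH^*$ still needs the small observation that $\varphi$ becomes the identity in the direct limit).

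On the analytic side you rightly flag compactness over the noncompact $E_g$ and the non-monotone homotopy underlying $\psi^+$ as the main obstacles, but the proposal offers no mechanism to overcome them, and ``coupling form plus compatible $J_{E_g}$'' cannot work as stated: Thurston's trick fails here because the fibres are noncompact and $\omega$ grows like $R$, so no pullback from the base dominates. The missing idea is to force $\hat J$ into the admissible shape at infinity dictated by the Hamiltonian (Definition \ref{Definition admissible J hat}); this yields a taming form when $\partial_sH$ is bounded above (Lemma \ref{Lemma Symplectic form for admissible J}), a maximum principle when $H$ is monotone (Lemma \ref{Lemma Maximum principle}), and, on the non-monotone $\psi^+$ side, a monotonicity-lemma/a priori energy argument producing only locally finite pseudocycles, with $H_0$ chosen bounded so that $\partial_sH$ stays bounded. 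Without some such input, the moduli spaces in your steps (ii) and (iv) have nothing to compactify against, so as written the plan has a genuine gap there even though the gluing strategy itself coincides with the paper's.
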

\begin{proof}
 The maps $\mathcal{R}_{\widetilde{g}}=\varphi_H\circ \mathcal{S}_{\widetilde{g}}: HF^*(H) \to HF^*(g^*H) \to HF^*(H)$, where $\varphi_H$ is the continuation, are compatible with continuations since $\mathcal{S}_{\widetilde{g}}$ is (Theorem \ref{Theorem commutative diagram for action}) and continuations are. The identification of $r_{\widetilde{g}}$ is a gluing argument (Theorem \ref{Theorem rg element}).
\end{proof}

\begin{remark*}
 We briefly explain why negative slopes $\kappa$ of $K$ are not allowed for $r_{\widetilde{g}}(1)\in QH^*(M)$ whereas $\mathcal{S}_{\widetilde{g}}(1)\in SH^*(M)$ is always defined (Theorem \ref{Theorem commutative diagram for action}).
To define an endomorphism of $SH^*(M)$ it suffices that (1) for each slope $k\in \R$ there is a $k'\in \R$ and a map $HF^*(H_k) \to HF^*(H_{k'})$, where $H_k$ denotes a Hamiltonian of slope $k$ at infinity; and (2) these maps are compatible with continuation maps. Since at infinity $g^*H_k$ has slope $k-\kappa$, one can define a continuation map $HF^{*}(g^*H_0,g^*J) \to HF^*(H_{k'},J)$ provided $k'\geq k-\kappa$. If the slope $\kappa<0$ is negative, then one cannot hope to choose $k'=0$, 
which is required to build $r_{\widetilde{g}}(1) \in QH^*(M)\cong HF^*(H_0)$.
\end{remark*}

The existence of a loop $g:S^1 \to \mathrm{Ham}_{\ell>0}(X,\omega)$ is equivalent to assuming that there is a Hamiltonian $S^1$-action on $X$ which agrees with the Reeb flow at infinity.

\begin{corollary*}
For any $g: S^1 \to \mathrm{Ham}_{\ell > 0}(X,\omega)$,
$$
\boxed{SH^*(X) \cong QH^*(X)/\ker r_{\widetilde{g}}^k}
$$
is induced by $c^*:QH^*(X) \to SH^*(X)$ for $k\geq \mathrm{rank}\, H^*(X)$.
\end{corollary*}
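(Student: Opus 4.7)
The plan is to mimic the strategy of \S\ref{Subsection The role of rg in determining SH}, now in the abstract setting of a conical-at-infinity $(X,\omega)$, using the loop $g \in \mathrm{Ham}_{\ell>0}(X,\omega)$ in place of the fibrewise rotation. The goal is to exhibit $SH^*(X)$ as the direct limit of a sequence of Floer cohomologies that collapses to the constant-source system
\[
V \xrightarrow{r_{\widetilde{g}}} V \xrightarrow{r_{\widetilde{g}}} V \xrightarrow{r_{\widetilde{g}}} \cdots, \qquad V := QH^*(X),
\]
from which the claimed isomorphism falls out by finite-dimensional linear algebra.

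First I would fix a Hamiltonian $H_0: X \to \R$ of very small positive slope at infinity, so that $\psi^{\pm}$ identifies $HF^*(H_0)$ with $QH^*(X)$, and form the sequence $H_n := (g^n)^* H_0$. Because $m_1 > 0$ for $g \in \mathrm{Ham}_{\ell>0}$, the slope of $H_n$ at infinity grows linearly with $n$, so $\{H_n\}$ is cofinal in the family defining $SH^*(X)$, giving $SH^*(X) \cong \varinjlim_n HF^*(H_n)$. Next I would iterate the commutative diagram of the preceding Theorem to identify this direct system with the displayed constant-source system. The rotation $\mathcal{S}_{\widetilde{g}}: HF^*(H_n) \to HF^{*+2I(\widetilde{g})}(H_{n+1})$ is a chain-level isomorphism (arising by pulling back Floer data by $g$), and by Theorem \ref{Theorem commutative diagram for action} it commutes with Floer continuations; hence each $\varphi_n: HF^*(H_n) \to HF^*(H_{n+1})$ is conjugate via $\mathcal{S}_{\widetilde{g}}^n$ to the base continuation $\varphi_0$. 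Translating every stage back to $HF^*(H_0)$, each successive continuation becomes the self-map $\mathcal{R}_{\widetilde{g}} = \varphi_0 \circ \mathcal{S}_{\widetilde{g}}$, which by the commutative square of the Theorem is identified via $\psi^{\pm}$ with quantum cup product by $r_{\widetilde{g}}(1)$ on $QH^*(X)$.

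Since $V = QH^*(X)$ has finite $\Lambda$-rank equal to $\mathrm{rank}\, H^*(X)$, elementary linear algebra shows that $r_{\widetilde{g}}^k(V)$ and $\ker r_{\widetilde{g}}^k$ stabilize once $k \geq \mathrm{rank}\, H^*(X)$, that $r_{\widetilde{g}}$ restricts to an automorphism of the stable image, and that the direct limit above equals $r_{\widetilde{g}}^k(V) \cong V/\ker r_{\widetilde{g}}^k$, with the structural map from the initial copy of $V$ into the direct limit being exactly $r_{\widetilde{g}}^k$. Under the identification with $SH^*(X)$ this structural map is precisely $c^*$, so $c^*$ is surjective with kernel $\ker r_{\widetilde{g}}^k$, giving the isomorphism of $\Lambda$-modules; it is a $\Lambda$-algebra isomorphism because $c^*$ is itself a $\Lambda$-algebra homomorphism.

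The main technical obstacle will be verifying that $\mathcal{S}_{\widetilde{g}}$ commutes with continuation maps in the non-compact, non-monotone Floer setting between Hamiltonians of unbounded slope, and that the Novikov bookkeeping encoded in the lift $\widetilde{g}$ is preserved through iteration — in particular that the identification of $r_{\widetilde{g}}(1) \in QH^{2I(\widetilde{g})}(X)$ as the count of holomorphic sections of the Hamiltonian fibration $E_g$ survives the passage up the entire cofinal tower. This is the content of Theorems \ref{Theorem commutative diagram for action} and \ref{Theorem rg element}, but it must be applied in a regime where the fibrations are open and the homotopies of Floer data are non-monotone, which is the principal novelty over Seidel's closed-manifold argument where the required monotonicity of the homotopy is automatic.
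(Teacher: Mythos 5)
Your proposal is correct and follows essentially the same route as the paper: the paper likewise takes the cofinal family $H_k$ obtained from a small-slope $H_0$ by iterating the $g$-action (invoking Remark \ref{Remark warning on non-linear Hamiltonians} to see that the direct limit over these, which need not be linear at infinity, still computes $SH^*(X)$), uses Theorem \ref{Theorem commutative diagram for action} to conjugate all continuation maps by powers of $\mathcal{S}_{\widetilde{g}}$ into the single map $\mathcal{R}_{\widetilde{g}}=\varphi_0\circ\mathcal{S}_{\widetilde{g}}$, identifies this with quantum product by $r_{\widetilde{g}}(1)$ via $\psi^{\pm}$ (Theorem \ref{Theorem rg element}), and concludes by the linear algebra of \ref{Subsection The role of rg in determining SH}. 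The only point you gloss is that $(g^k)^*H_0$ has no literal slope at infinity, so cofinality rests on the sandwiching/ladder argument of that Remark rather than on slope growth per se --- which is exactly what the paper cites there.
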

\begin{proof}
 For $H_0$ small, $HF^*(H_0)\cong QH^*(X)$ (it reduces to the Morse complex). There is a natural pull-back $H_k=(g^k)^*H_0$ (see \ref{Subsection Overview of the construction for closed mfds}). The Hamiltonian generating $g$ has positive linear growth, so $SH^*(X)=\varinjlim HF^*(H_k)$. Finally, $S_{\widetilde{g}}^k:HF^*(H_k)\cong HF^{*+2kI(\widetilde{g})}(H_0)$, so as in \ref{Subsection The role of rg in determining SH}: $SH^*(X)\cong HF^*(H_0)/\ker \mathcal{R}_{\widetilde{g}}^k$.
\end{proof}

\subsection{Comparison with the Seidel representation}
\label{Subsection Comparison with the Seidel representation}
 The element $r_{\widetilde{g}}(1)$ plays a similar role to the quantum invertible element $q(g,\widetilde{g})$ of the Seidel representation \cite{Seidel3} for closed symplectic manifolds $(C,\omega)$:
$$
q: \widetilde{\pi_1}(\mathrm{Ham}(C,\omega)) \to QH_*(C,\omega)^{\times}, \; g \mapsto q(g,\widetilde{g}).
$$
These invertibles arise naturally in Floer homology and one can pass to quantum homology via $QH_*(C)\cong HF_*(C)$. In our case there is only a homomorphism $c^*:QH^*(X) \to SH^*(X)$, the $r_{\widetilde{g}}(1)$ can be non-invertible in $QH^*(X)$, but they become invertibles $\mathcal{R}_{\widetilde{g}}(1)$ on the quotient $SH^*(X)$. Indeed, $r_{\widetilde{g}}$ represents the continuation maps defining $SH^*(X)$ and $r_{\widetilde{g}}$ is nilpotent precisely when $SH^*(X)=0$.

The natural generalization of the Seidel representation to non-compact $(X,\omega)$ would have been to consider compactly supported Hamiltonian diffeomorphisms $\mathrm{Ham}_{\ell=0}(X,\omega)$, so that their action does not affect the dynamics at infinity. In that case, $r_{\widetilde{g}}(1)=\mathrm{PD}[q(g,\widetilde{g})]$ is an invertible in $QH^*(X,\omega)$ in degree $0$ (Example \ref{Example Maslov zero loops}), and it induces a degree preserving automorphism $\mathcal{R}_{\widetilde{g}}$ on $SH^*(X,\omega)$.

However, it would not have helped to compute $SH^*(X,\omega)$. To help compute $SH^*(X)$ we need the diffeomorphism to dramatically affect the dynamics at infinity, so that it relates the different Floer cohomologies arising in the direct limit.
%
\subsection{Outline of the paper, Conventions, Acknowledgements.}
\strut\\ \indent
\textbf{Outline of the paper.} \\[1mm]
\begin{tabular}{|l|l|}
\hline
 \emph{Section \ref{Section Symplectic cohomology}}: review of $HF^*,SH^*$ 
&  \emph{Section \ref{Section Negative line bundles}}: negative line bundles \\
\hline
\emph{Section \ref{Section Hamiltonian symplectomorphisms act}}: $\mathcal{S}_{\widetilde{g}}$ and $\pi_1\mathrm{Ham}(M,\omega)$ & \emph{Section \ref{Section O(-n) over CP1}}: $SH^*(\mathcal{O}(-n)\to \P^1)$ by a
\\
action on Floer complexes 
& Grothendieck-Riemann-Roch argument
\\
\hline
\emph{Section \ref{Section Construction of the automorphism}}: $\mathcal{R}_{\widetilde{g}}$ and the Floer 
& \emph{Section \ref{Section O(-n) over CPm}}: $QH^*(\mathcal{O}(-1)\to \P^m)$ directly and
\\
 theoretic $r_{\widetilde{g}}$ 
& $SH^*(\mathcal{O}(-n)\to \P^m)$ by virtual localization\\
\hline
\emph{Section \ref{Section Pseudoholomorphic sections}}: $E_g\to \P^1$ and the 
& \emph{Section \ref{Section General theory for negative line bundles}}:  $r(1)=\pi_M^*c_1(L)$ for\\
 algebro-geometric $r_{\widetilde{g}}$ 
& negative $L\to B$\\
\hline
 \emph{Section \ref{Section Gromov-Witten invariants}}: review GW invariants
 & \emph{Section \ref{Section Negative vector bundles}}: negative vector bundles.\\
\hline
\end{tabular}
%
\strut \\[1mm] \indent
\textbf{Conventions.}
 We only consider the summand of $SH^*(M)$ coming from the contractible orbits $($which is everything if $\pi_1(B)\!=\!1)$. Observe that if $\pi_1(B)\!\neq\! 1$, then a vanishing result for this summand implies vanishing of the full $SH^*(M)$ since the unit lies in this summand (Corollary \ref{Corollary unit for SH}). We use char$(\Lambda)=2$ to avoid discussing orientations, but we kept track of orientation signs: Remark \ref{Remark orientation signs}.
%
\strut \\[1mm] \indent
\textbf{Acknowledgements.} I thank Paul Seidel and Davesh Maulik for helpful discussions in the early stages of this project. I thank Ivan Smith for his great patience in listening to the progress on this project, particularly when it first seemed that my work was (erroneously) contradicting his observation (Remark \ref{Remark Ivan}) because of the unexpected result at the end of \ref{Subsection The role of rg in determining SH}. I thank Gabriel Paternain for suggesting to rephrase Theorem \ref{Theorem Kodaira Vanishing theorem statement} as Corollary \ref{Corollary Kodaira vanishing full}.
%
%
\section{Conical symplectic manifolds and symplectic cohomology}
\label{Section Symplectic cohomology}
%
\subsection{Conical symplectic manifolds}
\label{Subsection Conical symplectic manifolds}
We will consider non-compact symplectic manifolds
$(M,\omega)$, whose symplectic form $\omega$ is typically non-exact. We call $M$ \textbf{conical at infinity} if outside a bounded domain $M_0\subset M$ there is a symplectomorphism
$$
\psi: (M\setminus M_0,\omega|_{M\setminus M_0}) \cong (\Sigma \times [1,\infty), d(R \alpha)).
$$
where $(\Sigma,\alpha)$ is a contact manifold, and $R$ is the coordinate on $[1,\infty)$.

The conical condition implies that outside of $M_0$ the symplectic form becomes exact: $\omega=d\theta$ where $\theta=\psi^*(R\alpha)$. It also implies that the Liouville vector field $Z=\psi^*(R\partial_R)$ (defined by $\omega(Z,\cdot) = \theta$) will point stricly outwards along $\partial M_0$. Finally, it implies that $\psi$ is induced by the flow of $Z$ for time $\log R$, so we can simply write $\Sigma=\partial M_0$, $\alpha=\theta|_{\Sigma}$ (pull-back).

By \emph{conical structure} $J=J_t$ we mean a (typically time-dependent) $\omega$-compatible almost complex structure on $M$ (so $\omega(\cdot,J\cdot)$ is a $J$-invariant metric) satisfying the \emph{contact type condition} $
J^*\theta=dR
$ for large $R$.
On $\Sigma$ this implies $J Z = Y$ where $Y$ is
the Reeb vector field for $(\Sigma,\alpha)$ defined by $\alpha(Y)=1$, $d\alpha(Y,\cdot)=0$. 

By choosing $\alpha$ or $\Sigma$ generically, one ensures that $\alpha$ is sufficiently generic so that the periods of the Reeb vector field $Y$ form a countable closed subset of $[0,\infty)$.

In this Section we succinctly construct $SH^*(M)$. In the exact setup ($\omega=d\theta$ on all of $M$) symplectic cohomology was introduced by Viterbo \cite{Viterbo1}, and we refer to \cite{Ritter1} for details and to Seidel \cite{Seidel} for a survey. In the non-exact setup it was first constructed by the author in \cite{Ritter2}, to which we refer for details. In this paper we use a larger Novikov ring than in \cite{Ritter2} (see \ref{Subsection geometrical novikov ring}), so that our conventions mirror \cite{Hofer-Salamon,Seidel}.
%
\subsection{Weak$^+$ monotonicity}
\label{Subsection weak monotonicity}
We assume $M$ satisfies \textbf{at least one of}:

\begin{enumerate}
 \item there is a $\lambda\geq 0$ such that $\omega(A) = \lambda c_1(TM)(A)$ for all $A\in \pi_2(M)$;

\item $c_1(TM)(A)= 0$ for all $A \in \pi_2(M)$;

\item the minimal Chern number $|N| \geq \dim_{\C} M -1$.

\end{enumerate}

Recall $|N|$ is defined by $c_1(TM)(\pi_2(M))=N\Z$. 
The requirement that one of these conditions holds is equivalent to the statement:
$$
A \in \pi_2(M),\; 2-\dim_{\C}M\leq c_1(TM)(A) < 0 \Longrightarrow \omega(A)\leq 0.
$$
%
%
%
\subsection{Hamiltonian dynamics}
\label{Subsection Hamiltonian dynamics}
Our Hamiltonians $H=H_t \in C^{\infty}(M\times S^1,\R)$ (typically time-dependent) will always be linear at infinity:
$$
H=mR+\textrm{constant} \quad \textrm{ for }R\gg 0
$$
with slope $m \in \R$ not equal to a Reeb period.
The Hamiltonian vector field $X_H$ is defined by
$
\omega(\cdot,X_H) = dH
$, and we call \emph{1-orbits} the $1$-periodic orbits $x$ of $X_H$, $\dot{x}(t)=X_{H_t}(x(t))$. In the region where $H$ is linear the $1$-orbits $x(t)$ lie inside hypersurfaces $R=\textrm{constant}$ and correspond to the Reeb orbits $y(t)=x(t/T)$ in $\Sigma$ of period $T=h'(R)<m$.
The $1$-orbits are the zeros of the \emph{action 1-form},
$\textstyle
dA_H(x) \cdot \xi = - \int_0^1 \omega(\xi, \dot{x} - X_H) \, dt
$
where $x\in \mathcal{L}M = C^{\infty}(S^1,M)$, and $\xi \in T_x\mathcal{L}M$.
%
\subsection{A cover of the loop space}
\label{Subsection Cover of the loop space}
%
\strut\\
\textbf{Convention.} \emph{From now on, consider only the component $\mathcal{L}_0 M \subset \mathcal{L}M$ of contractible free loops in $M$. We abbreviate 
$c_1 = c_1(TM,\omega).$
}

Consider the cover of $\mathcal{L}_0 M$ introduced by Hofer-Salamon \cite{Hofer-Salamon},
$$\widetilde{\mathcal{L}_0 M}= \{(v,x): x\in \mathcal{L}_0 M, v: D^2 \to M \textrm{ a smooth disc with boundary } \partial v = x\}/\sim$$
identifying $(v_1,x_1) \sim (v_2,x_2)$ whenever $x_1=x_2$ and $\omega,c_1$ both vanish on the sphere $v_1 \# \overline{v_2}$ obtained by gluing the two discs together along the common boundary.

The covering group of $\widetilde{\mathcal{L}_0 M} \to \mathcal{L}_0 M$ is 
$\boxed{\Gamma = \pi_2(M)/\pi_2(M)_0}$
 where $\pi_2(M)_0$ is generated by the spheres on which $\omega,c_1$ both vanish. $\Gamma$ acts by ``gluing in spheres''. This cover is useful because the action is now well-defined:
$$A_H: \widetilde{\mathcal{L}_0 M} \to \R,\; A_H(v,x) = -\int_{D^2} v^*\omega + \int_0^1 H_t(x(t))\, dt.$$
%
%
\subsection{$\Z$-grading on the cover}
\label{Subsection grading of symplectic homology}
%
The Conley-Zehnder grading of $(v,x) \in \widetilde{\mathcal{L}_0M}$ is well-defined and denoted 
$\boxed{\mu_H(v,x)}.$
%
%
We refer to Salamon \cite{Salamon} for details. 

\textbf{Convention.} \emph{For a $C^2$-small time-independent Hamiltonian, the $\mu_H$ of a critical point of $H$ is equal to its Morse index. Our conventions differ from \cite{Salamon} by reversing the sign of $H$, but our index $\mu_H$ in fact agrees with the $\mu_H$ of \cite{Salamon}.}
%
\subsection{Coefficient ring $\Lambda$}
\label{Subsection geometrical novikov ring}
%
The geometrical Novikov ring $\Lambda = \oplus_k \Lambda_k$ is defined using
$$\boxed{
\begin{array}{l}
\Gamma_k = \{\gamma\in \Gamma: 2c_1(\gamma)=k\} \qquad (\textrm{cohomological grading})\\
\Lambda_k = \{\;{\displaystyle  \sum_{j=0}^{\infty} n_{j} \gamma_j :
n_{j} \in \Z/2, \gamma_j\in \Gamma_k,  \lim_{j \to \infty} \omega(\gamma_j) = \infty
}\;\}
\end{array}
}
$$

\textbf{Homological grading.} in homology, the grading is reversed ($-2c_1(\gamma)=k$) so quantum cohomology/homology is compatible with Poincar\'{e} duality (\cite[11.1.16]{McDuff-Salamon2}).

\textbf{Characteristic 2.} \emph{We use $\Z/2$ to avoid the labour of discussing orientation signs, but one can do everything over characteristic $0$. Also see Remark \ref{Remark orientation signs}.}
%
%
\subsection{Floer cohomology}
\label{Subsection Floer cohomology}
%
Denote
$$\mathcal{P}_k(H)=\{c=(v,x) \in \widetilde{\mathcal{L}_0 M}: x \textrm{ is a (contractible) 1-orbit of } H \textrm{ and } \mu_H(c)=k\}
$$ 
Then $CH^*(H,J)$ is generated over $\Lambda$ by $\mathcal{P}_*(H)$:
$$
CF^k(H) =
 \{\; \sum_{j=0}^{\infty} n_{j} c_j :
n_{j} \in \Z/2, c_j \in \mathcal{P}_k(H), \lim_{j \to \infty} A_H(c_j) = -\infty
\;\}.
$$
The choice of sign is because $A_H(\gamma\# v,x) = A_H(v,x) - \omega(\gamma)$ for $\gamma \in \Gamma$, and we want $CF^*(H) = \oplus_k CF^k(H)$ to be a $\Lambda$-module by extending the action of $\Gamma$:
$$
\Gamma\ni\gamma: CF^k(H) \to CF^{k+|\gamma|}(H), \gamma\cdot (v,x) = (\gamma \# v,x)
$$
where we use that $\mu_H(\gamma \# v) = \mu_H(v)+2c_1(\gamma)$ (see \cite{Hofer-Salamon}).

\textbf{Convention.} \emph{We always assume that we made a time-dependent perturbation of $(H,J)=(H_t,J_t)$ to ensure that $(H,J)$ is regular: all $1$-orbits are non-degenerate zeros of $dA_H$ (which ensures that $CF^*(H,J)$ is finitely generated over $\Lambda$) and the following moduli spaces of Floer trajectories are smooth:}
$$\begin{array}{lll} \mathcal{M}(x,y)&=&\{u: \R\times S^1 \to M: \partial_s u + J_t(\partial_t u - X_{H_t}(u)) = 0 \\
   &&\quad\quad u\to x,y \textrm{ as } s\to -\infty,+\infty\}
 \; / \; (u \sim u(\cdot + \textrm{constant},\cdot))
 \end{array}$$ 
Separating the moduli spaces according to lifts yields dimension \& energy estimates:
$$
\begin{array}{l}
\mathcal{M}((v,x),(w,y))  = \{ u \in \mathcal{M}(x,y): \exists \textrm{ lift } \widetilde{u}: \R \to \widetilde{\mathcal{L}_0 M} \textrm{ with ends } (v,x),(w,y) \}\\[1mm]
\textrm{dim }\mathcal{M}(c,c') = \mu_H(c) - \mu_H(c')-1\\[1mm]
E(u) = \int_{\R} |\partial_s u|_J^2 \, ds = A_H(c) - A_H(c'),\; \forall u\in \mathcal{M}(c,c') \quad (|\cdot|_J^2 = \int_{S^1} \omega(\cdot,J_t\cdot)\,dt)
\end{array}
$$
This energy estimate, combined with a maximum principle and a bubbling analysis, ensures that these moduli subspaces are compact up to broken trajectories. The maximum principle forces the trajectories to stay in a bounded region determined by $x,y,J$ (using $J$ is conical). The bubbling of $J$-holomorphic spheres is ruled out by the methods of Hofer-Salamon \cite{Hofer-Salamon} (using weak monotonicity).

The differential $d: CF^k(H,J) \to CF^{k+1}(H,J)$ on a generator $c'$ is
$d c' = \sum c$
summing over $u\in \mathcal{M}(c,c')$ with $\mu_H(c)-\mu_H(c')-1=0$. Extending $d$ linearly and proving $d^2=0$ one obtains Floer cohomology
$
HF^k(H,J) = H^k(CF^*(H,J),d).
$
%
%
%
%
\subsection{Continuation maps}
\label{Subsection continuation maps}
For Hamiltonians $H^{\pm}$ with slopes $m^+\leq m^-$,
$$\varphi^*: CF^*(H^+,J^+) \to CF^*(H^-,J^-),$$
is $\varphi^*(c^+)\! =\! \sum c^-$ summing over $u \in \mathcal{N}(c^-,c^+)$ with $\mu_{H^-}(c^-) \!-\! \mu_{H^+}(c^+)\!=\!0$, where:
$$
\begin{array}{l}
\mathcal{C}(x^-,x^+) = \{ u: \R\times S^1 \to M: \partial_s u + J_z(\partial_t u - X_{H_z}) = 0, {\displaystyle \lim_{s\to \pm \infty}} u(s,\cdot) = x^{\pm}\}\\
\mathcal{N}(c^-,c^+)  = \{ u \in \mathcal{C}(\partial c^-,\partial c^+): \exists \textrm{ lift } \widetilde{u}: \R \to \widetilde{\mathcal{L}_0 M} \textrm{ with ends } c^-,c^+ \}\\[1mm]
\textrm{dim }\mathcal{N}(c^-,c^+) = \mu_H(c^-) - \mu_H(c^+)\\[1mm]
E(u) = \int_{\R} |\partial_s u|_{J_z}^2 \, ds = A_H(c^-) - A_H(c^+) + \int_{\R \times S^1} \partial_s H_z(u)\, ds\wedge dt,\; \forall u\in \mathcal{N}(c^-,c^+)
\end{array}
$$
where we fix some \emph{monotone} homotopy $H_z=H_{s+it}$ from $H^-_t$ to $H^+_t$. Recall \emph{monotone} means $H_z=h_s(R)$ for large $R$ with $\partial_s h_s'(R)\leq 0$. 

To be precise, $(H_z,J_z)$ depend on the cylinder's coordinates $z=s+it$, with $(H_z,J_z) = (H^{\pm}_t,J^{\pm}_t)$ for large $|s|$ (so $\partial_s H_z(u) = 0$ outside a compact subset of $\R\times S^1$ determined by $H_z$), and a generic choice $(H_z,J_z)$ ensures $\mathcal{N}(c^-,c^+)$ is smooth. The maximum principle still applies (this uses that $J_z$ is conical at infinity, and that $H_z$ is monotone) so $u$ must land in a compact $C$ determined by $x^-,x^+,J_z$ and so in the above energy estimate $|\partial_s H_z(u)|\leq \max_C |H_z|$. This ensures $\mathcal{N}(c^-,c^+)$ are compact up to broken trajectories.

Extending $\varphi$ linearly, and proving $\varphi^*$ is a chain map, yields continuation maps
$$
\varphi^*: HF^*(H^+,J^+) \to HF^*(H^-,J^-) \qquad (m^+ \leq m^-).
$$
%
%
%
%
\subsection{Properties of continuation maps}
\label{Subsection properties of continuation maps}
\begin{enumerate}
\item $\varphi^*: HF^*(H^+) \to HF^*(H^-)$ only depends on the slopes at infinity, since the choice $(H_s,J_s)$ only affects the map up to a chain homotopy.

\item Concatenating monotone homotopies yields composites of continuation maps. 

\item If the slopes are the same, then the continuation map is an isomorphism.
\end{enumerate}

\begin{lemma}\label{Lemma continuation isos if no in between Reebs}
 If there are no Reeb periods between $m^-$ and $m^+$, then $\varphi^*: HF^*(H^+) \to HF^*(H^-)$ is an isomorphism.
\end{lemma}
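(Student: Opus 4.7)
The plan is to construct an inverse $\psi^* : HF^*(H^-,J^-) \to HF^*(H^+,J^+)$ by running the continuation machinery of \ref{Subsection continuation maps} in the ``wrong'' direction, i.e.\ with a homotopy whose slope decreases monotonically from $m^-$ to $m^+$. Such an anti-monotone homotopy ordinarily fails the maximum principle that underlies the compactness of the moduli spaces $\mathcal{N}(c^-,c^+)$, so the whole argument hinges on using the no-Reeb-period hypothesis to recover an a priori $C^0$-bound on continuation trajectories.

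First, using property (1), I would replace $H^\pm$ by representatives that agree on a large compact subset containing all their interior 1-orbits, differing only at infinity through their linear slopes $m^\pm$. Choose a homotopy $(H_s, J_s)$ whose slope $m_s$ decreases monotonically from $m^-$ at $s=-\infty$ to $m^+$ at $s=+\infty$, staying always in $[m^+, m^-]$ and constant outside a compact $s$-interval. Since no Reeb period lies in $[m^+,m^-]$, a generic such $(H_s,J_s)$ has no 1-orbits in the cylindrical end at any intermediate $s$.

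The main obstacle is the a priori bound, since for an anti-monotone homotopy the term $\partial_s H_z(u)$ in the energy identity of \ref{Subsection continuation maps} has the wrong sign. The key device is the integrated (or ``no-escape'') maximum principle: for any solution $u$ of the continuation equation, the function $\rho := R\circ u$ cannot attain a local maximum in $\{R \geq R_0\}$ where $H_s$ is already linear. Indeed, computing $\Delta \rho$ using the contact type condition $J^*\theta = dR$ together with $H_s = h_s(R)$ at infinity gives, at an interior maximum, an inequality which forces $u(s_0,\cdot)$ to parametrise a Reeb orbit of period $h_{s_0}'(R) = m_{s_0} \in [m^+,m^-]$, contradicting the hypothesis. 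This is the same argument used by Seidel and Viterbo in the exact setting and extended to the conical non-exact setting in the author's \cite{Ritter2}. With this bound, the bubbling and breaking analysis of \ref{Subsection continuation maps} carries over verbatim, so $\psi^*$ is well-defined.

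Finally, the compositions $\psi^*\circ \varphi^*$ and $\varphi^*\circ \psi^*$ are, by property (2), the continuation maps for concatenated homotopies whose net slope change is zero; by property (3) these equal the identity on $HF^*(H^+)$ and $HF^*(H^-)$ respectively. Hence $\varphi^*$ is an isomorphism. The heart of the proof is thus step 3: converting the no-Reeb-period assumption into the $C^0$-confinement needed to legitimise the anti-monotone continuation.
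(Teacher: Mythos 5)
Your overall strategy differs from the paper's, and it contains a genuine gap at exactly the step you identify as the heart of the proof. The paper never constructs an inverse map: it arranges $H^-$ and $H^+$ to coincide on $R<R_0$ (so that, since no Reeb period lies in $[m^+,m^-]$, \emph{all} generators of both complexes coincide and lie in $R<R_0$), takes a \emph{monotone} homotopy which is $s$-independent on $R<R_0$, and uses the ordinary maximum principle (valid because the homotopy is monotone) to confine solutions to $R<R_0$; there the homotopy is $s$-independent, so non-constant solutions come in $1$-parameter translation families and the $0$-dimensional moduli spaces consist of constants, i.e.\ $\varphi^*$ is literally the identity on the common set of generators. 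No anti-monotone continuation is ever needed.

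The gap in your step 3 is that the pointwise maximum-principle computation does not do what you claim for an anti-monotone homotopy. With $H_s=h_s(R)$ at infinity one gets an identity of the shape $\Delta\rho + (\text{first order terms in }\rho) = \tfrac{1}{2}\|du-X_H\otimes dt\|^2 - \rho\,\partial_s h_s'(\rho)$; when $\partial_s h_s'>0$ the zeroth-order term has the wrong sign, so neither the weak nor the strong maximum principle applies, and one cannot conclude at an interior maximum that $\rho$ is locally constant and hence that $u$ sweeps a Reeb orbit of period $m_{s_0}$. The hypothesis that $[m^+,m^-]$ contains no Reeb periods never enters this computation, so it cannot rescue it; the argument you cite from the exact/conical setting is the one for $s$-independent data (or monotone homotopies), where $\partial_s h_s'=0$ (resp.\ $\leq 0$) and the orbit-cylinder conclusion is legitimate. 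Likewise the integrated (``no escape'') version only controls the curvature term $\int \partial_s H_z$, which for a slope change grows linearly in $R$ and is exactly what is unbounded here --- this is the same obstruction the paper points out in the Remark after Theorem \ref{Theorem rg element}, where it explains that such inverse continuation maps cannot in general be defined. Without the $C^0$-confinement, $\psi^*$ is not defined, and the subsequent appeals to properties (1)--(3) (which are established for monotone data) also lack the compactness they presuppose. To repair the argument you would have to abandon the anti-monotone construction and argue as the paper does, exploiting that the two complexes have the same generators.
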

\begin{proof}
 After a continuation isomorphism which does not change the slopes at infinity, we may assume $H^-$, $H^+$ are equal except on $R\geq R_0$ where $h^-=m^- \cdot R$ and $h^+=m(R)\cdot R$ with $m(R)$ decreasing from $m^-$ to $m^+$ on a compact subinterval of $R\geq R_0$ and then remaining constantly $m^+$.

All generators for $H^{\pm}$ coincide and lie in the region $R<R_0$ where $H^-=H^+$. Pick a homotopy $H_s$ from $H^-$ to $H^+$ which is $s$-independent on $R<R_0$ and monotone on $R\geq R_0$. By the maximum principle, all continuation solutions $u$ lie in $R<R_0$. But in that region $H_s$ is $s$-independent so non-constant $u$ would yield a $1$-dimensional family: $u(\cdot+\textrm{constant},\cdot)$. So the $0$-dimensional moduli spaces consist of constant $u$'s. So $\varphi^*=\textrm{identity}: HF^*(H^+) \to HF^*(H^-)$.
\end{proof}
%
%
%
%
\subsection{Symplectic cohomology}
\label{Subsection symplectic cohomology}
\label{Subsection summary symplectic cohomology}
%
%
$SH^*(M) = \varinjlim HF^*(H)$ is the direct limit over the continuation maps. It can be computed as the direct limit over a sequence of Hamiltonians with slopes $\to \infty$.
%
\subsection{Negative slopes and Poincar\'{e} duality}
\label{Subsection duality and negative slopes}
%
Replacing incoming trajectories by outgoing trajectories defines Floer homology $HF_*(H)$:
$$
\begin{array}{l}
CF_k(H) =
 \{\; \prod_{j=0}^{\infty} n_{j} c_j :
n_{j} \in \Z/2, c_j \in \mathcal{P}_k(H), {\displaystyle \lim_{j \to \infty}} A_H(c_j) = -\infty
\;\}\\
\displaystyle \delta c =
\prod c' \quad \textrm{taking product over all }u\in \mathcal{M}(c,c'),\mu_H(c)-\mu_H(c')-1=0.
\end{array}
$$
\begin{lemma}[Poincar\'e duality]\label{Lemma poincare duality}\strut\!\!\!
 $CF^*(H_t) \!\cong\! CF_{2n-*}(-H_{-t})$ are canonically isomorphic chain complexes $($send orbits $x(t)$ to $x(-t)$, Floer solutions $u(s,t)$ to $u(-s,-t))$.
\end{lemma}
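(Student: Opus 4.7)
The plan is to exhibit the stated maps as explicit bijections and then verify that they interchange all the cochain-level data of $(H_t,J_t)$ with the chain-level data of $(-H_{-t},J_{-t})$. First I would check that on orbits the map is well-defined: if $\dot{x}=X_{H_t}(x)$, then $\tilde{x}(t):=x(-t)$ satisfies $\dot{\tilde x}(t)=-\dot x(-t)=-X_{H_{-t}}(\tilde x(t))=X_{-H_{-t}}(\tilde x(t))$, using that the convention $\omega(\cdot,X_H)=dH$ gives $X_{-K}=-X_K$. On capping discs, send $v\co D^2\to M$ to $\tilde v(z):=v(\bar z)$; its boundary is $\tilde v(e^{2\pi it})=v(e^{-2\pi it})=\tilde x(t)$, so this is a capping for $\tilde x$, and the involution $(v,x)\mapsto(\tilde v,\tilde x)$ is a bijection on generators which descends to the cover $\widetilde{\mathcal{L}_0M}$.

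Next I would record the action and index identities. Since $\tilde v$ is an orientation-reversing reparametrization of $v$, $\int_{D^2}\tilde v^*\omega=-\int_{D^2}v^*\omega$; combined with the substitution $t\mapsto -t$ in the Hamiltonian term, this gives $A_{-H_{-t}}(\tilde v,\tilde x)=-A_H(v,x)$. The index shift $\mu_{-H_{-t}}(\tilde v,\tilde x)=2n-\mu_H(v,x)$ is the standard time-reversal formula for the Conley--Zehnder index: it is easiest to see in the local Morse model, where for $C^2$-small time-independent $H$ we have $\mu_H=\mathrm{ind}_{\mathrm{Morse}}(H)$ and $\mathrm{ind}(-H)=2n-\mathrm{ind}(H)$, and the general case follows by the concatenation property of $\mu_{CZ}$ under a symplectic trivialization adapted to $\tilde v$. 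This already gives the degree shift $CF^k(H)\leftrightarrow CF_{2n-k}(-H_{-t})$.

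For the moduli spaces, send a Floer solution $u\co\R\times S^1\to M$ for $(H,J)$ to $\tilde u(s,t):=u(-s,-t)$. A direct chain-rule computation shows $\partial_s\tilde u+J_{-t}\bigl(\partial_t\tilde u-X_{-H_{-t}}(\tilde u)\bigr)=0$, so $\tilde u$ solves the Floer equation for $(-H_{-t},J_{-t})$. The asymptotics are swapped: if $u\to x,y$ as $s\to\mp\infty$, then $\tilde u\to\tilde y,\tilde x$ as $s\to\mp\infty$, so $u\in\mathcal{M}(c,c')$ corresponds to $\tilde u\in\mathcal{M}(\tilde c',\tilde c)$ in the time-reversed problem, with the lift condition preserved under the capping bijection. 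The expected dimensions match because $\mu_H(c)-\mu_H(c')-1 = \mu_{-H_{-t}}(\tilde c')-\mu_{-H_{-t}}(\tilde c)-1$, so zero-dimensional moduli spaces go to zero-dimensional moduli spaces.

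Assembling these ingredients, $\Phi(v,x):=(\tilde v,\tilde x)$ is a degree-reversing bijection of generators under which the cohomology differential $d c'=\sum_u c$ becomes the homology differential $\delta\tilde c'=\sum_{\tilde u}\tilde c$ (the swap of incoming/outgoing is precisely what converts the coboundary into the boundary), proving $CF^*(H_t)\cong CF_{2n-*}(-H_{-t})$ as chain complexes. The main technical obstacle is the time-reversal formula for $\mu_H$ and the compatibility of the completion conventions with the action sign flip $A_{-H_{-t}}\circ\Phi=-A_H$; the former is a standard Conley--Zehnder calculation, and the latter is automatic once one interprets the $CF_*$-completion direction as the one dictated by the swapped energy inequality $A_{-H_{-t}}(\tilde y)\geq A_{-H_{-t}}(\tilde x)$.
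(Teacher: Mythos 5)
Your proof is correct and follows exactly the construction the paper indicates parenthetically (the paper offers no further argument): time-reversal of orbits, conjugated capping discs, and $u(s,t)\mapsto u(-s,-t)$ on Floer solutions, with the action sign flip and the Conley--Zehnder reversal $\mu_{-H_{-t}}=2n-\mu_H$ producing the degree shift and turning the coboundary into the boundary. The only bookkeeping point is the one you flag at the end: since the bijection of generators intertwines the $\Gamma$-action with its inverse (it sends $\gamma$ to $-\gamma$), the completion and $\Lambda$-module conventions on $CF_{2n-*}(-H_{-t})$ must be read as the ones induced by this identification, which is how the paper's remark following the lemma treats $CF_*$ and $CF^*$ as identifiable modules with dual differentials.
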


\begin{remark*} We always deal with finitely generated modules over $\Lambda$, so $CF_*,CF^*$ are identifiable modules, but the differentials are dual to each other. We compared $SH^*(M),SH_*(M)$ in \cite{Ritter3}. In this paper we will only use $SH^*(M)$.
\end{remark*}
%
%
\subsection{Quantum cohomology and locally finite homology}
%
The \emph{quantum cohomology} as a $\Lambda$-module is $QH^*(M,\omega)=H^*(M;\Lambda)$ with underlying chain complex $$QC^k(M)=\bigoplus_{i+j=k} C^i(M)\otimes \Lambda_j.$$ 

The \emph{locally finite quantum homology} is the $\Lambda$-module
$$
\begin{array}{l}
QH^{lf}_*(M) = H_*^{lf}(M;\Lambda). 
\end{array}
$$
Recall the latter is \emph{locally finite homology}: at the chain level one allows infinite $\Lambda$-linear combinations of chains provided that any point of $M$ has a neighbourhood intersecting only finitely many of the chains arising in the sum. We could identify this with a relative homology, $H_*^{lf}(M;\Lambda) \cong H_*(M;M\setminus M_0;\Lambda)$, but we will not.

By \emph{quantum intersection product} we mean the map:
$$
\boxed{*:QH_*^{lf}(M) \otimes QH_*^{lf}(M) \to QH_*^{lf}(M)}
$$
constructed as follows. Given $\alpha\in H_i^{lf}(M)$, $\beta\in H_j^{lf}(M)$, $\gamma\in H_k(M)$, and $A\in H_2(M;\Z)$, let $\mathrm{GW}_{0,3,A}^M(\alpha,\beta,\gamma)$ be the genus $0$ Gromov-Witten invariant (modulo $2$) of $J$-holomorphic spheres in class $A$ meeting generic representatives of the lf cycles $\alpha,\beta$ and of the cycle $\gamma$. This invariant is zero for generic $J$ unless $i+j+k = 4\dim_{\C}M-2c_1(A).$ 
For details, see Section \ref{Section Gromov-Witten invariants}.
Then define the quantum product $*$: 
$$(\alpha * \beta)\bullet \gamma = \sum_A \mathrm{GW}_{0,3,A}^M(\alpha,\beta,\gamma)\otimes A \in \Lambda$$
 where $\bullet$ is the (ordinary) intersection product: 
$$
\boxed{\bullet: H_*^{lf}(M) \otimes H_{2\dim_{\C}M-*}(M) \to \Z/2}
$$
This determines $\alpha*\beta$, then extend $\Lambda$-linearly to $QH_*^{lf}(M)^{\otimes 2}$.

\emph{Poincar\'{e} duality} is induced by ordinary Poincar\'e duality (see \ref{Subsection geometrical novikov ring} for grading):
$$
\boxed{\mathrm{PD}:QH^*(M) \cong QH_{2\dim_{\C}M-*}^{lf}(M)}
$$
Via Poincar\'e duality, quantum intersection product becomes quantum cup product 
$$\boxed{*: QH^p(M) \otimes QH^q(M) \to QH^{p+q}(M)}$$
%
%
\subsection{Canonical map $\mathbf{c^*:QH^*(M)\to SH^*(M)}$}
\label{Subsection relation to quantum homology}
The map $c^*$
is the direct limit of the continuation maps $HF^*(H_0) \to HF^*(H)$, where we fix a time-independent Hamiltonian $H_0$ which is a $C^2$-small Morse perturbation of $0$, having (possibly variable) positive slopes at infinity smaller than the minimal Reeb period. The choice of $H_0$ does not affect $HF^*(H_0)$ or $c^*$ by Lemma \ref{Lemma continuation isos if no in between Reebs}. For small enough $H_0$, the $1$-orbits of $H_0$ are all critical points of $H_0$ and the Floer trajectories are all time-independent $-\nabla H_0$ trajectories. So $CF_*(H_0) = CM_*(H_0;\Lambda)$ is the Morse complex for $H_0$. Finally, Morse homology is isomorphic to ordinary homology.

\begin{remark*}
Section \ref{Subsection Construction of the psi- psi+ maps}  constructs $c^*$ as $\psi^-: QH^*(M,\omega) \to SH^*(H)$, via a count of pseudo-holomorphic sections of a Hamiltonian fibration over a disc intersecting a given lf cycle at the disc's centre. In \cite{Ritter3}, we constructed $c^*$ as a count of spiked discs (a $-\nabla H$ flowline from a critical point of $H$ to the centre of a disc satisfying a Floer continuation equation). Both constructions involve the same count of discs. The spike is used to identify locally finite homology and Morse cohomology.
\end{remark*}

Now $c^*$ intertwines the (quantum) cup product on $QH^*(M)$ and the pair-of-pants product on $SH^*(M)$: we proved this in \cite{Ritter3} (our discussion there explains how the proof works in the non-exact setup using \cite{PSS}). One only needs to prove this for $QH^*(M)\to HF^*(H_0)$ since the POP product is compatible with continuations \cite{Ritter3}. Our proof in \cite{Ritter3} becomes simpler now thanks to the mutually inverse
$$\boxed{\psi^-:QH^*(M) \stackrel{\sim}{\to} HF^*(H_0),\quad \psi^+:HF^*(H_0) \stackrel{\sim}{\to} QH^*(M)}$$
which we construct in \ref{Subsection Construction of the psi- psi+ maps} (such $\psi^{\pm}$ were the main difficulty in the proof \cite{Ritter3}).


\begin{lemma}\label{Lemma HF H0 and QH are iso as rings}
 $HF^*(H_0) \cong QH^*(M)$ is an isomorphism of rings using the pair-of-pants product and the quantum cup product, and $c^*: QH^*(M) \to SH^*(M)$ respects the product structures (in fact, also the TQFT structures).
\end{lemma}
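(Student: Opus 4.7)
The plan is to prove the ring isomorphism by showing that $\psi^-\co QH^*(M)\to HF^*(H_0)$ constructed in Section 5 intertwines the quantum cup product with the pair-of-pants (POP) product, and then to transport compatibility along the continuation maps defining $c^*$. Since $\psi^+$ is the two-sided $\Lambda$-linear inverse of $\psi^-$, any such $\psi^-$ is automatically a ring isomorphism, so this is the only compatibility I need.

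First I would build a one-parameter family of moduli spaces interpolating between the two product structures, in the standard PSS/TQFT spirit. The Floer pair-of-pants on $HF^*(H_0)$ uses a thrice-punctured sphere with Floer cylindrical ends asymptotic to $H_0$-orbits. On the other side, the quantum cup product in $QH^*(M)$ is encoded by $3$-pointed genus $0$ GW invariants together with ordinary cup product. The interpolating surface is a thrice-punctured sphere to which three PSS half-planes are glued: at each puncture one attaches a Floer cap that at its centre carries a marked point evaluated into a locally finite quantum cycle, and whose cylindrical end matches the pair-of-pants end. A generic choice of domain-dependent pair $(H_z,J_z)$, with $H_z$ interpolating monotonically between $H_0$ on the Floer ends and vanishing slope near the marked points, yields a compact (modulo breaking and sphere bubbling) moduli space. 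Compactness uses the conical structure and the maximum principle as in Subsections \ref{Subsection continuation maps}, together with weak$^+$ monotonicity as in \ref{Subsection weak monotonicity} to rule out bubbles in codimension $\leq 1$.

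Next I would degenerate the domain in two standard ways. Pinching the neck between each PSS cap and the pair-of-pants yields configurations consisting of three $\psi^-$-type caps glued by a Floer pair-of-pants; the corresponding count is the POP product of the images $\psi^-(a)*\psi^-(b)*\psi^-(c)$. Pinching so that the pair-of-pants collapses and the three caps merge into a single sphere with three marked points yields a $3$-pointed Gromov-Witten configuration followed by a single $\psi^-$-cap; this count is $\psi^-(a*b*c)$ for the quantum intersection/cup product on $QH^*(M)$. Since both degenerations arise as boundaries of the same cobordism of $1$-dimensional moduli spaces, the resulting chain-level counts are chain homotopic, giving $\psi^-(a*b*c)=\psi^-(a)*\psi^-(b)*\psi^-(c)$ on cohomology. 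Invoking $\psi^+=(\psi^-)^{-1}$ promotes this to a ring isomorphism $HF^*(H_0)\cong QH^*(M)$.

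For the statement about $c^*$, I would recall that $c^*$ is the direct limit of continuation maps $HF^*(H_0)\to HF^*(H)$, and that continuation maps respect the POP product: this is the standard argument that the moduli space of Floer POP solutions on a pair-of-pants whose output end is equipped with a monotone homotopy $H_z$ is cobordant to the same configuration with homotopies on the input ends instead. Direct limits of ring maps are ring maps, so $c^*\co QH^*(M)\cong HF^*(H_0)\to SH^*(M)$ is a $\Lambda$-algebra homomorphism. The extension to the full TQFT structure (arbitrary genus zero Riemann surfaces with positive/negative ends and interior marked points) proceeds by the same degeneration principle applied to a chosen set of pair-of-pants/cap generators of the underlying prop; invariance under domain choice, proved exactly as above, upgrades the ring statement to a TQFT statement. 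The main obstacle in each step is the usual non-compact, non-exact one: controlling the domain-dependent Hamiltonians so that the maximum principle keeps solutions in a fixed compact set while the weak$^+$ monotonicity of Subsection \ref{Subsection weak monotonicity} prevents sphere bubbling from contributing in the codimension-one strata that we need to identify.
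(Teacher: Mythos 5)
Your overall strategy is the paper's: prove that $\psi^-$ intertwines the quantum product with the pair-of-pants product, then use that the pair-of-pants product is compatible with continuation maps and that $\psi^+=(\psi^-)^{-1}$ (the paper defers the degeneration argument to \cite{Ritter3} and \cite{PSS}, the new input being precisely the mutually inverse $\psi^{\pm}$ of \ref{Subsection Construction of the psi- psi+ maps}). However, the concrete moduli problem you describe has a genuine gap. You cap \emph{all three} punctures of the pair-of-pants with marked-point caps and claim compactness from the maximum principle because every cap interpolates ``monotonically''. That is not available: the pair-of-pants has two inputs and one output, and the cap glued to the output end is forced to be of $\psi^+$ type -- its slope must rise from $0$ at the marked point to the slope of $2H_0$ at the Floer end in exactly the direction in which monotonicity in the sense of \ref{Subsection continuation maps} requires it to fall. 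This is precisely where the maximum principle fails and where the paper instead needs $H_0$ bounded, the Monotonicity Lemma \ref{Lemma Monotonicity Lemma}, and locally finite pseudocycles; your compactness claim silently assumes the problem away. Relatedly, the two boundary degenerations you describe do not typecheck: there is no configuration of ``three $\psi^-$-type caps glued by a Floer pair-of-pants'' (one slot must be a Floer output or a $\psi^+$-type cap), and ``$\psi^-(a)*\psi^-(b)*\psi^-(c)$'' is an element of nothing that a closed-domain count could equal, so the claimed identification of the two ends of the cobordism is not established as stated.

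The repair, and in effect what the paper/\cite{Ritter3} does, is to cap only the two input ends: work on a genus zero domain with two interior marked points constrained on quantum (lf) cycles representing $a,b$ and one Floer output end asymptotic to $2H_0$, with all homotopies monotone so the maximum principle of \ref{Subsection continuation maps} applies verbatim. The two degenerations then give $\psi^-(a)\cdot\psi^-(b)$ and $\psi^-(a*b)$ directly as classes in $HF^*(2H_0)$, and the comparison with the product on $HF^*(H_0)$ uses the identification $HF^*(2H_0)\cong HF^*(H_0)$ of Lemma \ref{Lemma continuation isos if no in between Reebs} -- the weight bookkeeping recorded in the Remark after the Lemma ($2\delta$ still below the minimal Reeb period), which your write-up also omits. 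With those corrections, the remainder of your argument -- continuation maps respect the pair-of-pants product, direct limits of ring maps are ring maps, and the same degeneration principle extends to the TQFT operations -- is the intended proof.
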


\begin{corollary}\label{Corollary unit for SH}
 $\psi^-(1)$ is the unit for $SH^*(M)$ (see \cite{Ritter3} for a TQFT proof).
\end{corollary}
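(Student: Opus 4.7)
The plan is to deduce this directly from Lemma \ref{Lemma HF H0 and QH are iso as rings}, which already supplies all the algebraic input required. The first step is the elementary observation that a ring isomorphism must send the multiplicative unit to the multiplicative unit. Since $\psi^-\colon QH^*(M) \to HF^*(H_0)$ is asserted to be an isomorphism of $\Lambda$-algebras (with respect to the quantum cup product on the left and the pair-of-pants product on the right), we immediately obtain $\psi^-(1) = 1_{HF^*(H_0)}$, the intrinsic unit for the pair-of-pants product on $HF^*(H_0)$.

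The second step is to transport this identity into $SH^*(M)$. By construction (\ref{Subsection relation to quantum homology}, \ref{Subsection symplectic cohomology}) the canonical map $c^*\colon QH^*(M) \to SH^*(M)$ factors as $\iota \circ \psi^-$, where $\iota\colon HF^*(H_0) \to SH^*(M)$ is the natural map into the direct limit $\varinjlim HF^*(H_k)$. This $\iota$ is a direct limit of continuation maps, each of which is a $\Lambda$-algebra homomorphism for the pair-of-pants product (the TQFT compatibility recorded in Lemma \ref{Lemma HF H0 and QH are iso as rings}). A direct limit of unital ring homomorphisms is unital, so $\iota$ carries $1_{HF^*(H_0)}$ to the unit of $SH^*(M)$. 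Stringing the two steps together gives $\psi^-(1) = 1_{SH^*(M)}$, as claimed.

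The only non-formal input is the compatibility of continuation maps with the pair-of-pants product cited from Lemma \ref{Lemma HF H0 and QH are iso as rings}, and this is the main technical obstacle. The standard argument is a TQFT deformation: extend the pair-of-pants operation over a one-parameter family of monotone homotopies interpolating between the incoming and outgoing Hamiltonians, and appeal to the conical maximum principle of Section \ref{Section Symplectic cohomology} to keep the resulting moduli spaces compact in the non-exact setting. An alternative, advertised in the parenthetical reference to \cite{Ritter3}, is a direct TQFT proof: realise $\psi^-(1)$ as the count of pseudo-holomorphic sections of the Hamiltonian fibration over a disc with no fibre constraint at the centre, glue such a capped disc onto one of the incoming legs of a pair-of-pants surface, and use a standard gluing/cobordism argument to identify the resulting operation with a continuation cylinder. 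That continuation cylinder represents the identity on $SH^*(M)$, so pair-of-pants product with $\psi^-(1)$ is the identity, forcing $\psi^-(1)$ to be the unit.
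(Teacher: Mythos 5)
Your first step is fine: by Lemma \ref{Lemma HF H0 and QH are iso as rings}, $\psi^-$ is an algebra isomorphism onto $HF^*(H_0)$ (with the product explained in the Remark following that Lemma), so $\psi^-(1)$ is the unit of that ring. The gap is in your second step. The groups $HF^*(H_k)$ for $k\geq 1$ are not unital rings at all — the pair-of-pants product maps $HF^*(H_k)\otimes HF^*(H_\ell)$ to $HF^*(H_{k+\ell})$, and continuations only increase slope — so the continuation maps are not ``unital ring homomorphisms'' and there is no formal principle that passes unitality to the colimit. Compatibility of the product with continuations only gives that $\iota\colon HF^*(H_0)\to SH^*(M)$ is a ring homomorphism, and a non-surjective ring homomorphism can send a unit to a mere idempotent (e.g.\ $\Z\to\Z\times\Z$, $n\mapsto (n,0)$). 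The corollary is stated for general conical $M$, where the image of $HF^*(H_0)$ in $SH^*(M)$ is typically a proper subspace (for $T^*B$ the limit is $H_{\dim B-*}(\mathcal{L}B)$, much larger than $H^*(T^*B)$), so you must verify $\iota(\psi^-(1))\cdot a=a$ for classes $a$ represented only at arbitrarily large slope, and no algebraic manipulation of Lemma \ref{Lemma HF H0 and QH are iso as rings} reaches those classes. (In the paper's special setting the map from $HF^*(H_0)$ to $SH^*(M)$ does turn out to be surjective, but that is Theorem \ref{Theorem SH is lim of S-bRb}, proved later and in the restricted setting, whereas the corollary is invoked for the general theory, e.g.\ in Theorem \ref{Theorem Rg and products}.)

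The input you actually need is precisely what you relegate to an ``alternative'': gluing the cap — the disc whose section count with no constraint at the centre computes $\psi^-$ of the fundamental class — onto one input of the pair of pants produces a cylinder carrying a monotone continuation datum, so pair-of-pants product with $\psi^-(1)$, viewed as a map $HF^*(H_k)\to HF^*(H_k+H_0)$, is chain homotopic to a continuation map and hence becomes the identity in the direct limit; this shows $\psi^-(1)$ acts as the unit on all of $SH^*(M)$. That gluing/TQFT argument is exactly the proof the paper points to with its citation of \cite{Ritter3}; it is not an optional variant but the essential content. Your proposal becomes correct once the final paragraph is promoted from an aside to the actual argument and the formal direct-limit step is discarded.
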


\begin{remark*}
 In the Lemma, we actually compose the pair-of-pants product with a continuation map:
$
HF^*(H_0) \otimes HF^*(H_0) \to HF^*(2H_0) \stackrel{\equiv}{\to} HF^*(H_0).
$
By Lemma \ref{Lemma continuation isos if no in between Reebs}, for small $H_0$ the continuation is an identification.
\end{remark*}

\begin{lemma}\label{Lemma lf homology is sh of -h}\strut\!\!\!\!
 $HF_*(-H_0) \!\cong\! QH_*^{lf}(M)$ as $\Lambda$-algebras via Poincar\'{e} duality Lemma \ref{Lemma poincare duality}.
\end{lemma}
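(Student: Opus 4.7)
The plan is to chain together three $\Lambda$-algebra isomorphisms. First, apply Lemma \ref{Lemma poincare duality} to the time-independent Hamiltonian $H_0$: since $-H_{-t} = -H_0$, the lemma yields a canonical chain-level isomorphism $CF^*(H_0) \cong CF_{2\dim_{\C}M - *}(-H_0)$ given by $x(t)\mapsto x(-t)$ on generators, which on homology gives $HF^*(H_0) \cong HF_{2\dim_{\C}M - *}(-H_0)$. Second, Lemma \ref{Lemma HF H0 and QH are iso as rings} provides $HF^*(H_0) \cong QH^*(M)$ as $\Lambda$-algebras, intertwining the pair-of-pants product with the quantum cup product. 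Third, ordinary Poincar\'e duality (extended $\Lambda$-linearly) gives $QH^*(M) \cong QH^{lf}_{2\dim_{\C}M - *}(M)$, converting quantum cup product into quantum intersection product. Composing the three isomorphisms produces $HF_*(-H_0)\cong QH^{lf}_*(M)$ as $\Lambda$-modules, and the grading conventions of \ref{Subsection geometrical novikov ring} ensure the $\Lambda$-gradings match.

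To upgrade this to a $\Lambda$-algebra isomorphism, the only step that needs verification is that the Poincar\'e duality chain isomorphism of Lemma \ref{Lemma poincare duality} intertwines the cohomological pair-of-pants product on $HF^*(H_0)$ with the homological pair-of-pants product on $HF_*(-H_0)$. At the chain level the cohomological pair-of-pants is defined by counting solutions of a Floer equation on a twice-incoming, once-outgoing pair-of-pants surface with Hamiltonian data obtained by gluing copies of $H_0$ to cylindrical ends; the involution $(s,t)\mapsto(-s,-t)$ applied simultaneously to the domain Riemann surface, to the Hamiltonian ($H_0\mapsto -H_0$), and to the almost complex structure sends the cohomological pair-of-pants configuration bijectively onto the once-incoming, twice-outgoing pair-of-pants configuration defining the homological product, preserving the moduli spaces and the $\Gamma$-equivariance.

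The main obstacle is this last chain-level compatibility: carefully matching the orientations of the pair-of-pants, the incoming/outgoing convention for cylindrical ends, and the sign of the Hamiltonian under $(s,t)\mapsto(-s,-t)$, so that the two TQFT operations are literally identified by the reversal map. Once this is in place, composing the three algebra isomorphisms with the middle one supplied by Lemma \ref{Lemma HF H0 and QH are iso as rings} gives the claim.
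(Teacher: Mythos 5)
Your proposal is correct and follows the route the paper intends: the paper gives no separate argument for this lemma, relying exactly on the chain-level identification of Lemma \ref{Lemma poincare duality} (applied to the time-independent $H_0$, so $-H_{-t}=-H_0$) combined with Lemma \ref{Lemma HF H0 and QH are iso as rings} and Poincar\'e duality between $QH^*(M)$ and $QH^{lf}_*(M)$, under which the quantum cup product is by the paper's own definition the transported quantum intersection product. Your additional chain-level check that the reversal $(s,t)\mapsto(-s,-t)$, $H_0\mapsto -H_0$ matches the cohomological and homological pair-of-pants configurations is precisely the content the paper leaves implicit in the phrase ``via Poincar\'e duality''.
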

%
%
%
%
\section{Hamiltonian symplectomorphisms action on Floer cohomology}
\label{Section Hamiltonian symplectomorphisms act}
%
%
\subsection{$\mathbf{G,\widetilde{G}}$ groups, and the index $\mathbf{I(\widetilde{g})}$}
\label{Subsection G tildeG groups}\label{Subsection Maslov index I of g}
%
Let $(M,\omega)$ be as described in Section \ref{Section Symplectic cohomology}.

Let $\textrm{Ham}(M,\omega)$ denote the group of (smooth) Hamiltonian automorphisms. Let $G$ denote the group of (smooth) loops based at the identity:
$$G=\{g:S^1 \to \textrm{Ham}(M,\omega), g(0)=\textrm{id}\}.$$
Let $K^g$ be the Hamiltonian generating $g_t$ (recall any smooth path $(g_t)_{0\leq t \leq 1}$ of Hamiltonian diffeos yields a smooth 
$K^g: [0,1] \times M \to \R$ with $\partial_t(g_t \cdot) = X_{K^g}(t,g_t\cdot)$
and two choices of $K^g$ differ by a constant. Since $g$ is $1$-periodic, so is $K^g$).

Observe that $g\in G$ acts on $\mathcal{L}_0 M \subset C^{\infty}(S^1,M)$ by
$$
(g\cdot x)(t) = g_t(x(t)),
$$
which lifts to an action on the cover $\widetilde{\mathcal{L}_0 M}$ from \ref{Subsection Cover of the loop space}. Denote by $\widetilde{g}$ a choice of lift. The lifts define a group $\widetilde{G}$ \cite{Seidel3} which is an extension of $G$: $1\to \Gamma \to \widetilde G \to G \to 1$. 
We recall the Maslov index $I(\widetilde{g})$ from \cite{Seidel3}. Any $c=(v,x)\in \widetilde{\mathcal{L}_0M}$ determines (up to homotopy) a symplectic trivialization of $x^*(TM,\omega)$, namely
$$
\tau_c: x^*TM \to S^1 \times (\R^{2n}, \omega_0)
$$
obtained by restricting a trivialization of $v^*(TM,\omega)$. A lift $\widetilde{g}$ induces (up to homotopy) a loop of symplectomorphisms $\ell(t) \in \textrm{Sp}(2n,\R)$ by writing its linearization in terms of this trivialization:
$$
\ell(t) = \tau_{\widetilde{g}(c)}(t) \circ dg_t(x(t)) \circ \tau_c(t)^{-1}.
$$
Then define the Maslov index 
$\boxed{
I(\widetilde{g}) = \textrm{deg}(\ell)
}$
where $\textrm{deg}: H_1(\textrm{Sp}(2n,\R)) \to \Z$ is the isomorphism induced by the determinant $U(n) \to S^1$ on $U(n)\subset \textrm{Sp}(2n,\R)$.

$I(\widetilde{g})$ is independent of the choice of $(v,x)$ and it only depends on the homotopy class $\widetilde{g}\in \pi_0(\widetilde{G})$. The induced map $\pi_0(\widetilde{G}) \to \Z$ is a homomorphism. For $g=\textrm{id}$ and picking $\widetilde{g}$ to be multiplication by $\gamma \in \Gamma$, $2I(\gamma) = 2c_1(\gamma)$ (homological grading).
\begin{example}\label{Example Maslov zero loops}
If $K^g$ is compactly supported, and we pick $\widetilde{g}$ to preserve the constants $(v\equiv x_0,x_0)$ outside the support of $g$, then computing $I$ for $(x_0,x_0)$: $I(\widetilde{g})=0$.
\end{example}
%
%
%
\subsection{$\mathbf{\widetilde{G}}$-action of Floer cohomology}
\label{Subsection Overview of the construction for closed mfds}
%
Define the pull-back $(g^*H,g^*J)$ of $(H,J)$:
$$\boxed{
g^*H_t(y) = H_t(g_t y) - K^g_t(g_t y)}\qquad
\boxed{J_t^g = dg_t^{-1}\circ J_t \circ dg_t}
$$
The following results are just a rephrasing of the analogous results in the closed case (Seidel \cite[Sec.4]{Seidel3}), so we omit the proofs.
\begin{lemma*}
 The pull-back of the action $1$-form is $g^*(dA_H) = dA_{g^*H}$. Therefore the lift $\widetilde{g}$ induces the pull-back $\widetilde{g}^* A_H = A_{g^*H} + \textrm{constant}$.
\end{lemma*}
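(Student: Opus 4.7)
The plan is a direct variational computation on $\mathcal{L}_0 M$, followed by a connectedness argument on the cover. First I would differentiate the action of $g$ on a loop: from the defining equation $\partial_t g_t = X_{K^g_t}\circ g_t$ I get
$$\tfrac{d}{dt}(g_t x(t)) = X_{K^g_t}(g_t x(t)) + dg_t(x(t))\,\dot x(t).$$
Substituting this and the pushed-forward variation $(dg\cdot\xi)_t = dg_t(x(t))\,\xi(t)$ into the definition of $dA_H$ at the loop $g\cdot x$ gives
$$(g^*dA_H)(x)\!\cdot\!\xi = -\!\int_0^1\!\omega\bigl(dg_t\xi,\; dg_t\dot x + X_{K^g_t}(g_tx) - X_{H_t}(g_tx)\bigr)\,dt.$$

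Second, I would exploit two facts: each $g_t$ is a symplectomorphism, so $\omega(dg_t\xi, dg_t\dot x) = \omega(\xi,\dot x)$; and the Hamiltonian identity $\omega(\cdot,X_F)=dF$ gives
$$\omega\bigl(dg_t\xi,\, X_{K^g_t}(g_tx) - X_{H_t}(g_tx)\bigr) = d\bigl((K^g_t-H_t)\circ g_t\bigr)(x)\!\cdot\!\xi = -d(g^*H_t)(x)\!\cdot\!\xi,$$
using the definition $g^*H_t = (H_t - K^g_t)\circ g_t$ and the chain rule. Reassembling, the integrand collapses to $-\omega\bigl(\xi,\dot x - X_{g^*H_t}(x)\bigr)$, which by definition is the integrand of $dA_{g^*H}(x)\!\cdot\!\xi$. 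This proves the identity of $1$-forms on $\mathcal{L}_0 M$.

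For the second statement, both $\widetilde{g}^*A_H$ and $A_{g^*H}$ are well-defined real-valued functions on the cover $\widetilde{\mathcal{L}_0 M}$, and the first part shows that their differentials agree, since each descends to the same $1$-form on $\mathcal{L}_0 M$. Moreover, they transform in the same way under the deck group $\Gamma$: replacing $(v,x)$ by $(\gamma\# v, x)$ subtracts $\omega(\gamma)$ from both $A_H$ and $A_{g^*H}$, and $\widetilde{g}$ (being a lift of $g$) commutes with the $\Gamma$-action, so the difference $\widetilde{g}^*A_H - A_{g^*H}$ is $\Gamma$-invariant. Hence it descends to a locally constant function on the connected space $\mathcal{L}_0 M$, which must be a constant.

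The work is essentially formal once the chain rule and the Hamiltonian identity are applied correctly; the main bookkeeping hazard is simply getting the signs straight in $g^*H_t = (H_t-K^g_t)\circ g_t$ and verifying that the $dg_t$'s on the two arguments of $\omega$ cancel by symplecticity on the kinetic term while producing exactly $-d(g^*H_t)$ on the Hamiltonian term. No compactness or analytic input is needed at this stage — it is a pointwise pullback identity at the level of $1$-forms.
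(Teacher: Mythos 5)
Your computation of the first statement is correct and is exactly the standard argument (the paper itself omits the proof, deferring to Seidel's closed-case treatment, where the same pointwise calculation is carried out): differentiating $t\mapsto g_t(x(t))$, using symplecticity of $g_t$ on the kinetic term, and using $\omega(\cdot,X_F)=dF$ on the Hamiltonian term to produce $-d(g^*H_t)$ is precisely how the identity $g^*(dA_H)=dA_{g^*H}$ is obtained with these conventions.

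One caveat about your deduction of the second statement. The claim that ``$\widetilde{g}$ (being a lift of $g$) commutes with the $\Gamma$-action'' is not automatic: a lift of a homeomorphism only \emph{normalizes} the deck group, so a priori $\widetilde{g}\,\gamma\,\widetilde{g}^{-1}=\gamma'$ for some possibly different $\gamma'\in\Gamma$, and your descent argument really only needs $\omega(\gamma')=\omega(\gamma)$. This is true here, but it uses that $g$ is a loop of \emph{Hamiltonian} diffeomorphisms: if $\ell=(x_r)$ is a loop of loops representing $\gamma$, the conjugated class is represented by $(r,t)\mapsto g_t(x_r(t))$, and writing $G(t,y)=g_t(y)$ one has $G^*\omega=\omega-d(K^g_t\circ g_t)\wedge dt$, whose correction term integrates to zero because $x_0=x_1$; hence the $\omega$-values agree. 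Alternatively, you can avoid this point entirely: the difference $\widetilde{g}^*A_H-A_{g^*H}$ has vanishing differential on $\widetilde{\mathcal{L}_0M}$, and that cover is connected (the map $\pi_1(\mathcal{L}_0M)\to\Gamma$ is onto, since every sphere class can be swept by a loop of loops based at a constant loop), so the difference is constant without descending to $\mathcal{L}_0M$. With either repair the proof is complete.
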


\begin{corollary}\label{Corollary bijection of generators}
 The $1$-orbits $($being zeros of the action $1$-form$)$ biject via
$$
\textrm{Zeros}\,(dA_{g^*H}) \to \textrm{Zeros}\,(dA_H),\; x \mapsto g\cdot x.
$$
The Floer solutions $($being negative gradient trajectories of
 the action $1$-form with respect to the metric induced by the almost complex structure$)$, biject via
$$
\begin{array}{l}
\mathcal{M}(c,c';g^*H,g^*J) \to \mathcal{M}(\widetilde{g}c,\widetilde{g}c';H,J),\; u\mapsto g\cdot u \\
\mathcal{N}(c^-,c^+;g^*H_z,g^*J_z) \to \mathcal{N}(\widetilde{g}c^-,\widetilde{g}c^+;H_z,J_z),\; u\mapsto g\cdot u \\
\end{array}
$$
where $(g\cdot u)(s,t) = g_t(u(s,t))$.
\end{corollary}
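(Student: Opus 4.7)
The plan is to deduce the Corollary from the preceding Lemma by directly unpacking the pull-back definitions; the argument splits into a bijection on $1$-orbits and a bijection on trajectories, both obtained by conjugation with $g$.

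For orbits, the Lemma gives $g^*(dA_H) = dA_{g^*H}$, so the map $x \mapsto g \cdot x$ automatically sends zeros of $dA_{g^*H}$ to zeros of $dA_H$, with inverse $y \mapsto g^{-1} \cdot y$. Concretely, differentiating $y(t) = g_t(x(t))$ gives
\[
\dot y(t) = X_{K^g_t}(y(t)) + dg_t(\dot x(t)),
\]
and combining this with the general identity $dg_t^{-1} \circ X_F \circ g_t = X_{F \circ g_t}$ (valid since $g_t$ is a symplectomorphism) yields $\dot y = X_{H_t}(y) \Longleftrightarrow \dot x = X_{g^*H_t}(x)$. The lift statement follows from the definition of $\widetilde g$ on capped discs in \ref{Subsection G tildeG groups}, together with the pull-back identity $\widetilde g^* A_H = A_{g^*H} + \textrm{const}$ from the Lemma, which preserves the action filtration on generators.

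For Floer trajectories I would verify directly that $v(s,t) := g_t(u(s,t))$ solves the Floer equation for $(H,J)$ if and only if $u$ solves it for $(g^*H, g^*J)$. The key derivatives are $\partial_s v = dg_t(\partial_s u)$ and $\partial_t v = X_{K^g_t}(v) + dg_t(\partial_t u)$, and substituting yields
\[
\partial_s v + J_t(\partial_t v - X_{H_t}(v)) \;=\; dg_t\bigl(\partial_s u + J^g_t(\partial_t u - X_{g^*H_t}(u))\bigr),
\]
after applying the symplectomorphism identity with $F = H_t - K^g_t$ and unfolding the definitions of $J^g$ and $g^*H$. Invertibility of $dg_t$ gives the equivalence, and the same pointwise calculation with $(H_z, J_z)$ replacing $(H_t, J_t)$ covers the continuation moduli spaces $\mathcal{N}$ verbatim, since the $s$-dependence of the data does not interfere.

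The remaining structures match automatically: asymptotic ends by the orbit bijection; Fredholm and Conley--Zehnder indices because the linearized Floer operators are conjugated by $dg_t$; and energies by the action pull-back identity. The whole proof is formal and I do not expect a real obstacle --- the mild bookkeeping point is to check that the $\Gamma$-action on lifts is respected, i.e.\ that $u$ with ends $c,c'$ maps to $g \cdot u$ with ends $\widetilde g c, \widetilde g c'$ at the level of capped discs, which is built into how $\widetilde g$ was defined. This mirrors the closed-case computation in \cite[Sec.4, Sec.5]{Seidel3} which the author invokes.
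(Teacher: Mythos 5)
Your proposal is correct and takes essentially the intended route: the paper omits the proof (citing Seidel, Sec.\,4--5), and its implicit argument --- that $1$-orbits are zeros of the pulled-back action $1$-form and that Floer/continuation solutions are negative gradient trajectories for the pulled-back metric --- is exactly what your chain-rule computation, using $\partial_t(g_t x) = X_{K^g_t}(g_t x) + dg_t(\dot x)$ and the naturality $dg_t^{-1}\circ X_F\circ g_t = X_{F\circ g_t}$, makes explicit, including the verbatim extension to the $s$-dependent continuation data. The bijectivity via $v\mapsto g^{-1}\cdot v$ and the compatibility with the lifts $\widetilde{g}$ on capped orbits are handled as you indicate, so there is no gap.
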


\begin{theorem}\label{Theorem commutative diagram for action}
 For $g\in G$ with lift $\widetilde{g}$, we obtain an isomorphism
$$
\mathcal{S}_{\widetilde{g}}: CF^*(H) \to CF^{*+2I(\widetilde{g})}(g^*H), c \mapsto \widetilde{g}^{-1}\cdot c
$$
with $\mathcal{S}_{\widetilde{g}}^{-1}=\mathcal{S}_{\widetilde{g}^{-1}}$ using the reversed loop. These commute with continuations:
$$
\xymatrix@R=12pt{ CF^*(H^-,J^-)
\ar@{<-}_{g^*\varphi}[d] \ar@{->}[r]^-{\mathcal{S}_{\widetilde{g}}} & CF^{*+2I(\widetilde{g})}(g^*H^-,g^*J^-)
\ar@{<-}^{\varphi}[d] \\
CF^*(H^+,J^+)
\ar@{->}[r]^-{\mathcal{S}_{\widetilde{g}}} & CF^{*+2I(\widetilde{g})}(g^*H^+,g^*J^+) 
}
$$
where $\varphi$ is a monotone continuation map, and $g^*\varphi$ is the continuation map using 
$$
g^*H_z(y) = H_z(g_t y) - K^g_t(g_t y)
\qquad \qquad 
g^*J_z=dg_t^{-1}\circ J_z \circ dg_t.$$ 
The commutativity follows because the generators and the moduli spaces defining the continuation maps biject by Corollary \ref{Corollary bijection of generators}. In particular, one can check \cite[Lemma 4.1]{Seidel3} that $(g^*H,g^*J)$ is regular if $(H,J)$ is, and similarly for the continuation data.
\end{theorem}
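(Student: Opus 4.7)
The plan is to follow Seidel's proof from the closed case (\cite{Seidel3}), leveraging the fact that Corollary \ref{Corollary bijection of generators} already supplies the key bijections of $1$-orbits and of Floer and continuation moduli spaces. Three things remain to check: $(i)$ $\mathcal{S}_{\widetilde{g}}$ is a well-defined $\Lambda$-linear map with grading shift $2I(\widetilde{g})$; $(ii)$ it is a chain map with $\mathcal{S}_{\widetilde{g}}^{-1}=\mathcal{S}_{\widetilde{g}^{-1}}$; $(iii)$ it intertwines continuation maps in the way depicted.

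For $(i)$, the bijection $\textrm{Zeros}(dA_{g^*H}) \to \textrm{Zeros}(dA_H)$, $x\mapsto g\cdot x$, lifts to an invertible action of $\widetilde{g}$ on the cover $\widetilde{\mathcal{L}_0 M}$, so $c \mapsto \widetilde{g}^{-1}c$ is a bijection on underlying generators. The grading shift is a Conley-Zehnder computation in the spirit of \ref{Subsection Maslov index I of g}: choosing a trivialization $\tau_c$ of $x^*TM$ extending over $v$, the map $dg_t^{-1}\circ \tau_c$ is a trivialization of $(g^{-1}\!\cdot\! x)^*TM$ extending over the disc underlying $\widetilde{g}^{-1}c$. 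In these trivializations, the linearized flow of $g^*H$ is conjugate to that of $H$ by the loop $\ell(t) \in \textrm{Sp}(2n,\R)$ from \ref{Subsection Maslov index I of g}, and the loop-composition property of the Conley-Zehnder index (\cite{Salamon}) yields $\mu_{g^*H}(\widetilde{g}^{-1}c)=\mu_H(c)+2I(\widetilde{g})$. $\Lambda$-linearity is immediate because gluing in spheres commutes with $\widetilde{g}$, so the $\Gamma$-action on $CF^*$ is intertwined. Finiteness of $A_{g^*H}$-values follows from the lemma in \ref{Subsection Overview of the construction for closed mfds} ($\widetilde{g}^*A_H = A_{g^*H}+\textrm{const}$).

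For $(ii)$, a direct computation (using the definitions of $g^*H$ and $g^*J$) shows that if $u(s,t)$ solves the Floer equation for $(H,J)$ then $g_t^{-1}(u(s,t))$ solves it for $(g^*H, g^*J)$; combined with Corollary \ref{Corollary bijection of generators} this is a diffeomorphism of moduli spaces preserving dimension. In particular the linearized Cauchy-Riemann operators are conjugate, so regularity of $(H,J)$ implies regularity of $(g^*H,g^*J)$, and the counts of rigid trajectories match up under $\widetilde{g}^{-1}$. This proves $\mathcal{S}_{\widetilde{g}}$ is a chain map, and $\mathcal{S}_{\widetilde{g}}^{-1}=\mathcal{S}_{\widetilde{g}^{-1}}$ is then tautological, applying $\widetilde{g}$ and $\widetilde{g}^{-1}$ in sequence on $\widetilde{\mathcal{L}_0 M}$ and noting $K^{g^{-1}}_t = -K^g_t \circ g_t$ recovers the pull-back convention under reversal.

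For $(iii)$, one verifies that the pull-back $(g^*H_z, g^*J_z)$ of a monotone homotopy remains monotone with the same slopes at infinity (since $K^g$ is $t$-periodic and $g$ preserves the contact hypersurfaces $\{R=\textrm{const}\}$ in the applications of interest, so the slopes of $H_z$ and $g^*H_z$ agree). The bijection of continuation moduli $\mathcal{N}$-spaces from Corollary \ref{Corollary bijection of generators} then matches the counts defining $\varphi$ and $g^*\varphi$ after relabeling via $\widetilde{g}$, giving commutativity of the square. The main obstacle is the careful bookkeeping in $(i)$, namely tracking sign and normalization conventions (our sign convention for $H$, for $K^g$, for the action form, and for the Conley-Zehnder index) so that the shift appears as $+2I(\widetilde{g})$; once this is settled, the rest is a mechanical consequence of Corollary \ref{Corollary bijection of generators}.
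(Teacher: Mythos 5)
Your overall strategy is the paper's own: the paper omits the proof, noting it is a rephrasing of Seidel's closed-case argument, and the entire content is what you extract from Corollary \ref{Corollary bijection of generators} — the bijection of $1$-orbits and of Floer/continuation moduli spaces under $u\mapsto g\cdot u$, the Conley--Zehnder shift by $2I(\widetilde{g})$ via the loop property of the index, the constancy of $\widetilde{g}^*A_H-A_{g^*H}$ for the Novikov finiteness condition, and the tautological transfer of regularity. Items $(i)$ and $(ii)$ are fine at the level of detail the paper itself demands (modulo the convention bookkeeping you already flag).

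In $(iii)$, however, your parenthetical justification is wrong: the slopes of $H_z$ and $g^*H_z$ do \emph{not} agree in the applications of interest. By the Lemma in \ref{Subsection The Floer construction}, if $K^g$ has slope $\kappa$ at infinity then $g^*H$ has slope $m-\kappa$ (indeed $g^*H_k=H_{k-1}$; this slope shift is precisely the mechanism that makes the whole paper work), and for a general $g\in G$ — the generality in which this theorem is stated — $g^*H_z$ need not be linear at infinity nor $g^*J_z$ conical, so one cannot argue that $g^*\varphi$ satisfies the usual monotonicity/maximum-principle hypotheses directly. The correct, and intended, justification is that no such verification is needed: $g^*\varphi$ is by definition the count over $\mathcal{N}(c^-,c^+;g^*H_z,g^*J_z)$, and these spaces biject with $\mathcal{N}(\widetilde{g}c^-,\widetilde{g}c^+;H_z,J_z)$ by Corollary \ref{Corollary bijection of generators}, so compactness and regularity (hence well-definedness of $g^*\varphi$ and the matching of rigid counts giving the commuting square) are inherited tautologically from the data $(H_z,J_z)$ — this is exactly the point of Remark \ref{Remark warning on non-linear Hamiltonians}. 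Replace the slope claim by this transport argument and your proof of $(iii)$ is complete; as written, the claim contradicts the paper's own computation and should be deleted.
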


\section{Construction of the automorphism on symplectic cohomology}
\label{Section Construction of the automorphism}
%
\subsection{Hamiltonian symplectomorphisms of linear growth}
\label{Subsection Hamiltonian symplectomorphisms linear at infinity}
Recall $\textrm{Ham}_{\ell}(M,\omega)$ from Section \ref{Subsection Intro general theory}. Let $G_{\ell}\subset G$ be the subgroup of all $g: S^1 \to \mathrm{Ham}_{\ell}(M,\omega)\subset \mathrm{Ham}(M,\omega)$, with $g_0=\mathrm{id}$. So $g$ is generated by a (typically time-dependent) Hamiltonian $K_t^g:M \to \R$ with $K_t^g(y) = \kappa_t R$, some $\kappa_t\in \R$, where $R(y)\in [1,\infty)$ is the radial coordinate on the conical end. 

Similarly, define $G_{\ell\geq 0},G_{\ell>0}$ by requiring $\kappa_t\geq 0,\kappa_t>0$.

We will make the simplifying assumption that the slopes $\kappa_t$ are time-independent for large $R$. The next technical remark explains that, by a time-reparametrization trick, one can always homotope $g_t$ within $G_{\ell}$ to ensure this.

\begin{remark}[Technical Remark]\label{Remark Technical Rmk about time indep of K at infty}
 The Hamiltonian vector field of $K_t$ for large $R$ is $X_{K_t}=\kappa_t Y$, where $Y$ is the Reeb vector field. Consider a flowline $\gamma(t)$, so $\gamma'(t)=\kappa_t Y$. The time-reparametrized curve $\mu(t)=\gamma(\alpha(t))$ satisfies $$\mu'(t)=\alpha'(t)\kappa_{\alpha(t)} Y = \lambda'(\alpha(t)) Y,$$ where $\lambda(t)=\int_0^{\alpha(t)} \kappa_T \, dT$. If we ensure that $\lambda'=c$ is constant and $\alpha(1)=1$, then the homotopy from $g_t$ to $g_{\alpha(t)}$ obtained by interpolating the time-coordinates allows us to assume that $K_t$ is time-independent at infinity (after the homotopy, $K_t = cR$ at infinity). To ensure those conditions, we choose $c=\int_0^1 \kappa_T \, dT $, and we want $\int_0^{\alpha(t)} \kappa_T \,dT = ct$  and $\alpha(1)=1$. If $\kappa_t>0$ everywhere then $\int_0^{t} \kappa_T \,dT$ is strictly increasing with $t$, so there is a unique solution $\alpha(t)$. Similarly if $\kappa_t<0$. If $\kappa_t$ is zero or changes sign, then the flow $g_t$ of $\kappa_t Y$ stops or reverses respectively, but a further time-reparametrization (and thus a homotopy of $g_t$) will undo this flow.

We remark that if two loops $g_t,g'_t$ are homotopic in $G_{\ell}$ and their Hamiltonians $K_t,K_t'$ are time-independent at infinity, then in fact they have the same slopes at infinity and there is a homotopy $(K_{t,r})_{0\leq r \leq 1}$ which is time-independent at infinity. For such homotopies, the maps $\mathcal{S}_{\widetilde{g}}$, $\mathcal{S}_{\widetilde{g'}}$ are the same (up to a continuation map, as explained in Corollary \ref{Corollary properties of Sg}), which is proved by the same homotopy-invariance result as in the closed case \cite[Sec.5]{Seidel3} (in the case $c<0$ one first runs the argument for $g^{-1}$, and then one applies $\mathcal{S}_{\widetilde{g}^2}$). Therefore $\mathcal{S}_{\widetilde{g}}$ really only depends on $(g,\widetilde{g})$ up to homotopy, so the above simplifying assumption is legitimate.
\end{remark}

%
\begin{lemma}\label{Lemma g*J g*H are linear}
$g^*J$ is conical and $g^*H$ is linear at infinity of slope $\mathrm{slope}(H)-\kappa$.
\end{lemma}
\begin{proof}
$g_t$ preserves $R$ at infinity since $K=K^g$ is radial there, therefore $dg_t$ preserves the Liouville vector field $Z=R \partial_R$. Moreover $g_t$ preserves the Reeb vector field $Y=X_R$, because the Lie derivative $\mathcal{L}_{X_K} Y$ vanishes:
$$
\mathcal{L}_{X_K} Y = [X_K,Y]=[X_K,X_R] = -X_{\omega(X_K,X_R)}=0,
$$
since $X_K=\partial_R K \cdot  X_R$. Since $dg_t$ preserves $Z$ and $Y$, $g^*J=dg_t^{-1}\circ J \circ dg_t$ is of contact type at infinity since $J$ is. 
Moreover, for $H$ of slope $m$ at infinity,
$
g^*H = H \circ g_t - K^g \circ g_t = (m-\kappa)R+\textrm{constant}
$
for large $R$.
\end{proof}
%
%
%

The Lemma ensures that the data $(g^*H,g^*J)$ can be used to compute the symplectic cohomology, as we let the slope of $H$ grow to infinity.

Thus, taking the direct limit of the maps $\mathcal{S}_{\widetilde{g}}: CF^*(H,J) \to CF^{*+2I(\widetilde{g})}(g^*H,g^*J)$ from Theorem \ref{Theorem commutative diagram for action}, we obtain the $\Lambda$-module automorphism:
$$
\boxed{[\mathcal{S}_{\widetilde{g}}]: SH^*(M,\omega) \stackrel{\sim}{\longrightarrow}  SH^{*+2I(\widetilde{g})}(M,\omega)}$$
\begin{remark*}
 We defined $\mathcal{S}_{\widetilde{g}}$ using $\widetilde{g}^{-1}$ instead of $\widetilde{g}$ because it should act by $\widetilde{g}$ on $CF_*$, and so on $\mathrm{Hom}_{\Lambda}(CF_*,\Lambda)$ it acts by $(\widetilde{g}\phi)(\cdot)=\phi(\widetilde{g}^{-1}\cdot)$, but we tacitly identified $CF^*\equiv CF_*$ as $\Lambda$-modules, so we should act by $\widetilde{g}^{-1}$ on $CF^*$.
\end{remark*}
The following results are just a rephrasing of the analogous results in the closed case (Seidel \cite[Sec.4, Sec.5]{Seidel3}), so we omit the proofs.
\begin{corollary}\label{Corollary properties of Sg}
 $\mathcal{S}_{\widetilde{g}}$ is the identity for $(g,\widetilde{g})=(\mathrm{id},\mathrm{id})$, and is multiplication by $\gamma$ for $(g,\widetilde{g})=(\mathrm{id},\gamma)$. It is a right-action: 
$
\mathcal{S}_{\widetilde{g_1}} \circ \mathcal{S}_{\widetilde{g_2}} = \mathcal{S}_{\widetilde{g_2}\widetilde{g_1}}
$
\emph{(}although in fact $\mathcal{S}_{\widetilde{g_2}\widetilde{g_1}}=\mathcal{S}_{\widetilde{g_1}\widetilde{g_2}}$ since we will show that $\mathcal{S}_{\widetilde{g}}$ is pair-of-pants product by the \emph{even} degree element $\mathcal{S}_{\widetilde{g}}(1)$\emph{)}. 
It is homotopy invariant: if $(g_{r,t})_{0\leq r\leq 1}$ is a smooth family of Hamiltonian automorphisms in $G_{\ell}$ based at $g_{r,0}=\mathrm{id}$, and $(\widetilde{g}_{r,t})_{0\leq r\leq 1}$ is a smooth lift to $\widetilde{G}$, then on cohomology
$
[\varphi]\circ [\mathcal{S}_{\widetilde{g_0}}] = [\mathcal{S}_{\widetilde{g_1}}]
$
where $\varphi: HF^*(g_0^*H,g_0^*J) \to HF^*(g_1^*H,g_1^*J)$ is the continuation isomorphism. In particular, the choice of $K^g_t$ generating $g_t$ does not affect the map $\mathcal{S}_{\widetilde{g}}$ on cohomology.
\end{corollary}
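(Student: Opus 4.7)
Properties (1) and (2) will follow immediately from the chain-level formula $\mathcal{S}_{\widetilde{g}}(c) = \widetilde{g}^{-1}\cdot c$: when $\widetilde{g}$ is the identity lift the deck action on $\widetilde{\mathcal{L}_0 M}$ is trivial, and when $\widetilde{g} = \gamma \in \Gamma$ covers the identity, the deck action is ``gluing in $\gamma$'', which agrees (up to the $\widetilde{g}$ vs. $\widetilde{g}^{-1}$ convention of the preceding remark) with the $\Lambda$-multiplication by $\gamma$ on $CF^*$, with grading shift $2c_1(\gamma) = 2I(\gamma)$. For the right-action (3), I would first verify covariance of pull-back, $g_1^{*}(g_2^{*} H) = (g_2 g_1)^{*} H$ and similarly for $J$, which follows from the standard composition rule $K^{g_2 g_1}_t((g_2 g_1)_t y) = K^{g_1}_t((g_1)_t y) + K^{g_2}_t((g_2)_t(g_1)_t y)$ for Hamiltonians of composed flows. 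Given covariance, the identity $\mathcal{S}_{\widetilde{g_1}}\circ \mathcal{S}_{\widetilde{g_2}} = \mathcal{S}_{\widetilde{g_2}\widetilde{g_1}}$ reduces to $(\widetilde{g_2}\widetilde{g_1})^{-1} = \widetilde{g_1}^{-1}\widetilde{g_2}^{-1}$, and the gradings match by additivity of the Maslov index (\ref{Subsection Maslov index I of g}).

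The substantive point is homotopy invariance (4), and my strategy is to realize $\varphi \circ \mathcal{S}_{\widetilde{g_0}}$ itself as a particular continuation and then identify it with $\mathcal{S}_{\widetilde{g_1}}$. I would pick a smooth cutoff $\rho: \R \to [0,1]$ with $\rho(s) = 0$ for $s \ll 0$ and $\rho(s) = 1$ for $s \gg 0$, and consider the parametrized Floer data $(\widetilde{H}_z, \widetilde{J}_z) := (g_{\rho(s),t}^{*} H,\, g_{\rho(s),t}^{*} J)$, interpolating from $(g_0^{*}H, g_0^{*}J)$ at $s = -\infty$ to $(g_1^{*}H, g_1^{*}J)$ at $s = +\infty$. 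The parametrized analogue of Corollary \ref{Corollary bijection of generators}, obtained by pulling back each cylinder $u$ by the family $g_{\rho(s),t}$, will biject the moduli spaces for $(\widetilde{H}_z, \widetilde{J}_z)$ with Floer cylinders for the fixed data $(H,J)$ whose asymptotic lifts are twisted by $\widetilde{g_0}$ on the left and $\widetilde{g_1}$ on the right. These cylinders represent $\mathcal{S}_{\widetilde{g_1}} \circ \mathcal{S}_{\widetilde{g_0}}^{-1}$ on the pulled-back complexes, so the resulting continuation $\varphi_\rho$ satisfies $[\varphi_\rho] \circ [\mathcal{S}_{\widetilde{g_0}}] = [\mathcal{S}_{\widetilde{g_1}}]$; invoking independence of continuations from the choice of homotopy (\ref{Subsection properties of continuation maps}) then replaces $\varphi_\rho$ by an arbitrary $\varphi$.

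The hard part will be compactness: the intermediate data $g_{\rho(s),t}^{*}H$ and $g_{\rho(s),t}^{*}J$ are generally neither linear nor conical at infinity, so the maximum-principle argument of \ref{Subsection continuation maps} does not apply verbatim. This is precisely the issue flagged in Remark \ref{Remark warning on non-linear Hamiltonians}, and I expect to handle it by the ladder argument indicated there: since $g_{r,t}$ varies continuously on the compact square $(r,t) \in [0,1] \times S^1$, the family $\widetilde{H}_z$ is sandwiched between two linear Hamiltonians and $\widetilde{J}_z$ remains uniformly tame, which suffices to compactify the parametrized moduli spaces up to broken trajectories. Statement (5) will then fall out as a special case of (4): two generators $K, K'$ for the same loop $g$ differ by a function $c(t)$ of time, and the linear interpolation $K + r\,c(t)$ realizes the constant family $g_{r,t} \equiv g_t$ with varying Hamiltonian generator, so (4) yields $[\mathcal{S}_{\widetilde{g}}]_K = [\mathcal{S}_{\widetilde{g}}]_{K'}$ on cohomology.
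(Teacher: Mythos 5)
The paper itself offers no argument here (it defers to Seidel's closed-case proofs in \cite{Seidel3}), so your sketch has to stand on its own. Parts (1)--(3), and the reduction of (5) to (4) via the interpolation $K+r\,c(t)$, are fine. The gap is in the key step of (4): the bijection of Corollary \ref{Corollary bijection of generators} uses crucially that $g$ is independent of $s$, and your ``parametrized analogue'' fails. If $u$ solves the continuation equation for the data $(g_{\rho(s),t}^*H,\,g_{\rho(s),t}^*J)$ and you set $v(s,t)=g_{\rho(s),t}(u(s,t))$, then $\partial_s v$ picks up the extra term $\rho'(s)\,(\partial_r g_{r,t})|_{r=\rho(s)}(u)$, so $v$ is \emph{not} a Floer cylinder for the fixed data $(H,J)$: it solves a Floer equation perturbed by an additional Hamiltonian term in the $ds$-direction (the Hamiltonian generating the $r$-motion of the family), i.e.\ an equation of the type governed by a Hamiltonian-valued one-form as in Remark \ref{Remark Floer continuation gives admissible Jhat}. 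As a sanity check: were your bijection literally true, index-zero solutions for $s$-independent data $(H,J)$ would be constant, so $\varphi_\rho\circ\mathcal{S}_{\widetilde{g_0}}=\mathcal{S}_{\widetilde{g_1}}$ would hold on the chain level and independently of the homotopy — too strong for a continuation-type statement, which signals the error. To repair it you must either work with the transformed equation honestly (then identify its count with a continuation map by a further homotopy of the perturbation one-form rel ends, with its own transversality/compactness discussion), or avoid the transformation: subdivide the homotopy into $C^\infty$-small pieces, use that the orbits $g_{r}^{-1}\cdot x$ and their lifts vary smoothly with no births or deaths, and run an action/energy filtration argument showing each small continuation is the canonical identification. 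This is essentially the content of Seidel's argument that the paper is invoking.

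Relatedly, your compactness fix is not sufficient as stated. For the intermediate data there is no tautological bijection with $(H,J)$-moduli spaces (that is exactly the point above), and being sandwiched between two linear Hamiltonians with a ``uniformly tame'' $J$ does not yield a $C^0$-bound: the maximum principle of \ref{Subsection continuation maps} needs $J$ of contact type and a monotone radial Hamiltonian at infinity, neither of which $g_{\rho(s),t}^*H$, $g_{\rho(s),t}^*J$ satisfy, and the ladder argument of Remark \ref{Remark warning on non-linear Hamiltonians} concerns the computation of the direct limit, not compactness for a single continuation with non-conical almost complex structure. The standard way to get the estimate is precisely to transform back by the family and control the extra $\rho'(s)$-term within the admissibility framework of Definition \ref{Definition admissible J hat}, so the analytic issue and the algebraic gap above are the same missing ingredient. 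Finally, a cosmetic point: with the paper's conventions a continuation map goes from the $s=+\infty$ data to the $s=-\infty$ data, so your interpolation (with $g_0^*H$ at $-\infty$) produces a map $HF^*(g_1^*H)\to HF^*(g_0^*H)$, opposite to the $\varphi$ in the statement; flip $\rho$.
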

%
%
%
%
%
\subsection{Floer theoretic construction of $\mathbf{r_{\widetilde{g}}}$ and $\mathbf{\mathcal{R}_{\widetilde{g}}}$}
\label{Subsection The Floer construction}
In order to construct the endomorphism $\mathcal{R}_{\widetilde{g}}=\varphi_H\circ S_{\widetilde{g}}$ of $HF^*(H)$ for a \emph{monotone} continuation map $\varphi_H: HF^*(g^*H,g^*J) \to HF^*(H,J)$, one needs $g^*H\leq H$, so we require $g\in G_{\ell\geq 0}$. 
Just as before (see Remark \ref{Remark Technical Rmk about time indep of K at infty}) we will assume that, for large $R$, the Hamiltonian $K^g_t = \kappa R$ is time-independent. Since we work with $G_{\ell \geq 0}$, the slope $\kappa \geq 0$. We will treat the case $\kappa>0$, since the case $\kappa=0$ is rather easy (see Section \ref{Subsection Comparison with the Seidel representation}). 

By these assumptions, the Reeb flow is an $S^1$-action. So, after rescaling $\omega$, we may assume the time $1$ Reeb flow is a Hamiltonian $S^1$-action which is not an iterate.

\textbf{Examples:}  $S^1$-action of $g_t=e^{2\pi i t}$ on $\C^{m+1}$; rotation in the fibres $g_t=e^{2\pi i t}$ of line bundles; $S^1$-actions on Hyperk\"{a}hler ALE spaces $(X,\omega_I)$ (see \cite{Ritter2}).
%
%
%
%
%
%
%
%
\strut\\
\noindent
\begin{tabular}{lllll}
Denote: & $H_0 = \textrm{ generic Hamiltonian with slope }0<\delta<(\textrm{min Reeb period})$\\ &
$H_1 = H_0 + K^g \quad (\textrm{generic Hamiltonian with slope } \delta+\kappa)$\\ &
$H_k = H_0 + k K^g  \quad (\textrm{generic Hamiltonian with slope } \delta+k\kappa, k\in \Z)$
\end{tabular}\\
Then, by Lemma \ref{Lemma g*J g*H are linear},
$
\boxed{g^*H_k = H_{k-1}}
$
%
%
%
%

In \ref{Subsection Construction of the psi- psi+ maps} we construct the chain maps $\psi^{\pm}$ which are homotopy inverse to each other: 
$$
\psi^-: QC^*(M) \to CF^*(H_0)\qquad \psi^+: CF^*(H_0) \to QC^*(M).
$$
To ensure $\psi^+$ exists we actually need $H_0$ to be bounded at infinity. So take $H_0$ a generic $C^2$-small Hamiltonian, with $H_0=h_0(R)$ convex for $R\gg 0$ and $h_0'(R)\to 0^+$ as $R \to \infty$ ($H_0$ is not linear at infinity, but that is not an issue). $H_0,H_k$ should be thought of as perturbations of  slope $0,k\kappa$ Hamiltonians.

\begin{definition*} 
 Define
$$
\begin{array}{lll}
\mathcal{R}_{\widetilde{g}} = \varphi_0 \circ \mathcal{S}_{\widetilde{g}}: CF^*(H_{0}) \to CF^{*+2I(\widetilde{g})}(H_{0}) 
\\
r_{\widetilde{g}} = \psi^+ \circ \mathcal{R}_{\widetilde{g}}\circ \psi^-: QC^*(M) \to QC^{*+2I(\widetilde{g})}(M)
\end{array}$$
where $\varphi_0$ is the monotone continuation $($for a homotopy from $H_{0}$ to $g^*H_{0}=H_{-1})$.
$$\boxed{
\xymatrix@C=10pt{ QC^*(M) \ar@{->}^-{\psi^-}[r] \ar@{->}@/_5ex/_-{r_{\widetilde{g}}}[rrrr]
&
CF^*(H_{0})  \ar@{->}^-{\mathcal{S}_{\widetilde{g}}}[r]  \ar@{->}@/_2ex/_-{\mathcal{R}_{\widetilde{g}}}[rr]
&
CF^{*+2I(\widetilde{g})}(H_{-1})   \ar@{->}^-{\varphi_0}[r]
&
CF^{*+2I(\widetilde{g})}(H_{0})   \ar@{->}^-{\psi^+}[r]
&
QC^{*+2I(\widetilde{g})}(M)
}
}
$$
\end{definition*}

\begin{theorem}\label{Theorem SH is lim of S-bRb}  $\mathcal{S}_{\widetilde{g}}^{-k} \mathcal{R}_{\widetilde{g}}^{k}:HF^*(H_{0}) \to HF^*(H_{k})$ is a continuation map. For $k\geq \mathrm{dim}\, H^*(M)$, we may identify $SH^*(M)\equiv \mathrm{image}(\mathcal{S}_{\widetilde{g}}^{-k} \mathcal{R}_{\widetilde{g}}^{k})$ and $ 
c^*\equiv (\mathcal{S}_{\widetilde{g}}^{-k} \mathcal{R}_{\widetilde{g}}^{k})\circ \psi^- =(\mathcal{S}_{\widetilde{g}}^{-k} \psi^-)\circ r_{\widetilde{g}}^k: QH^*(M) \to SH^*(M)$. Thus
$
\boxed{SH^*(M) \cong QH^*(M)/\ker r_{\widetilde{g}}^k}
$
\end{theorem}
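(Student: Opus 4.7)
The plan is to first verify that $\mathcal{S}_{\widetilde{g}}^{-k}\mathcal{R}_{\widetilde{g}}^{k}$ coincides on cohomology with the monotone continuation $\Phi_0^k\co HF^*(H_0)\to HF^*(H_k)$, and then to exploit finite-dimensionality of $HF^*(H_0)\cong QH^*(M)$ over $\Lambda$ to collapse the direct limit defining $SH^*(M)$ to the stable image of $\mathcal{R}_{\widetilde{g}}$. Write $\Phi_a^b\co HF^*(H_a)\to HF^*(H_b)$ for the monotone continuation when $a\leq b$, so that $\varphi_0=\Phi_{-1}^0$ and $\Phi_b^c\circ\Phi_a^b=\Phi_a^c$ on cohomology. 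Since $g^*H_k=H_{k-1}$, Theorem \ref{Theorem commutative diagram for action} yields the cohomology-level intertwining $\mathcal{S}_{\widetilde{g}}\circ\Phi_a^b=\Phi_{a-1}^{b-1}\circ\mathcal{S}_{\widetilde{g}}$, equivalently $\mathcal{S}_{\widetilde{g}}^{-1}\Phi_a^b\mathcal{S}_{\widetilde{g}}=\Phi_{a+1}^{b+1}$. Substituting $(a,b)=(-1,0)$ gives $\mathcal{S}_{\widetilde{g}}^{-1}\mathcal{R}_{\widetilde{g}}=\Phi_0^1$, and the induction
\[
\mathcal{S}_{\widetilde{g}}^{-(k+1)}\mathcal{R}_{\widetilde{g}}^{k+1}
=\mathcal{S}_{\widetilde{g}}^{-1}\bigl(\mathcal{S}_{\widetilde{g}}^{-k}\mathcal{R}_{\widetilde{g}}^{k}\bigr)\varphi_0\mathcal{S}_{\widetilde{g}}
=\mathcal{S}_{\widetilde{g}}^{-1}\Phi_0^k\Phi_{-1}^0\mathcal{S}_{\widetilde{g}}
=\mathcal{S}_{\widetilde{g}}^{-1}\Phi_{-1}^k\mathcal{S}_{\widetilde{g}}
=\Phi_0^{k+1}
\]
settles the first assertion.

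For the direct limit, the slopes $\delta+k\kappa$ of $H_k$ tend to infinity, so $\{H_k\}_{k\in\N}$ is cofinal and $SH^*(M)=\varinjlim_k HF^*(H_k)$ under the transitions $\Phi_k^{k+1}$. Conjugation by the isomorphism $\iota_k:=\mathcal{S}_{\widetilde{g}}^{-k}\co HF^*(H_0)\xrightarrow{\sim} HF^*(H_k)$ converts this directed system to the constant system $V\xrightarrow{\mathcal{R}_{\widetilde{g}}}V\xrightarrow{\mathcal{R}_{\widetilde{g}}}V\xrightarrow{\mathcal{R}_{\widetilde{g}}}\cdots$ on the finite-rank $\Lambda$-module $V:=HF^*(H_0)$ of rank $\dim H^*(M)$. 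The Fitting decomposition of $\mathcal{R}_{\widetilde{g}}\in\mathrm{End}_{\Lambda}(V)$ shows that $\mathcal{R}_{\widetilde{g}}^{k}(V)$ stabilizes for $k\geq\dim H^*(M)$ and that $\mathcal{R}_{\widetilde{g}}$ restricts to an automorphism on this stable subspace; hence the direct limit is canonically this stable image, which under $\iota_k$ corresponds to $\mathrm{image}(\mathcal{S}_{\widetilde{g}}^{-k}\mathcal{R}_{\widetilde{g}}^{k})\subset HF^*(H_k)$.

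Finally, by the construction of $c^*$ in \ref{Subsection relation to quantum homology} as $\psi^-$ followed by the canonical inclusion $HF^*(H_0)\hookrightarrow SH^*(M)$, this inclusion becomes $\mathcal{S}_{\widetilde{g}}^{-k}\mathcal{R}_{\widetilde{g}}^{k}$ under the identification just made, so $c^*\equiv(\mathcal{S}_{\widetilde{g}}^{-k}\mathcal{R}_{\widetilde{g}}^{k})\circ\psi^-$. Using $r_{\widetilde{g}}=\psi^+\mathcal{R}_{\widetilde{g}}\psi^-$ together with $\psi^-\psi^+\simeq\mathrm{id}$ gives $\mathcal{R}_{\widetilde{g}}^{k}\psi^-=\psi^-r_{\widetilde{g}}^{k}$, hence $c^*\equiv(\mathcal{S}_{\widetilde{g}}^{-k}\psi^-)\circ r_{\widetilde{g}}^{k}$. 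Since $\mathcal{S}_{\widetilde{g}}^{-k}\psi^-\co QH^*(M)\to HF^*(H_k)$ is an isomorphism (of graded $\Lambda$-modules up to degree shift), we conclude $\ker c^*=\ker r_{\widetilde{g}}^{k}$ and $c^*$ surjects onto its image, whence $SH^*(M)\cong QH^*(M)/\ker r_{\widetilde{g}}^{k}$ by the first isomorphism theorem. The main subtlety I expect is that the intertwining of Step 1 holds only on cohomology (or up to chain homotopy), so the induction and the identifications must be phrased cohomologically rather than at the chain level; a secondary issue is cofinality of $\{H_k\}$ in the limit system given that $g^*H$ may fail to be linear at infinity, which I would handle via the ladder argument of Remark \ref{Remark warning on non-linear Hamiltonians}.
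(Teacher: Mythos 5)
Your proposal is correct and follows essentially the same route as the paper: the intertwining from Theorem \ref{Theorem commutative diagram for action} together with the fact that continuations depend only on slopes identifies $\mathcal{S}_{\widetilde{g}}^{-k}\mathcal{R}_{\widetilde{g}}^{k}$ with the composite monotone continuation $HF^*(H_0)\to HF^*(H_k)$ (you run the induction directly on $\mathcal{S}_{\widetilde{g}}^{-k}\mathcal{R}_{\widetilde{g}}^{k}$ where the paper first shows $\varphi_k=\mathcal{S}^{-k}\varphi_0\mathcal{S}^k$ and telescopes), and your conjugation to the constant system with the Fitting/stable-image argument is exactly the linear algebra of \ref{Subsection The role of rg in determining SH} that the paper invokes, transported via $\psi^{\pm}$ and $r_{\widetilde{g}}=\psi^+\mathcal{R}_{\widetilde{g}}\psi^-$. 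The only cosmetic slip is calling the canonical map $HF^*(H_0)\to SH^*(M)$ an ``inclusion'' (it has kernel $\ker\mathcal{R}_{\widetilde{g}}^k$); your subsequent use of it as the continuation $\mathcal{S}_{\widetilde{g}}^{-k}\mathcal{R}_{\widetilde{g}}^{k}$ under the identification is the correct statement.
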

\begin{proof}
Abbreviate $\mathcal{S}=\mathcal{S}_{\widetilde{g}}$, $\mathcal{R}=\mathcal{R}_{\widetilde{g}}$. Consider the monotone continuation maps
$\varphi_k: HF^*(H_{k-1}) \to HF^*(H_{k}).$
 Theorem \ref{Theorem commutative diagram for action} yields the commutative diagram
$$
\xymatrix@R=16pt{ HF^{*}(H_{k+1})
\ar@{<-}_{g^*\varphi_k}[d] \ar@{->}[r]^-{\mathcal{S}}_-{\sim} & HF^{*+2I(\widetilde{g})}(H_{k})
\ar@{<-}^{\varphi_k}[d] \\
HF^{*}(H_{k})
\ar@{->}[r]^-{\mathcal{S}}_-{\sim} & HF^{*+2I(\widetilde{g})}(H_{k-1}) 
}
$$
By Property (1) in \ref{Subsection properties of continuation maps}, 
$\varphi_{k+1} = g^*\varphi_k= \mathcal{S}^{-1} \circ \varphi_{k} \circ \mathcal{S}$
and so by induction, for $k\in \Z$,
$$\varphi_{k} = \mathcal{S}^{-k} \circ \varphi_{0} \circ \mathcal{S}^{k}.$$
By property (2) in \ref{Subsection properties of continuation maps}, the continuation $HF^*(H_{0}) \to HF^*(H_{k})$ equals
$$
\varphi_{k} \varphi_{k-1}  \cdots \varphi_{1} =
 (\mathcal{S}^{-k}  \varphi_{0}  \mathcal{S}^{k})
 (\mathcal{S}^{-(k-1)}  \varphi_{0}  \mathcal{S}^{k-1})
\cdots 
 (\mathcal{S}^{-1}  \varphi_{0}  \mathcal{S}^{1}) 
= 
 \mathcal{S}^{-k}   (\varphi_{0}  \mathcal{S})^{k}
=
 \mathcal{S}^{-k}   \mathcal{R}^{k}.
$$
%
%
Let $V=HF^*(H_0)$. Conjugation by $\mathcal{S}^k$ identifies $V=HF^*(H_k)$ so $\varphi_k$ becomes $\mathcal{R}:V\to V$. Using $\psi^{\pm}$ we can identify $V=QH^*(M,\omega)$, which turns $\mathcal{R}$ into $r_{\widetilde{g}}$.
The claims then follow by the argument in \ref{Subsection The role of rg in determining SH}.
\end{proof}
\begin{definition*}
Let $\mathcal{R}: HF^*(H_{\ell})\stackrel{S}{\to} HF^{*+2I(\widetilde{g})}(H_{\ell-1}) \stackrel{\psi}{\to} HF^{*+2I(\widetilde{g})}(H_k)$
where $\psi$ is a continuation map, so $\psi=\varphi_k\circ \cdots \varphi_{\ell} = \mathcal{S}^{-k}\mathcal{R}^{k-\ell+1} S^{\ell-1}$ (proved like the case $\ell=1$ above). One easily checks that these $\mathcal{R}=\mathcal{S}^{-k}\mathcal{R}^{k-\ell+1}\mathcal{S}^{\ell}$ form a family of maps compatible with continuation maps, so they define a map on direct limits:
$$
\boxed{[\mathcal{R}_{\widetilde{g}}]: SH^*(M) \to SH^{*+2I(\widetilde{g})}(M)}
$$
\end{definition*}

\begin{corollary*}
$
SH^*(M)\!\cong\! HF^*(H_0)/\ker \mathcal{R}^k
$
for $k\!\geq\! \dim H^*(M)$, and 
$[\mathcal{R}_{\widetilde{g}}]\!=\![\mathcal{S}_{\widetilde{g}}]\in \mathrm{Aut}(SH^*(M))$.
 For $g_1,g_2\in G_{\ell\geq 0}$, $[\mathcal{R}_{\widetilde{g}_1}][\mathcal{R}_{\widetilde{g}_2}]=[\mathcal{R}_{\widetilde{g}_2\widetilde{g}_1}]$, so $[\mathcal{R}_{\widetilde{g}^k}]=[\mathcal{R}_{\widetilde{g}}]^k$.
\end{corollary*}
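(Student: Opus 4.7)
The plan is to extract the first assertion directly from the identification already worked out in Theorem \ref{Theorem SH is lim of S-bRb}, then deduce the remaining assertions by comparing the two expressions for the induced map on the direct limit, exploiting the fact that structure arrows of a directed system become identities in the colimit.

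First, for the isomorphism $SH^*(M) \cong HF^*(H_0)/\ker \mathcal{R}^k$: the proof of Theorem \ref{Theorem SH is lim of S-bRb} showed that the composite continuation $HF^*(H_0) \to HF^*(H_k)$ equals $\mathcal{S}^{-k}\mathcal{R}^k$. Since $\mathcal{S}$ is an isomorphism, $\ker(\mathcal{S}^{-k}\mathcal{R}^k)=\ker \mathcal{R}^k$ and the image is $HF^*(H_0)/\ker \mathcal{R}^k$. Because $\mathcal{R}$ is an endomorphism of the finite-rank $\Lambda$-module $HF^*(H_0)\cong QH^*(M)$, the ascending chain $\ker \mathcal{R}\subset \ker \mathcal{R}^2\subset\cdots$ must stabilize by $k\geq \mathrm{rank}_\Lambda HF^*(H_0)=\dim H^*(M)$, so the image of $\mathcal{S}^{-k}\mathcal{R}^k$ also stabilizes and by the description $SH^*(M)=\varinjlim HF^*(H_k)$ this stable image is $SH^*(M)$.

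Next, for $[\mathcal{R}_{\widetilde{g}}]=[\mathcal{S}_{\widetilde{g}}]$: the generalized family defining $[\mathcal{R}_{\widetilde{g}}]$ is, in the notation of the excerpt, $\mathcal{R}=\mathcal{S}^{-k}\mathcal{R}^{k-\ell+1}\mathcal{S}^{\ell}:HF^*(H_\ell)\to HF^{*+2I(\widetilde{g})}(H_k)$, and factors as $\mathcal{S}_{\widetilde{g}}:HF^*(H_\ell)\to HF^{*+2I(\widetilde{g})}(H_{\ell-1})$ followed by a continuation map $\psi$ into $HF^{*+2I(\widetilde{g})}(H_k)$. But $\psi$ is a composition of the structure arrows $\varphi_\ell,\varphi_{\ell+1},\ldots$ of the directed system computing $SH^*(M)$, and such arrows become identities under $\varinjlim$. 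Hence on $SH^*(M)$ the map $[\mathcal{R}_{\widetilde{g}}]$ is represented by $[\mathcal{S}_{\widetilde{g}}]$ alone. I would just need to confirm that this family really does descend to the direct limit, i.e.\ that it intertwines the continuation arrows $\varphi_{\ell+1}$ and $\varphi_{k+1}$; this was asserted in the definition and falls out by writing $\varphi_{\ell+1}=\mathcal{S}^{-\ell-1}\varphi_0\mathcal{S}^{\ell+1}=\mathcal{S}^{-\ell-1}\mathcal{R}\mathcal{S}^\ell$ and telescoping.

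Finally, the multiplicative property $[\mathcal{R}_{\widetilde{g}_1}][\mathcal{R}_{\widetilde{g}_2}]=[\mathcal{R}_{\widetilde{g}_2\widetilde{g}_1}]$ is now immediate from Corollary \ref{Corollary properties of Sg}, which gives $\mathcal{S}_{\widetilde{g}_1}\circ \mathcal{S}_{\widetilde{g}_2}=\mathcal{S}_{\widetilde{g}_2\widetilde{g}_1}$ on cohomology, combined with the identity $[\mathcal{R}]=[\mathcal{S}]$ just established; and then $[\mathcal{R}_{\widetilde{g}^k}]=[\mathcal{R}_{\widetilde{g}}]^k$ follows by induction on $k$. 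The only non-formal step in the whole argument, and the one I expect to be the main obstacle, is the verification that $\mathcal{R}$ genuinely defines a map of directed systems (so that $[\mathcal{R}_{\widetilde{g}}]$ exists as an honest map of colimits); everything else is bookkeeping against Theorem \ref{Theorem SH is lim of S-bRb} and Corollary \ref{Corollary properties of Sg}.
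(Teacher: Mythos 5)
Your proposal is correct and follows essentially the same route as the paper: claim 1 is extracted from Theorem \ref{Theorem SH is lim of S-bRb} together with the stabilization-of-kernels linear algebra, claim 2 from the fact that $\mathcal{R}_{\widetilde{g}}$ differs from $\mathcal{S}_{\widetilde{g}}$ by a continuation map (the paper phrases this as $\mathcal{S}^{-1}\mathcal{R}=\varphi_1$ representing the identity on $SH^*(M)$), and claim 3 from Corollary \ref{Corollary properties of Sg}. The "non-formal step" you flag — compatibility of the family $\mathcal{S}^{-k}\mathcal{R}^{k-\ell+1}\mathcal{S}^{\ell}$ with continuations — is exactly the check the paper also leaves as routine in the preceding Definition, and your telescoping via $\varphi_{\ell+1}=\mathcal{S}^{-\ell-1}\varphi_0\mathcal{S}^{\ell+1}$ is the intended verification.
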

\begin{proof}
 The $1^{st}$ claim is Theorem \ref{Theorem SH is lim of S-bRb}. So $\mathcal{R}:HF^*(H_0) \to HF^{*+2I(\widetilde{g})}(H_0)$ induces $[\mathcal{R}]$. It is an automorphism since
$\ker \mathcal{R}^{k+1}=\ker \mathcal{R}^{k}$.  $\mathcal{S}:HF^*(H_0)\to HF^{*+2I(\widetilde{g})}(H_{-1})$ determines $[\mathcal{S}]$, so $[\mathcal{S}]=[\mathcal{R}]$ since $\mathcal{S}^{-1}\mathcal{R} = \varphi_1$ represents the identity on $SH^*(M)$. The $3^{rd}$ claim is Corollary \ref{Corollary properties of Sg} (or check directly on $HF^*(H_0)$).
\end{proof}
%
%
\subsection{Product structure on $\mathbf{SH^*(M)}$}
\label{Subsection Product structure on SH using rg}
%
\begin{theorem}\label{Theorem Rg and products}
$\mathcal{R}_{\widetilde{g}},\mathcal{S}_{\widetilde{g}}$ are compatible with products, meaning $\boxed{\mathcal{R}_{\widetilde{g}} (a \cdot b) = (\mathcal{R}_{\widetilde{g}} a) \cdot b}$
$\mathcal{R}_{\widetilde{g}},\mathcal{S}_{\widetilde{g}}$ are pair-of-pants product by $\mathcal{R}_{\widetilde{g}}(1),\mathcal{S}_{\widetilde{g}}(1)$ and $r_{\widetilde{g}}$ is quantum product by $r_{\widetilde{g}}(1)$.
\end{theorem}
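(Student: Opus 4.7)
The plan is to reduce all three clauses to the single module identity
\[
\mathcal{R}_{\widetilde{g}}(a\cdot b)=(\mathcal{R}_{\widetilde{g}}\,a)\cdot b
\]
on $HF^*(H_0)$. Once this is established, setting $a=1$ (the unit of $HF^*(H_0)$ supplied by Lemma \ref{Lemma HF H0 and QH are iso as rings} and Corollary \ref{Corollary unit for SH}) yields $\mathcal{R}_{\widetilde{g}}(b)=\mathcal{R}_{\widetilde{g}}(1)\cdot b$, so $\mathcal{R}_{\widetilde{g}}$ is pair-of-pants multiplication by $\mathcal{R}_{\widetilde{g}}(1)\in HF^{2I(\widetilde{g})}(H_0)$. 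The corresponding statement for $\mathcal{S}_{\widetilde{g}}$ is the un-continued version of the same argument: the $g$-pullback identity $\mathcal{S}_{\widetilde{g}}(a\cdot b)=\mathcal{S}_{\widetilde{g}}(a)\cdot b$ with the target product interpreted on the $H_{-1}$-family then gives $\mathcal{S}_{\widetilde{g}}(b)=\mathcal{S}_{\widetilde{g}}(1)\cdot b$. Finally, since $\psi^\pm$ are mutually inverse $\Lambda$-algebra isomorphisms (Lemma \ref{Lemma HF H0 and QH are iso as rings}),
\begin{equation*}
r_{\widetilde{g}}(a)\;=\;\psi^+\bigl(\mathcal{R}_{\widetilde{g}}(1)\cdot\psi^-(a)\bigr)\;=\;\psi^+\mathcal{R}_{\widetilde{g}}(1)\,*_Q\,\psi^+\psi^-(a)\;=\;r_{\widetilde{g}}(1)*_Q a,
\end{equation*}
so $r_{\widetilde{g}}$ is quantum cup product by $r_{\widetilde{g}}(1)$, as claimed.

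To establish the module identity I would prove separately that $\mathcal{S}_{\widetilde{g}}$ and the monotone continuation $\varphi_0$ each commute with pair-of-pants products, then compose. The pair-of-pants product is defined at the chain level by counting Floer solutions on a genus-zero surface $\Sigma$ with two incoming and one outgoing cylindrical ends, with parametric data $(H_z,J_z)$ interpolating the end Hamiltonians. Extending Corollary \ref{Corollary bijection of generators} from the cylinder to $\Sigma$, the $g$-pullback $u\mapsto g\cdot u$ with $(g\cdot u)(z)=g_{t(z)}(u(z))$, where $t$ is extended across $\Sigma$ from the three $S^1$-coordinates at the ends (such an extension exists because $g$ is a loop and the angular parameters on the three ends are all $t$), bijects the moduli space for $(H_z,J_z)$ with the one for $(g^*H_z,g^*J_z)$ and intertwines asymptotic conditions via $\widetilde{g}$. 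This shows that at the chain level $\mathcal{S}_{\widetilde{g}}$ carries the pair-of-pants product of the $H_0$-family to that of the $H_{-1}$-family. Product-compatibility of monotone continuation maps is a standard TQFT fact, proved by gluing a continuation cylinder onto an end of $\Sigma$ and deforming the resulting surface with Floer data. Composing, $\mathcal{R}_{\widetilde{g}}=\varphi_0\circ\mathcal{S}_{\widetilde{g}}$ is a module homomorphism on $HF^*(H_0)$.

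The main obstacle is the bookkeeping on the cover $\widetilde{\mathcal{L}_0 M}$, which I would handle as in Seidel \cite{Seidel3}: one must verify that $g$-pullback shifts the Conley--Zehnder indices by exactly $2I(\widetilde{g})$ on each of the three ends, that the three capping discs in the lifts $\widetilde{g}c_i$ assemble consistently across $\Sigma$, and that the chosen extension of $g$ over $\Sigma$ is compatible with the symplectic trivialisations used to define the indices. This matches the analysis in \cite[\S8]{Seidel3} for closed symplectic manifolds and carries over to our non-compact setting essentially verbatim: the non-compactness of $M$ causes no extra difficulty because the maximum principle of Remark \ref{Remark warning on non-linear Hamiltonians} (valid since $J_z$ is conical and $H_z$ monotone outside a compact set) confines all pair-of-pants Floer solutions to a bounded region, and sphere bubbling is controlled by weak$^+$ monotonicity exactly as in Section \ref{Section Symplectic cohomology}.
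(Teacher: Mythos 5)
Your overall skeleton matches the paper's: derive the one-sided module identity, set $a=1$ to get multiplication by $\mathcal{R}_{\widetilde{g}}(1)$ resp.\ $\mathcal{S}_{\widetilde{g}}(1)$, and transport through the ring isomorphisms $\psi^{\pm}$ (with $\psi^-(1)=1$) to identify $r_{\widetilde{g}}$ with quantum product by $r_{\widetilde{g}}(1)$; that formal part is fine. The gap is in the key geometric step. You propose to define the $g$-action on pair-of-pants moduli spaces by $(g\cdot u)(z)=g_{t(z)}(u(z))$ where $t$ is a circle-valued function on $S$ restricting to the cylindrical angular coordinates at \emph{all three} ends, and you assert such an extension exists "because the angular parameters on the three ends are all $t$". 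It does not exist: in $H_1(S)$ for $S\cong\R\times S^1\setminus(0,0)$ the class of a small loop around the interior puncture equals $[\{s_+\}\times S^1]-[\{s_-\}\times S^1]$, so any circle-valued function agreeing with the global angle at both ends $s\to\pm\infty$ has winding $1-1=0$ around the interior puncture, whereas agreeing with the local cylindrical angle there would force winding $\pm 1$. (This is the same Stokes-type constraint that allows a closed one-form $\beta$ with weights $k+\ell,\ell,k$ but forbids integral winding $1,1,1$.) Moreover, even granting such an action, transforming the asymptotics at all three ends would give a multiplicativity-type statement relating two different products, not the module identity $\mathcal{S}_{\widetilde{g}}(a\cdot b)=(\mathcal{S}_{\widetilde{g}}a)\cdot b$, which requires the input $b$ to be left untouched.

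The paper's proof (following Seidel's Prop.~6.3 and Lemma~6.4) avoids exactly this: one uses the \emph{global} coordinate $t$ of $S\cong\R\times S^1\setminus(0,0)$, which near the interior puncture is not the local angular coordinate, and one first invokes homotopy invariance (Corollary \ref{Corollary properties of Sg}) to replace $g$ by a homotopic loop with $g_t=\mathrm{id}$ and $K^g_t=0$ for $t\in[-\tfrac14,\tfrac14]$. Then the transformation $u\mapsto g_t(u)$ is trivial near the interior puncture, the Floer data and the middle asymptote $c_0$ are unchanged there, and one obtains the bijection
\begin{equation*}
\mathcal{M}_{(S,\beta)}(c^-,c_0,c^+;g^*H,g^*J)\;\cong\;\mathcal{M}_{(S,\beta)}(\widetilde{g}c^-,c_0,\widetilde{g}c^+;H,J),
\end{equation*}
which is precisely what yields $\mathcal{S}_{\widetilde{g}}(a\cdot b)=(\mathcal{S}_{\widetilde{g}}a)\cdot b$; compatibility of monotone continuations with the product (from \cite{Ritter3}) then handles $\mathcal{R}_{\widetilde{g}}=\varphi_0\circ\mathcal{S}_{\widetilde{g}}$ as you intended. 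A secondary point you gloss over: in this non-exact, non-compact setting the paper keeps $d\beta=0$ (rather than homotoping $\beta$ to zero near the interior end as Seidel does), since that is what the compactness/maximum-principle arguments for the pair-of-pants require; the Remark \ref{Remark warning on non-linear Hamiltonians} you cite concerns non-linear $g^*H$ at infinity and is not by itself the confinement statement you need.
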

\begin{proof} 
Recall \cite{Ritter3} the product $HF^*(H_k) \otimes HF^*(H_{\ell}) \to HF^*(H_{k+\ell})$ counts isolated solutions $u: S \to M$ to the equation $(du-X_{H_1}\otimes \beta)^{0,1}=0$, where $S$ is a pair-of-pants surface (so diffeomorphic to $\R\times S^1 \setminus (0,0)$) and $\beta$ is a $1$-form on $S$ which equals $(k+\ell)\, dt,\ell\, dt,k\, dt$ near the three punctures $-\infty,(0,0),+\infty$ and satisfies $d\beta=0$, where a cylindrical parametrization $(s,t)$ has been chosen near $(0,0)$ (say $e(s,t)=(\frac{1}{4}e^{2\pi s}\cos 2 \pi t,\frac{1}{4}e^{2\pi s} \sin 2\pi t)$, where $s\in (-\infty,0]$).

This is similar to the closed setup \cite[Sec.6]{Seidel3}, except we do not homotope $\beta$ to zero near $s=\pm 2$ (which would contradict $d\beta= 0$, and would cause compactness problems). The only difference with the definition of product in \cite{Ritter3} is that the Novikov ring in our current setup is larger, so we need to specify what it means for $u$ to converge to $c^-,c_0,c^+\in \widetilde{\mathcal{L}_0M}$. As in \cite[Def.6.1]{Seidel3}, this means: if $c_0=(v_0,x_0)$, then $u\# v_0$ (gluing $v_0$ onto $x_0=\lim_{s \to -\infty} u \circ e$), viewed as a path of loops, must lift to a path in $\widetilde{\mathcal{L}_0M}$ with limits $c^-,c^+$.

On homology $\mathcal{S}_{\widetilde{g}}$ only depends on $(g_t,\widetilde{g}_t)$ up to homotopy by Corollary  \ref{Corollary properties of Sg}. So we can arrange (as in \cite[Prop.6.3]{Seidel3}) that $g_t$ is the identity for $t\in [-\frac{1}{4},\frac{1}{4}]$, so we can ensure $K^g_t=0$ there.

\emph{Technical Remark. A reparametrization $g_{\alpha(t)}$ has Hamiltonian $L_t = \alpha'(t) K_{\alpha(t)}^g$. Since $g_0=\mathrm{id}$, taking $\alpha'\geq 0$, $\alpha=0$ for $t\in [0,\frac{1}{4}]$ and $\alpha=1$ for $t\in [\frac{3}{4},1]$, ensures $L_t=0$ for $t\in [-\frac{1}{4},\frac{1}{4}]$. Although the slope of $L_t$ varies at infinity, the homotopy argument of Corollary \ref{Corollary properties of Sg} still holds by \cite[Sec.5]{Seidel3}, since throughout the homotopy we can ensure that the slopes are non-negative.
}

Observe that near the puncture $(0,0)$, where we use a different $t$ coordinate than the global $t\in S^1$ of $S\cong \R \times S^1 \setminus (0,0)$, the data $g^*H_1,g^*J$ is the same as $H_1,J$ since $K^g(t,\cdot)=0$ there. Therefore, as in \cite[Lemma 6.4]{Seidel3}, the following moduli spaces of pair-of-pants solutions biject:
$$
\mathcal{M}_{(S,\beta)}(c^-,c_0,c^+;g^*H_1,g^*J) \cong \mathcal{M}_{(S,\beta)}(\widetilde{g}c^-,c_0,\widetilde{g}c^+;H_1,J), u(s,t) \mapsto g_t(u(s,t)).
$$
We deduce $\mathcal{S}_{\widetilde{g}}(a \cdot b) = (\mathcal{S}_{\widetilde{g}}a )\cdot b$ for any $g\in G$. Since continuation maps preserve the product structure \cite{Ritter3}, the same holds for $\mathcal{R}_{\widetilde{g}}$ (using the Remark after Lemma \ref{Lemma HF H0 and QH are iso as rings}).
So, using the unit $1=\psi^-(1)$ of Corollary \ref{Corollary unit for SH}: $\mathcal{R}_{\widetilde{g}}(a)=\mathcal{R}_{\widetilde{g}}(1)\cdot a$, $\mathcal{S}_{\widetilde{g}}(a)=\mathcal{S}_{\widetilde{g}}(1)\cdot a$. 

Recall $r_{\widetilde{g}}=\psi^+\mathcal{R}_{\widetilde{g}} \psi^-$. So $
r_{\widetilde{g}}(y) = \psi^+[ \mathcal{R}_{\widetilde{g}}(1)\cdot \psi^- (y)]$.
 By Corollary \ref{Corollary unit for SH}: $\mathcal{R}_{\widetilde{g}}(1)=\psi^-r_{\widetilde{g}}(1)$. By Lemma \ref{Lemma HF H0 and QH are iso as rings}: $\psi^+[\psi^-(r_{\widetilde{g}}(1)) \cdot \psi^-(y)]= r_{\widetilde{g}}(1) \ast y$, using $\psi^+\psi^-=\mathrm{id}$.
\end{proof}

 Abbreviate $c=r_{\widetilde{g}}(1)$. Identifying $QH^*(M)\equiv HF^*(H_k)$ the continuation $\varphi^k:HF^*(H_0) \to HF^*(H_k)$ is quantum cup product by $c_Q^k$ (quantum powers).
 By Lemma \ref{Lemma HF H0 and QH are iso as rings}, $\alpha_0 * \beta_0=\alpha_0\cdot \beta_0$ via $QH^*(M)\equiv HF^*(H_0)$, but not on $HF^*(H_k)$ unless one correctly interprets $k$.  For general reasons \cite{Ritter3} $\cdot$ will not preserve $H_k$:
$$HF^*(H_k)\otimes HF^*(H_{\ell}) \to HF^*(H_{k+{\ell}}),\; \alpha_k \otimes \beta_{\ell} \mapsto \alpha_k \cdot \beta_{\ell}.$$
 This can be elucidated in our case. Suppose $\alpha_k, \beta_{\ell}$ have $HF(H_0)$ representatives: $\alpha_k=\varphi^k(\alpha_0), \beta_{\ell} =\varphi^{\ell}(\beta_0)$. Since continuations are compatible with products \cite{Ritter3}, 
$$\varphi^{k+{\ell}}(\alpha_0*\beta_0)=\varphi^{k+{\ell}}(\alpha_0\cdot\beta_0) = \varphi^k(\alpha_0)\cdot \varphi^{\ell}(\beta_0) = \alpha_k\cdot \beta_{\ell},$$
which proves that
$
\alpha_k\cdot \beta_{\ell} =  c^{k+{\ell}}_Q* \alpha_0 * \beta_0 = (c^k_Q*\alpha_0) * (c^{\ell}_Q*\beta_0) = \alpha_k * \beta_{\ell}. 
$
%
%
\section{Pseudoholomorphic sections}
\label{Section Pseudoholomorphic sections}
%
%
\subsection{Space of sections $\mathcal{S}(j,\hat{J})$}
\label{Subsection Space of sections}
We briefly recall some definitions [Sec. 7,\cite{Seidel3}].

Let $(\pi: E \to S^2,\Omega)$ be a \emph{symplectic fibre bundle} with fibre $(M,\omega)$, meaning: $\Omega_z$ is a symplectic form for the fibre $E_z$ over $z\in S^2$, smoothly varying in $z$. It is understood that we fix an isomorphism $i:(M,\omega) \to (E_{z_0},\Omega_{z_0})$ where $z_0\in S^2$ is the South pole (view $S^2=D^+\cup_{S^1} D^-$ as a union of two discs, $z_0=$\,centre$(D^-)$).

Let $\mathcal{J}(E,\Omega)$ be the space of $J=(J_z)_{z\in S^2}$ (smooth in $z$), where $J_z$ is a conical structure on the fibre $(E_z,\Omega_z)$ (see \ref{Subsection Conical symplectic manifolds}). Fix a positively oriented complex structure $j$ on $S^2$. Call an almost complex structure $\hat{J}$ on $E$ \emph{compatible} with $(j,J)$ if 
$d\pi \circ \hat{J} = j \circ d\pi$
 and $\hat{J}$ restricts to $J$ fibrewise. Denote $\hat{\mathcal{J}}(j,J)$ the space of compatible $\hat{J}$.

\begin{definition*}
 For $j,J,\hat{J}\in\hat{\mathcal{J}}(j,J)$ as above, denote $\mathcal{S}(j,\hat{J})$ the space of $(j,\hat{J})$-holomorphic sections, meaning all $s:S^2 \to E$ with
$$
ds \circ j = \hat{J}\circ ds.
$$ 
\end{definition*}

\begin{definition*}
 Call $(E,\Omega)$ \emph{Hamiltonian} (symplectic fibre bundle) if there is a closed two-form $\widetilde{\Omega}$ on $E$ restricting to $\Omega_z$ fibrewise.
\end{definition*}

\begin{definition*}
For $(E,\Omega,\widetilde{\Omega})$ Hamiltonian, two sections $s,s'$ are $\Gamma$-equivalent if $\widetilde{\Omega}(s)=\widetilde{\Omega}(s')$ and $c_1(TE^v,\Omega)(s)=c_1(TE^v,\Omega)(s')$, where $TE^v =\ker d\pi$. Denote $\mathcal{S}(j,\hat{J},S)\subset \mathcal{S}(j,\hat{J})$ the subspace of sections in the $\Gamma$-equivalence class $S$. 
\end{definition*}

The equivalence classes do not depend on the choice of $\widetilde{\Omega}$, but only on $\Omega$: a difference of two sections $S-S' \in \pi_2(E)$ maps to $[S^2]-[S^2]=0\in \pi_2(S^2)$ via the fibration $E\to S^2$, so $S-S' \in \textrm{im}(\pi_2(M)\to \pi_2(E))$, so the $\Omega$-value on this fibre class determines whether $\widetilde{\Omega}(S-S')$ is zero or not. 

By \cite[Lemma 2.10]{Seidel3} for $S,S'$, there is a unique $\gamma \in \Gamma$ such that the $\widetilde{\Omega}$ values on $S,S'$ differ by $\omega(\gamma)$, and the $c_1(TE^v,\Omega)$ values differ by $c_1(TM,\omega)(\gamma)$. Conversely, given $S$, $\gamma \in \Gamma$ there is a unique class denoted $S'=S+\gamma$ for which this holds.
%
\subsection{Hamiltonian fibration}
\label{Subsection Hamiltonian fibration}
%
From now on, we view $S^2$ as $D^+\cup_{S^1} D^-$ and we will use the coordinates $z=\exp(s+it)$ on $D^+=\{z\in \C: |z|\leq 1 \}$ and the complex structure $j\partial_s = \partial_t$. Near $\partial D^{+}$ these coordinates lie in $(s,t)\in (-\epsilon,0]\times S^1$ and we can extend these coordinates to $D^-$ near $\partial D^{-}$ via $(s,t)\in [0,\epsilon) \times S^1$.
\begin{definition*}
Given $g\in \textrm{Ham}_{\ell}(M,\omega)$ generated by $(K^g_t)_{t\in S^1}$, define the symplectic fibre bundle $(\pi_g:E_g \to S^2,\Omega_g)$ by the clutching construction
$$
\begin{array}{l}
 E_g = (D^+\times M) \cup_{\phi^g} (D^-\times M)\\
 \phi^g: \partial D^+ \times M \to \partial D^- \times M, \; \phi^g(t,y)  = (t,g_t(y)) 
\end{array}
$$
with form $\Omega_g=\omega^{\pm}$ on the fibres (the pull-backs of $\omega$ from $M$ to $D^{\pm} \times M$).
\end{definition*}
\noindent Let $H^{\pm}: D^{\pm} \times M \to \R$ be Hamiltonians which:
\begin{enumerate}
 \item  vanish near the centres of $D^{\pm}$; 
 \item only depend on $t\in S^1$ near $\partial D^{\pm}$: $H^{\pm}(s+it,y)=H^{\pm}_t(y)$;

 \item and which satisfy the gluing condition on $\partial D^{\pm} \times M$:
$$
H^+_t(y) = g^*H^-_t(y) = H^-_t(g_t y) - K^g_t(g_t y).
$$
\end{enumerate}
\begin{definition}\label{Definition Monotone H}
 We call $H$ \emph{monotone} if $\partial_s (h^+)'\leq 0$ and $\partial_s (h^-)'\leq 0$, where $H^{\pm}=h^{\pm}(R)$ at infinity and where $s$ is the coordinate determined by the parametrizations
$$
\begin{array}{ll} 
(-\infty,0]\times S^1 \to D^+\setminus z_{\infty}\equiv \{z\in \C\setminus 0: |z|\leq 1 \},& (s,t)\mapsto e^{2\pi (s+it)}\\[0mm]
[0,+\infty)\times S^1 \to D^-\setminus z_0\equiv \{z\in \C: |z|\geq 1 \},& (s,t)\mapsto e^{2\pi (s+it)}.
\end{array}
$$
\end{definition}
\noindent Define a one-form $\tau^{\pm}$ on $D^{\pm}\times M$ and a closed $2$-form $\widetilde{\Omega}$ on $E_g$ by
$$\boxed{
\begin{array}{l}
\tau^{\pm} = H^{\pm}\, dt\\
 \widetilde{\Omega}|_{D^{\pm} \times M} = \omega^{\pm} - d\tau^{\pm} = \omega^{\pm} - dH^{\pm} \wedge dt - \partial_s H^{\pm} \, ds \wedge dt.
\end{array}}
$$
Note $\widetilde{\Omega}$ glues correctly since $g_t^* \omega = \omega$ and since
$$((\phi^g)^*\omega^-)(\vec{v},\partial_t) = \omega^-(d\phi^g \cdot \vec{v},X_{K_g}\circ g_t)  = d(K^g_t\circ g_t\,dt)\cdot(\vec{v},\partial_t).
$$
\begin{definition}\label{Definition c1 of Sg in terms of I}
 Recall \cite[Lemma 2.12]{Seidel3}, that $E_g$ has a continuous section $s_{\widetilde{g}}$ built as follows. Pick any $c\in \widetilde{\mathcal{L}M}$, and pick representatives $(v,x),(v',x')$ of $c,\widetilde{g}(c)$. Then glue:
$$
\begin{array}{l}
 s^+_{\widetilde{g}}: D^+ \to D^+ \times M, s^+_{\widetilde{g}}(z) = (z,v(z))\\
 s^-_{\widetilde{g}}: D^- \to D^- \times M, s^-_{\widetilde{g}}(z) = (z,\overline{v'}(z))\\
\end{array}
$$
where $\overline{v'}: D^- \cong D^2 \to M$ involves an orientation-reversing identification $\cong$. 

The $\Gamma$-equivalence class $S_{\widetilde{g}}$ of $s_{\widetilde{g}}$ does not depend on the choices $c,v,v'$, and  $$\boxed{I(\widetilde{g}) = -c_1(TE^v_g, \Omega_g)(s_{\widetilde{g}})}.$$
\end{definition}
%
\subsection{Admissible almost complex structures $\hat{J}$}
\label{Subsection Admissible almost cx str}
%
\begin{remark*}
To build a symplectic form on the total space of a symplectic fibration (Thurston's method) one modifies the symplectic form by a pull-back of a large multiple of a symplectic form on the base to achieve non-degeneracy in the horizontal distribution. This fails in our case because the fibres are non-compact and the given symplectic form grows like $R$ at infinity, so such pull-backs cannot dominate. Thus \cite[Lemma 7.4]{Seidel3} fails in our setup. The remedy is to require that $\hat{J}$ has a special form at infinity, depending on the Hamiltonian.
\end{remark*}
\begin{definition}\label{Definition admissible J hat}
Call $(j,J,\hat{J})$ admissible if $J_z$ is $(\Omega_g)_z$-compatible, $\hat{J}\in \hat{\mathcal{J}}(j,J)$ and such that for large $R$ they have the form
$$
\hat{J}_{(z,y)} = \left(\begin{array}{ll} j & 0 \\ \nu_{(z,y)}\circ j & J_z  \end{array} \right)
= \left(\begin{array}{ll} j & 0 \\ ds \otimes X_H - dt \otimes J_zX_H & J_z  \end{array} \right)
$$
where $\nu_{(z,y)}: T_z S^2 \to T_y M$ is the $(j,J_z)$-antilinear homomorphism given by
$$
\nu_{(z,y)} = ds \otimes J_z X_H(z,y) + dt \otimes X_H(z,y).
$$
\end{definition}

\begin{remark}
 \label{Remark Floer continuation gives admissible Jhat} $($due to Gromov$)$
The $\hat{J}_z=\left(\begin{smallmatrix} j & 0 \\ \nu_{(z,y)}\circ j & J_z  \end{smallmatrix} \right)$ arise from turning the Floer continuation $\partial_s u + J_z(u)(\partial_t u - X_H(z,u))=0$, for $z=(s,t)\in \R\times S^1$, into $$du + J \circ du \circ j = \nu,$$ and finally into $ds\circ j = \hat{J}\circ ds$ for 
$s: \R\times S^1 \to \R\times S^1 \times M,\; s(z) = (z,u(z)).$
\end{remark}

\begin{remark*}
 Our $H$, $\hat{J}$ correspond in the notation of \cite[Sec.8.1\,(p.243)]{McDuff-Salamon2} to $G$ and $\widetilde{J}_{G\,dt}= \widetilde{J}_{\tau}$ $($their Hamiltonian vector fields are opposite to ours$)$. The curvature \cite[Sec.8.1]{McDuff-Salamon2} is $F_{\tau} \, \mathrm{vol}_{S^2} = \partial_s H\, \mathrm{vol}_{S^2}$. The $\widetilde{\Omega}$-horizontal distribution over $D^+$ is $\mathrm{Hor} = \{ \xi \in T(D^+ \times M): \widetilde{\Omega}(\ker d\pi,\xi)=0\} = \mathrm{span}\, \{ \partial_s, \partial_t + X_{H}\}$. 
$\hat{J}$ preserves $\mathrm{Hor}$.
\end{remark*}

\begin{lemma}\label{Lemma Symplectic form for admissible J}
For admissible $(j,J,\hat{J})$, if $\partial_s H\leq 0$ then for large $c\in \R$, $\hat{J}$ is $\widetilde{\Omega}+\pi^*(c\cdot \mathrm{vol}_{S^2})$-compatible and so $\widetilde{\Omega}+\pi^*(c\cdot \mathrm{vol}_{S^2})$ is symplectic. Without the condition $\partial_s H \leq 0$, this still holds provided we assume $\partial_s H$ is bounded above.
\end{lemma}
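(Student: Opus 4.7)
The plan is to split the verification into two parts: closedness of $\Omega_c := \widetilde{\Omega} + \pi^*(c\cdot\mathrm{vol}_{S^2})$, and $\hat J$-compatibility. Closedness is immediate since both summands are closed by construction. For compatibility I will decompose $TE = \mathrm{Hor} \oplus \mathrm{Ver}$ with $\mathrm{Ver} = \ker d\pi$ and $\mathrm{Hor}$ the $\widetilde\Omega$-orthogonal complement, and check the condition block by block. Because $\pi^*\mathrm{vol}_{S^2}$ annihilates $\mathrm{Ver}$, this splitting is also $\Omega_c$-orthogonal, eliminating the off-diagonal obstruction; once compatibility is established, non-degeneracy (and hence symplecticness) follows automatically from the positivity of $\Omega_c(\xi,\hat J\xi)$.

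A direct computation on $D^+ \times M$ with $\widetilde\Omega = \omega^+ - d(H^+\, dt)$ paired against vertical $\zeta$ gives $\widetilde\Omega(\partial_s,\zeta) = 0$ and $\widetilde\Omega(\partial_t + X_H,\zeta) = 0$, so $\mathrm{Hor} = \mathrm{span}\{\partial_s,\, \partial_t + X_H\}$ everywhere on $D^+\times M$ (the $D^-$ case is symmetric). On $\mathrm{Ver}$ one has $\Omega_c|_{\mathrm{Ver}} = \omega$ and $\hat J|_{\mathrm{Ver}} = J_z$, $\omega$-compatible by admissibility. At infinity the explicit admissible matrix form of $\hat J$ yields $\hat J \partial_s = \partial_t + X_H$ and $\hat J(\partial_t + X_H) = -\partial_s$, so $\hat J$ preserves $\mathrm{Hor}$ and acts there as $j$; since $\mathrm{Hor}$ is two-dimensional, compatibility reduces to a single scalar inequality
\[
\Omega_c(\partial_s,\hat J\partial_s) \;=\; -\partial_s H + c\cdot\mathrm{vol}_{S^2}(\partial_s,\partial_t) \;=\; c\rho - \partial_s H \;>\; 0,
\]
with $\rho = \mathrm{vol}_{S^2}(\partial_s,\partial_t) > 0$. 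If $\partial_s H \le 0$ at infinity, any $c \ge 0$ works there; without this hypothesis I still need $\sup \partial_s H < \infty$ so that the threshold $c > \partial_s H/\rho$ is finite.

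In the compact part of $E_g$, $\hat J$ need not have the admissible form and so need not preserve $\mathrm{Hor}$. It still preserves $\mathrm{Ver}$, however, and hence determines its own horizontal complement $\mathrm{Hor}_{\hat J}$ on which $\hat J$ acts as $j$; on this compact region $\widetilde\Omega|_{\mathrm{Hor}_{\hat J}}$ and the mixed terms $\widetilde\Omega(\mathrm{Hor}_{\hat J},\mathrm{Ver})$ are bounded while $\pi^*\mathrm{vol}_{S^2}|_{\mathrm{Hor}_{\hat J}}$ is bounded below (since $d\pi|_{\mathrm{Hor}_{\hat J}}$ is an isomorphism), so a standard Thurston-type argument yields $\hat J$-compatibility for $c$ large there. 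Combined with the infinity estimate this gives compatibility globally for $c \gg 0$. The main obstacle, as the Remark preceding the lemma flags, is that Thurston's construction fails naively at infinity: $\omega$ grows like $R$ in the fibre direction and no fixed $c\pi^*\mathrm{vol}_{S^2}$ can dominate it on all of $TE$. The whole point of the admissibility hypothesis is to align $\hat J$ with the $\widetilde\Omega$-horizontal distribution at infinity, so that the check on $\mathrm{Hor}$ reduces to the pointwise-bounded scalar $c\rho - \partial_s H$ with no $R$-dependence; this is precisely where the two hypotheses on $\partial_s H$ in the statement enter.
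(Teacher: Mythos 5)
Your proposal is correct and takes essentially the same route as the paper: the crucial point at infinity is the same identity the paper displays (the admissible form makes the horizontal contribution equal to $c-\partial_s H$ up to the volume factor, while $\omega$-compatibility of $J$ handles the fibre directions, whence the two hypotheses on $\partial_s H$), and your compact-region ``Thurston-type'' absorption is exactly the paper's rescaling/case-splitting estimate on the compact set $\{R\leq R_0\}$, just organized via the splitting $\mathrm{Hor}\oplus\mathrm{Ver}$ instead of normalized coordinates $(a,b,\vec{m})$. One shared caveat: in the compact region, where $\nu$ is an arbitrary antilinear term, what one actually gets (and what the paper's own proof checks) is positivity of $(\widetilde{\Omega}+\pi^*(c\,\mathrm{vol}_{S^2}))(\xi,\hat{J}\xi)$, i.e.\ taming rather than literal $\hat{J}$-invariance of the form, which is all that is used later.
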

\begin{proof}
At infinity, a computation shows that:
$$
\begin{array}{l}
\widetilde{\Omega}(a\partial_s+b\partial_t+\vec{m},a'\partial_s+b'\partial_t+\vec{m}') = \omega(\vec{m}-bX_H,\vec{m}'-b'X_H) - (ab'-a'b)\partial_s H\\
\widetilde{\Omega}(a\partial_s+b\partial_t+\vec{m}, \hat{J}(a\partial_s+b\partial_t+\vec{m})) =
\omega(\vec{m}-bX_H,J(\vec{m}-bX_H)) -(a^2+b^2)\partial_s H
\end{array}
$$
adding $\pi^*(c\cdot ds \wedge dt)$ to $\widetilde{\Omega}$ contributes an extra $c\cdot (a^2+b^2)$. Since $J$ is $\omega$-compatible, this proves the claim at infinity for $c\geq \partial_s H$.

In the compact region where $\hat{J} = \left( \begin{smallmatrix} j & 0 \\ \nu\circ j & J \end{smallmatrix} \right)$ does not have $\nu$ in the special form of Definition \ref{Definition admissible J hat}, we need positivity of:
$$
\omega(\vec{m},J\vec{m}) + \omega(\vec{m}-bX_H,\nu j (a\partial_s + b\partial_t)) 
- \omega(bX_H,J\vec{m})
-\omega(\vec{m},aX_H) +(a^2+b^2)(c-\partial_s H).
$$
Abbreviate $|\vec{m}|^2=\omega(\vec{m},J\vec{m})$. By rescaling, assume $a^2+b^2+|\vec{m}|^2=1$.
If $a^2+b^2 \ll |\vec{m}|^2$, then the first term dominates (on the compact region all terms are bounded). Otherwise, we make the last term dominate by making $c\gg 0$.
\end{proof}

\begin{example}\label{Example non-monotone homotopy H}
 Let $H_0 = \delta(R) R + \textrm{constant}$ for $R\gg 0$, such that $\delta(R)R>0$ is $C^2$-bounded and concave. By Lemma \ref{Lemma continuation isos if no in between Reebs}, $HF^*(H_0)$ is the same as if $\delta(R)<($min Reeb period$)$ was constant. The advantage now is that one can find (non-monotone) homotopies $H_s$ from $0$ to $H_0$ with $\partial_s H_s$ bounded, which is crucial for Lemmas \ref{Lemma Symplectic form for admissible J} and \ref{Lemma Monotonicity Lemma} and Theorem \ref{Theorem psi+ and psi-}.
\end{example}

\begin{lemma}\label{Lemma compatibility trick}
 Suppose $\partial_s H \leq 0$. For any $(j,\hat{J})$-holomorphic section $u: S^2 \to E_g$,  $u^*\widetilde{\Omega}\geq 0$ at all points $z$ for which $u(z)$ lies in the region where $\hat{J}$ has the special form as in Definition \ref{Definition admissible J hat}.
\end{lemma}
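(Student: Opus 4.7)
The plan is to compute $u^*\widetilde{\Omega}(\partial_s,\partial_t)$ directly in local $(s,t)$-coordinates on $D^\pm$, using the hypothesis that $u(z)$ lies in the region where $\hat{J}$ has the special block form of Definition \ref{Definition admissible J hat}.

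First I would trivialize the bundle locally and write the section as $u(z) = (z, \bar u(z))$, so that $du(\partial_s) = \partial_s + \partial_s \bar u$ and $du(\partial_t) = \partial_t + \partial_t \bar u$ in the horizontal+vertical splitting. Then I would translate the holomorphicity $du \circ j = \hat{J} \circ du$ into a PDE for $\bar u$. By the explicit block form of $\hat{J}$ in the region of interest, this unpacks exactly to the Floer continuation equation
$$\partial_s \bar u + J_z(\partial_t \bar u - X_H) = 0,$$
as pointed out in Remark \ref{Remark Floer continuation gives admissible Jhat}.

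Next I would apply the bilinear formula for $\widetilde{\Omega}$ already established in the proof of Lemma \ref{Lemma Symplectic form for admissible J},
$$\widetilde{\Omega}(a\partial_s + b\partial_t + \vec{m},\, a'\partial_s + b'\partial_t + \vec{m}') = \omega(\vec{m}-bX_H,\, \vec{m}'-b'X_H) - (ab'-a'b)\,\partial_s H,$$
plugging in $(a,b,\vec{m}) = (1,0,\partial_s\bar u)$ and $(a',b',\vec{m}')=(0,1,\partial_t\bar u)$. This gives
$$u^*\widetilde{\Omega}(\partial_s,\partial_t) = \omega\bigl(\partial_s \bar u,\, \partial_t \bar u - X_H\bigr) - \partial_s H.$$
Substituting the Floer equation in the form $\partial_t \bar u - X_H = J\partial_s \bar u$ (using $J^2=-\mathrm{id}$) turns the first term into $\omega(\partial_s \bar u, J\partial_s \bar u) = |\partial_s \bar u|_J^2 \ge 0$, so
$$u^*\widetilde{\Omega}(\partial_s,\partial_t) = |\partial_s \bar u|_J^2 - \partial_s H \ge 0$$
whenever $\partial_s H \le 0$, which is the claim.

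I do not expect any serious obstacle here: the block formula for $\widetilde{\Omega}$ is an immediate consequence of $\widetilde{\Omega} = \omega - d(H\,dt)$ and was already carried out in Lemma \ref{Lemma Symplectic form for admissible J}, and the conversion of $(j,\hat{J})$-holomorphicity into the Floer equation is exactly what the special block form of $\hat{J}$ is designed to produce. The only thing to keep track of is that the calculation is strictly local and only uses the special form of $\hat{J}$ at the point $z$ in question, which matches the pointwise statement of the lemma.
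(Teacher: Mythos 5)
Your proof is correct and follows essentially the same route as the paper: the paper plugs $du(\partial_s)=\partial_s+\partial_s u^{\pm}$ and $du(j\partial_s)=\hat{J}\,du(\partial_s)$ directly into the quadratic formula $\widetilde{\Omega}(\cdot,\hat{J}\cdot)$ from Lemma \ref{Lemma Symplectic form for admissible J}, obtaining $\omega(\partial_s u^{\pm},J\partial_s u^{\pm})-\partial_s H\geq 0$ in one step. Your version, which uses the bilinear formula and then substitutes the Floer continuation equation $\partial_t \bar u - X_H = J\partial_s\bar u$, is just an unpacked form of the same pointwise computation, so there is nothing to add.
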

\begin{proof}
 Locally $u(z)=(z,u^{\pm})\in D^{\pm}\times M$. Using the proof of Lemma \ref{Lemma Symplectic form for admissible J}:
$$
\begin{array}{lll}
 \dfrac{u^*\widetilde{\Omega}}{ds\wedge dt} &=& \widetilde{\Omega}(du \circ \partial_s,du \circ j\partial_s)
= \widetilde{\Omega}(\partial_s u, \hat{J}\partial_s u)\\
&=& \widetilde{\Omega}(1\cdot \partial_s + \partial_s u^{\pm}, \hat{J}(1\cdot \partial_s + \partial_s u^{\pm})) 
= \omega(\partial_s u^{\pm},J\partial_s u^{\pm}) -\partial_s H
\geq 0. \qedhere
\end{array}
$$
\end{proof}
%
%
\subsection{Compactness result for $\mathbf{\mathcal{S}(j,\hat{J})}$}
\label{Subsection Compactness for sections}
%
By Lemma \ref{Lemma Symplectic form for admissible J}, $\widetilde{\Omega}+\pi^*(\sigma)$ is symplectic on $E=E_g$ for some form $\sigma$ on $S^2$, and admissible $\hat{J}$ are $\widetilde{\Omega}+\pi^*(\sigma)$-compatible. 

\begin{lemma}\label{Lemma Compactness for space of sections}
Under the assumptions of Lemma \ref{Lemma Symplectic form for admissible J}, and $J$ generic, then for every $C\in \R$, and any given compact $D\subset E_{z_0}$, only finitely many $\Gamma$-equivalence classes $S$ have $\widetilde{\Omega}(S)\leq C$ with $\mathcal{S}(j,\hat{J},S)$ containing a section intersecting $D$ over $z_0$.
\end{lemma}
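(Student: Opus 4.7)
The plan is to combine three ingredients: a uniform energy bound from the $\widetilde{\Omega}+\pi^*\sigma$-tameness established in Lemma~\ref{Lemma Symplectic form for admissible J}, a Floer-type maximum principle that confines every section to a fixed compact subset of $E_g$, and Gromov compactness to force the set of $\Gamma$-classes realised by such sections to be finite.

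First I would fix $c\in \R$ large enough that $\widetilde{\Omega}+\pi^*(c\cdot\mathrm{vol}_{S^2})$ is a symplectic form taming $\hat{J}$, as provided by Lemma~\ref{Lemma Symplectic form for admissible J} (using $\partial_s H\leq 0$). For any $u\in\mathcal{S}(j,\hat{J},S)$,
$$
E(u)=\int_{S^2}u^*\bigl(\widetilde{\Omega}+\pi^*(c\cdot\mathrm{vol}_{S^2})\bigr)=\widetilde{\Omega}(S)+c\cdot\mathrm{vol}(S^2)\leq C+c\cdot\mathrm{vol}(S^2),
$$
so every section whose $\Gamma$-class satisfies $\widetilde{\Omega}(S)\leq C$ has energy uniformly bounded independently of $S$.

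Next I would prove spatial confinement. By Remark~\ref{Remark Floer continuation gives admissible Jhat}, wherever $\hat{J}$ has the admissible form at infinity, a $\hat{J}$-holomorphic section reads, in the cylindrical coordinates $(s,t)$ near a pole, as a Floer continuation solution $\partial_s u+J_z(\partial_t u-X_H)=0$. Since $H$ is linear in $R$ at infinity with $\partial_s H\leq 0$, the usual Floer maximum principle (cf.\ the proof of Lemma~\ref{Lemma continuation isos if no in between Reebs}) applied to $R\circ u$ shows that $R\circ u$ is subharmonic on the preimage of the conical end and cannot attain a strict interior maximum there. Because $u(z_0)\in D$ bounds $R\circ u$ at $z_0$ and $S^2$ is compact, the maximum of $R\circ u$ must occur in the complementary compact part of $E_g$ where $\hat{J}$ is unrestricted, producing a uniform bound $R\circ u\leq R_1$ depending only on the admissible data. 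Thus every such section lands inside a fixed compact $K\subset E_g$.

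Finally I would argue by contradiction using Gromov compactness. Assume that infinitely many distinct $\Gamma$-classes $S_n$ with $\widetilde{\Omega}(S_n)\leq C$ carried sections $u_n\in\mathcal{S}(j,\hat{J},S_n)$ with $u_n(z_0)\in D$. All $u_n$ lie in the fixed compact $K$ and have uniformly bounded energy, so a subsequence Gromov-converges to a stable $\hat{J}$-holomorphic map $u_\infty$ inside $K$. Integral homology is locally constant under Gromov convergence, so $[u_{n_k}]=[u_\infty]\in H_2(E_g;\Z)$ for all large $k$. But $\widetilde{\Omega}$ and $c_1(TE_g^v,\Omega_g)$ factor through $H_2(E_g;\Z)$, so the $\Gamma$-class of $u_{n_k}$ is the same as that of $u_\infty$ for all large $k$, contradicting the assumed distinctness of the $S_n$.

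The main obstacle I expect is the maximum-principle step: under the sole assumption $\partial_s H\leq 0$ (not strict), one must carefully invoke the strong maximum principle for $R\circ u$ on the conical end and rule out an interior maximum by showing that local constancy of $R\circ u$ would propagate along Reeb orbits in a level set and contradict $u(z_0)\in D$. Once this confinement holds, the rest is bookkeeping: Gromov compactness inside a compact region with uniformly bounded energy, combined with local constancy of integer homology, delivers finiteness of the $\Gamma$-classes. Genericity of $J$ and weak$^+$ monotonicity are not actually needed for this statement; they are used elsewhere to make moduli spaces smooth and to rule out bubbling in low codimension, not to establish the compactness that underlies finiteness here.
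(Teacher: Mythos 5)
Your energy bound and your closing argument (uniform energy, confinement to a compact region, Gromov convergence of a putative sequence lying in infinitely many distinct $\Gamma$-classes, and the observation that $\widetilde{\Omega}$ and $c_1(TE_g^v,\Omega_g)$ are homological so the classes must eventually stabilize) are in line with the paper's route, which cites Seidel's closed-case analysis for the bubbling/cusp degenerations and isolates the escape in the fibre direction as the only new failure mode. The genuine gap is in your confinement step. The lemma is stated under the assumptions of Lemma \ref{Lemma Symplectic form for admissible J}, and these include the case where $\partial_s H$ is merely bounded above rather than $\partial_s H\leq 0$; this non-monotone case is precisely the one the lemma must cover, since it is applied to the construction of $\psi^+$ in \ref{Subsection Construction of the psi- psi+ maps}, where the homotopy from $H_0$ to $0$ is not monotone. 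In that situation the Floer maximum principle fails: the curvature term $-\partial_s H$ (more precisely $-\rho\,\partial_s h'$ in the computation of Lemma \ref{Lemma Maximum principle}) enters with the wrong sign, $R\circ u$ need not be controlled, and no a priori $C^0$-bound of the kind you assert is available. Even in the monotone case your hypothesis is slightly off: the maximum principle requires monotonicity of the slope, $\partial_s h'_z\leq 0$, not merely $\partial_s H\leq 0$.

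The paper's substitute for your second step is Lemma \ref{Lemma key to compactness}: since $M$, hence $E_g$, is geometrically bounded, the Monotonicity Lemma \ref{Lemma Monotonicity Lemma} provides a constant $C'>0$ such that any portion of a $(j,\hat{J})$-holomorphic section crossing a ball of radius $\epsilon$ consumes energy at least $C'\epsilon^2$; combined with the uniform energy bound $E(s)\leq C+\sigma[S^2]$ (which you derived) and the condition $s(z_0)\in D$, this confines all relevant sections to a compact region of $E_g$ determined by $C$ and $D$, with no sign condition on $\partial_s H$ beyond boundedness above. If you replace your maximum-principle paragraph by this monotonicity argument, the rest of your proof goes through and is essentially the paper's. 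Your remark that genericity of $J$ is not needed for bare finiteness is fair; the paper keeps it because the same degeneration analysis (failures (1)--(3)) feeds into the pseudocycle statement of Lemma \ref{Lemma dim of space of sections}.
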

\begin{proof}
Consider a sequence $s_n \in \mathcal{S}(j,\hat{J},S)$ with $\widetilde{\Omega}(s_n)\leq C$, $c_1(TE^v,\Omega)(s_n)\leq c$ and with $s_n(z_0) \to y \in E_{z_0}$. 

 Three out of four possible failures of sequential compactness are analogous to the case of closed manifolds \cite[Lemmas 7.5,7.6]{Seidel3}. These three failures would imply the existence of a holomorphic section $s\in \mathcal{S}(j,\hat{J})$, which respectively: (1) passes through $y$ but $c_1(TE^v,\Omega)(s)<c$; or (2) passes through $y$ and $c_1(TE^v,\Omega)(s)=c$ but a holomorphic bubble appears in some fibre $E_z$ and intersects $s(z)$; or (3) a cusp-curve of total Chern number $\leq c-c_1(TE^v,\Omega)(s)$ appears in $E_{z_0}$ whose initial marked point lands at $s(z_0)$ and whose last marked point lands at $y$.

Failure (4): the $s_n$ are unbounded in the fibre direction. This cannot happen by Lemma \ref{Lemma key to compactness}.
\end{proof}
In all situations, except for the construction of $\psi^+$ in Section \ref{Subsection Construction of the psi- psi+ maps}, we will only work with monotone $H$ (Definition \ref{Definition Monotone H}). Equivalently, this corresponds via Remark \ref{Remark Floer continuation gives admissible Jhat} to the usual assumption $\partial_s h_z'\leq 0$ which ensures that the maximum principle for Floer continuation solutions holds. We recall the argument below.
\begin{lemma}[Maximum principle]\label{Lemma Maximum principle}
Assume that for $R\geq R_0$ the following hold:  
$H=h_z(R)$, $\partial_s h'_z \leq 0$ and $\hat{J}$ has the form as in Definition \ref{Definition admissible J hat}. Then all $(j,\hat{J})$ pseudo-holomorphic sections $s:S^2 \to E_g$ lie in the region $R\leq R_0$.
\end{lemma}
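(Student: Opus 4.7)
The plan is to reduce the section equation on $E_g$ to a Floer-type continuation equation on each of the two discs $D^\pm$, and then run the standard symplectic cohomology maximum principle fibrewise. First, I would work in the two trivializations $D^\pm\times M$, writing the section as $s(z)=(z,u^\pm(z))$ for $u^\pm:D^\pm\to M$, with the clutching $u^-(t,y)=g_t\,u^+(t,y)$ on $\partial D^{\pm}$. Plugging the special admissible form of $\hat J$ at infinity (Definition \ref{Definition admissible J hat}) into the equation $ds\circ j=\hat J\circ ds$ gives, by Remark \ref{Remark Floer continuation gives admissible Jhat}, that $u^\pm$ satisfies the Floer continuation equation
\[
\partial_s u^\pm+J_z\bigl(\partial_t u^\pm-X_{H^\pm}(z,u^\pm)\bigr)=0
\]
in the region where $u^\pm(z)$ lies in the conical end $\{R\ge R_0\}$. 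Because $K^g$ is radial at infinity, the clutching $g_t$ preserves $R$, so the function $\rho=R\circ u^\pm$ is well defined on the (possibly empty) open set $U=\{z\in S^2:R(s(z))>R_0\}\subset S^2$, independently of which chart we use.

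Next, on $U$ I would run the standard ``contact-type'' maximum principle computation (as in Viterbo \cite{Viterbo1}, Seidel \cite{Seidel}, or the author's \cite{Ritter1,Ritter2}). Using $J^*\theta=dR$ (hence $\theta\circ J=dR$ and $dR\circ J=-\theta$), and $X_H=h'_z(R)\,Y$ with $Y$ the Reeb field (so $dR(X_H)=0$), one computes term by term that
\[
\partial_s\rho=\theta(\partial_t u)-R\,h'_z(R),\qquad \partial_t\rho=-\theta(\partial_s u),
\]
and then, differentiating once more and substituting the equation, one arrives at the familiar inequality
\[
\Delta\rho\;\ge\;-R\,(\partial_s h'_z)(R)\;+\;(\text{nonnegative terms in }\lvert\partial_s u-h'_z Y\rvert_J^2).
\]
The hypothesis $\partial_s h'_z\le 0$ is precisely what makes the first term nonnegative, so $\rho$ is subharmonic on $U$.

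Having subharmonicity, I would finish by a standard strong-maximum-principle argument on $S^2$. If $U\ne S^2$, then $\partial U\subset\{\rho=R_0\}$ is nonempty and, since $U$ is an open subset of the closed surface $S^2$ with $\rho$ continuous up to the boundary, the weak maximum principle gives $\rho\le R_0$ on $\overline{U}$, contradicting the definition of $U$. Hence $U=\emptyset$, unless the whole section already lies in $\{R>R_0\}$; in that remaining case $\rho$ is a subharmonic function on the compact closed surface $S^2$, so it is constant, and the image of $s$ is contained in a single level set $\{R=c\}$ with $c>R_0$. But then near the centres of $D^\pm$ (where $H^\pm\equiv 0$ by construction) the map $u^\pm$ is genuinely $J$-holomorphic into the contact-type hypersurface $\{R=c\}$, and the same contact-type identity $dR\circ J=-\theta$ forces $du^\pm\equiv 0$ on a neighbourhood of each pole, so by unique continuation $s$ is constant, contradicting $R_0<c$ and the existence of a definite conical region; this final rigidity argument rules out the pathological case.

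The only real obstacle is the maximum-principle computation itself: one has to keep careful track of the fact that $H$ (and $J$) depend on $z=s+it$, not just on $t$, and that in our setup $(E_g,\widetilde\Omega)$ is only genuinely symplectic after the twist of Lemma \ref{Lemma Symplectic form for admissible J}. Neither causes any trouble as long as $\partial_s h'_z\le 0$ is imposed where the section meets $\{R\ge R_0\}$: the $z$-dependence of $h_z$ contributes exactly the ``curvature'' term $-R\,\partial_s h'_z$, with the correct favourable sign. Everything else (bubbling, horizontal distributions, geometry at infinity) has already been controlled in \ref{Subsection Admissible almost cx str}.
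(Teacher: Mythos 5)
Your argument is the same as the paper's: rewrite the section equation via the admissible form of $\hat{J}$ as a Floer continuation equation (Remark \ref{Remark Floer continuation gives admissible Jhat}), compute $d\rho\circ j$ for $\rho=R\circ u$ from the contact-type condition $dR\circ J=-\theta$ and $dR(X_H)=0$, and conclude with the maximum principle using $\partial_s h'_z\le 0$; your first-order identities $\partial_t\rho=-\theta(\partial_s u)$ and $\partial_s\rho=\theta(\partial_t u)-\rho h'_z(\rho)$ are correct. Two caveats. First, differentiating $d\rho\circ j=-u^*\theta+\theta(X_H)\,dt$ gives
\[
\Delta\rho \;=\; \tfrac12\|du-X_H\otimes dt\|^2 \;-\;\rho\,(\partial_s h'_z)(\rho)\;-\;\rho\,h''_z(\rho)\,\partial_s\rho ,
\]
so $\rho$ is \emph{not} subharmonic in general: when $h_z$ is not linear in $R$ (which is the typical case here, e.g.\ $H_0$ has $h_0''<0$, and the interpolation regions of monotone homotopies are nonlinear) there is a genuine first-order term. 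What you actually get is $\Delta\rho+\rho\,h''_z(\rho)\,\partial_s\rho\ge 0$, an elliptic inequality with no zeroth-order term; this is harmless, since the weak and strong maximum principles you invoke in the last step hold verbatim for such operators, but the word ``subharmonic'' should be replaced accordingly (the paper keeps the ``first order terms in $\rho$'' explicitly for this reason).

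Second, your treatment of the case $U=S^2$ (which the paper leaves implicit) is the only genuinely shaky step. The phrase ``by unique continuation $s$ is constant'' is not justified as written: unique continuation compares $u$ with another \emph{solution}, and the constant map at the value $u(z_0)$ solves the continuation equation only where $X_H$ vanishes at that point, so constancy near the poles cannot be propagated this way; moreover a constant section is not by itself a contradiction with ``$c>R_0$''. The clean fix is already contained in your identity: if $\rho\equiv c$ then $\Delta\rho=0$ and $\partial_s\rho=0$, so the two nonnegative terms $\tfrac12\|du-X_H\otimes dt\|^2$ and $-\rho\,\partial_s h'_z$ must both vanish, giving $\partial_s u=0$ and $\partial_t u=X_H$; smoothness of the section at the two poles then forces $u$ to be a single point at which $X_H(z,\cdot)=h'_z(c)\,Y=0$ for every $z$, i.e.\ $h'_z(c)=0$ for all $z$, which is excluded in all the situations where the lemma is applied (the slope is nonzero for some $z$ at every level $R\ge R_0$). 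With these two repairs your proof is correct and coincides with the paper's.
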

\begin{proof} By Remark \ref{Remark Floer continuation gives admissible Jhat}, $s$ has the form $u: \R \times S^1 \to \R \times S^1 \times M$ (defined on a subset of $\R \times S^1$), with $du\circ j = J\circ du + \nu \circ j$. Let $\rho=R\circ u$. 
Since $dR \circ J = - \theta = -R\alpha$,
$$
d\rho \circ j = dR (J\circ du + \nu\circ j)
=  -u^*\theta + dt \otimes \theta(X_H)
$$
using $dR(X_H)=dR(h'(R)\mathcal{R})=0$. Arguing as for the Maximum Principle in \cite{Ritter3},
$$
\Delta \rho\, ds \wedge dt = -d(d\rho\circ j)= \frac{1}{2}\|du - X_H \otimes dt\|^2 + \frac{h' d\rho\wedge dt - d(\rho h' dt)}{ds \wedge dt}
$$
so $(\Delta \rho + \textrm{first order terms in }\rho) \geq -\rho (\partial_s h')$. So the maximum principle for $\rho$ applies provided $\partial_s h' \leq 0$.
\end{proof}
In the construction of $\psi^+$ in Section \ref{Subsection Construction of the psi- psi+ maps}, we need to allow a non-monotone Hamiltonian as described in Example \ref{Example non-monotone homotopy H}. To prevent Floer solutions from escaping to infinity we appeal to the following monotonicity lemma.
\begin{lemma}[Monotonicity Lemma]\label{Lemma Monotonicity Lemma}
 Suppose $H$ is $C^2$-bounded, in particular $\partial_s H$ is bounded above, and $(j,J,\hat{J})$ is admissible. Then there is a constant $C>0$, such that for any $(j,\hat{J})$ pseudo-holomorphic disc $s: D \subset S^2 \to E_g$ and boundary $s(\partial D)$ lying in the boundary of a ball of radius $\epsilon$ with centre intersecting $s(D)$, the energy $E(s) = \int_D \| du \|_{\hat{J}}^2\, ds \wedge dt$ calculated with respect to the metric $(\widetilde{\Omega}+\pi^*\sigma)(\cdot, \hat{J}\cdot)$ (Lemma \ref{Lemma Symplectic form for admissible J}) is at least $C \epsilon^2$.
\end{lemma}
\begin{proof}
 This is a standard consequence of the isoperimetric inequality, see \cite[Sec.V.4]{Audin-Lafontaine}. This uses the fact that $M$, and hence $E_g$, is geometrically bounded. In particular, it uses that $X_H=h'_z(R)\mathcal{R}$ is $C^1$-bounded at infinity since $H$ is $C^2$-bounded, and $\hat{J}$ is prescribed in terms of $j,J,X_H$ at infinity by Definition \ref{Definition admissible J hat}. In particular, the condition that $\partial_s H$ is bounded above is required for Lemma \ref{Lemma Symplectic form for admissible J} to hold.
\end{proof}
\begin{lemma}\label{Lemma key to compactness}
Let $(j,J,\hat{J})$ be admissible. Suppose $H:E_g \to \R$ is monotone (Definition \ref{Definition Monotone H}) except possibly over a subset of the base $S^2=D^+\cup_{S^1} D^-$ where the assumption of Lemma \ref{Lemma Monotonicity Lemma} holds. Then all sections $s \in \mathcal{S}(j,\hat{J})$ which intersect a compact domain $D\subset E_{z_0}$ with $\widetilde{\Omega}(s)\leq C'$ are contained in a compact region of $E_g$ determined by $C',D$.
\end{lemma}
\begin{proof}
If $H$ is monotone everywhere, then this is a consequence of the maximum principle (Lemma \ref{Lemma Maximum principle}). Consider first the section restricted to the subset of $S^2$ where $H$ is non-monotone. Then, by Lemma \ref{Lemma Monotonicity Lemma} and the energy estimate $(\widetilde{\Omega}+\pi^*\sigma)(s) \leq C + \sigma[S^2]$, the section is forced to lie in a compact region of $E_g$ determined a priori by $C'$, $D$ (and the constant $C$ from Lemma \ref{Lemma Monotonicity Lemma}). On the remaining region of $S^2$ where $H$ is monotone, one can apply the proof of the maximum principle to deduce that the maximal value of $R$ occurs at the boundary of that region of $S^2$. But there, the section is constrained to lie in the compact region of $E_g$ determined previously, so the $R$-coordinate is bounded a priori in terms of $C',D$.
\end{proof}
%
%
%
%
\subsection{Transversality for $\mathcal{S}(j,\hat{J})$}
\label{Subsection Transversality for sections}
%
%
\begin{lemma}\label{Lemma dim of space of sections}
 After a small generic perturbation of $(J,H)$, for admissible $(j,J,\hat{J})$ the moduli space $\mathcal{S}(j,\hat{J})$ is a smooth manifold of dimension
$$
d(s) = (\dim_{\R} \textrm{of }\mathcal{S}(j,\hat{J})\textrm{ near }s)  = 2\dim_{\C}M+2c_1(TE_g^v,\Omega)(s),
$$
where $TE^v = \ker d\pi$ (abbreviating $E=E_g$), and the evaluation maps
$$
\begin{array}{l}
\mathrm{ev}: S^2 \times \mathcal{S}(j,\hat{J}) \to E,\; \mathrm{ev}(z,s) =s(z)\\
\mathrm{ev}_{z_0}: \mathcal{S}(j,\hat{J}) \to M,\; \mathrm{ev}_{z_0}(z,s) =i^{-1}(s(z_0))
\end{array}
$$
are transverse respectively to 
$$
\begin{array}{l}
\eta: \mathcal{M}_0^s(J) \times_{PSL(2,\C)} \CP^1 \to E, \;\eta(z,w,x) =w(x)\\
\eta_1: \mathcal{C}_{r,k}(J) \to M, \;\eta_1(w_1,\ldots,w_r,t_1,\ldots,t_r,t_1',\ldots,t_r') = w_1(t_1'),
\end{array}
$$
so the evaluation 
$
\mathrm{ev}_{z_0}: \mathcal{S}(j,\hat{J},S) \to M
$
is a pseudo-cycle of dimension $d(S)$.

\textbf{Notation:} $\mathbf{\mathcal{M}_k^s(J)}=\{(z,w)\in S^2 \times C^{\infty}(\CP^1,E): w$ simple $J_z$-holomorphic curve in $E_z$ with $c_1(TE_z,\Omega_z)(w)=k\}$. This is empty for $k<0$ and is a $\dim E$-manifold for $k=0$ whose image under $\eta$ has codim $= 4$ (uses weak$^+$-monotonicity). 

$\mathbf{\mathcal{C}_{r,k}(J)}$ is the $(2n+2k-2r)$-manifold of simple $J$-holomorphic cusp-curves with $r\geq 1$ components of total Chern number $k$ quotiented by the $PSL(2,\C)^r$ action, where $w_i(t_i)=w_{i+1}(t_{i+1}')$ are the nodes for $i=1,\ldots,r-1$.
\end{lemma}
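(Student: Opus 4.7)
The plan is to adapt the standard universal moduli space argument of McDuff--Salamon \cite{McDuff-Salamon2} and Seidel \cite{Seidel3}, with the key input being that by the maximum principle (Lemma \ref{Lemma Maximum principle}), the compactness result (Lemma \ref{Lemma key to compactness}), and the fact that $\pi_g\circ s = \mathrm{id}_{S^2}$ for any section $s$, all issues arising from non-compactness of the fibre $M$ are under control.

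First, I would set up the Fredholm theory. Linearizing the equation $ds\circ j = \hat{J}\circ ds$ at a section $s \in \mathcal{S}(j,\hat{J})$ yields a Cauchy--Riemann type operator $D_s$ acting on sections of $s^*TE_g^v$ (the vertical tangent bundle pulled back to $S^2$), with values in $\Omega^{0,1}(s^*TE_g^v)$. The Riemann--Roch formula gives $\mathrm{ind}_{\R}(D_s) = 2\dim_{\C} M + 2c_1(TE_g^v,\Omega)(s)$, which is the claimed dimension $d(s)$. Next, I would form the universal moduli space $\mathcal{S}^{\mathrm{univ}}$ of triples $(s, J, H)$ where $(J,H)$ varies in the Banach manifold of admissible data and $s$ is $(j,\hat{J}_{J,H})$-holomorphic. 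The universal linearization $D_s \oplus (\delta J, \delta H)\mapsto D_s\xi + \text{(perturbation term)}$ is surjective: since $\pi_g\circ s = \mathrm{id}_{S^2}$, every point of $s$ is somewhere-injective as a section, and by Lemma \ref{Lemma key to compactness} the image of $s$ lies in a compact region $K\subset E_g$ where admissibility imposes no constraint on perturbations. A standard Hahn--Banach argument on this compact region produces enough freedom to span any cokernel element. Sard--Smale on the projection $\mathcal{S}^{\mathrm{univ}}\to \{(J,H)\}$ yields a residual set of regular data, and for these $\mathcal{S}(j,\hat{J})$ is a smooth manifold of dimension $d(s)$.

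For transversality of $\mathrm{ev}$ to $\eta$ and $\mathrm{ev}_{z_0}$ to $\eta_1$, I would enlarge the universal moduli space to include the simple fibre spheres $\mathcal{M}_0^s(J)$ (resp. the cusp curves $\mathcal{C}_{r,k}(J)$), together with a marked point where the section meets the bubble (resp. cusp). Surjectivity of the linearized universal equation follows by the same somewhere-injective argument applied separately to the section (transverse to a single point in the fibre $E_z$) and to the simple bubble components (using McDuff--Salamon's classical argument). The image $\eta(\mathcal{M}_0^s(J)\times_{PSL(2,\C)} \CP^1)$ has codimension $4$ in $E_g$ (using weak$^+$-monotonicity to avoid negative Chern number bubbles), so generically sections do not meet it; and the image $\eta_1(\mathcal{C}_{r,k}(J))$ has codimension $\geq 2$ in $M$, so generically the evaluation $\mathrm{ev}_{z_0}$ avoids it on the top stratum.

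Finally, to conclude that $\mathrm{ev}_{z_0}\colon \mathcal{S}(j,\hat{J},S)\to M$ is a pseudo-cycle of dimension $d(S)$, one combines the transversality above with Lemma \ref{Lemma Compactness for space of sections}: the image has compact closure, and the boundary (limit points arising from broken configurations) decomposes as a union over sections $s'\in\mathcal{S}(j,\hat{J},S')$ of lower $\widetilde{\Omega}$-energy together with sphere bubbles or cusp curves in the fibres, which by the transversality just proved form strata of codimension $\geq 2$. The main obstacle is the interaction between the admissibility constraint on $\hat{J}$ at infinity and the need to perturb freely for surjectivity; this is resolved precisely because the maximum principle forces all sections into a compact region $\{R\leq R_0\}\subset E_g$ where admissibility is vacuous, so the standard Floer--Gromov transversality scheme applies verbatim.
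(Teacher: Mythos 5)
Your overall scheme (universal moduli space with $(J,H)$ perturbations, Sard--Smale, somewhere-injectivity of sections, then the compactness lemma plus a codimension-two dimension count for the pseudo-cycle) is the same standard route the paper takes by citing \cite[Prop.~7.3, Prop.~7.7]{Seidel3} and \cite[Sec.~8.3--8.4]{McDuff-Salamon2}. However, the step where you resolve what you correctly identify as the main obstacle is wrong as stated. You claim that every section lies in a compact region $\{R\leq R_0\}$ where admissibility is vacuous, so that $\hat{J}$ may be perturbed freely there. The maximum principle (Lemma \ref{Lemma Maximum principle}) gives this only when $H$ is monotone, $\partial_s h'_z\leq 0$, and the lemma is applied precisely in situations where this fails (the non-monotone homotopy defining $\psi^+$, where only $\partial_s H$ bounded above is available). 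Lemma \ref{Lemma key to compactness} does not rescue you: it confines only those sections which both have bounded $\widetilde{\Omega}$ \emph{and} pass through a prescribed compact subset of the fibre over $z_0$; it does not assert that all of $\mathcal{S}(j,\hat{J})$, or even all sections in a fixed class $S$, stay in a fixed compact region. (Note also that the image of a single section is automatically compact, being the image of $S^2$; the relevant question is whether it avoids the region $R\geq R_0$ of Definition \ref{Definition admissible J hat}, and in general it need not.) Consequently, at sections venturing into the large-$R$ region the cokernel must be killed using only the perturbations compatible with admissibility, namely perturbations of $H$ (and of $J$ within conical structures): this is exactly the point the paper flags, and the reason it invokes the Hamiltonian-perturbation transversality argument of \cite[Sec.~8.3--8.4]{McDuff-Salamon2} rather than free perturbations of $\hat{J}$. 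You did include a $\delta H$ term in your universal linearization, but you never justify surjectivity through it; your justification rests on the false confinement claim.

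A secondary overstatement: in the last paragraph you assert that the image of $\mathrm{ev}_{z_0}$ has compact closure. In the non-monotone setting this is not true and not needed; Lemma \ref{Lemma Compactness for space of sections} only controls sections meeting a given compact subset of the fibre (with bounded $\widetilde{\Omega}$), which is what makes $\mathrm{ev}_{z_0}$ a (locally finite) pseudo-cycle after the codimension-$\geq 2$ count for the bubble and cusp-curve strata. With the transversality step repaired as above (Hamiltonian perturbations at infinity, free perturbations of $(J,\hat{J})$ on compact parts), the rest of your argument matches the paper's.
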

This Lemma is the analogue of \cite[Prop.7.3]{Seidel3}, except at infinity we perturb $\hat{J}$ in a controlled way by perturbing $H$ (thus preserving admissibility). The proof of transversality using perturbations of $H$ is in Sec. 8.3 \& 8.4 of \cite{McDuff-Salamon2}. The proof that $ev_{z_0}$ is a pseudo-cycle then follows by Lemma \ref{Lemma Compactness for space of sections}, just like in \cite[Prop.7.7]{Seidel3}. Indeed, the proof of Lemma \ref{Lemma Compactness for space of sections} describes how $\mathrm{ev}_{z_0}(\mathcal{S}(j,\hat{J},S))$ can be compactified by countably many images of manifolds (since we only care about the image, we may assume the holomorphic bubbles and cusp-curves that we described are simple). By a dimension count, using the above transverseness claims about $\eta,\eta_1$, one shows that these additional manifolds have dimension $\leq d(S) -2$.
\begin{remark}
 $\mathcal{S}(j,\hat{J},S)$ depends on $H$ in so far as $\hat{J}$ depends on $H$ (admissibility), but equivalence classes $S$ are independent of this choice by \ref{Subsection Space of sections} (they depend on $\Omega_g$).
\end{remark}
%
\subsection{Construction of the $\psi^+$ and $\psi^-$ maps}
\label{Subsection Construction of the psi- psi+ maps}
%
\begin{theorem}\label{Theorem psi+ and psi-}
 Let $H_0$ be a Hamiltonian on $M$ which at infinity equals $h(R)$ (non-linear) with slopes $0<h'<
(\textrm{min Reeb period})$, $h''<0$ and $h'\to 0$ fast enough so that $H_0$ is $C^2$-bounded. Then there are chain maps 
$$
\psi^+: CF^*(H_0) \to QC_{2n-*}^{lf}(M) \qquad \psi^-: QC_{2n-*}^{lf}(M) \to CF^*(H_0)
$$
homotopy inverse to each other, where $2n=\dim_{\R} M$. Via Poincar\'e duality:
$$
\psi^+: CF^*(H_0) \to QC^*(M) \qquad \psi^-: QC^*(M) \to CF^*(H_0)
$$
\end{theorem}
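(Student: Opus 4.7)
The plan is to construct $\psi^{\pm}$ as PSS-type maps adapted to our non-compact, non-monotone setting, by counting pseudoholomorphic sections of a trivial Hamiltonian fibration over a disc equipped with one cylindrical end at the boundary and one marked point in the interior. The boundedness of $H_0$, together with the concavity $h''<0$ and the decay $h'\to 0^+$, is precisely what is needed for the compactness machinery of Section \ref{Section Pseudoholomorphic sections} to apply in spite of the fact that the Hamiltonian homotopy interpolating between $0$ in the interior and $H_0$ near the end cannot be taken monotone in $s$.

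To define $\psi^-$, let $D$ be a disc with a positive cylindrical end $[0,\infty)\times S^1$ attached at the boundary, and fix an interior marked point $p\in D$. On $E=D\times M$, choose a smooth family $H_z$ with $H_z\equiv 0$ near $p$ and $H_z=H_0$ along the end, and an admissible $\hat{J}$ in the sense of Definition \ref{Definition admissible J hat}. For a locally finite cycle $\alpha\in QC^{lf}_{2n-*}(M)$ and a generator $c\in \mathcal{P}_*(H_0)$, let $\mathcal{M}(\alpha,c)$ be the moduli space of $\hat{J}$-holomorphic sections $s:D\to E$ converging to $c$ along the end and with $s(p)\in \alpha$. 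After Novikov weighting as in \ref{Subsection Cover of the loop space}, set
$$
\psi^{-}(\alpha) \;=\; \sum_{c}\,\#\mathcal{M}(\alpha,c)\cdot c.
$$
The map $\psi^{+}$ is defined symmetrically, with a negative cylindrical end at the boundary (so that $c$ is the asymptotic at $s\to -\infty$ and the role of ``input'' and ``output'' of the section is swapped), and with evaluation at $p$ producing a pseudo-cycle representing a class in $QC^{lf}_{2n-*}(M)$.

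Compactness of these moduli spaces follows from the three ingredients assembled in \ref{Subsection Compactness for sections}: the Maximum Principle (Lemma \ref{Lemma Maximum principle}) confines sections to $R\leq R_0$ near the end; the Monotonicity Lemma \ref{Lemma Monotonicity Lemma} together with Lemma \ref{Lemma key to compactness} traps sections in a compact region of $E$ once an energy bound is fixed; and weak$^+$-monotonicity rules out sphere bubbling. Transversality is obtained by a generic perturbation of $(H,J)$ within the admissibility class, as in Lemma \ref{Lemma dim of space of sections}. The chain map property of $\psi^{\pm}$ is then the standard analysis of the boundary of the $1$-dimensional moduli spaces: Floer breakings at the cylindrical end contribute $d\circ\psi^-$ or $\psi^+\circ d$, while gradient-trajectory breakings at $p$ contribute the Morse differential on $QC^{lf}_{*}$.

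To prove the homotopy equivalences $\psi^{\pm}\psi^{\mp}\simeq\mathrm{id}$, I would glue the $\psi^-$-disc and the $\psi^+$-disc along their cylindrical ends and consider the $1$-parameter family of Riemann surfaces obtained by varying the neck length from $0$ to $\infty$. At one end of the family the neck stretches and one recovers the composition $\psi^{\pm}\psi^{\mp}$; at the other end the surface degenerates to a sphere with two marked points and no Hamiltonian perturbation, which computes the identity via the ordinary intersection pairing $\bullet$ of \ref{Subsection relation to quantum homology} (thought of as the quantum pairing with trivial Novikov weight coming from constant sections). The main obstacle throughout is that the interpolation between $0$ and $H_0$ is anti-monotone; the one-sided bound on $\partial_s H$ produced by bounded $H_0$ is what saves the argument, since it is precisely the hypothesis of Lemma \ref{Lemma Symplectic form for admissible J} under which the $2$-form $\widetilde{\Omega}+\pi^*\sigma$ becomes symplectic on $E$ and a uniform energy bound becomes available. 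The concavity $h''<0$ and the decay of $h'$ are what are needed for admissibility of $\hat{J}$ to be compatible with the non-monotone homotopy, exactly as in the example following Lemma \ref{Lemma Symplectic form for admissible J}.
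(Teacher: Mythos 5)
Your plan is essentially the paper's own construction: $\psi^{\pm}$ are defined by counting $(j,\hat{J})$-holomorphic sections over the two capped half-discs with $H$ interpolating between $0$ at the interior marked point and $H_0$ at the cylindrical end, with boundedness of $H_0$ giving the one-sided bound on $\partial_s H$ needed for Lemma \ref{Lemma Symplectic form for admissible J} and the Monotonicity Lemma, and with the chain-map and chain-homotopy-inverse statements left to the standard PSS/gluing arguments, exactly as the paper does. The only bookkeeping point to note is that in the paper the maximum principle is available only for $\psi^-$ (whose homotopy can be chosen monotone, yielding a finite pseudocycle), whereas $\psi^+$ is the genuinely non-monotone direction and its evaluation is only a \emph{locally finite} pseudocycle via the energy estimate plus Lemmas \ref{Lemma Monotonicity Lemma} and \ref{Lemma key to compactness} -- which is precisely the mechanism you identify.
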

\begin{proof}
 $\psi^+$ will count $(j,\hat{J}^+=\hat{J}|_{D^+})$-holomorphic sections $s^+:D^+ \to D^+ \times M$, for $(j,J,\hat{J})$ admissible, where $H^+=H_0$  on $\partial D^+$ and $H^+=0$ at the centre of the disc (compare \ref{Subsection Hamiltonian fibration}). We can ensure that $\partial_s H_z$ is bounded above since $H_0$ is bounded.

Let $c=(v,x) \in \widetilde{\mathcal{L}_0 M}$, where $x$ is a $1$-orbit of $X_{H_0}$. Denote $\mathcal{M}^+=\mathcal{M}^+(c;H^+,\hat{J}^+)$ the moduli space of such sections $s^+$ with
$(D^2\cong D^+ \stackrel{s^+}{\to} M)=c \in \widetilde{\mathcal{L}_0 M},$
 where $\cong$ is the orientation-preserving identification. These moduli spaces are defined in the closed setup in \cite[Sec.8]{Seidel3}. For generic $(\hat{J},H)$, $\mathcal{M}^+$ is smooth and
$$
\dim \mathcal{M}^+(c;H^+,\hat{J}^+) = 2n - \mu_{H_0}(c),
$$
(see \ref{Subsection grading of symplectic homology} for gradings). The evaluation at the centre of the disc $\mathrm{ev}_{z_{\infty}}: \mathcal{M}^+ \to M, u \mapsto u(z_{\infty})$ is a locally finite pseudo-cycle of that dimension. To ensure the locally finite condition, we use Lemma \ref{Lemma Monotonicity Lemma} and the a priori energy estimate for $u$:
$$
\begin{array}{lll}
E(u) &=& \int_{D^+} \|u\|_{(\widetilde{\Omega}+\pi^*\sigma)(\cdot,\hat{J}^+\cdot)}^2\, ds \wedge dt \\[1mm]
&=& (\widetilde{\Omega}+\pi^*\sigma)[c]\\[1mm]&=& \int_{D^+} u^*\omega + \int_{D^+} u^*d(-H^+dt) + \int_{D^+} u^*(\pi^*\sigma)\\[1mm]
&=& \omega[c]-\int_{S^1} H^+(x)\, dt +\sigma[D^+].
\end{array}
$$
%
%
Indeed, this estimate and Lemma \ref{Lemma Monotonicity Lemma} imply that all $u\in \mathcal{M}^+ (c;H^+,\hat{J}^+)$ which intersect a given compact $C'$ of $M$ must lie in a compact subset $C''$ of $M$ determined by $C'$. But then a standard Gromov compactness argument implies the compactness up to breaking of the subset of all $u\in \mathcal{M}^+ (c;H^+,\hat{J}^+)$ intersecting $C'$. 

Define $\psi^+$ by extending linearly the map defined on generators by
$$\boxed{
\begin{array}{rcl}
\psi^+: CF^*(H_0) & \to & QC_{2n-*}^{lf}(M),\\
\psi^+(c) & = & \displaystyle \sum_{\gamma \in \Gamma}\;\; (\mathrm{ev}_{z_{\infty}})_*
[\mathcal{M}^+(\gamma \cdot c;H^+,\hat{J}^+)] \otimes <\gamma>
\end{array}
}
$$
As $\dim \mathcal{M}^+ (\gamma \cdot c;H^+,\hat{J}^+) = 2n- |\gamma \cdot c| = 2n-|c|-|\gamma|$, the right hand side above has degree $2n-|c|$. For $c=(v,x)$, the energy of $u \in \mathcal{M}^+ (\gamma \cdot c;H^+,\hat{J}^+)$ is 
$$\textstyle E(u)=\omega(\gamma)+\omega[c]-\int_{S^1} H^+(x)\, dt +\sigma[D^+]$$
So for fixed $c$ but varying $\gamma$, the $\omega(\gamma)$ must grow to $\infty$ if such energies were to grow to $\infty$. So $\psi^+$ is well-defined.

Similarly define $\mathcal{M}^-(c;H^-,\hat{J}^-)$ requiring
$(D^2 \, \cong \, D^- \stackrel{s^-}{\to} M)=c \in \widetilde{\mathcal{L}_0 M},
$
where $\cong$ is orientation-reversing. Then $\dim\, \mathcal{M}^-(c;H^-,\hat{J}^-)=\mu_{H_0}(c)$. Since $H^-$ is a homotopy from $H_0$ to $0$ we can choose it to be monotone: $\partial_s H_z^- \leq 0$. So by Lemma \ref{Lemma Maximum principle} we obtain a (finite) pseudo-cycle $\mathrm{ev}_{z_0}:\mathcal{M}^-(c;H^-,\hat{J}^-) \to M$.
$$
\boxed{
\begin{array}{rcl}
\psi^-: QC_{2n-*}^{lf}(M) & \to & CF^*(H_0),\\
\psi^-(\alpha) & = & \displaystyle \hspace{-4ex}\sum_{\dim \mathcal{M}^-(c;H^-,\hat{J}^-) + \dim \alpha = 2n}\hspace{-4ex} ((\mathrm{ev}_{z_0})_*[\mathcal{M}^-(c;H^-,\hat{J}^-)] \bullet \alpha) \; <c>
\end{array}
}
$$
where $\bullet$ is the intersection product between the pseudo-cycle $\mathrm{ev}_{z_0}$ and the lf cycle $\alpha$. In particular, for the unit $[M] \otimes 1 \in QH_{2n}^{lf}(M)$, 
$$\psi^-([M]) = \sum_{\mu_{H_0}(c)=2n} \# \mathcal{M}^-(c;H^-,\hat{J}^-) \cdot <c>.$$

By standard arguments (combining \cite{PSS} and \cite{Ritter3}), one checks that $\psi^-$, $\psi^+$ are chain maps inverse to each other up to chain homotopy. We omit the details. 
\end{proof}
%
\subsection{Algebro-geometric construction of 
$\mathbf{r_{\widetilde{g}}}$.}
\label{Subsection Description of rg}
\begin{theorem}\label{Theorem rg element}
$r_{\widetilde{g}}(1)\in QH^{2I(\widetilde{g})}(M)$ is represented Poincar\'e dually by the lf cycle
$$
r_{\widetilde{g}}[M] = \sum_{\gamma \in \Gamma}\; (\mathrm{ev}_{z_{\infty}})_*[\mathcal{S}(j,\hat{J},\gamma+S_{\widetilde{g}})]\; \otimes \; \gamma \; \in QC_{2n-2I(\widetilde{g})}^{lf}(M).
$$
%
%
%
After Poincar\'{e} dualizing $r_{\widetilde{g}}$, and for a generic lf chain $\alpha: \Delta^{|\alpha|}\to M$,
$$
\begin{array}{lll}
r_{\widetilde{g}}: QH_{2n-*}^{lf}(M)  \to  QH^{lf}_{2n-*-2I(\widetilde{g})}(M)\\[0mm]
r_{\widetilde{g}}(\alpha \otimes 1)  = \displaystyle \sum_{\gamma \in \Gamma} (\mathrm{ev}_{z_{\infty}})_*\left[\mathcal{S}(j,\hat{J},\gamma+S_{\widetilde{g}}) \times_{\mathrm{ev}_{z_{0}},\alpha} \Delta^{|\alpha|}\right]  \otimes \gamma \\
\qquad\quad\quad\; = \displaystyle \sum_{\gamma \in \Gamma} \sum_i (\mathrm{ev}_{z_{\infty}}\times \mathrm{ev}_{z_0})_*\left[\mathcal{S}(j,\hat{J},\gamma+S_{\widetilde{g}})\right] \bullet \left[\mathrm{D}[\beta_i] \times \alpha\right]\;\; \beta_i \otimes \gamma
\end{array}
$$
counts holomorphic sections intersecting the lf chain $\alpha$ over $z_0$ and the $($finite$)$ chain $\mathrm{D}[\beta_i]$ over $z_{\infty}$, where $\mathrm{D}[\beta_i]$ is the dual basis with respect to the intersection product $\bullet: H_*^{lf}(M) \otimes H_*(M) \to \Z/2$ of a basis $\beta_i$ of lf cycles for $H_*^{lf}(M)$.
\end{theorem}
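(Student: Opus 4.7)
The plan is to identify $r_{\widetilde g}(1)$ by unravelling the definition $r_{\widetilde g}=\psi^+\circ \mathcal R_{\widetilde g}\circ\psi^- = \psi^+\circ\varphi_0\circ\mathcal S_{\widetilde g}\circ\psi^-$ applied to $[M]\in QH^{lf}_{2n}(M)$, and then showing that the resulting composite count of (half-disc $\cup$ continuation cylinder $\cup$ half-disc) configurations is, via a gluing argument, the count of full $\hat J$-holomorphic sections of $E_g\to S^2$ in the $\Gamma$-class $S_{\widetilde g}+\gamma$.

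First I would track the input $[M]$: by the formula for $\psi^-$ in \ref{Subsection Construction of the psi- psi+ maps}, $\psi^-([M]) = \sum_{\mu_{H_0}(c^-)=2n} \#\mathcal M^-(c^-;H^-,\hat J^-)\cdot \langle c^-\rangle$, a count of $(j,\hat J^-)$-holomorphic sections of the trivial fibration over $D^-$ asymptotic to $c^-$ at $\partial D^-$, with $H^-$ interpolating between $0$ at $z_0$ and $H_0$ on $\partial D^-$. Then $\mathcal S_{\widetilde g}$ simply relabels generators by $c^-\mapsto \widetilde g^{-1}c^-$, and $\varphi_0:CF^*(H_{-1})\to CF^*(H_0)$ is a continuation counting Floer cylinders for a monotone homotopy from $H_{-1}=g^*H_0$ to $H_0$. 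Finally $\psi^+$ counts $(j,\hat J^+)$-holomorphic sections of the trivial fibration over $D^+$ asymptotic at $\partial D^+$ to a generator $c^+$, with $H^+$ interpolating between $H_0$ on $\partial D^+$ and $0$ at $z_\infty$; the result is evaluated at $z_\infty$ and weighted by $\gamma$-homotopy classes.

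The crux is to interpret the composition as a count of sections over the glued surface $D^-\cup_{S^1}[\text{cylinder}]\cup_{S^1}D^+\cong S^2$. Here I would use the pull-back identification of Corollary \ref{Corollary bijection of generators}: replacing the continuation data $(H_z,J_z)$ in the cylinder by $(g^*H_z,g^*J_z)$ transforms Floer cylinders for $H_{-1}\to H_0$ into Floer cylinders for $H_0\to H_1$ after acting by $g_t$ pointwise, and crucially it matches the clutching $\phi^g$ used to build $E_g$ (\ref{Subsection Hamiltonian fibration}). Via Remark \ref{Remark Floer continuation gives admissible Jhat}, the half-disc Floer problems and the continuation cylinder Floer problem all assemble into the $(j,\hat J)$-holomorphic section equation on $E_g$ for an admissible $(j,J,\hat J)$ built from $H^\pm$ and the monotone homotopy. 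A standard gluing theorem (as in Seidel \cite{Seidel3} for the closed case, plus the maximum principle/Lemma \ref{Lemma Maximum principle} and compactness Lemma \ref{Lemma Compactness for space of sections} to handle non-compactness of the fibre) identifies the fibre product $\mathcal M^-\times_{CF^*}\mathcal N\times_{CF^*}\mathcal M^+$ in the appropriate dimensions with a neighbourhood of the broken locus in $\mathcal S(j,\hat J,S_{\widetilde g}+\gamma)$, and shows that generic sections arise this way up to cobordism.

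The $\Gamma$-class bookkeeping is then mechanical: summing the contributions of pairs $(c^-,c^+=\widetilde g^{-1}c^-)$ with cylinder class contributing $\gamma$, the total Chern number of the glued section is $-I(\widetilde g)-c_1(\gamma)$ (using Definition \ref{Definition c1 of Sg in terms of I}), which matches $c_1(TE^v_g,\Omega_g)(S_{\widetilde g}+\gamma)$ and gives the degree $2I(\widetilde g)$ shift after Poincaré duality. The evaluation $\psi^+$ at $z_\infty$ becomes evaluation at the North pole in $E_g$, yielding the formula for $r_{\widetilde g}[M]$. The general formula $r_{\widetilde g}(\alpha\otimes 1)$ follows by feeding a representative $\alpha$ into $\psi^-$ instead of $[M]$: this replaces the asymptotic condition at $z_0$ by an incidence condition $\mathrm{ev}_{z_0}\in\alpha$ on the $D^-$-sections, and after the same gluing becomes the fibre-product description over $\alpha$ at $z_0$ on full sections. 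The dual-basis reformulation is just Poincaré duality applied to express the lf class via intersection with $\beta_i$. The hard part is the gluing in the non-compact, non-exact setting; it is controlled by the a priori energy estimate, the admissibility of $\hat J$ ensuring the Maximum Principle holds across the glued region, and weak$^+$ monotonicity ruling out sphere bubbling in the boundary strata.
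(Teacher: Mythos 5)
Your proposal is correct and follows essentially the same route as the paper: unwind $r_{\widetilde g}=\psi^+\circ\varphi_0\circ\mathcal S_{\widetilde g}\circ\psi^-$, glue the $D^\pm$-sections and the continuation cylinder into $(j,\hat J)$-holomorphic sections of $E_g$ in class $\gamma+S_{\widetilde g}$ by the gluing argument of Seidel's Sec.\ 8 (with the maximum principle and the compactness lemmas controlling the non-compact fibres), and note that the general case of $\alpha$ only changes the incidence conditions over $z_0,z_\infty$. The paper presents the appeal to Seidel slightly differently, first identifying $\psi^\pm$ with his $\Psi^\pm$ maps for the Poincar\'e-dual data $-H_{-(s+it)}$ (which is why the evaluation ends up at $z_\infty$), but this is the same geometric argument you describe; only note that the Chern number of the glued class $\gamma+S_{\widetilde g}$ is $c_1(\gamma)-I(\widetilde g)$, not $-I(\widetilde g)-c_1(\gamma)$ as written in your bookkeeping step.
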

\begin{proof}
Recall from Lemma \ref{Lemma lf homology is sh of -h} that 
$$
QH^*(M) \cong HF^*(H_0) \cong HF_{2n-*}(-H_0) \cong QH_{2n-*}^{lf}(M).
$$
So $\psi^-: QH_{2n-*}^{lf}(M) \to HF^*(H_0)$ factors through $HF_{2n-*}(-H_0)$ (canonically identified with $HF^*(H_0)$ by identifying generators), and the intermediate map $QH_{2n-*}^{lf}(M) \to HF_{2n-*}(-H_0)$ equals the $\Psi^+$ map of \cite{Seidel3} for the data $-H_{-(s+it)}$ on $D^+$ (which is dual to the data $H_{s+it}$ on $D^-$ by Lemma \ref{Lemma poincare duality}).

Similarly, our composite $r_{\widetilde{g}}=\psi^+ \circ \varphi_0 \circ \mathcal{S}_{\widetilde{g}} \circ \psi^-$ is analogous to the composite $\Psi^-\circ \varphi_0 \circ HF_*(\widetilde{g}^{-1}) \circ \Psi^+$ which arises in \cite[Sec.8]{Seidel3} but using the dual data $-H_{-(s+it)}$ instead of $H_{s+it}$. 
%
%
%
%
%
%
%
%
The gluing argument of \cite[Sec.8]{Seidel3} proves that the image of the unit $[M] \otimes 1 \in QH_{2n}^{lf}(M)$ under $r_{\widetilde{g}}: QH_{2n-*}^{lf}(M) \to QH_{2n-*-2I(\widetilde{g})}^{lf}(M)$ is
$$
r_{\widetilde{g}}([M]) 
=\sum_{\gamma \in \Gamma} (\mathrm{ev}_{z_{\infty}})_*[\mathcal{S}(j,\hat{J},\gamma+S_{\widetilde{g}})] \otimes \gamma \in QH_{2n-2I(\widetilde{g})}(M)
$$
(we evaluate at $z_{\infty}$ instead of $z_0$ because of the dualization which changes domain coordinates). In particular, since gluing sections $s^+,s^-$ representing $c',\widetilde{g}c'$ defines the equivalence class $S_{\widetilde{g}}$ (for any $c'$),  the gluing of $s^+ \in \mathcal{M}^+(\gamma \cdot \widetilde{g}^{-1}c;H^+,\hat{J}^+)$ and $s^- \in \mathcal{M}^-(c;H^-,\hat{J}^-)$ yields a section of $E_g$ in the class $\gamma + S_{\widetilde{g}}$ (take $c'=\widetilde{g}^{-1}c$).
%
%
%
%
%
%

The same gluing argument (since we are only changing the intersection conditions over $z_0,z_{\infty}$) in fact shows more generally that $r_{\widetilde{g}}=\psi^+ \circ \varphi_0 \circ \mathcal{S}_{\widetilde{g}} \circ \psi^-$ agrees on homology with the map in the claim.
\end{proof}
\begin{remark}
 The map $r_{\widetilde{g}}$ is not in general an isomorphism, unlike for closed $M$. This is because the inverse map can no longer be defined: it would involve a non-monotone homotopy $H_s$ from $H_{-1}$ to $H_0$ which has $\partial_s H_s$ unbounded above.
\end{remark}
%
%
%
\subsection{Invariance: the choice of $\hat{J}$}
\label{Subsection Invariance the choice of hat J}
%
In Theorem \ref{Theorem rg element} we did not specify precisely the choice of $\hat{J}$: the proof recovers $\hat{J}$ as a gluing of admissible $\hat{J}$ over the discs $D^{\pm}$ and an admissible $\hat{J}$ arising from Floer's continuation equation.

\begin{lemma}\label{Lemma make H monotone and sections lie near zero}
 $\mathcal{R}_{\widetilde{g}}$, $r_{\widetilde{g}}$ on cohomology do not depend on the choice of $H$ (defining admissibility for $\hat{J}$). We can choose a monotone $H$ with $\partial_s H\leq 0$ and satisfying:
$$
\begin{array}{ll}
H: E_g \to \R, \; H|_{D^{\pm}\times M}=H^{\pm},\\
H^-=0 \textrm{ on } D^-\times M,\\
H^+_t(y) = g^*0 = - K^g_t(g_t(y)) \textrm{ on } \partial D^{\pm} \times M,\\
H^+=0 \textrm{ near the centre of }D^{+}.
\end{array}
$$
For such $H$, the $(j,\hat{J})$-holomorphic sections $s: S^2 \to E_g$ have $s^{\pm}(D^{\pm})\subset M$ landing entirely in the complement of the conical end $(\Sigma \times (-\varepsilon,\infty),d(R\alpha))$ of $(M,\omega)$ (assuming $J$ is conical and $K^g=\kappa R + \textrm{constant}, \kappa>0$, on the conical end).
\end{lemma}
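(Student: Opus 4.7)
The lemma has two parts: invariance of $\mathcal{R}_{\widetilde{g}},r_{\widetilde{g}}$ under the choice of $H$ within the admissible class, and the construction of a specific $H$ for which sections avoid the conical end. I would treat them in that order.

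\textbf{Invariance.} This is a standard chain-homotopy argument. Given two choices $(H^0,\hat{J}^0)$ and $(H^1,\hat{J}^1)$, each satisfying admissibility, monotonicity, and $\partial_s H$ bounded above, interpolate through a smooth family $(H^\tau,\hat{J}^\tau)_{\tau\in[0,1]}$ of data satisfying the same conditions---the space of such data is convex once the required behaviour on $\partial D^\pm\times M$ is fixed. After a generic perturbation, the parametrized moduli $\bigsqcup_\tau\mathcal{S}(j,\hat{J}^\tau,S)$ is a smooth one-parameter family of pseudo-cycles; compactness is uniform in $\tau$ by the parametrized versions of Lemmas \ref{Lemma Symplectic form for admissible J}, \ref{Lemma Compactness for space of sections}, \ref{Lemma Monotonicity Lemma}, \ref{Lemma key to compactness}, and \ref{Lemma Maximum principle}. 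The boundary of this cobordism yields a chain homotopy between the two versions of $r_{\widetilde{g}}$, and likewise for $\mathcal{R}_{\widetilde{g}}=\varphi_0\circ\mathcal{S}_{\widetilde{g}}$ using the analogous Floer-theoretic moduli spaces.

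\textbf{Construction.} Set $H^-\equiv 0$ on $D^-\times M$; the gluing condition then forces $H^+|_{\partial D^+\times M}=-K^g_t\circ g_t$, which equals $-\kappa R+\textrm{const}$ at infinity since $K^g=\kappa R+\textrm{const}$ there and $g_t$ preserves $R$. Using the parametrization $(s,t)\in(-\infty,0]\times S^1$ of $D^+\setminus z_\infty$, extend $H^+$ over $D^+$ so that $H^+\equiv 0$ for $s\le -\epsilon$ (hence near the centre $z_\infty$), $H^+\equiv -K^g_t\circ g_t$ for $s$ close to $0$, and in between the radial slope at infinity $(h^+)'(R)=\lambda(s)$ interpolates monotonically from $0$ to $-\kappa$. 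Then $\partial_s(h^+)'=\lambda'(s)\le 0$, so $H$ is monotone (monotonicity on $D^-$ being automatic); $\partial_s H$ is bounded above since the transition is compactly supported in $s$; and $\hat{J}$ has the special form of Definition \ref{Definition admissible J hat} throughout the conical end of $M$.

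\textbf{Sections avoid the conical end.} Choose $R_0$ to be any radius inside the conical end of $M$. By the construction above, $H=h_z(R)$ depends only on $R$ for $R\ge R_0$, with $\partial_s h_z'\le 0$, and $\hat{J}$ is of the admissible form there. Lemma \ref{Lemma Maximum principle} immediately gives $R\circ s\le R_0$ for every $(j,\hat{J})$-holomorphic section $s:S^2\to E_g$. Taking $R_0$ just above the threshold of the conical end yields $s^\pm(D^\pm)\subset M\setminus(\textrm{conical end})$, as claimed. The main obstacle is really just the interpolation in the second step: one needs monotonicity, boundedness of $\partial_s H$ from above, and smooth matching across the gluing circle simultaneously. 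All three follow from a standard cutoff construction, so the essential content of the lemma is that invariance (part 1) frees us to make these convenient choices.
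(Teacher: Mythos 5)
Your invariance argument (parametrized moduli space / chain homotopy) and your construction of $H$ (the interpolation with $H^-\equiv 0$, $H^+\equiv 0$ near the centre of $D^+$, and a monotone cutoff, i.e.\ the paper's $\phi(s)K^g\circ g_t$ with $\partial_s\phi\le 0$) coincide with the paper's. The gap is in your final step, where you assert that Lemma \ref{Lemma Maximum principle} ``immediately gives'' $R\circ s\le R_0$. The domain here is the \emph{closed} surface $S^2$, and the differential inequality for $\rho=R\circ s$ holds only where $\rho\ge R_0$. The strong maximum principle therefore yields a dichotomy, not a bound: either $\{\rho>R_0\}$ is a proper open subset of $S^2$, in which case constancy of $\rho$ on a component contradicts the boundary value $R_0$ and one gets $\rho\le R_0$; or the section lies entirely in the conical region, in which case $\rho$ is constant and the section sits in a slice $\{R=\mathrm{const}\}$, and nothing so far excludes this possibility (there is no puncture or asymptotic condition, as there is for Floer cylinders, forcing the first alternative). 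This is exactly how the paper uses the maximum principle: ``sections which touch the conical region must lie in a slice $R=\mathrm{const}$.''

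Excluding those slice sections is the actual content of the paper's proof, and it is where the specific vanishing conditions on $H$ are used — conditions that play no role in your last step, which is a warning sign. On such a slice $\omega$, hence $\widetilde{\Omega}$, is exact, so $\int_{S^2}s^*\widetilde{\Omega}=0$; Lemma \ref{Lemma compatibility trick} gives $s^*\widetilde{\Omega}\ge 0$ pointwise, hence $\equiv 0$; since $H^-=0$ this forces $u^-$ to be constant and $\partial_s u^+=0$, $\partial_t u^+=X_{H^+}$; via the clutching map the boundary loop of $u^+$ is then a non-constant orbit of $g_t^{-1}$ independent of $s$, yet $X_{H^+}=0$ near the centre of $D^+$ forces it to be constant — contradiction. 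Without this argument you also only obtain $R\circ s\le R_0$, which is weaker than the stated conclusion that the sections avoid the conical end altogether.
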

\begin{proof}
This is a standard cobordism argument which is proved by inspecting the $1$-dimensional parts of the parametrized moduli space $\cup_{\lambda} \mathcal{S}(j,\hat{J}_{\lambda},S)$ for a homotopy $(\hat{J}_{\lambda})_{0\leq \lambda \leq 1}$. This proves that the maps $r_{\widetilde{g}}$ obtained for $\hat{J}_0$ and for $\hat{J}_1$ are chain homotopic. We omit the details.

We homotope the glued $\hat{J}$ obtained from \ref{Subsection Description of rg} to a generic $\hat{J}$ which is admissible for a smooth monotone Hamiltonian $H$ satisfying the claim (over $D^+$ we can choose an interpolation $\phi(s)K^g \circ g_t$ where $\phi$ is monotone: $\partial_s \phi\leq 0$, $\phi=0$ for $s\ll 0$ (near the centre of $D^+$) and $\phi=-1$ near $s=0$ (the boundary $\partial D^+$)).

Suppose by contradiction that there is a section which intersects the conical end. Because $H_z$ is monotone, the maximum principle \ref{Lemma Maximum principle} applies in the region where $J$ is conical. So the section must lie in a slice $R=$ constant (which is preserved by $g_t$). In this region $\omega$ is exact and so $\widetilde{\Omega}$ is exact, so the holomorphic sphere $u=s:S^2 \to E_g$ would have $\int_{S^2} u^*\widetilde{\Omega} =0$. By Lemma \ref{Lemma compatibility trick}, $(u^{\pm})^*\widetilde{\Omega}\geq 0$ pointwise, where $u^{\pm}:D^{\pm} \to M$. So $(u^{\pm})^*\widetilde{\Omega}= 0$. Lemma \ref{Lemma compatibility trick} also shows that $u^-$ is constant on $D^-$ (since $H^-=0$ there). Via the transition, this means $t \mapsto u^+(0,t)$ along $\partial D^+$ is a non-constant orbit of $g_t^{-1}$ (it is non-constant since we are assuming $u$ does not lie in the zero section). 
Lemma \ref{Lemma compatibility trick} also shows $\partial_s u^+=0$ and hence $\partial_t u^+ = X_{H^+}$. By the first equation, the non-constant orbit $t \mapsto u^+(s,t)$ of $g_t^{-1}$ is independent of $s \in (-\infty,0]$. But $X_{H^+}=0$ for $s\ll 0$, so the second equation says the orbit is constant. Contradiction. 
%
\end{proof}
%
%
\section{Gromov-Witten invariants}
\label{Section Gromov-Witten invariants}
%
%
\subsection{Gromov-Witten invariants}
\label{Subsection GW invariants in general}
%
We now make some brief remarks about GW invariants, referring to \cite{McDuff-Salamon2,Ruan-Tian} for details.

For a closed symplectic manifold $(X,\omega)$ of dimension $\dim_{\R} X=2n$, satisfying the monotonicity condition, and a generic $\omega$-compatible almost complex structure $J$, the (genus $0$) Gromov-Witten invariant of $J$-holomorphic curves $u:\C P^1 \to X$ with $k \geq 3$ marked points in a class $[u]=\beta \in H_2(X)$ (working over $\Z/2$) is
$$
\mathrm{GW}_{0,k,\beta}^X: H_*(X)^{\otimes k} \to \Z/2, \; (\alpha_1,\ldots, \alpha_k) \mapsto (X_1\times \cdots \times X_k) \cdot \mathrm{ev}_J
$$
where we intersect in $X^k$ the pseudocycle $\mathrm{ev}_J: \mathcal{M}_{0,k}^*(\beta,J) \to X^k$ with a generic representative $X_1 \times \cdots \times X_k$ of $\alpha_1 \times \cdots \times \alpha_k$. Here $\mathcal{M}_{0,k}^*(\beta,J)$ is the moduli space of $PSL(2,\C)$-equivalence classes of stable $k$-pointed curves $(u,z_1,\ldots,z_k)$, where $u: \C P^1 \to X$ is a simple $J$-holomorphic sphere in class $\beta$ and $z_i$ are pairwise distinct points in $\C P^1$ ($\phi \in PSL(2,\C)$ acts by $(u\circ \phi^{-1},\phi(z_1),\ldots, \phi(z_k))$). To get a non-zero invariant, one requires %
$$\sum \mathrm{codim}_{\R}(\mathrm{cycles})\equiv 2nk-\sum |\alpha_i| = 2n+2c_1(TX,\omega)(\beta)+2k-6.$$

To ensure $\mathrm{ev}_J$ is a pseudo-cycle one requires a condition on $\beta$: that $\beta$ is not a multiple of a spherical homology class $B$ with $c_1(TX,\omega)(B)=0$ \cite[Sec 6.6]{McDuff-Salamon2}. The genericity condition on $X_1\times \cdots X_k$ is to ensure that it is transverse to $\mathrm{ev}_J$ and to the evaluations maps involved in the lower strata in the compactification.

If one works over $\Q$, and one chooses differential forms $a_i\in H^{2n-|\alpha_i|}(X)$ Poincar\'e dual to $\alpha_i$ supported near $X_i$, then
$$
\mathrm{GW}_{0,k,\beta}^X(\alpha_1,\cdots,\alpha_k) = \int_{\overline{\mathcal{M}_{0,k}}(\beta,J)} \mathrm{ev}_1^*a_1 \wedge \cdots \wedge \mathrm{ev}_k^*a_k \in \Q
$$
where $\overline{\mathcal{M}_{0,k}}(\beta,J)$ is the compactification by stable maps of the space of $k$-pointed $J$-holomorphic $u:\C P^1 \to X$ in class $\beta$, and $\sum \textrm{deg}(a_i) = 2n + 2c_1(TX,\omega)(\beta) + 2k -6$.
%
\subsection{GW invariants counting sections of $E_g$}
\label{Subsection GW invariants for sections}
%
The story for $(j,\hat{J})$-holomorphic sections $u: S^2 \to E_g$ is slightly different \cite[Def 8.6.6]{McDuff-Salamon2}. The key observations are:

\begin{enumerate}
 \item The quotient by $PSL(2,\C)$ in the definition of the moduli spaces defining GW invariants for $E_g$ is equivalent to imposing the condition that $u: S^2 \to E_g$ is a section, since $u\circ \phi^{-1}$ is a section for a unique $\phi = \pi_g\circ u \in PSL(2,\C)$.

 \item Sections lie in a class $\beta = [S^2] + (j_{z_0})_*\beta_0$, for some $\beta_0 \in H_*(M)$ where $j_{z_0}: M \to E_g$ includes the fibre over $z_0$. So the condition on $\beta$ is automatic since $(\pi_g)_*[u]=[S^2]$, and a section is automatically simple.

 \item Suppose we want to use \emph{fixed} marked points $w_i \in S^2$ (pairwise distinct) and we want the sections to intersect $j_i(X_i)$ where $X_i$ represents $\alpha_i \in H_*(M)$ and $j_i: M \to E_g$ is the inclusion of the fibre over $w_i$. Then, when defining the GW invariants for $E_g$, we can still let the marked points $z_i\in S^2$ vary freely since the intersection condition $u(z_i)\in j_i(X_i)$ automatically forces 
$$ z_i = \pi_g(u(z_i)) = \pi_g(j_i(X_i))=w_i.$$

 \item One can make sense of these GW invariants even when $0\leq k<3$: we can simply add $3-k$ extra marked points and we require the (automatically satisfied) condition that the section intersects $j_i(M)$ for these new marked points. Any section of $E_g$ will automatically intersect $[M]$ once transversely over these new $w_i$. So we are ensuring the divisor axiom \cite[Rmk 7.5.2]{McDuff-Salamon2}.
%
\end{enumerate}

\noindent The upshot, is that the GW invariant
$$
\mathrm{GW}_{0,k,\beta}^{E_g}: QH_*(M)^{\otimes k} \to \Lambda, \; (\alpha_1,\ldots, \alpha_k) \mapsto (j_1(X_1)\times \cdots \times j_k(X_k)) \cdot \mathrm{ev}_J
$$
corresponds precisely to the sections one plans to count modulo $2$, with weight $\gamma_{\beta}$:
$$ 
\mathrm{GW}_{0,k,\beta}^{E_g}(\alpha_1,\ldots,\alpha_k) = \# \{ u \in \mathcal{S}(j,\hat{J},\gamma_{\beta}+S_{\widetilde{g}}): u(w_i) \in j_i(X_i)  \}
$$
using $\hat{J}$ on $E_g$ to define GW, where $\gamma_{\beta}\in \Gamma$ is determined by $\beta$ (here $\beta=[S^2]+(j_{z_0})_*\beta_0$, and $\beta_0\in H_2(M)$ is a spherical class so determines a $\gamma_{\beta}\in \Gamma$), and where we require the dimension is correct: 
$$2m+2c_1(T^vE_g,\Omega)(\gamma_{\beta}+S_{\widetilde{g}})=\sum \textrm{codim}_M(X_i),$$
 which is equivalent to the GW condition 
$$(2m+2)+2c_1(TE_g,\widetilde{\Omega})(\beta) +2k-6 = (2m+2)k-\sum |\alpha_i|$$
where $\dim_{\R} M =2m$ (using $TE_g\cong TS^2 \oplus T^vE_g$, and $6=2+2c_1(TS^2)[S^2]$).

We will only be considering the case: $k=2$, $\alpha_1\in QH_*^{lf}(M)$, $\alpha_2\in QH_*(M)$.
%
%
\section{Negative line bundles}
\label{Section Negative line bundles}
%
%
\subsection{Definition and properties}
\label{Subsection Definition and properties neg line bdle}
Fix $(B,\omega_B)$ any closed symplectic manifold. A complex line bundle $\pi:L \to B$ is called \emph{negative} if for some real $n>0$,
$$
c_1(L) = -n[\omega_B].
$$
\textbf{Examples:} 
\begin{enumerate}
 \item $\mathcal{O}(-n) \to \CP^m$ for integers $n\geq 1$. Recall $\mathcal{O}(-1)=\{(x,v): v\in x\}\subset \CP^m\times \C^{m+1}$, and $\mathcal{O}(-n)=\mathcal{O}(-1)^{\otimes n}$ has $c_1({O}(-n))[\CP^1]=-n$. 

 \item Any $L$ dual to an ample holomorphic line bundle over a compact complex manifold $B$. Indeed for some $k>0$, $L^{-k}$ is very ample, so $L^{-k} = j^*\mathcal{O}(1)$ via the embedding $j: B \to \CP^m$ defined by the global holomorphic sections of $L^{-k}$. Let $\omega_B = j^*\omega_{\CP^m}$. Since the Fubini-Study form $[\omega_{\CP^m}] = c_1(\mathcal{O}(1))$,
$$-k c_1(L)= c_1(L^{-k}) = j^*c_1(\mathcal{O}(1)) = j^*[\omega_{\CP^m}] =[\omega_B].$$
Indeed any compact complex manifold admitting a holomorphic embedding $B\subset \CP^m$ arises in this way, and by Kodaira's embedding theorem these are precisely the compact K\"ahler manifolds with integral K\"ahler form.
\end{enumerate}

\begin{lemma}[see Oancea \cite{Oancea}]\label{Lemma negative curvature is negativity}  $L\to B$ is negative iff $L$ admits a Hermitian metric, and some Hermitian connection whose curvature $\mathcal{F}$ satisfies $\frac{i}{2\pi }\mathcal{F}(v,J_B v)<0$ for all $v\neq 0 \in TB$ and for all almost complex structures $J_B$ compatible with $\omega_B$ (meaning $\omega_B(\cdot,J_B \cdot)$ is a metric).
\end{lemma}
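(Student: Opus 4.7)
The plan is to invoke Chern--Weil theory: for any Hermitian connection $\nabla$ on a complex line bundle $L$, the curvature $\mathcal{F}$ is a closed purely imaginary $2$-form with $[\tfrac{i}{2\pi}\mathcal{F}] = c_1(L) \in H^2(B;\R)$, and conversely any closed real $2$-form representing $c_1(L)$ can be realized as $\tfrac{i}{2\pi}\mathcal{F}$ for some Hermitian connection (obtained by modifying a starting connection $\nabla_0$ by a purely imaginary $1$-form).

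For the forward implication, assume $c_1(L) = -n[\omega_B]$ with $n > 0$. Fix any Hermitian metric and Hermitian connection $\nabla_0$ with curvature $\mathcal{F}_0$; since $[\tfrac{i}{2\pi}\mathcal{F}_0] = -n[\omega_B]$, pick a real $1$-form $\beta$ with $d\beta = \tfrac{i}{2\pi}\mathcal{F}_0 + n\omega_B$ and replace $\nabla_0$ by the Hermitian connection $\nabla := \nabla_0 - 2\pi i\beta$. Its curvature then satisfies $\tfrac{i}{2\pi}\mathcal{F} = -n\,\omega_B$ pointwise, so $\tfrac{i}{2\pi}\mathcal{F}(v, J_B v) = -n\,\omega_B(v, J_B v) = -n\,g_{J_B}(v,v) < 0$ for every nonzero $v$ and every compatible $J_B$.

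For the reverse implication, write $\alpha := \tfrac{i}{2\pi}\mathcal{F}$ and aim to show $[\alpha] = -n[\omega_B]$ for some real $n > 0$. The crux is a pointwise linear algebra claim: at each $p$, $\alpha_p = -n(p)\,\omega_B|_p$ for some positive function $n$. The key input is that $\{J_B v \co J_B \text{ compatible with }\omega_B|_p\}$ coincides with the open half-space $\{w\in T_pB \co \omega_B(v,w) > 0\}$; given any such $w$, a compatible $J_B$ with $J_Bv = w$ is built by rescaling so $\omega_B(v,w) = 1$, extending $\{v,w\}$ to a symplectic basis, setting $J_Bv = w$ and $J_Bw = -v$, and then choosing any compatible structure on the $\omega_B$-complement. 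The hypothesis then reads: the linear functional $\alpha_p(v,\cdot)$ is strictly negative on this open half-space. Continuity and linearity force it to vanish on the boundary hyperplane $\{\omega_B(v,\cdot) = 0\}$, so $\alpha_p(v,\cdot) = c(v)\omega_B(v,\cdot)$ with $c(v) < 0$; antisymmetry of both $\alpha$ and $\omega_B$ then gives $c(v) = c(w)$ whenever $\omega_B(v,w) \neq 0$, so $c$ is a pointwise constant $-n(p) < 0$.

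To upgrade this to a cohomological statement, differentiate $\alpha = -n(\cdot)\omega_B$: the identity $d\alpha = 0$ becomes $dn \wedge \omega_B = 0$. When $\dim_\C B \geq 2$ the wedge map $\wedge\omega_B\co T^*B \to \Lambda^3 T^*B$ is injective, so $dn = 0$ and $n$ is the constant sought. When $\dim_\C B = 1$, $H^2(B;\R)$ is one-dimensional with generator $[\omega_B]$, so $[\alpha] = c[\omega_B]$ automatically, and pointwise negativity of $\alpha/\omega_B$ combined with integration forces $c < 0$. The main obstacle I anticipate is the linear algebra step in the reverse direction, specifically the orbit description $\{J_Bv\} = \{w \co \omega_B(v,w) > 0\}$, since it is the only nontrivial use of the ``for all $J_B$'' part of the hypothesis; once that is established, the rest of the argument is a formal manipulation with Chern--Weil and elementary cohomology.
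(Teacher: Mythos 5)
Your proposal is correct, and for the direction the paper actually argues it follows the same route: represent $c_1(L)$ by $\tfrac{i}{2\pi}\mathcal{F}_0$ via Chern--Weil and absorb the exact discrepancy with $-n\omega_B$ into the connection by adding a purely imaginary $1$-form (the paper only sketches this direction and defers all details, including the converse, to Oancea). One small slip in your forward step: with your choice $d\beta=\tfrac{i}{2\pi}\mathcal{F}_0+n\omega_B$, modifying by $-2\pi i\beta$ gives $\tfrac{i}{2\pi}\mathcal{F}=\tfrac{i}{2\pi}\mathcal{F}_0+d\beta\neq -n\omega_B$; you want $\nabla=\nabla_0+2\pi i\beta$ (or flip the sign of $\beta$), which is harmless since the modification is still purely imaginary and hence Hermitian. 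Your converse is a genuine addition relative to what is written in the paper, and it is sound: the orbit claim $\{J_Bv\}=\{w:\omega_B(v,w)>0\}$ holds, though the ``rescaling'' step is unnecessary --- for any $w$ with $\omega_B(v,w)>0$ one can set $J_Bv=w$, $J_Bw=-v$ on the symplectic plane $\mathrm{span}(v,w)$ (the induced metric there is $\omega_B(v,w)$ times the identity, hence positive) and take any compatible structure on the $\omega_B$-complement; from there the half-space negativity forces $\alpha_p=-n(p)\omega_B|_p$ with $n(p)>0$, and closedness plus injectivity of $\wedge\,\omega_B$ on $1$-forms (for $\dim_{\C}B\geq 2$), or the separate surface argument, gives $c_1(L)=-n[\omega_B]$. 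This correctly isolates where the ``for all compatible $J_B$'' hypothesis is used, which a single fixed $J_B$ would not suffice for.
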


This Lemma essentially follows from the fact that $\frac{i}{2\pi}\mathcal{F}$ represents $c_1(L)$ inside $H^2(B;\R)$. For example, in one direction, if $c_1(L)=-n[\omega_B]$, then there is a Hermitian metric on $L$ whose curvature satisfies $n\omega_B=\frac{1}{2\pi i}(\mathcal{F} + da)$, and by adding the one-form $-a$ to the connection one can get rid of the exact term $da$.
%
\subsection{Construction of the symplectic form}
\label{Subsection construction of the symplectic form neg line bdle}
%
From now on $M$ is the total space of a negative line bundle $\pi: L \to (B,\omega_B)$, and we assume a connection and metric as above are chosen. Thus  
$$c_1(L) = [\tfrac{i}{2\pi} \mathcal{F}] =-n[\omega_B] \in H^2(B,\Z) \cap H^2(B,\R).$$
We choose $\boxed{\Sigma = \{ r=1 \}}$ to be the hypersurface for $M$ (which will be contact). 

\begin{examples*}
 $\mathcal{O}(-1) \to \C P^m$ arises as the blow-up of $\C^{m+1}$ at the origin, so $\Sigma \cong S^{2m+1}$ is the preimage of $S^{2m+1}\subset \C^{m+1}$. The multiplication action on $\C^{m+1}$ by a primitive $n$-th root of unity lifts to the blow-up, fixing the exceptional $\C P^m$ which is the zero section of $\mathcal{O}(-1)$. The quotient by this action defines a bundle map $\mathcal{O}(-1) \to \mathcal{O}(-n)$. So for $\mathcal{O}(-n) \to \C P^m$, $\Sigma=S^{2m+1}/(\Z/n)$ is a Lens space.     
\end{examples*}

We will now construct the symplectic form $\omega$ for $M$ of the form
$$
\boxed{\omega = d\theta + \varepsilon \Omega\qquad (\textrm{fixed }\varepsilon>0)}
$$
consisting of a \emph{non-exact} form $[d\theta]=n\pi^*[\omega_B]$ (only away from the zero section it is exact) and a term $\Omega$ which is fibrewise the area form (not contributing to $[\omega]$). 

For $w \in L$, define the radial function $r$ by $r(w) = |w|$ in the above metric.

The connection defines the fibrewise angular $1$-form $\theta=\frac{1}{4\pi} d^c \log r^2$ on $L\setminus(\textrm{zero section})$, which satisfies 
$$d\theta = -\tfrac{1}{2\pi i} \partial \overline{\partial} \log r^2 \equiv -\pi^*c_1^{\C}(L) = -\tfrac{i}{2\pi}\,\pi^*(\mathcal{F}) = n\pi^*\omega_B.$$
Explicitly \cite[p.132]{Audin-Lafontaine}, $\boxed{\theta_w(\cdot)=\tfrac{1}{2\pi r^2}\langle iw, \cdot \rangle}$ so in the complement of the zero section
$$
\theta_w(w)=0,\; \theta_w(iw) = 1/2\pi
$$
where $w,iw$ is considered as a basis of $T_w^{\mathrm{vert}} L \cong L_{\pi(w)}$, and $\theta=0$ on horizontal vectors.

\begin{lemma}\label{Lemma dtheta properties}
$d\theta(v,\cdot)=0$ for any vertical vector $v\in TL$ $(v \in \ker d\pi)$. On horizontal  $v,v' \in T_w L$, $d\theta(v,v')\! =\! - \theta([v,v']) \!=\! \theta_w(\mathcal{F}_{d\pi\cdot v, d\pi \cdot v'} w)$ \emph{(}see \cite[p.133]{Audin-Lafontaine}\emph{)}. Since $\pi^*\mathcal{F}$ is imaginary valued, we deduce $\boxed{d\theta = \tfrac{1}{2\pi i} \pi^*\mathcal{F}}$, which extends $d\theta$ over the zero section. \emph{Remark:} our curvature is opposite to \cite[p.120]{Audin-Lafontaine}.
\end{lemma}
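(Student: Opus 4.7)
The plan is to verify the formula on vertical and horizontal vectors separately, using invariance properties of $\theta$, and then assemble the global identity $d\theta=\tfrac{1}{2\pi i}\pi^*\mathcal{F}$.

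First I would handle vertical inputs. The radial vector field $Z_w=w$ generates the flow $\phi_t(w)=e^t w$, and a direct check from the explicit formula $\theta_w(\cdot)=\tfrac{1}{2\pi r^2}\langle iw,\cdot\rangle$ shows $\phi_t^*\theta=\theta$ (the factor $e^{2t}$ from rescaling $w$ cancels the factor $1/r^2$). Similarly the fibrewise $S^1$-action $w\mapsto e^{i\alpha}w$, generated by $X_w=iw$, preserves $\theta$. Cartan's magic formula $0=\mathcal{L}_Z\theta=d\iota_Z\theta+\iota_Z d\theta$ then gives $\iota_Z d\theta=0$ since $\iota_Z\theta=\theta_w(w)=0$, and $0=\mathcal{L}_X\theta$ gives $\iota_X d\theta=0$ since $\iota_X\theta=1/2\pi$ is constant. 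Because $Z$ and $X$ span the (real rank $2$) vertical tangent bundle, this proves the first assertion.

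For horizontal inputs I would apply the standard Koszul/Cartan formula to horizontal lifts $\widetilde{v},\widetilde{v}'$ of vector fields $d\pi\cdot v, d\pi\cdot v'$ on $B$: since $\theta$ vanishes identically on horizontal vectors, $d\theta(\widetilde{v},\widetilde{v}')=\widetilde{v}(\theta(\widetilde{v}'))-\widetilde{v}'(\theta(\widetilde{v}))-\theta([\widetilde{v},\widetilde{v}'])=-\theta([\widetilde{v},\widetilde{v}'])$. The horizontal component of $[\widetilde{v},\widetilde{v}']$ is again killed by $\theta$, so only the vertical component matters; the defining property of the curvature of a Hermitian connection identifies this vertical component with the action of $\mathcal{F}_{d\pi\cdot v,d\pi\cdot v'}$ on $w$, i.e.\ with the vertical vector $\mathcal{F}_{d\pi\cdot v,d\pi\cdot v'}w$. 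This yields $d\theta(v,v')=\theta_w(\mathcal{F}_{d\pi\cdot v,d\pi\cdot v'}w)$, which is exactly the middle equality in the statement.

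To conclude $d\theta=\tfrac{1}{2\pi i}\pi^*\mathcal{F}$, I would note that $\pi^*\mathcal{F}$ also vanishes on vertical inputs (they lie in $\ker d\pi$), so the two sides agree on the vertical/horizontal decomposition. On horizontal pairs, writing $\mathcal{F}_{d\pi\cdot v,d\pi\cdot v'}=i\lambda$ with $\lambda\in\R$ (since $\mathcal{F}$ is $i\R$-valued on a $U(1)$-bundle), the previous step gives $d\theta(v,v')=\theta_w(i\lambda\, w)=\lambda\,\theta_w(iw)=\lambda/2\pi$, while $\tfrac{1}{2\pi i}\pi^*\mathcal{F}(v,v')=\tfrac{1}{2\pi i}(i\lambda)=\lambda/2\pi$. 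Since both expressions $d\theta$ and $\tfrac{1}{2\pi i}\pi^*\mathcal{F}$ agree on $L\setminus(\text{zero section})$ and the right hand side is smooth on all of $L$ (being the pull-back of a smooth $2$-form on $B$), this provides the smooth extension of $d\theta$ over the zero section.

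The only subtle step is the identification of the vertical part of $[\widetilde v,\widetilde v']$ with the curvature, but this is the standard intrinsic definition of curvature for a principal $U(1)$-connection (or equivalently a Hermitian connection on the associated line bundle); I would simply cite \cite[p.~133]{Audin-Lafontaine}. Everything else is a direct computation from the explicit formula for $\theta$ and invariance under the fibrewise $\C^\times$-action.
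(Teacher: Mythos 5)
Your proof is correct and follows essentially the same route as the paper, which states these equalities directly with a citation to Audin--Lafontaine: the vertical vanishing (which you derive cleanly via invariance of $\theta$ under the radial flow and fibrewise rotation plus Cartan's formula), the Koszul identity $d\theta(\widetilde v,\widetilde v')=-\theta([\widetilde v,\widetilde v'])$ with the curvature interpretation of the vertical part of the bracket, and the final identification using that $\mathcal{F}$ is $i\R$-valued. The only point to watch is the sign convention relating the vertical part of $[\widetilde v,\widetilde v']$ to $\mathcal{F}_{d\pi v,d\pi v'}w$ (your wording would give an extra minus sign with the cited reference's convention), but the paper itself flags that its curvature is opposite to \cite[p.120]{Audin-Lafontaine}, so this is a matter of convention rather than a gap.
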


On $L\setminus (\textrm{zero section})$ define
$\boxed{\Omega = d(r^2 \theta)}.$
Fibrewise this is $(\textrm{area form})/\pi$, so extend $\Omega$ over the zero section by
$$
\Omega|_{\textrm{fibre}} = (\textrm{area form})/\pi \qquad \Omega(T(\textrm{zero section}),\cdot)=0.
$$
%
%
%
\subsection{Liouville and Reeb fields}
\label{Subsection Liouville and Reeb fields neg line bdle}
%
\begin{lemma}\label{Lemma fiberwise liouville and reeb}
Fibrewise the Liouville and Reeb fields for $\Omega$ at $w\in L$ are
$$
Z_{\Omega} = \frac{1}{2} w,\qquad Y_{\Omega}=2\pi i w = 4\pi iZ_{\Omega}.
$$
\end{lemma}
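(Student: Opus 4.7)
The plan is to work inside a single fibre $F \cong \C$, using the Hermitian trivialization and polar coordinates $(r,\phi)$ with $w = re^{i\phi}$; everything reduces to an explicit coordinate calculation, after which the claimed relation $Y_\Omega = 4\pi iZ_\Omega$ is just arithmetic.

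First, I would restrict $\theta$ to $F$. From the formula $\theta_w(\cdot) = \tfrac{1}{2\pi r^2}\langle iw,\cdot\rangle$ and $iw = \partial_\phi$ in these coordinates, one gets $\theta|_F = \tfrac{1}{2\pi}\,d\phi$. In particular $d(\theta|_F)=0$, which also follows from Lemma \ref{Lemma dtheta properties} since $d\theta = \tfrac{1}{2\pi i}\pi^*\mathcal F$ is horizontal and so vanishes on fibre-tangent vectors. Therefore
\[
\Omega|_F \;=\; d(r^2\theta)|_F \;=\; dr^2 \wedge \theta|_F + r^2\, d(\theta|_F) \;=\; \tfrac{r}{\pi}\,dr\wedge d\phi,
\]
which is $(1/\pi)$ times the Euclidean area form, matching the fibrewise description of $\Omega$ in \ref{Subsection construction of the symplectic form neg line bdle}.

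For the Liouville field I use the convention from \ref{Subsection Conical symplectic manifolds} that $\Omega(Z_\Omega,\cdot) = r^2\theta$. Writing $Z_\Omega = A\,\partial_r + B\,\partial_\phi$, the equation $\iota_{Z_\Omega}\bigl(\tfrac{r}{\pi}dr\wedge d\phi\bigr) = \tfrac{r^2}{2\pi}\,d\phi$ forces $A = r/2$ and $B=0$, so $Z_\Omega = \tfrac{r}{2}\partial_r = \tfrac{1}{2}w$. For the Reeb field, on the circle $\{r=1\}\subset F$ the primitive restricts to $\alpha := r^2\theta|_{\{r=1\}} = \tfrac{1}{2\pi}\,d\phi$; the condition $d\alpha(Y_\Omega,\cdot)|_{T\{r=1\}} = 0$ is automatic as $\{r=1\}$ is one-dimensional in $F$, and $\alpha(Y_\Omega) = 1$ together with tangency to $\{r=1\}$ gives $Y_\Omega = 2\pi\,\partial_\phi = 2\pi iw$.

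Finally, comparing the two answers yields $4\pi iZ_\Omega = 4\pi i\cdot \tfrac{1}{2}w = 2\pi iw = Y_\Omega$. There is no real obstacle: the only subtlety worth spelling out is that $d\theta$ vanishes fibrewise (so that only the $dr^2\wedge\theta$ term survives in $\Omega|_F$), and this is precisely what Lemma \ref{Lemma dtheta properties} and the formula $d\theta = \tfrac{1}{2\pi i}\pi^*\mathcal F$ guarantee.
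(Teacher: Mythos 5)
Your fibre computation is accurate ($\theta|_F=\tfrac{1}{2\pi}d\phi$, $\Omega|_F=\tfrac{r}{\pi}dr\wedge d\phi$, hence $Z_\Omega=\tfrac{r}{2}\partial_r=\tfrac12 w$ and $Y=2\pi\partial_\phi=2\pi i w$ on the unit circle of the fibre), but it verifies strictly less than what the lemma is required to deliver, and this is where it diverges from the paper's proof. The paper establishes the identity $\iota_{Z_\Omega}\Omega=d(r^2\theta)(\tfrac{w}{2},\cdot)=r^2\theta$ as an equality of $1$-forms on $L\setminus(\textrm{zero section})$, i.e.\ also when paired with \emph{horizontal} vectors, and it checks the Reeb conditions for $Y_\Omega=2\pi i w$ on the whole sphere bundle $\Sigma=\{r=1\}$, whose tangent space contains the horizontal distribution. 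Your shortcut ``$d\alpha(Y_\Omega,\cdot)=0$ is automatic because $\{r=1\}$ is one-dimensional in $F$'' only works inside a single fibre; the hypersurface the paper cares about is the sphere bundle, where $d(r^2\theta)(Y_\Omega,\cdot)=0$ on $T\Sigma$ is not a dimension count but uses $dr(iw)=0$, $dr(T\Sigma)=0$ and $d\theta(iw,\cdot)=0$ (Lemma \ref{Lemma dtheta properties}, $iw$ vertical). The global form of the Liouville identity is also exactly what the next result consumes: in Lemma \ref{Lemma Liouville and Reeb for neg line bdle} one needs $\omega(Z_\Omega,\cdot)=\varepsilon r^2\theta$ on all of $TL$ to normalize and obtain the Liouville field of $(M,\omega)$, which does not follow from knowing $\Omega|_F(Z_\Omega,\cdot)=r^2\theta|_F$ on fibre vectors alone.

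The gap is small and easily closed, and the closing argument is precisely the paper's: both sides of each identity vanish in the horizontal directions because $\theta$ annihilates horizontal vectors by construction, $dr$ annihilates them since the connection is Hermitian (parallel transport preserves $r$), and $d\theta$ vanishes whenever one argument is vertical by Lemma \ref{Lemma dtheta properties}. With that one extra sentence your fibrewise calculation upgrades to the statement the paper actually proves and uses; without it, the lemma as you have proved it cannot be quoted in the proof of Lemma \ref{Lemma Liouville and Reeb for neg line bdle}.
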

\begin{proof}
By Lemma \ref{Lemma dtheta properties}, $d(r^2\theta)(\frac{w}{2},\cdot) = 2r\,dr(\frac{w}{2}) \theta = (2r^2/2)\theta = r^2\theta$ using $dr(w)=r$ and $\theta(w)=0$; $r^2\theta(2\pi i w)= 1$ on $\Sigma$, $d(r^2\theta)(2\pi i w,\cdot) = 0$ on $T\Sigma$ using $dr(iw)=0$ and $dr(T\Sigma)=0$ (by Lemma \ref{Lemma dtheta properties}, $d\theta(iw,\cdot)=0$ since $iw$ is vertical).
\end{proof}

Now study the conical symplectic manifold $(M,\omega)$ with hypersurface $\Sigma$, where
$$
\omega = d\theta + \varepsilon \Omega = 
d((1+\varepsilon r^2)\, \theta) \qquad (\textrm{fix }\varepsilon >0).
$$
At infinity, indeed in the complement of the zero section, $\omega$ is exact since the primitive $(1+\varepsilon r^2)\, \theta$ is defined there. 

\begin{lemma}\label{Lemma Liouville and Reeb for neg line bdle}
 The Liouville field $Z$ for $(M,\omega)$ is
$$
Z = \frac{1+\varepsilon r^2}{ \varepsilon r^2} \cdot \frac{w}{2}
$$
which is defined away from the zero section and is outward pointing along $\Sigma$. 

The Reeb vector field is
$$Y = \frac{2\pi}{1+\varepsilon}\, i w.$$
The Reeb periods are $k(1+\varepsilon)$ for $k=0,1,2,\ldots$, with a Reeb orbit $w(t)=e^{2\pi i t/(1+\varepsilon)} w_0$ in each fibre with base point $w_0$ and $t\in [0,k(1+\varepsilon)]$. 
\end{lemma}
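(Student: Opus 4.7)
The plan is to directly verify the explicit formulas for $Z$, $Y$, and the Reeb periods using the structural facts collected in Lemmas~\ref{Lemma dtheta properties} and \ref{Lemma fiberwise liouville and reeb}. Throughout I would work on $M \setminus (\text{zero section})$, where $(1+\varepsilon r^2)\theta$ is defined and is a primitive of $\omega$ (since $\omega = d\theta + \varepsilon \, d(r^2\theta)$ by construction).

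For the Liouville field, I would make the ansatz that $Z$ is fibrewise radial, $Z = \lambda(r) \cdot w/2$ for a function $\lambda(r)$. Since $w$ is vertical, Lemma~\ref{Lemma dtheta properties} gives $\iota_Z d\theta = 0$, so only the $\varepsilon\Omega$ term contributes. Using $\theta(w)=0$ and $dr(w)=r$, the Cartan formula $\iota_{w/2}\,d(r^2\theta) = \mathcal L_{w/2}(r^2\theta) - d(r^2\theta(w/2))$ quickly yields $\iota_{w/2}\Omega = r^2\theta$ (this is essentially the content of Lemma~\ref{Lemma fiberwise liouville and reeb}). Hence $\iota_Z\omega = \varepsilon\lambda r^2 \theta$, and matching this to the primitive $(1+\varepsilon r^2)\theta$ forces $\lambda = (1+\varepsilon r^2)/(\varepsilon r^2)$. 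Since $\lambda>0$ and $dr(w)=r$, we have $dr(Z) = (1+\varepsilon r^2)/(2\varepsilon r) > 0$, so $Z$ is outward pointing along $\Sigma = \{r=1\}$.

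For the Reeb field, the contact form is $\alpha = (1+\varepsilon r^2)\theta|_\Sigma = (1+\varepsilon)\,\theta|_\Sigma$. I would try the ansatz $Y = c\, iw$ with constant $c$, which is vertical. Three conditions must be checked: (i) tangency to $\Sigma$, which follows from $dr(iw)=0$ (the vector $iw$ is fibrewise tangent to the circle $r=\text{const}$); (ii) $\alpha(Y)=1$, which using $\theta(iw)=1/(2\pi)$ gives $c = 2\pi/(1+\varepsilon)$; (iii) $\iota_Y d\alpha = 0$ on $T\Sigma$, which is automatic since $d\alpha = (1+\varepsilon)d\theta|_\Sigma$ and $\iota_{iw} d\theta = 0$ by Lemma~\ref{Lemma dtheta properties} (as $iw$ is vertical). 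Uniqueness of the Reeb vector field then gives $Y = \frac{2\pi}{1+\varepsilon}\, iw$.

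Finally, to read off the periods and the orbits, I would integrate the ODE $\dot w(t) = \tfrac{2\pi}{1+\varepsilon} i w(t)$ fibrewise: this yields $w(t) = e^{2\pi i t/(1+\varepsilon)}w_0$, and the minimal $T>0$ for which $w(T)=w_0$ is $T = 1+\varepsilon$, with the full set of periods being $k(1+\varepsilon)$ for $k \in \Z_{\geq 0}$. No step is a serious obstacle; the only subtlety worth flagging is that all the exterior-derivative computations are clean precisely because both $Z$ and $Y$ are vertical, which kills $d\theta$ via Lemma~\ref{Lemma dtheta properties} and reduces everything to the already-established fibrewise picture.
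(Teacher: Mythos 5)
Your proposal is correct and takes essentially the same route as the paper: both rescale the fibrewise Liouville and Reeb fields $Z_\Omega=\tfrac{w}{2}$, $Y_\Omega=2\pi i w$ of Lemma \ref{Lemma fiberwise liouville and reeb}, using Lemma \ref{Lemma dtheta properties} to kill the $d\theta$-terms for vertical vectors and then matching against the primitive $(1+\varepsilon r^2)\theta$, resp.\ normalizing $\alpha(Y)=1$ on $\Sigma$. The only cosmetic difference is that you verify $\iota_Y d\alpha=0$ via $d\alpha=(1+\varepsilon)\,d\theta|_{T\Sigma}$, whereas the paper checks $\omega(Y_\Omega,\cdot)=0$ on $T\Sigma$ using $\Omega(Y_\Omega,\cdot)=-2r\,dr$; these are the same computation.
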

\begin{proof}
By the previous two Lemmas, 
$
\omega(Z_{\Omega},\cdot) = \varepsilon r^2 \theta.
$
So normalizing: $Z = \frac{1+\varepsilon r^2}{\varepsilon r^2} Z_{\Omega}$.
Since $Y_{\Omega}$ is vertical, by Lemma \ref{Lemma dtheta properties} we have $d\theta(Y_{\Omega},\cdot) = 0$ and $d\Omega(Y_{\Omega},\cdot) = -2 r dr(\cdot)$ (using $\theta(iw)=1/2\pi$). So $\omega(Y_{\Omega},\cdot)=0$ on $T\Sigma$ (parallel transport preserves $r$, so $T\Sigma_w$ is spanned by the horizontal vectors and the vertical $iw$, and $dr(iw)=0$). Finally, on $\Sigma$, $(\theta+\varepsilon r^2\theta)(Y_{\Omega})=1+\varepsilon$. So normalizing: $Y=Y_{\Omega}/(1+\varepsilon)$.\end{proof}
%
\subsection{Conical parametrization}
\label{Subsection conical parametrization neg line bdle}
%
\begin{lemma}\label{Lemma R coordinate for neg line bldes}
The radial coordinate $R$ in the sense of Section \ref{Subsection Conical symplectic manifolds} is
$$
R = \frac{1+\varepsilon r^2}{1+\varepsilon},
$$
defined on all of $M$ with differential $dR = (2\varepsilon r)(1+\varepsilon)^{-1} dr$ vanishing on the zero section. The flow of $Z$ defines the conical parametrization
$$(M_1,\omega|_{M_1}) \cong \left(\Sigma\times \left(\tfrac{1}{1+\varepsilon},\infty\right), d(R\alpha)\right)$$
 where $R$ is the coordinate for the interval, $\alpha=(1+\varepsilon)\theta|_{\Sigma}$, $M_1 =M \setminus (\textrm{zero section})$.
\end{lemma}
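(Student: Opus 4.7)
The plan is to extract the radial coordinate $R$ directly from the Liouville field $Z$ of Lemma \ref{Lemma Liouville and Reeb for neg line bdle}, exploiting the general principle that on an exact symplectic manifold a Liouville flow furnishes a conical parametrization as soon as one writes $Z$ in the form $R\partial_R$. First I would rewrite $Z=\frac{1+\varepsilon r^2}{\varepsilon r^2}\cdot\frac{w}{2}$ as $Z=\frac{1+\varepsilon r^2}{2\varepsilon r}\partial_r$, where $\partial_r$ denotes the fibrewise unit radial vector (so $w=r\partial_r$ under the canonical identification $L_{\pi(w)}\cong T_w L_{\pi(w)}$). Then I would search for $R=R(r)$ satisfying $Z(R)=R$ and $R(1)=1$; separating variables gives exactly $R=\frac{1+\varepsilon r^2}{1+\varepsilon}$, and $dR=\frac{2\varepsilon r}{1+\varepsilon}dr$ follows by differentiation.

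Next I would verify that $\psi\co \Sigma\times(\tfrac{1}{1+\varepsilon},\infty)\to M_1$, defined by $\psi(x,R)=\phi_{\log R}^Z(x)$, is a diffeomorphism. Since $Z$ has no zero on $M_1$ and $R$ ranges over $(\tfrac{1}{1+\varepsilon},\infty)$ as $r$ ranges over $(0,\infty)$, the flow of $Z$ sweeps $\Sigma$ across every level set of $R$ in this range, yielding the diffeomorphism with $\psi^*R=R$ tautologically.

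To identify $\psi^*\omega$ with $d(R\alpha)$, I would use the primitive $\theta_{\mathrm{prim}}=(1+\varepsilon r^2)\theta$ of $\omega$ on $M_1$, which by construction of $Z$ satisfies $\omega(Z,\cdot)=\theta_{\mathrm{prim}}$ and restricts on $\Sigma$ to $(1+\varepsilon)\theta|_\Sigma=\alpha$. Cartan's formula gives $\mathcal{L}_Z\theta_{\mathrm{prim}}=\iota_Z\omega=\theta_{\mathrm{prim}}$ (the other term vanishes as $\theta_{\mathrm{prim}}(Z)=\omega(Z,Z)=0$), which integrates along the flow to $(\phi_\tau^Z)^*\theta_{\mathrm{prim}}=e^\tau\theta_{\mathrm{prim}}$. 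Setting $\tau=\log R$ and restricting to vectors tangent to $\Sigma\times\{R\}$ yields $\psi^*\theta_{\mathrm{prim}}=R\alpha$ (the $\partial_R$-component vanishes on both sides, since $\theta_{\mathrm{prim}}(Z)=0$ and $\alpha$ is pulled back from $\Sigma$), and then $\psi^*\omega=d(\psi^*\theta_{\mathrm{prim}})=d(R\alpha)$.

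I do not expect a serious obstacle here: once Lemma \ref{Lemma Liouville and Reeb for neg line bdle} has supplied $Z$ and the primitive $\theta_{\mathrm{prim}}$, the whole statement is routine Liouville-flow bookkeeping. The only mild subtlety is the canonical identification of the fibre point $w$ with the tangent vector $r\partial_r$ used to write $Z$ in radial coordinates, together with the need to work on $M_1$ so that $\theta_{\mathrm{prim}}$ and $\log R$ remain well-defined away from the zero section.
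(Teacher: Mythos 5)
Your proposal is correct and follows essentially the same route as the paper: the paper integrates the flow ODE $\dot w=Z(w)$ with the ansatz $w(t)=r(t)w_0$ and reads off $1+\varepsilon r^2=(1+\varepsilon)e^t$, which is exactly your characterizing equation $Z(R)=R$ with $R(1)=1$. The additional Liouville-flow bookkeeping ($\mathcal{L}_Z\theta_{\mathrm{prim}}=\theta_{\mathrm{prim}}$, hence $\psi^*\omega=d(R\alpha)$) is correct but left implicit in the paper, which treats it as part of the standard conical setup of Section \ref{Subsection Conical symplectic manifolds}.
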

\begin{proof}
 Let $w(t)$ solve $\dot{w}(t)=Z(w(t))$ with $w(0)=w_0 \in \Sigma$. The radial coordinate is defined by $R(w(t))=e^t$. The solution $w$ is unique, and we try to solve for $w(t)=r(t)w_0$. Then the equation becomes
$\dot{r} =  (1+\varepsilon r^2)/2\varepsilon r.
$
So $\partial_t(1+\varepsilon r^2) = 2\varepsilon r \dot{r} = 1+\varepsilon r^2$, thus
$
1+\varepsilon r^2 = (1+\varepsilon)e^t =  (1+\varepsilon) R.
$
\end{proof}

%
%
%
\subsection{The Hamiltonians}
\label{Subection Hamiltonians neg line bdle}
%
Consider the Hamiltonian
$$
H=h_k(R) = k (1+\varepsilon) R.
$$
Since in general $X_H = h'(R) Y$, we obtain
$$
X_H = k (1+\varepsilon) Y.
$$
The flow is $w(t) = e^{k  2\pi i t}w(0)$. Observe that for integer values of $k$ the flow is $1$-periodic, but for non-integer values of $k$ the only orbits are the constant orbits lying on the zero section (which is the critical level set for $H$).

%
%

The Hamiltonians $h_k$, $k\notin \Z$, have degenerate $1$-orbits, indeed they are Morse-Bott with critical level set $C$ the zero section.

There are two ways around this. One can introduce an auxiliary Morse function $f$ on $C$, and then one defines $CF^*(h_k,f)$ by standard Morse-Bott techniques (see for example Bourgeois-Oancea \cite{Bourgeois-Oancea}). The generators will be the critical points of $f$ in $C$, and the differential will count rigid trajectories which are suitable combinations of $-\nabla f$-flowlines inside $C$ and Floer flowlines with ends on $C$. This approach is an infinitesimal version of the second approach, which is to explicitly construct a perturbation of the form
$$
h_{k,\epsilon} = h_k + \epsilon f
$$
using a time-dependent function $f$ supported near $C$ and Morse on $C$, and a small enough constant $\epsilon>0$. For small enough $\epsilon$, one then shows that the local Floer cohomology near $C$ is isomorphic to the Morse cohomology of $C$. This is also a standard method (for instance, for $S^1$ critical level sets, see \cite[Prop. 2.2]{CFHW}). We omit these details.
%

\subsection{The $g$-action}
\label{Subection g action neg line bdle}
%
The action by rotation in the fibres, 
$$g_t = e^{2\pi i t},$$ 
is Hamiltonian generated by
$K = h_{1}(R) = (1+\varepsilon) R.$
Since $g_t$ preserves $R$, the pull-back of the Hamiltonians by the $g$-action is:
$$
g^*h_k= h_k \circ g_t - K \circ g_t = (1+\varepsilon) k R - (1+\varepsilon) R = h_{k-1}.
$$
%
%
%
\subsection{Complex structure}
\label{Subsection complex structure neg line bdle}
%
The complex structure $J=i$ does not strictly satisfy ``$JZ=Y$'', but it satisfies a rescaled version:
$$
Y = \left. \frac{4\pi \varepsilon r^2}{(1+\varepsilon)(1+\varepsilon r^2)}\, J Z\right|_{\Sigma} = \frac{4\pi \varepsilon}{(1+\varepsilon)^2}\, J Z,
$$
so the contact condition ``$dR \circ J = -R\alpha$'' is actually rescaled as follows:
$$
dR \circ J = \frac{-4\pi \varepsilon r^2}{(1+\varepsilon)(1+\varepsilon r^2)} R \alpha.
$$
\begin{lemma}[Maximum principle]\label{Lemma Max principle for neg line bdles}
 Lemma \ref{Lemma Maximum principle} holds for $J=i$ everywhere on $M$.
\end{lemma}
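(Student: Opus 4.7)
The strategy is to adapt the proof of Lemma~\ref{Lemma Maximum principle}, replacing the standard contact-type identity $dR\circ J=-\theta$ with the rescaled identity displayed just above the statement,
$$dR\circ J \;=\; -\beta(R)\,R\alpha, \qquad \beta(R) \;:=\; \frac{4\pi\varepsilon r^{2}}{(1+\varepsilon)(1+\varepsilon r^{2})}.$$
The whole point is that $\beta(R)>0$ away from the zero section, so the rescaling only introduces strictly positive multiplicative factors in the elliptic inequality and never flips a sign.

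First I would reduce as in Remark~\ref{Remark Floer continuation gives admissible Jhat}: any $(j,\hat{J})$-holomorphic section has the form $s(z)=(z,u(z))$ with $u$ solving the Floer continuation equation $\partial_{t} u=X_{H_{z}}+J\partial_{s} u$. Set $\rho=R\circ u$. Since the Reeb field lies in $\ker dR$ (Lemma~\ref{Lemma R coordinate for neg line bldes}), we have $dR(X_{H})=h'_{z}(R)\,dR(Y)=0$. Combining this with the rescaled identity, a direct calculation parallel to the one in Lemma~\ref{Lemma Maximum principle} yields
$$d\rho\circ j \;=\; -\beta(\rho)\,\rho\, u^{*}\alpha \;+\; (\text{a $dt$-term linear in }h'_{z},\rho),$$
so $\Delta\rho\, ds\wedge dt=-d(d\rho\circ j)$ differs from the classical expression by extra first-order terms coming from $d\beta=\beta'(R)\,dR$.

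Next I would derive, just as in the standard argument, an elliptic inequality
$$\Delta\rho \;+\; (\text{first-order terms in }\rho) \;\geq\; -\,\rho\,(\partial_{s} h'_{z})\cdot(\text{positive factor involving }\beta),$$
where the nonnegative quadratic piece $\tfrac12\|du-X_{H}\otimes dt\|^{2}\geq 0$ is recovered exactly as before from the energy identity. Monotonicity of the homotopy ($\partial_{s}h'_{z}\leq 0$) together with $\beta>0$ then forces the right-hand side to be $\geq 0$ on the open set $U=\{r>0\}$, and the strong elliptic maximum principle forbids $\rho$ from having a strict interior maximum on $U$.

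Finally I would rule out the zero section $\{r=0\}$ as a possible location of a maximum: by Lemma~\ref{Lemma R coordinate for neg line bldes}, $R=(1+\varepsilon r^{2})/(1+\varepsilon)$ attains its global minimum $1/(1+\varepsilon)$ precisely on $\{r=0\}$, so zero-section points are minima, never maxima, of $\rho$. Combining these two steps and comparing $\rho$ with $R_{0}$ on the boundary of the open set $\{\rho>R_{0}\}$ forces $\rho\leq R_{0}$ everywhere, proving the lemma. The main obstacle is the careful bookkeeping of the extra contributions arising from $d\beta$: one has to verify that terms such as $\beta'(R)\,d\rho\wedge u^{*}\alpha$ produced when differentiating $d\rho\circ j$ are genuinely first-order in $\rho$ and cannot contaminate the sign of the leading second-order part of the inequality.
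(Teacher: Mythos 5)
Your plan is correct and follows essentially the same route as the paper: you rerun the proof of Lemma \ref{Lemma Maximum principle} with the rescaled contact identity, and the whole point is that the rescaling factor (the paper's $4\pi\tfrac{\rho-1}{\rho}$ with $\rho=(1+\varepsilon r^2)\circ u$, your $\beta(R)$) is nonnegative, so monotonicity $\partial_s h_z'\leq 0$ still yields a usable elliptic inequality for $\rho$. The only difference is the detail you flag at the end: rather than excising the zero section and treating $\{r=0\}$ separately via minimality of $R$ there, the paper checks directly that the extra term produced by differentiating the factor equals $\tfrac{1}{\rho(\rho-1)}\,|d\rho|^2\,ds\wedge dt\geq 0$, so the inequality persists for all $\rho\geq 1$, i.e.\ down to the zero section.
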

\begin{proof} 
We mimick the old proof (Lemma \ref{Lemma Maximum principle}). Let $\rho = (1+\varepsilon r^2)\circ u$. Since $dr|_w (w)=r, dr|_w(iw)=0$, we deduce
$$
dr \circ i = -2\pi r \theta.
$$
Thus, letting $\widetilde{\theta}=(1+\varepsilon r^2) \theta$ denote the primitive for $\omega$,
$$
\begin{array}{lll}
d\rho \circ j &=& 2\varepsilon r dr (i\circ du + \nu\circ j)\\
& =& \frac{4\pi  \varepsilon r^2}{1+\varepsilon r^2} (-u^*\widetilde{\theta}+dt \otimes \widetilde{\theta}(X_H))\\
&=& 4\pi \frac{\rho - 1}{\rho}  (-u^*\widetilde{\theta}+dt \otimes \widetilde{\theta}(X_H))
\end{array}
$$
$$\textstyle
(-d(d\rho\circ j) + 1^{st} \textrm{order in }\rho) \geq 4\pi \frac{\rho-1}{\rho} (-(R\circ u) \partial_s h')\, ds \wedge dt -d(4\pi\frac{\rho-1}{\rho})\wedge \frac{\rho}{4\pi (\rho-1)}(d\rho \circ j).
$$
We need to ensure the right hand side is a positive multiple of $ds\wedge dt$ so that, as in the old proof, $(\Delta \rho + 1^{st} \textrm{ order terms in }\rho) \geq 0$ provided $\partial_s h'\leq 0$.
%
%

So we need $\rho\geq 1$ for the first term. The second term is $-\frac{1}{\rho^2}\cdot \frac{\rho}{\rho-1} d\rho \wedge d\rho \circ j$, and $d\rho \wedge d\rho \circ j = (-(\partial_s \rho)^2-(\partial_t \rho)^2)\,ds \wedge dt$. So $\rho \geq 1$ suffices, equivalently: $r\geq 0$.
\end{proof}
\begin{corollary}\label{Corollary sections lie in zero section neg lbdles}
If $H$ is monotone as in \ref{Subsection Invariance the choice of hat J}, and $\hat{J}$ is admissible with $J=i$,
then $(j,\hat{J})$-holomorphic sections of $E_g \to S^2$ must land in the zero sections of the fibres.
\end{corollary}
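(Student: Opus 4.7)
The plan is to combine the globally-valid maximum principle of Lemma~\ref{Lemma Max principle for neg line bdles} with the exactness/compatibility argument carried out at the end of Lemma~\ref{Lemma make H monotone and sections lie near zero}. First I would apply that maximum principle to the function $\rho = (1+\varepsilon r^2)\circ u$ on the closed surface $S^2$: because $J=i$ on all of $M$ and the Hamiltonian $H$ has $\partial_s h' \leq 0$, Lemma~\ref{Lemma Max principle for neg line bdles} gives the elliptic inequality $\Delta \rho + (\text{first order in }\rho) \geq 0$ wherever $\rho > 1$ (equivalently, wherever $u$ avoids the zero section). Since $S^2$ has no boundary and $\rho \geq 1$ everywhere, the strong maximum principle yields a dichotomy: either $\rho \equiv 1$ — in which case $u$ maps into the zero section and we are done — or $\rho$ is identically equal to some $\rho_{\max} > 1$, so $u$ lies entirely in a level set $\{r = c\}$ with $c > 0$.

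The second step is to rule out the latter alternative by essentially repeating the final paragraph of the proof of Lemma~\ref{Lemma make H monotone and sections lie near zero}. In the region $\{r=c\}$ the $1$-form $(1+\varepsilon r^2)\theta$ is a primitive of $\omega$, hence $\widetilde{\Omega}$ is exact on the image of $u$ and $\int_{S^2} u^*\widetilde{\Omega}=0$. By Lemma~\ref{Lemma compatibility trick} the integrand is pointwise nonnegative on each disc $D^{\pm}$, so $(u^\pm)^*\widetilde{\Omega}\equiv 0$; this in turn forces $u^-$ to be constant on $D^-$ (since $H^-\equiv 0$ there) and hence $t\mapsto u^+(0,t)$ to be a non-constant orbit of $g_t^{-1}$ along $\partial D^+$, while $\partial_s u^+=0$ together with $X_{H^+}=0$ near the centre of $D^+$ force the orbit to be constant, the desired contradiction.

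The main technical point, and the step I expect to require the most care, is the invocation of Lemma~\ref{Lemma compatibility trick} when the hypothetical level $c$ is small: that lemma is stated only where $\hat{J}$ has the asymptotic special form of Definition~\ref{Definition admissible J hat}, whereas a small-$c$ level set may sit inside the compact region of $M$ where admissibility leaves $\hat{J}$ unconstrained. The remedy is to use the cobordism/invariance argument of Lemma~\ref{Lemma make H monotone and sections lie near zero} to replace $\hat{J}$ by a homotopic admissible almost complex structure whose $(j,J)$-antilinear off-diagonal part agrees with the prescribed $\nu = ds\otimes JX_H + dt\otimes X_H$ on all of $E_g$ rather than merely at infinity; this extended form is compatible with $J=i$ and with the monotone $H$ fixed in the statement, the counts defining $r_{\widetilde{g}}$ are unchanged, and Lemma~\ref{Lemma compatibility trick} now applies at every point of $S^2$, closing the contradiction.
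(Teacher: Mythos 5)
Your first two paragraphs reproduce exactly the paper's argument: the maximum principle of Lemma \ref{Lemma Max principle for neg line bdles} (valid at all $r>0$ because $J=i$) forces any section not contained in the zero section to lie in a slice $\{r=c\}$ with $c>0$, and that alternative is then killed by the exactness-plus-Lemma \ref{Lemma compatibility trick} contradiction taken verbatim from the end of the proof of Lemma \ref{Lemma make H monotone and sections lie near zero}. This is precisely what the one-line proof ``Lemmas \ref{Lemma make H monotone and sections lie near zero} and \ref{Lemma Max principle for neg line bdles}, using that $\omega$ is exact except on the zero section'' is doing.

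The one place where your write-up goes astray is the final paragraph. First, the worry you raise there applies equally to your first step, which you did not flag: the differential inequality of Lemma \ref{Lemma Max principle for neg line bdles} is derived from $du\circ j = J\circ du + \nu\circ j$ with the \emph{specific} antilinear term $\nu = ds\otimes J X_H + dt\otimes X_H$ of Definition \ref{Definition admissible J hat}, not merely from $J=i$; so if the special form were only assumed at large $R$, both the maximum-principle step and the appeal to Lemma \ref{Lemma compatibility trick} would break in the compact region. Second, the proposed remedy does not repair the statement as written: homotoping $\hat{J}$ to another admissible structure and invoking the cobordism argument of \ref{Subsection Invariance the choice of hat J} only shows that the homology-level map $r_{\widetilde{g}}$ is unchanged; it says nothing about where the $(j,\hat{J})$-holomorphic sections of the \emph{original} $\hat{J}$ lie, which is what the Corollary asserts. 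The correct resolution is the intended reading of the hypothesis (the one used in the Lemma immediately following the Corollary, where $\hat{J}_H$ is written down globally): here ``admissible with $J=i$'' means $\hat{J}$ carries the form of Definition \ref{Definition admissible J hat}, with $\nu$ built from the chosen monotone $H$, on the whole region $r>0$ --- which makes sense globally precisely because $J=i$ is defined everywhere and $H^-=0$, $H^+=\phi(s)K^g\circ g_t$ are radial everywhere --- so that Lemma \ref{Lemma Max principle for neg line bdles} and Lemma \ref{Lemma compatibility trick} apply pointwise along the section and no replacement of $\hat{J}$ is needed.
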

\begin{proof}
 Lemmas \ref{Lemma make H monotone and sections lie near zero} and \ref{Lemma Max principle for neg line bdles}, using that $\omega$ is exact except on the zero section.
\end{proof}
\begin{lemma*}
 The (non-admissible) complex structure $\hat{J}=\left[ \begin{smallmatrix} j & 0 \\ 0 & i \end{smallmatrix} \right]$ on $D^{\pm} \times M$ yields a complex structure on $E_g$ ($i$ is $g$-invariant) and it can be used to compute $r_{\widetilde{g}},\mathcal{R}_{\widetilde{g}}$ possibly after a generic small perturbation to make it regular.
\end{lemma*}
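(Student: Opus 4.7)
The plan is to check three things: (a) $\hat{J}$ assembles into a well-defined almost complex structure on $E_g$; (b) every $\hat{J}$-holomorphic section $s:\CP^1\to E_g$ is a priori confined to the zero section of each fibre; and (c) after a generic small perturbation the resulting count agrees with the definition of $r_{\widetilde{g}}$ in Theorem \ref{Theorem rg element}.

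For (a), since $g_t=e^{2\pi it}$ acts $\C$-linearly on fibres of $L$ and preserves the horizontal lifts of the Hermitian connection, $dg_t$ commutes with $J=i$ on $M$, i.e.\ $g^{*}i=i$. Combined with the intrinsic $j$ on $S^{2}$, the clutching data is holomorphic, so $\hat{J}=j\oplus i$ assembles into a global complex structure on $E_g$; when $B$ is K\"ahler and $L$ is holomorphic, $E_g$ is canonically the total space of $L\boxtimes\mathcal{O}(1)\to B\times\CP^1$ and $\hat{J}$ is just its natural integrable structure.

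Step (b) is the main obstacle, since we lack the admissibility tail that powered Lemmas \ref{Lemma Maximum principle} and \ref{Lemma Max principle for neg line bdles}. Each restriction $u^{\pm}=s|_{D^\pm}$ is honestly $i$-holomorphic with $u^{-}(t)=g_t u^{+}(t)$ on the equator, so $\rho=1+\varepsilon r^{2}\circ s$ is globally defined on $\CP^{1}$ by $g$-invariance of $r$. Applied with $\nu=0$ and $H=0$, the computation of Lemma \ref{Lemma Max principle for neg line bdles} collapses to
$$\Delta\rho+(\text{first order in }\rho)\;\geq\;4\pi(\rho-1)\,u^{*}d\theta\;\geq\;0,$$
where positivity uses $d\theta=n\pi^{*}\omega_B$ and $u^{*}\omega_B\geq 0$ for $i$-holomorphic $u$. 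Since $\CP^{1}$ is compact without boundary, the Hopf maximum principle forces $\rho$ to be constant. If that constant were $>1$, then $s$ would avoid the zero section and $\omega=d\widetilde{\theta}$ with $\widetilde{\theta}=(1+\varepsilon r^{2})\theta$ would be exact along $s$. Applying Stokes on $D^{\pm}$ and combining via the boundary identity $\partial_t u^{-}=dg_t\partial_t u^{+}+X_{K^g}(g_t u^{+})$ together with $g^{*}\widetilde{\theta}=\widetilde{\theta}$, the two boundary integrals collapse (by Lemma \ref{Lemma Liouville and Reeb for neg line bdle}, $\theta(X_{K^g})=1$ so $\widetilde{\theta}(X_{K^g})=\rho$) into
$$0\leq\int_{\CP^{1}}s^{*}\omega\;=\;-\int_{0}^{1}\widetilde{\theta}(X_{K^g})\big|_{g_t u^{+}(t)}\,dt\;=\;-\rho\;<\;0,$$
a contradiction. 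Hence $\rho\equiv 1$ and $s$ lies entirely in the zero section.

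For (c), confinement to the compact zero section $B\subset M$ of each fibre makes Gromov compactness applicable, and a generic small perturbation of $\hat{J}$ supported near $B$ achieves transversality via the arguments of Lemma \ref{Lemma dim of space of sections} (the confinement in (b) is a one-sided inequality, stable under small perturbations). To identify the resulting count with $r_{\widetilde{g}}(1)$, one homotopes this $\hat{J}$ to the admissible structure of Lemma \ref{Lemma make H monotone and sections lie near zero} through a family $\hat{J}_\lambda$ chosen so that the subharmonicity argument of step (b) persists uniformly in $\lambda$; the parametrised moduli $\bigcup_\lambda\mathcal{S}(j,\hat{J}_\lambda,S)$ then yields the cobordism/chain homotopy between the two chain-level maps, exactly as in \S\ref{Subsection Invariance the choice of hat J}.
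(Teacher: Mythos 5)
Your steps (a) and (b) are fine, and (b) is in fact a pleasant self-contained alternative to the paper's route to confinement (the paper gets it from Lemma \ref{Lemma compatibility trick} together with Lemma \ref{Lemma make H monotone and sections lie near zero} and Corollary \ref{Corollary sections lie in zero section neg lbdles}). For $H=0$ the cleanest bookkeeping is $d\rho\circ j=-4\pi\varepsilon\,u^*(r^2\theta)$, whence $\Delta\rho\,ds\wedge dt=4\pi\varepsilon\,u^*\Omega\geq 0$ with no first-order terms at all; as you wrote it, the first-order terms carry the coefficient $\tfrac{1}{\rho(\rho-1)}$, which degenerates exactly at the zero section, so the Hopf argument needs this cleaner formulation. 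Your flux computation with $\widetilde{\theta}(X_{K^g})=\rho$ is correct.

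The genuine gap is in step (c), which is where the actual content of the lemma lies: identifying the $\hat{J}_0$-count with $r_{\widetilde{g}},\mathcal{R}_{\widetilde{g}}$. To run the cobordism argument of \ref{Subsection Invariance the choice of hat J} you need the parametrized moduli space $\bigcup_{\lambda}\mathcal{S}(j,\hat{J}_{\lambda},S)$ to be compact, i.e.\ a priori $C^0$-bounds at every intermediate $\lambda$, not only at the two endpoints. Your proposed mechanism, that ``the subharmonicity argument of step (b) persists uniformly in $\lambda$'', is not available once the Hamiltonian term is switched on: for $H_{\lambda}\neq 0$ the sections solve an inhomogeneous equation ($\nu\neq 0$), $u^*\omega$ is no longer pointwise nonnegative, and $\rho$ is not subharmonic; at best one has a maximum principle with first-order terms, and only for a carefully chosen interpolation. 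Supplying this is precisely the missing computation: the paper takes $H_{\lambda}-H$ radial, say $k_{\lambda}(\rho)$, and checks that the only new term in $-d(d\rho\circ j)$ is $-(k_{\lambda}(\rho)-\rho k_{\lambda}'(\rho))\,\partial_s\rho\,ds\wedge dt$, a first-order term, so Lemma \ref{Lemma Max principle for neg line bdles} applies for every $\lambda$ and the homotopy has uniformly confined sections. Relatedly, your parenthetical that the confinement of (b) is ``a one-sided inequality, stable under small perturbations'' is not right as stated: confinement to the zero section ($\rho\equiv 1$) is destroyed by an arbitrary small perturbation of $\hat{J}_0$; what survives is confinement to a compact neighbourhood, and only because the perturbation is supported near the zero section while $\hat{J}$ is left unchanged (radial, monotone) outside, where the maximum principle still forbids sections from entering — this is exactly the paper's Remark following the lemma.
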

\begin{proof}
Let $\hat{J}_{H}=\left[ \begin{smallmatrix} j & 0 \\ ds \otimes X_H - dt \otimes J_zX_H & i \end{smallmatrix} \right]$ constructed for the monotone $H$ as in \ref{Subsection Invariance the choice of hat J}. If $H$ is the same as the Hamiltonian defining $\widetilde{\Omega}$, then we showed in Lemma \ref{Lemma Symplectic form for admissible J} that $\hat{J}_{H}$ is compatible with a symplectic form $\widetilde{\Omega}+\pi_g^*\sigma$.

For $\hat{J}_0$ (the $\hat{J}$ of the claim), compatibility will fail at infinity but it will still hold in a large compact region surrounding the zero section of $E_g$ (which can be made larger by rescaling $\sigma$ by a positive constant).

However, for the purpose of defining $\mathcal{R}_{\widetilde{g}}, r_{\widetilde{g}}$, this lack of compatibility will not matter if we can show that all $(j,\hat{J}_{H_{\lambda}})$-holomorphic sections lie in a compact region where compatibility holds, for each $H_{\lambda}$ in a homotopy $(H_{\lambda})_{0\leq \lambda \leq 1}$ from $H$ to $0$.

Inspecting the proof of Lemma \ref{Lemma Maximum principle} or \ref{Lemma Max principle for neg line bdles}, the new term in $d\rho \circ j$ caused by changing $\hat{J}$ (but keeping $\widetilde{\Omega}$ the same) is the term $dt \otimes \theta(X_{H_{\lambda}-H})$. So, outside that compact region, $H_{\lambda}-H$ is radial, say $(H_{\lambda}-H)(u)=k_{\lambda}(\rho)$, so  the new term in $-d(d\rho\circ j)$ is
$$
-d(\rho k_{\lambda}(\rho)) dt = -(k_{\lambda}(\rho) - \rho k_{\lambda}'(\rho))\, \partial_s \rho\, ds \wedge dt
$$
and these first order terms in $\rho$ don't affect the proof of the maximum principle. 

By \ref{Subsection Invariance the choice of hat J},  $\mathcal{R}_{\widetilde{g}}, r_{\widetilde{g}}$ will not be affected in homology if we homotope $\hat{J}_{H}$ to $\hat{J}_0$.
\end{proof}

\begin{remark*}
  In the notation of the proof, if $\hat{J}_0$ is not regular then one needs to homotope it to $\hat{J}_{L}$, where $L$ is a small perturbation of $0$ typically non-radial near the zero section (the maximum principle will not hold there, so $(j,\hat{J}_L)$-holomorphic sections may not lie entirely in the zero section) but $L=0$ away from the zero section (so the maximum principle applies and sections cannot touch this region).
\end{remark*}

%
\subsection{The choice of $\widetilde{g}$}
\label{Subection gtilde choice neg line bdle}
\label{Subection the maslov index of g neg line bdles}
%
The action of $g$ on $\mathcal{L}_0 M$ lifts to an action of $\widetilde{\mathcal{L}_0 M}$. We choose the lift $\widetilde{g}$ so that the constant orbits $x$ on the zero section lifted to $(c_x,x) \in \widetilde{\mathcal{L}_0 M}$ satisfy $\widetilde{g}\cdot (c_x,x) = (c_x,x)$, where $c_x:D \to M$ is the constant map to $x$. 
So $S_{\widetilde{g}}$ is represented by the constant $s_g^+(z) = c_x(z)=x$, $s_g^-(z)=(\widetilde{g}c_x)(z)=c_x(z)=x$.
\begin{lemma*}
 $\widetilde{\Omega}(s_{\widetilde{g}})=0.$
\end{lemma*}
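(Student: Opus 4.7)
The plan is to evaluate $\widetilde{\Omega}(s_{\widetilde{g}}) = \int_{D^+}(s_g^+)^*\widetilde{\Omega} + \int_{D^-}(s_g^-)^*\widetilde{\Omega}$ directly, using the local formula $\widetilde{\Omega}|_{D^{\pm}\times M} = \omega^{\pm} - d(H^{\pm}\,dt)$ from \ref{Subsection Hamiltonian fibration} and Stokes' theorem on each half-disc.

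First I would observe that since $s_g^{\pm}$ is the constant section equal to the point $x \in B \subset M$ on the zero section, and $\omega^{\pm}$ is pulled back from $M$ along the projection $D^{\pm}\times M \to M$, the fibrewise contribution vanishes automatically: $(s_g^{\pm})^*\omega^{\pm} = 0$. Consequently $(s_g^{\pm})^*\widetilde{\Omega} = -d(H^{\pm}(\cdot,x)\,dt)$ is an exact $2$-form on each disc, and Stokes' theorem converts the disc integrals into pure boundary integrals,
\[
\int_{D^{\pm}}(s_g^{\pm})^*\widetilde{\Omega} \;=\; -\int_{\partial D^{\pm}} H^{\pm}(t,x)\,dt.
\]

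Next I would combine the two boundary contributions on the glued sphere. The boundary orientations of $\partial D^+$ and $\partial D^-$ induced from their respective discs are opposite inside $S^2 = D^+\cup_{\phi^g} D^-$, so the two integrals differ by a sign when expressed over the common $S^1$. Feeding in the clutching identity $H^+_t(y) = H^-_t(g_ty) - K^g_t(g_ty)$ at the point $y = x$, together with the pivotal fact that $g_t\cdot x = x$ (the zero section of $L$ is pointwise fixed by $g_t = e^{2\pi i t}$), the combination reduces to
\[
\widetilde{\Omega}(s_{\widetilde{g}}) \;=\; \int_0^1 \bigl(H^-_t(x) - H^+_t(x)\bigr)\,dt \;=\; \int_0^1 K^g_t(x)\,dt.
\]

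The final step is to kill this integral by exploiting the additive freedom in the choice of generating Hamiltonian: two Hamiltonians generating the same loop $g_t$ differ by a constant (see \ref{Subsection G tildeG groups}), so we may replace $K^g = (1+\varepsilon)R$ by $K^g - 1 = \varepsilon r^2$, which vanishes identically on $B$ (this is just centering $K^g$ at its critical value on the zero section, where $X_{K^g}$ vanishes). With this normalisation $K^g_t(x) = 0$ and the integral is zero. The argument contains no analytic content beyond two applications of Stokes; the main obstacle is really just keeping the boundary orientations straight and fixing the normalisation of $K^g$ once and for all so that the answer is zero on the nose rather than modulo a constant.
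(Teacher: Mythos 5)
Your proof is correct and runs on the same mechanism as the paper's: the paper simply fixes the particular admissible data $H^-=0$, $H^+=\phi(s)\,K^g\circ g_t$ and reads off $\widetilde{\Omega}(s_{\widetilde{g}})=\int_{D^+}\phi'(s)K^g(x)\,ds\wedge dt=0$, whereas your Stokes/boundary computation does the same calculation for an arbitrary $H^{\pm}$ satisfying the clutching condition, reducing in both cases to the vanishing of $K^g$ at the $g$-fixed point $x$ of the zero section. Your explicit renormalisation $K^g-1=\varepsilon r^2$ is exactly the additive-constant adjustment the paper uses tacitly when it asserts ``$K=0$ on the zero section'' (the generator $K=(1+\varepsilon)R$ chosen earlier equals $1$ there), so that step is a welcome clarification rather than a deviation or a gap.
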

\begin{proof}
As in \ref{Subsection Invariance the choice of hat J}, choose
$
\widetilde{\Omega}^+= \omega^+ + \phi'(s)K\circ g_t\, ds \wedge dt$, $\widetilde{\Omega}^-=\omega^-$
so $\widetilde{\Omega}(s_{\widetilde{g}}) = \int_{D^+} \phi'(s) K(x) \, ds \wedge dt =0$ since $K=0$ on the zero section.
\end{proof}
\begin{lemma}\label{Lemma calculation of I(g) neg line bdle}
 $I(\widetilde{g}) = 1$ $($defined in Section \ref{Subsection Maslov index I of g}$)$.\end{lemma}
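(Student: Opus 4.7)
The plan is to compute $I(\widetilde{g})$ directly from the definition, using a constant generator $c$ on the zero section, for which the trivializations at the two ends of the loop coincide and the loop $\ell(t)\in\mathrm{Sp}(2n,\R)$ reduces to the fibrewise linearization of $g_t$.

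First, I would choose $x\in M$ to be any point on the zero section $B\subset M$, viewed as a constant loop, and take the lift $c=(c_x,x)\in\widetilde{\mathcal{L}_0M}$ where $c_x\co D^2\to M$ is the constant disc at $x$. By the choice of $\widetilde{g}$ in \ref{Subection gtilde choice neg line bdle}, we have $\widetilde{g}(c)=c$, so the trivializations $\tau_{\widetilde{g}(c)}$ and $\tau_c$ from $x^*TM$ to $S^1\times(\R^{2n},\omega_0)$ coincide (up to homotopy). Hence
$$
\ell(t)=\tau_c(t)\circ dg_t(x)\circ\tau_c(t)^{-1}.
$$

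Second, I would analyse $dg_t(x)$. The splitting $T_xM=T_xB\oplus L_x$ is symplectic (the zero section is a symplectic submanifold) and $\omega$-compatibly complex with respect to any $J=J_B\oplus i$. Since $g_t$ fixes $B$ pointwise, $dg_t(x)|_{T_xB}=\mathrm{id}_{T_xB}$. On the fibre $L_x\cong\C$, the map $g_t$ is multiplication by $e^{2\pi it}$, so $dg_t(x)|_{L_x}$ is multiplication by $e^{2\pi it}$. Choosing $\tau_c$ to respect this splitting, $\ell(t)\in U(n)\subset\mathrm{Sp}(2n,\R)$ becomes the diagonal unitary $\mathrm{diag}(1,\ldots,1,e^{2\pi it})$, where $n=\dim_{\C}M=1+\dim_{\C}B$.

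Third, $I(\widetilde{g})=\deg(\ell)$ is computed by the composition of $\ell$ with $\det\co U(n)\to S^1$. Here $\det\ell(t)=e^{2\pi it}$, which has degree $1$ as a loop in $S^1$. Therefore $I(\widetilde{g})=1$. There is no real obstacle in this argument: the only point requiring a small sanity check is that the trivialization $\tau_c$, which is defined by extending from a trivialization of $v^*TM$ over the disc $v=c_x$, is homotopic to the product trivialization of $T_xB\oplus L_x$ over the constant loop---this is immediate because both are constant in $t$ over a point.
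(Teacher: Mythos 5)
Your proposal is correct and follows essentially the same route as the paper: take a constant lift $(c_x,x)$ on the zero section fixed by $\widetilde{g}$, use a trivialization splitting $T_xM\cong T_bB\oplus L_b\cong T_bB\oplus\C$, observe that $dg_t$ is the identity on the base factor and multiplication by $e^{2\pi it}$ on the fibre, and conclude $\deg(\det\ell)=1$. The small point you flag (compatibility of the constant-disc trivialization with the splitting) is exactly what the paper handles by noting $dg_t$ commutes with $\tau_{c_x}(t)$, so there is no gap.
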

\begin{proof}
Using any $(c_x,x)$ as above, pick a unitary trivialization of $L$ over a neighbourhood of the point $b=\pi(x)\in B$ to obtain
$$
\tau_{c_x}: x^*TM \cong S^1 \times T_{x} M \cong S^1 \times T_b B \times L_b \cong S^1 \times T_b B \times \C.
$$
Now $\widetilde{g}\cdot(c_x,x)=(c_x,x)$, and $g_t$ is a linear holomorphic action given by multiplication by a complex number, so $dg_t$ commutes with $\tau_{c_x}(t)$. Thus
$$
\ell(t) = \tau_{c_x}(t) \circ dg_t \circ \tau_{c_{x}}(t)^{-1} = dg_t \circ \tau_{c_x}(t) \circ \tau_{c_{x}}(t)^{-1} = e^{2\pi i t},
$$
so $\ell(t)$ is the rotation of the $\C$ factor and the identity on the $T_b B$ factor. So $t\mapsto \det e^{2\pi i t}= e^{2\pi it}$ is $1$ in $H_1(S^1;\Z)\cong \Z$. So $I(\widetilde{g}) = \textrm{deg}(\ell) = 1$.
\end{proof}
%
\section{Symplectic cohomology of $M=\mathrm{Tot}(\mathcal{O}(-n)\to \C P^1)$}
\label{Section O(-n) over CP1}
%
%
\subsection{The $r_{\widetilde{g}}$ map for $\mathcal{O}(-n)$}
\label{Subection rg for O(-1)}
%
Consider $M=\textrm{Tot}(\mathcal{O}(-n) \to \C P^1)$ for $n\geq 1$.
The generators of $H_*^{lf}(M)$ are in degree $2$ and $4$:
$$
\begin{array}{rcl}
 F &=& \textrm{ fibre } \C, \textrm{ Poincar\'e dual to the zero section }[\CP^1]\\
 M &=& \textrm{ fundamental chain, Poincar\'e dual to the point class } [\mathrm{pt}]
\end{array}
$$
Using a connection, $TM \cong T\C P^1 \oplus \mathcal{O}(-1)$, so 
$
c_1(TM)[\CP^1] = 2-n.
$
The zero section $[\C P^1]$ generates $\pi_2(M)$. $M$ satisfies weak$^+$ monotonicity: it is either monotone (for $\mathcal{O}(-1)$), or $c_1=0$ (for $\mathcal{O}(-2)$), or the min Chern number $|N|\geq 1$:
$$
N = c_1(TM,\omega)([\C P^1]) = 2-n.
$$
Moreover, $\Lambda$ is generated by $[\C P^1]$ which has
$
c_1(TM,\omega)[\C P^1] = N$,
$\omega[\C P^1] >0.
$
Writing $t=[\C P^1]$ for the generator of $\Lambda$, and $t^m = m[\C P^1]$, we obtain
$$
\begin{array}{lll}
\Lambda &=& \displaystyle \Z[t^{-1},t]] = \{ \sum n_j t^{m_j} : n_j\in \Z/2, \; \lim_{j \to \infty} m_j = \infty \}\\
|t| &=& -2c_1(TM,\omega)[\C P^1] = -2N \quad (\textrm{homological grading})
\end{array}
$$
By Lemma \ref{Lemma calculation of I(g) neg line bdle} and the choice of $\widetilde{g}$ in \ref{Subection gtilde choice neg line bdle},
$
c_1(TE^v_g,\Omega)(S_{\widetilde{g}}) = -I(\widetilde{g}) =-1$, and
$\widetilde{\Omega}(S_{\widetilde{g}}) = 0$.
So the dimension of the space of sections (Lemma \ref{Lemma dim of space of sections}) is
$$
\begin{array}{lll}
\dim \mathcal{S}(j,\hat{J},t^m + S_{\widetilde{g}}) &=& 2\dim_{\C} M + 2 c_1(TE^v_g,\Omega)(S_{\widetilde{g}}) + 2m\,c_1(TM,\omega)(t) \\
&=& 2+2N\cdot m
\end{array}
$$
The condition that the sections intersect $F$ or $M$ at $z_0$ cuts down the dimension respectively by $2$ or $0$, and then evaluation at $z_{\infty}$ sweeps out a locally finite chain in dimension $2Nm$ or $2+2Nm$. So in these two cases, the possibilities are:
$${\tiny
\begin{array}{c|c|c|c|c|c|c}
 -n & N=2-n & |t|=-2N & 2Nm & 2+2Nm & l.f. \; 2Nm\textrm{-chains} & l.f.\; (2+2Nm)\textrm{-chains} \\  
\hline
-1 & 1  & -2  & 2m & 2+2m  & \mathbf{F} (m=1), \textbf{M} (m=2)    & \mathbf{F} (m=0), \textbf{M} (m=1)\\ 
-2 & 0  & 0   &   0 & 2     & \textrm{none}       & \textbf{F} (\textrm{any } m)\\
-3 & -1 & 2   &  -2m & 2-2m  & F (m=-1), M (m=-2)  & \mathbf{F} (m=0), M (m=-1)\\
-4 & -2 & 4   &  -4m & 2-4m  & M (m=-1)            & \mathbf{F} (m=0)\\
\leq -5 & \leq -3 & \geq 6   & \leq -6m & \leq 2-6m & \textrm{none} & \mathbf{F} (m=0)
\end{array}
}
$$
We can rule out $m<0$ since a $(j,\hat{J})$-holomorphic section $S$ has $\widetilde{\Omega}(S)\geq 0$ (by Lemma \ref{Lemma compatibility trick}) and
$
\widetilde{\Omega}(t^m + S_{\widetilde{g}}) = m \omega(\C P^1) + \widetilde{\Omega}(S_{\widetilde{g}}) = m \omega(\C P^1).
$
The sections $s$ for $m=0$ are constant (since $\widetilde{\Omega}(s)=0$).

The sections in class $t^m+S_{\widetilde{g}}$ contribute with Novikov weight $t^{m}$ to $r_{\widetilde{g}}$. Thus viewing $\Lambda^2\equiv QC^{lf}_*(M) \equiv \Lambda\cdot (F\otimes 1) + \Lambda \cdot (M\otimes 1)$, the matrix $r_{\widetilde{g}}: \Lambda^2 \to \Lambda^2$ is
$${
\begin{array}{c|c|c}
 n=1  & n=2 &  n\geq 3\\
\hline
\strut & \strut & \strut \\[-2mm]
\left[\begin{smallmatrix} At & C \\ Bt^{2} & Dt  \end{smallmatrix}\right] &
\left[\begin{smallmatrix} 0 & C\lambda \\ 0 & 0  \end{smallmatrix}\right] &
\left[\begin{smallmatrix} 0 & C \\ 0 & 0  \end{smallmatrix}\right]
\end{array}
}
$$
where $A,B,C,D\in \Z/2$, $\lambda\in \Lambda$. Note this is nilpotent for $n\geq 2$, so:
%
%
\begin{corollary*}
$SH^*(M) = 0$ for $M=\mathrm{Tot}(\mathcal{O}(-n)\to \CP^1)$ and $n\geq 2$.
\end{corollary*}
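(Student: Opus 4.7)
The plan is to reduce the vanishing $SH^*(M)=0$ to the nilpotence of $r_{\widetilde{g}}$ via the machinery already established in the paper. Specifically, Theorem \ref{Theorem SH is lim of S-bRb} gives the isomorphism $SH^*(M)\cong QH^*(M)/\ker r_{\widetilde{g}}^{k}$ for $k\geq \dim H^*(M)$; equivalently, Corollary \ref{Corollary SH is zero if rg is nilpotent} asserts that vanishing of $SH^*(M)$ is equivalent to $r_{\widetilde{g}}(1)=\pi_M^*c_1(L)$ being nilpotent in $QH^*(M)$. Since $\dim_\Lambda QH^*(M)=\dim H^*(M)=2$ in our setting, it suffices to exhibit $r_{\widetilde{g}}^{2}=0$ as a $\Lambda$-linear endomorphism.

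First I would invoke the explicit matrix computation just carried out: after identifying $QC^{lf}_*(M)\cong \Lambda\cdot(F\otimes 1)\oplus \Lambda\cdot(M\otimes 1)$ via the two locally finite generators $F,M$, the dimension count and the non-negativity $\widetilde{\Omega}(t^m+S_{\widetilde{g}})=m\,\omega[\mathbb{CP}^1]\geq 0$ (from Lemma \ref{Lemma compatibility trick}) forced the matrix of $r_{\widetilde{g}}$ to be strictly upper triangular for $n\geq 2$, namely of the shape $\left[\begin{smallmatrix}0 & \ast \\ 0 & 0\end{smallmatrix}\right]$ (with $\ast=C\lambda$ for $n=2$ and $\ast=C$ for $n\geq 3$, where $C\in \Z/2$, $\lambda\in \Lambda$). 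A direct matrix multiplication then gives $r_{\widetilde{g}}^{2}=0$, so $\ker r_{\widetilde{g}}^{2}$ equals all of $QH^*(M)$.

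Applying Theorem \ref{Theorem SH is lim of S-bRb} with $k=2\geq \dim H^*(M)$ yields
$$SH^*(M)\;\cong\; QH^*(M)/\ker r_{\widetilde{g}}^{2}\;=\;QH^*(M)/QH^*(M)\;=\;0,$$
as claimed. Alternatively, one can simply cite Corollary \ref{Corollary SH is zero if rg is nilpotent} once nilpotence is verified, which under Poincar\'e duality matches quantum cup product by $\pi_M^*c_1(L)$ being nilpotent.

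The only point requiring care (rather than a genuine obstacle) is confirming that no further non-zero matrix entries have been missed: this amounts to checking that the list of locally finite chains of the correct dimension together with $m\geq 0$ is exhausted by the table above. This is immediate because $H_{<2}^{lf}(M)=0$ (the base $\mathbb{CP}^1$ is the only $2$-dimensional lf generator besides $F$, but it is Poincar\'e dual to a point, already counted as $M$) and negative $m$ is excluded by $\widetilde{\Omega}\geq 0$. No homotopical subtleties arise in taking the limit of $\mathcal{R}_{\widetilde{g}}^k$ since $\mathcal{R}_{\widetilde{g}}$ is, after identification, quantum cup product by $r_{\widetilde{g}}(1)$ (Theorem \ref{Theorem Rg and products}), and nilpotence of a $2\times 2$ matrix is an elementary check.
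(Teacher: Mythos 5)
Your proposal is correct and follows essentially the same route as the paper: the dimension count plus the positivity $\widetilde{\Omega}(t^m+S_{\widetilde{g}})=m\,\omega[\CP^1]\geq 0$ forces $r_{\widetilde{g}}$ to be strictly upper triangular for $n\geq 2$, hence nilpotent, and Theorem \ref{Theorem SH is lim of S-bRb} (equivalently Corollary \ref{Corollary SH is zero if rg is nilpotent}) gives $SH^*(M)=0$. The only blemish is the parenthetical identification of the zero section: $[\CP^1]$ is a multiple of the lf generator $F$ in $H_2^{lf}(M)$ (it is $F$, not $M$, that is Poincar\'e dual to $[\CP^1]$), but this side remark plays no role in the argument.
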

%
\subsection{Description of $\mathbf{E_g}$ for $\mathbf{M=\mathrm{Tot}(\mathcal{O}(-1)\to \C P^1)}$}
%
%
\begin{lemma*}
The $\C$-line bundle over $\C P^1$ with transition $\partial D^+ \times \C \to \partial D^- \times \C$ given by $([e^{2\pi i t}:1],x) \to ([1:e^{-2\pi i t}],g_t\cdot x)$ is the bundle $\mathcal{O}(-1)$ \emph{(}where $g_t=e^{2\pi it})$.
\end{lemma*}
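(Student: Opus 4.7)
The plan is to prove this by constructing an explicit bundle isomorphism with $\mathcal{O}(-1)$ via matching trivializations on the two hemispheres. First I would fix the standard open cover of $\CP^1$ by $U_0=\{[z:1]\}\cong D^+$ and $U_1=\{[1:w]\}\cong D^-$, with overlap parametrized on the equator by $[e^{2\pi i t}:1]=[1:e^{-2\pi i t}]$, so that the coordinates on $D^{\pm}$ match the identification $([e^{2\pi i t}:1],x)\leftrightarrow ([1:e^{-2\pi i t}],\ldots)$ given in the statement.

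Next I would trivialize $\mathcal{O}(-1)=\{([z_0:z_1],v):v\in\textrm{span}(z_0,z_1)\}$ over each hemisphere. Over $U_0$ write $v=\lambda\cdot(z,1)$ and use $\lambda$ as fibre coordinate; over $U_1$ write $v=\mu\cdot(1,w)$ and use $\mu$ as fibre coordinate. On the overlap the identification $\lambda(z,1)=\mu(1,1/z)$ forces $\mu=z\lambda$. Restricting to the equator $z=e^{2\pi i t}$, the transition is $\mu=e^{2\pi i t}\lambda$, which is exactly the clutching map $g_t\cdot x=e^{2\pi i t}x$ of the statement. Thus the two bundles are defined by the same clutching data, so they are isomorphic.

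As a sanity check I would compare first Chern numbers: the bundle defined by the clutching construction with transition $t\mapsto e^{2\pi i t}\in\C^{\times}$ has $c_1$ equal to $\pm 1$ (the winding number) under the classification of complex line bundles on $S^2$ by $\pi_1(\C^{\times})=\Z$, and the explicit trivialization above pins down the sign to agree with $c_1(\mathcal{O}(-1))[\CP^1]=-1$ in the convention fixed earlier in the paper (where $\mathcal{O}(-n)$ is negative with $c_1=-n[\omega_B]$, and $\omega_{\CP^1}$ integrates to $+1$).

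The only potentially subtle point is keeping the orientation conventions straight: the paper uses $D^-$ with an orientation-reversing identification (see Definition \ref{Definition c1 of Sg in terms of I}), and one must ensure this matches the $w=1/z$ affine chart rather than $w=1/\bar z$. Since the clutching is written via the holomorphic equality $[e^{2\pi i t}:1]=[1:e^{-2\pi i t}]$ and the fibre transition is complex-linear multiplication by $e^{2\pi i t}$, the holomorphic identification $w=1/z$ is the correct one, so no sign is flipped. This completes the identification.
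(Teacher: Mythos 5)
Your proposal is correct and takes essentially the same route as the paper: both identify the clutching data along the equator with the transition function of $\mathcal{O}(-1)$ in the two standard affine charts of $\CP^1$, and both cross-check the sign via the winding number/$c_1$. The only difference is one of emphasis — the paper pins down the sign by an explicit orientation computation of $c_1$ (and checks the transition claim via a holomorphic section of $\mathcal{O}(1)$), whereas you derive the transition directly from the tautological description of $\mathcal{O}(-1)$; your treatment of the holomorphic identification $w=1/z$ on $D^-$ is consistent with the paper's conventions, so no sign issue arises.
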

\begin{proof}
Coordinates: $[w:1]$ on $D^+ = $ Northern hemisphere of $S^2\equiv \C P^1$, and $[1:z]$ on $D^-=$ Southern hemisphere. Claim: $\mathcal{O}(-1)$ is defined by the transition $([w:1],x) \mapsto ([1:\frac{1}{w}],wx)$. Sanity check: $\mathcal{O}(1)$ has transition $g^{-1}=1/w$ and has a holomorphic section $w=1$ on $D^+$, $z=z$ on $D^-$ (simple zero at $z_0$).

We compute $c_1$. The orientation on $D^+$ is induced by $(s,t)\in (-\infty,0]\times \R$ via $w=e^{2\pi(s+it)}$. The equator $C=\{[e^{2\pi it}:1] \}$ is the positively oriented boundary of $D^+$: (outward normal, $\partial_t$) is an oriented basis of $S^2$. The equator is a \emph{negatively} oriented boundary for $D^-$, so $c_1[\C P^1]$ is $-\textrm{deg}($transition from $\partial D^+$ to $\partial D^-$), and $-\textrm{deg}(t \mapsto e^{2\pi i t}\in U(1))=-1$.
\end{proof}

\begin{corollary*}
 For $M=\mathrm{Tot}\,(\pi_M: \mathcal{O}(-1) \to \C P^1)$, the complex line bundle $(\pi_g,\pi_M): E_g \to S^2 \times \C P^1$ is $\mathcal{O}(-1,-1)=\pi_g^*\mathcal{O}(-1)\otimes \pi_M^*\mathcal{O}(-1)$.
\end{corollary*}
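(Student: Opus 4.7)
The plan is to verify that $(\pi_g,\pi_M)\co E_g\to S^2\times\CP^1$ is a holomorphic line bundle and then identify it by restricting to a pair of coordinate axes in $S^2\times\CP^1$, using that $\mathrm{Pic}(S^2\times \CP^1)\cong H^2(S^2\times \CP^1;\Z)\cong \Z\oplus \Z$ via $(c_1(E)[S^2\times \mathrm{pt}],c_1(E)[\mathrm{pt}\times\CP^1])$.

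First, because $g_t=e^{2\pi it}$ acts holomorphically and fibrewise on $M=\mathrm{Tot}(\mathcal{O}(-1)\to\CP^1)$, the clutching map $\phi^g\co (t,([z_0\!:\!z_1],\lambda))\mapsto(t,([z_0\!:\!z_1],e^{2\pi it}\lambda))$ preserves the base coordinates on $\CP^1$ and is $\C$-linear in the fibres of $\pi_M$. Hence the two composites $\pi_M\co D^{\pm}\times M\to D^{\pm}\times\CP^1$ glue to a well-defined projection $E_g\to S^2\times\CP^1$ whose fibres are copies of $\mathcal{O}(-1)_{[z_0\!:\!z_1]}\cong\C$, giving $E_g$ the structure of a complex line bundle.

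Next I compute the bidegree. Restricting to a fibre $\{z\}\times\CP^1$ over a point $z\in S^2$ away from the equator, the bundle is (by construction) simply a copy of $\mathcal{O}(-1)\to\CP^1$ sitting in one of the two trivializing pieces $D^{\pm}\times M$; this gives $c_1(E_g)[\{z\}\times\CP^1]=-1$. Restricting to $S^2\times\{[z_0\!:\!z_1]\}$ for a fixed base point, the bundle is obtained by clutching $D^{+}\times\mathcal{O}(-1)_{[z_0\!:\!z_1]}$ with $D^{-}\times\mathcal{O}(-1)_{[z_0\!:\!z_1]}$ along the transition $([e^{2\pi it}\!:\!1],\lambda)\mapsto([1\!:\!e^{-2\pi it}],e^{2\pi it}\lambda)$, which is precisely the clutching description of $\mathcal{O}(-1)\to\CP^1$ supplied by the Lemma just proved. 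So $c_1(E_g)[S^2\times\{\mathrm{pt}\}]=-1$ as well.

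Combining these two restrictions and using the classification of line bundles on $S^2\times\CP^1$ by $c_1$, we get $E_g\cong \pi_g^*\mathcal{O}(-1)\otimes\pi_M^*\mathcal{O}(-1)=\mathcal{O}(-1,-1)$. The only subtlety is checking that the two projections really do fit together into a single holomorphic line bundle structure on $E_g$ over $S^2\times\CP^1$ (i.e.\ that the transition data in the $\CP^1$-direction and the clutching in the $S^2$-direction commute); since $g_t$ acts by scalar multiplication on $\mathcal{O}(-1)$-fibres and therefore commutes with the $\mathcal{O}(-1)$-transitions $z_1/z_0$, this compatibility is automatic, and there is no further obstacle.
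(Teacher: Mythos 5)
Your proof is correct and takes essentially the same approach as the paper: the paper's (very brief) proof simply observes that the transition along the equator of $S^2$ is the one identified as $\mathcal{O}(-1)$ in the preceding Lemma, while the transition over the equator of $\CP^1$ is the transition defining $M=\mathcal{O}(-1)$ itself. Your additional steps --- restricting to the slices $\{z\}\times\CP^1$ and $S^2\times\{\mathrm{pt}\}$ and invoking the classification of line bundles on $S^2\times\CP^1$ by bidegree, plus the compatibility check that $g_t$ commutes with the $\mathcal{O}(-1)$-transitions --- just make explicit what the paper leaves implicit.
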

\begin{proof}
 The transition along the equator of $S^2$ is as in the previous lemma, and the transition over the equator of $\C P^1$ is the same as the transition for $M=\mathcal{O}(-1)$. 
\end{proof}

\begin{lemma*}
$m=d = \mathrm{degree}(\textrm{sections in class }t^m+S_{\widetilde{g}})$ so the virtual dimension of the space of sections in class $(1,d)\in H^2(S^2 \times \C P^1)$ via $(\pi_g,\pi_M)$ is $2+2d$.
\end{lemma*}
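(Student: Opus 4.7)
The plan is to compute the pushforward $(\pi_g,\pi_M)_*\colon H_2(E_g) \to H_2(S^2 \times \CP^1) \cong \Z\oplus\Z$ on the class $t^m + S_{\widetilde{g}}$, read off $d$, and then apply the dimension formula of Lemma~\ref{Lemma dim of space of sections}.

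First I would evaluate $(\pi_g,\pi_M)_*$ on each summand. By the choice of $\widetilde{g}$ in~\ref{Subection gtilde choice neg line bdle}, the representative $s_{\widetilde{g}}$ is the constant section at a point $x$ of the zero section of $M$, so $(\pi_g,\pi_M)\circ s_{\widetilde{g}}\colon z \mapsto (z,\pi_M(x))$ is the graph of a constant map and has bidegree $(1,0)$. The generator $t=[\CP^1]\in H_2(M)$, represented by the zero section of $\mathcal{O}(-1)\to\CP^1$ sitting inside the fibre $\pi_g^{-1}(z_0) \cong M$, pushes forward to $\{z_0\}\times\CP^1$ of bidegree $(0,1)$, since $\pi_M$ is the identity on the zero section. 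By $\Z$-linearity, $(\pi_g,\pi_M)_*(t^m + S_{\widetilde{g}}) = (1,m)$, giving $d = m$.

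For the virtual dimension, substituting $N=1$, $c_1(TE_g^v,\Omega_g)(S_{\widetilde{g}}) = -I(\widetilde{g}) = -1$ and $\dim_\C M = 2$ into Lemma~\ref{Lemma dim of space of sections} gives
\[
\dim \mathcal{S}(j,\hat{J}, t^m + S_{\widetilde{g}}) \;=\; 2\dim_\C M + 2(Nm - 1) \;=\; 2 + 2m \;=\; 2 + 2d.
\]
As a consistency check I would also recompute $c_1(TE_g^v)$ intrinsically from the short exact sequence
\[
0 \to (\pi_g,\pi_M)^*\mathcal{O}(-1,-1) \to TE_g^v \to \pi_M^*T\CP^1 \to 0,
\]
which gives $c_1(TE_g^v)\cdot(1,d) = (-1-d) + 2d = d-1$, so the Lemma again yields $4 + 2(d-1) = 2+2d$.

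There is no serious obstacle beyond bookkeeping: the only mildly delicate point is making sure the orientation conventions of the clutching construction line up with the holomorphic normalization used to identify $E_g \cong \mathcal{O}(-1,-1)$, but this was already handled by the two preceding Lemmas of~\ref{Subection rg for O(-1)}. Once that is in hand, the argument reduces to linear algebra in $H_2(S^2\times\CP^1)$ together with one application of the section dimension formula.
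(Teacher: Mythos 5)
Your argument is correct, and it differs from the paper's proof in a useful way on the first assertion. The paper establishes $m=d$ only indirectly: it computes $c_1(TE_g^v)=(-1,1)$ from the global identification $E_g=\mathrm{Tot}(\mathcal{O}(-1,-1))$ together with $TE_g\cong T(S^2\times\CP^1)\oplus\mathcal{O}(-1,-1)$ and $TE_g\cong TS^2\oplus TE_g^v$, obtains the virtual dimension $4+2\langle(-1,1),(1,d)\rangle=2+2d$, and then reads off $m=d$ by comparing with the formula $2+2Nm=2+2m$ already derived for the class $t^m+S_{\widetilde{g}}$. You instead identify the bidegree directly by pushing forward in $H_2$: the constant section representing $S_{\widetilde{g}}$ (which exists by the choice of $\widetilde{g}$ fixing constants on the zero section) maps to bidegree $(1,0)$ and $t=[\CP^1]$ maps to $(0,1)$, so $t^m+S_{\widetilde{g}}\mapsto(1,m)$; this is a cleaner and more geometric justification of the degree identification, and it also makes explicit that the correspondence between $\Gamma$-equivalence classes of sections and bidegrees $(1,d)$ is a bijection, which the paper leaves implicit. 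For the dimension statement your main route (plugging $N=1$, $c_1(TE_g^v,\Omega)(S_{\widetilde{g}})=-I(\widetilde{g})=-1$, $\dim_\C M=2$ into the section dimension formula) is the same computation the paper performs in the preceding subsection, and your consistency check via the extension $0\to(\pi_g,\pi_M)^*\mathcal{O}(-1,-1)\to TE_g^v\to\pi_M^*T\CP^1\to 0$ reproduces the paper's Chern class computation $c_1(TE_g^v)=(-1,1)$ in an equivalent packaging. So both proofs rest on the same ingredients; what your version buys is a direct homological proof of $m=d$ rather than an inference from matching index formulas, at the cost of needing the explicit constant representative of $S_{\widetilde{g}}$ (which the paper's choice of lift indeed provides).
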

\begin{proof}
Viewing $E_g = \textrm{Tot}(\mathcal{O}(-1,-1)\to S^2\times \C P^1)$, a choice of connection yields $TE_g \cong T(S^2\times \C P^1) \oplus \mathcal{O}(-1,-1)$, so 
$$c_1(TE_g) = (2,2) + (-1,-1) = (1,1) \in H^2(S^2\times \C P^1)$$
Similarly, using $E_g \to S^2$, $TE_g \cong TS^2 \oplus T^vE_g$ so 
$$
c_1(T^v E_g) = c_1(TE_g) - c_1(TS^2) = (1,1)-(2,0) = (-1,1) \in H^2(S^2\times \C P^1).
$$
The space of sections in class $(1,d)$ therefore has $\dim =4 + 2\cdot \langle (-1,1),(1,d) \rangle = 4-2+2d$. Compare this with the formula $4-2+2m$ for sections in class $t^m+S_{\widetilde{g}}$.
\end{proof}
\begin{remark*}
 For $M\!=\!\mathrm{Tot}\,(\mathcal{O}(-n) \!\to\! \C P^1)$, $(E_g \!\to\! S^2 \times \C P^1) \!=\! \mathcal{O}(-1,-n)$ and $m=d$.
\end{remark*}
%
%
\subsection{The sections of $E_g$ for $M=\mathcal{O}(-1)\to \C P^1$}
%
In $$r_{\widetilde{g}}= \left[\begin{array}{cc} At & C \\ Bt^{2} & Dt  \end{array}\right]$$ only $m=0,1,2$ contribute, so we only care about sections in classes $(1,0),(1,1),(1,2)$.

Sections in class $(1,0)$ have area $\widetilde{\Omega}(S_{\widetilde{g}})=0$, so they are constant sections:
$$
u: S^2 \to S^2 \times \C P^1 \subset E_g, z \mapsto (z,y),
$$
some $y\in \C P^1$. This is a $2$-dimensional space of sections, agreeing with $\textrm{virdim}_{\R}=2$.

\begin{lemma}\label{Lemma constants neg l bdle}
 $\hat{J}=\left[ \begin{smallmatrix} j & 0 \\ 0 & i \end{smallmatrix} \right]$ is regular for the  constant sections, and $C=-1$.
\end{lemma}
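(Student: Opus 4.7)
The plan is to prove the lemma in two independent parts: regularity of the block-diagonal $\hat J$ at the constant sections, followed by the direct count of $C$. Since $\hat J$ is integrable and glues via $g_t$ (multiplication by $e^{2\pi it}$, which commutes with $i$), the total space $E_g$ is the complex manifold $\mathrm{Tot}(\mathcal O(-1,-1)\to S^2\times\CP^1)$. A $(j,\hat J)$-holomorphic section in class $S_{\widetilde g}=(1,0)$ projects to a degree-zero holomorphic map $S^2\to\CP^1$, hence a constant $y\in\CP^1$, and then lifts to a holomorphic section of $\mathcal O(-1,-1)|_{S^2\times\{y\}}=\mathcal O_{S^2}(-1)$; since $H^0(S^2,\mathcal O(-1))=0$ this lift must be zero. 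So the moduli $\mathcal S(j,\hat J,S_{\widetilde g})$ is exactly $\{u_y(z)=(z,y)\}_{y\in\CP^1}\cong\CP^1$, all sitting in the zero section of $E_g$.

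For regularity, I would split the pulled-back vertical tangent bundle via a connection: along the zero section $T^vE_g\cong T\CP^1\oplus\mathcal O(-1,-1)$, so restricting along $S^2\times\{y\}$ yields
$$u_y^*T^vE_g \;\cong\; \underline{T_y\CP^1}\;\oplus\;\mathcal O_{S^2}(-1).$$
Integrability of $\hat J$ means the linearised Cauchy--Riemann operator is the holomorphic Dolbeault $\overline\partial$, whose $S^2$-cohomology is $\ker D_{u_y}\overline\partial = H^0(\underline\C)\oplus H^0(\mathcal O(-1))\cong\C\oplus 0$ and $\coker D_{u_y}\overline\partial = H^1(\underline\C)\oplus H^1(\mathcal O(-1))=0$. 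The kernel matches $T_y\CP^1=T_y\mathcal S$, the cokernel vanishes, so regularity holds at every constant section.

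For $C$, I would apply Theorem~\ref{Theorem rg element} with $\alpha=M$ and $\gamma=0$. The fibre product $\mathcal S(j,\hat J,S_{\widetilde g})\times_{\mathrm{ev}_{z_0},M}\Delta^4$ collapses to $\mathcal S\cong\CP^1$ (the constraint at $z_0$ is vacuous since $M$ is the whole space), and $\mathrm{ev}_{z_\infty}(u_y)=y$ sweeps out the zero section $[\CP^1]\subset M$. The remaining step is to recognise $[\CP^1]\in H_2^{lf}(M;\Z/2)$ as the generator $F$: the group is rank one over $\Z/2$, and both classes pair non-trivially with $[\CP^1]\in H_2(M;\Z/2)$, since $F\bullet[\CP^1]=1$ (fibre meets zero section transversely once) and the self-intersection $[\CP^1]\bullet[\CP^1] = c_1(\mathcal O(-1))[\CP^1]= -1 \equiv 1\pmod 2$. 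Hence $[\CP^1]=F$ and $C\equiv 1\equiv -1$.

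The hard part will be this final identification, which bridges the compact image of $\mathrm{ev}_{z_\infty}$ and the locally finite class $F$ through the intersection pairing on $M$. In characteristic $2$ it reduces cleanly, and the self-intersection equalling the Euler number $-1$ of the normal bundle is precisely what accounts for the sign announced as $C=-1$ (consistent with Remark~\ref{Remark orientation signs}).
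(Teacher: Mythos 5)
Your proposal is correct and follows essentially the same route as the paper: identify $u_y^*T^vE_g\cong \underline{\C}\oplus\mathcal{O}_{S^2}(-1)$, conclude surjectivity of the Dolbeault operator from the vanishing of $H^1$ (the paper phrases this via Serre duality, you state the vanishing directly), and then read off $C$ by noting the $z_0$-constraint is void, the evaluation at $z_\infty$ sweeps the zero section $[\CP^1]$, and the intersection numbers $[\CP^1]\bullet F=1$, $[\CP^1]\bullet[\CP^1]=-1$ give $[\CP^1]=-F$, i.e.\ $C=-1$. Your extra observation that class-$(1,0)$ sections are forced to be constant because $H^0(S^2,\mathcal{O}(-1))=0$ is a valid (slightly more algebraic) substitute for the paper's area argument, and your mod-$2$ treatment of the sign is consistent with the paper's conventions.
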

\begin{proof}
We are in the integrable case, so $D_u$ is just the Dolbeaut operator:
$$
\overline{\partial} = \partial_s + J \partial_t: \Gamma(u^*T^vE_g) \to \Gamma(u^*T^vE_g \otimes_{\C} \Omega^{0,1}S^2),\, D_u \cdot \xi = (\partial_s u + J\partial_t u) \otimes (ds-i\,dt),
$$
(we only differentiate in the vertical directions of $E_g$ since we only consider sections).

Now $(u^*T^vE_g)_z = T(E_g)_{(z,y)} \cong T_y \C P^1 \oplus (\mathcal{O}(-1))_y \cong \C \oplus \C$. The transition over the equator of $S^2$ is multiplication by $dg_t$, which acts by $(\textrm{id},g_t)$ on the fibre $\C \oplus \C$. Thus, as bundles over $S^2$,
$
u^*T^vE_g \cong \underline{\C} \oplus \mathcal{O}(-1).
$
We deduce:
$$
\overline{\partial}: \Gamma(S^2,\underline{\C} \oplus \mathcal{O}(-1)) \to \Omega^{0,1}(S^2,\underline{\C} \oplus \mathcal{O}(-1))
$$
Using Dolbeaut's theorem
$
H^{p,q}_{\overline{\partial}}(S^2,\underline{\C} \oplus \mathcal{O}(-1)) \cong H^q(S^2,\Omega^p(\underline{\C} \oplus \mathcal{O}(-1))),
$
and using Serre duality (for the canonical bundle $T^*S^2=\mathcal{O}(-2)$),
$$
\begin{array}{lll}
\mathrm{coker}\, \overline{\partial} &=& H^{0,1}_{\overline{\partial}}(S^2,\underline{\C} \oplus \mathcal{O}(-1)) 
%
%
\cong H^1(S^2,\mathcal{O}(\underline{\C}) \oplus \mathcal{O}(-1))
\\[1mm] & \cong &
H^0(S^2,(\mathcal{O}(\underline{\C}) \oplus \mathcal{O}(-1))^{\vee} \otimes T^*S^2)^{\vee}
\\[1mm] & \cong &
H^0(S^2,\mathcal{O}(-2))^{\vee} \oplus H^0(S^2,\mathcal{O}(-1))^{\vee} 
%
%
=0.
\end{array}
$$
since $\mathcal{O}(-k)$ has no global holomorphic sections for $k\geq 1$. So $D_u$ is surjective, so $\hat{J}$ is regular for the constants. A small perturbation of $\hat{J}$ to make the other moduli spaces regular will not affect the count of constants, so to find $C$ we can use $\hat{J}$.

$C$ is the multiple of $[F]\in H_*^{lf}(M)$ corresponding to the chain swept out by evaluation at $z_{\infty}$ of the constant sections intersecting $[M]$ at $z_0$. The latter condition is void, so the chain is $[\C P^1]\in H_*^{lf}(M)$. The intersection pairing
$
H_*(M) \otimes  H_{4-*}^{lf}(M) \to \Z$ maps
$[\C P^1]  \otimes  [F] \mapsto 1$ and $
[\C P^1]  \otimes  [\C P^1] \mapsto  [\C P^1]\bullet [\C P^1] = -1
$. So $[\C P^1]=-[F]\in H_2^{lf}(M)$. Thus $C=-1$.
\end{proof}

\begin{remark*}
 For $\mathcal{O}(-n)$, regularity is proved in the same way, so $C\!=\!-n\!=\!c_1(\mathcal{O}(-n))$.
\end{remark*}

\begin{lemma}\label{Lemma Virtual dim for neg l blde}
 Sections in class $(1,d)$ for $d\geq 1$ form a moduli space isomorphic to 
$\mathcal{M}(\mathbb{P}^1\times \mathbb{P}^1;\beta=(1,d))$:
the rational curves in $\mathbb{P}^1\times \mathbb{P}^1$ in class $(1,d)$ (abbreviating $\mathbb{P}^1=\C P^1$) quotiented by $PSL(2,\C)$ reparametrization. Let $Z=\P^1\times \P^1$. We expect an obstruction bundle of rank$_{\R}=2d$ since:
$$
\begin{array}{rcl}
 \dim \mathcal{M}(Z;(1,d)) & = & 2(\dim_{\C}Z+c_1(Z)(\beta) - 3) = 2(2+2+2d-3)
=2+4d\\
\mathrm{virdim}\, \mathcal{M}(E_g;(1,d)) & = & 2+2d.
\end{array}
$$
\end{lemma}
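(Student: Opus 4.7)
The plan is to first invoke Corollary \ref{Corollary sections lie in zero section neg lbdles}: for the admissible $\hat{J}$ built from $J=i$ and a suitable monotone $H$ as in \ref{Subsection Invariance the choice of hat J}, every $(j,\hat{J})$-holomorphic section $s\colon S^2\to E_g$ must lie fibrewise in the zero section of $M=\mathcal{O}(-1)\to\P^1$, so its image lies in $Z=\P^1\times\P^1\subset E_g$. The section condition $\pi_g\circ s=\mathrm{id}_{S^2}$ forces $s=(\mathrm{id},f)$ with $f\colon\P^1\to\P^1$ holomorphic; the class $(1,d)\in H_2(Z)$ then pins down $\deg f=d$.

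Next I would identify these sections with $\mathcal{M}(\P^1\times\P^1;(1,d))$. Any simple holomorphic curve $u=(u_1,u_2)\colon \P^1\to Z$ of class $(1,d)$ has $u_1$ a degree $1$ self-map of $\P^1$, hence $u_1\in PSL(2,\C)$. Precomposing with $u_1^{-1}$ gives the unique $PSL(2,\C)$-representative with first component the identity, which is exactly a section of $E_g$ of class $(1,d)$. This yields the claimed bijection of moduli spaces.

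For the dimension comparison, the standard count for unmarked simple rational curves on the Fano surface $Z=\P^1\times\P^1$ gives
$$\dim_{\R}\mathcal{M}(Z;(1,d))=2\dim_{\C}Z+2c_1(TZ)(1,d)-6=4+2(2+2d)-6=2+4d,$$
using $c_1(TZ)=(2,2)$. On the other hand, by Lemma \ref{Lemma dim of space of sections} together with $c_1(T^vE_g)(S_{\widetilde{g}})=-I(\widetilde{g})=-1$ (Definition \ref{Definition c1 of Sg in terms of I}, Lemma \ref{Lemma calculation of I(g) neg line bdle}) and $c_1(T^vE_g)(t)=c_1(TM,\omega)[\P^1]=1$, the virtual dimension of the moduli of sections in class $t^d+S_{\widetilde{g}}$ is $2\dim_{\C}M+2(-1+d)=2+2d$. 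The discrepancy is $(2+4d)-(2+2d)=2d$, which is the expected real rank of the obstruction bundle.

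To verify this rank directly and exhibit the obstruction bundle as an honest holomorphic vector bundle (as in Lemma \ref{Lemma constants neg l bdle}), one decomposes
$$T^vE_g|_Z=\pi_{\P^1_y}^{*}T\P^1\oplus N_{Z/E_g}=\mathcal{O}(0,2)\oplus\mathcal{O}(-1,-1).$$
Pulling back along $u=(\mathrm{id},f)$ with $\deg f=d$ gives $u^{*}T^vE_g=\mathcal{O}(2d)\oplus\mathcal{O}(-1-d)$. Since $\hat{J}$ is integrable on $Z$, the linearised operator is the Dolbeault $\overline{\partial}$ on this bundle; for $d\geq 1$ the $\mathcal{O}(2d)$ summand is unobstructed, while by Serre duality $H^1(\P^1,\mathcal{O}(-1-d))\cong H^0(\P^1,\mathcal{O}(d-1))^{\vee}\cong\C^d$. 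Hence $\operatorname{coker}\overline{\partial}$ has complex rank $d$, giving the predicted real rank $2d$. The main subtlety will be justifying the clutching-induced identification $N_{Z/E_g}=\mathcal{O}(-1,-1)$ (rather than merely $\pi_2^{*}\mathcal{O}(-1)$), which produces the crucial extra twist along the $S^2$ direction responsible for the negative degree $-1-d$.
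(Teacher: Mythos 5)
Your proof is correct and takes essentially the same route as the paper: the maximum principle (Corollary \ref{Corollary sections lie in zero section neg lbdles}) confines sections to the zero section $S^2\times\CP^1\subset E_g$, and the unique $PSL(2,\C)$-reparametrization making the degree-one first component the identity gives the bijection with rational curves of class $(1,d)$, after which the dimension count is exactly the arithmetic in the statement. Your additional verification that $u^*T^vE_g\cong\mathcal{O}(2d)\oplus\mathcal{O}(-1-d)$ has $\coker\overline{\partial}\cong\C^d$ is not needed for the lemma itself but correctly reproduces what the paper does in Lemma \ref{Lemma first right derived functor O-1 case for P1} and Section \ref{Section O(-n) over CPm}, and the identification $E_g\cong\mathcal{O}(-1,-1)$ over $S^2\times\CP^1$ that you flag as the remaining subtlety is supplied by the corollary immediately preceding the lemma.
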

\begin{proof}
Sections in class $(1,1)$ yield a degree $1$ holomorphic map $\pi_M\circ s: S^2 \to \C P^1$, because $\pi_M:M \to \C P^1$ is $(\hat{J},j)$ holomorphic since $\nu \circ j$ lands in the vertical tangent space of $M$. We quotient by the $PSL(2,\C)$ reparametrizations $u \mapsto u\circ \phi^{-1}$ to ensure $\mathbb{P}^1$ maps identically onto the first factor.
\end{proof}
\begin{lemma}\label{Lemma A coefficient neg l bdles}
 $r_{\widetilde{g}}= \left[\begin{smallmatrix} At & -1 \\ 0 & 0  \end{smallmatrix}\right]$, where $A$ is the count of holomorphic sections $S^2 \to E_g$ in the class $(1,1)$ $($after perturbing $J$ to achieve regularity$)$ which intersect $F$ over $z_0$ and a $($perturbed$)$ $\CP^1$ over $z_{\infty}$.
\end{lemma}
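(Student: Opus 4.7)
The plan is to compute $r_{\widetilde{g}}$ using the (non-admissible) complex structure $\hat{J}_0 = \left[\begin{smallmatrix} j & 0 \\ 0 & i \end{smallmatrix}\right]$ justified by the preceding lemma, after a generic small regularity perturbation $\hat{J}_L$ where $L$ is a Hamiltonian supported in a small neighbourhood $U$ of the zero section of $E_g$. The decisive geometric observation is that for this choice every $(j,\hat{J}_0)$-holomorphic section $u:S^2\to E_g$ in any class $(1,m)$ with $m\geq 0$ must land in the zero section $Z=S^2\times \CP^1\subset E_g$: projecting $u$ to $\CP^1\times\CP^1$ yields a graph $(\mathrm{id},v)$ with $v:S^2\to \CP^1$ a degree-$m$ holomorphic map, and the fibre component of $u$ is a global holomorphic section of $(\mathrm{id},v)^*\mathcal{O}(-1,-1)=\mathcal{O}_{S^2}(-1-m)$, which vanishes identically for $m\geq 0$. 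Under the perturbed $\hat{J}_L$, the maximum principle of Lemma \ref{Lemma Max principle for neg line bdles} applies in the complement of $U$ where $L=0$, so all $(j,\hat{J}_L)$-holomorphic sections remain confined to $U$; the size of $U$ may be taken arbitrarily small.

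The entries $B$ and $D$ correspond to sections in classes $(1,2)$ and $(1,1)$ whose evaluation $\mathrm{ev}_{z_\infty}$ must meet the dual cycle $D[M]=[\mathrm{pt}]\in H_0(M)$, i.e.\ a generic point of $M$. Since $\mathrm{ev}_{z_\infty}$ takes values in the (arbitrarily small) neighbourhood of the two-real-dimensional zero section $\CP^1\subset M$, while the ambient $M$ is four-real-dimensional, one represents $[\mathrm{pt}]$ by a point $p\in M$ placed outside this neighbourhood; then no section satisfies $u(z_\infty)=p$, so $B=D=0$. The remaining nonzero entry $A$, arising from $m=1$ and dual basis element $\beta_i=F$, counts sections $u\in \mathcal{S}(j,\hat{J}_L, t+S_{\widetilde{g}})$ subject to $u(z_0)\in F$ for a generic fibre $F$ and $u(z_\infty)\in D[F]$, where $D[F]\in H_2(M)$ is represented by a generic smooth perturbation of the zero section $\CP^1\subset M$, placed inside the neighbourhood of the zero section so as to intersect $\mathrm{ev}_{z_\infty}$ transversely. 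This is precisely the description in the statement, and together with $C=-1$ from Lemma \ref{Lemma constants neg l bdle}, the matrix of $r_{\widetilde{g}}$ takes the claimed form.

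The main technical point to verify is that the regularity perturbation $\hat{J}_L$ keeps the holomorphic sections inside a neighbourhood of the zero section small enough to separate them from the chosen point $p$ and to make the perturbed $\CP^1$ transverse to them; this is handled by a standard parametrised moduli-space cobordism along a homotopy from $\hat{J}_0$ to $\hat{J}_L$, using that the maximum principle continues to hold uniformly in the homotopy parameter (cf.\ Subsection \ref{Subsection Invariance the choice of hat J}). The actual numerical value of $A$ is not addressed by this Lemma; it will be determined subsequently by analysing the $PSL(2,\C)$-family of degree-one curves in $\CP^1\times\CP^1$ together with the rank-two obstruction bundle computed at the end of Lemma \ref{Lemma Virtual dim for neg l blde}.
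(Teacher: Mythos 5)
Your proposal is correct and follows essentially the same route as the paper: the vanishing of $B$ and $D$ comes from the fact that the multiple of $[M]$ is read off by intersecting with the dual point class, which can be moved away from the region where the maximum principle confines the (perturbed) holomorphic sections, while $A$ is by definition the stated count with the dual cycle of $F$ being a perturbed $\CP^1$. The only cosmetic difference is that the paper needs just boundedness of the sections in a compact subset of $E_g$, whereas you invoke the sharper confinement to a neighbourhood of the zero section via $\hat{J}_0$ and $\hat{J}_L$ (which the paper records in the adjacent lemmas and uses later for the computation of $A$).
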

\begin{proof}
 The entries $B,D$ involve a count of sections which have some intersection condition at $z_0$ and which sweep out a multiple of $[M]$ under evaluation at $z_{\infty}$. However, even after perturbing $J$ to achieve regularity of the moduli space of sections, the maximum principle implies that the sections all land in a certain compact subset of $E_g$. So evaluation at $z_{\infty}$ involves a bounded lf chain in $M$. The multiple of $[M]$ is determined via Poincar\'e duality by intersecting with the point class. In homology, it does not matter which point we choose, so we can pick a point outside that compact subset of $M$, thus avoiding the bounded lf chain. So $B=D=0$.

 The entry $A$ involves the intersection condition $F$ at $z_0$, and $\C P^1$ at $z_{\infty}$ ($\CP^1$ is the cycle dual to the lf cycle $F$ via intersection product).
\end{proof}

\subsection{Calculation of $A$ using obstruction bundles}
\label{Calculation of A using obstruction bundles}
In our setup, for $M=\mathrm{Tot}(\mathcal{O}(-1)\to \CP^1)$, we want to count sections in class $\beta=(1,1)$:
$$ \boxed{\begin{array}{lll}
A &=& \mathrm{GW}_{0,2,\beta=(1,1)}^{E_g}((j_{z_0})_*[F],(j_{z_{\infty}})_*[\C P^1]) 
\\ &=& \# \{ u \in \mathcal{S}(j,\hat{J},t+S_{\widetilde{g}}): u(z_0) \in j_{z_0}(F), u(z_{\infty})\in j_{z_{\infty}}(\CP^1)  \}
\end{array}
}
$$
The standard $J$ on the fibre $M$ yields a non-regular $\hat{J}=\left[ \begin{smallmatrix} j & 0 \\ 0 & J \end{smallmatrix} \right]$ for the moduli space of sections in class $(1,1)$ by Lemma \ref{Lemma Virtual dim for neg l blde}, with rank$_{\R}=2$ obstruction bundle
$$(\mathrm{Obs} = \mathrm{coker}\, D_u) \to \mathcal{M}_{\hat{J}}$$
$$\mathcal{M}_{\hat{J}}=\{u\in \mathcal{M}(1,1)\cong PSL(2,\C): u(z_0)\in j_{z_0}(F), u(z_{\infty})\in j_{z_{\infty}}(P) \}$$
 where $D_u$ is the linearization of the $\overline{\partial}_{\hat{J}}$ operator defining $(j,\hat{J})$-holomorphic sections, and where $F$ is a generic fibre of $M$ and $P$ is a perturbation of $\CP^1$ (perturbing smoothly in the vertical direction, it will intersect the zero section of $M$ in a point).

\begin{lemma}\label{Lemma obs bundle result} Assuming that we can extend the obstruction bundle smoothly over a smooth compactification of $\overline{\mathcal{M}}_{\hat{J}}$ $($for which the tangent spaces are the kernels $\ker D_u)$, then the coefficient $A$ in Lemma \ref{Lemma A coefficient neg l bdles} is
$$
A = \mathrm{GW}_{0,2,(1,1)}^{E_g}(j_{z_0}F_1,j_{z_{\infty}}\CP^1) = \langle e(\overline{\mathrm{Obs}}),\overline{\mathcal{M}}_{\hat{J}} \rangle.
$$
\end{lemma}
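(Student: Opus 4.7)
The plan is to invoke the standard obstruction-bundle technique: since $\hat{J}$ fails to be regular, the actual moduli space $\mathcal{M}_{\hat{J}}$ has dimension equal to $\mathrm{virdim} + \mathrm{rank}\,\mathrm{Obs}$, and a small generic perturbation of $\hat{J}$ produces a regular moduli space whose signed count equals the Euler number of the extended obstruction bundle paired with $\overline{\mathcal{M}}_{\hat{J}}$.

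First I would linearize: the operator $D_u = (\overline{\partial}_{\hat{J}})'|_u$ is Fredholm of index $2+2d$, and by semicontinuity together with an explicit Serre-duality computation in the spirit of Lemma \ref{Lemma constants neg l bdle}, $\dim \coker D_u$ is locally constant on $\mathcal{M}_{\hat{J}}$. Consequently the cokernels assemble into a smooth real rank-$2d$ vector bundle $\mathrm{Obs} \to \mathcal{M}_{\hat{J}}$, while $\ker D_u$ cut down by the two incidence conditions at $z_0, z_\infty$ identifies $T_u \mathcal{M}_{\hat{J}}$. This verifies that $\mathcal{M}_{\hat{J}}$ and $\mathrm{Obs}$ fit into the standard obstruction-bundle picture.

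Next, perturb $\hat{J}$ inside the admissible class to $\hat{J}_\epsilon = \hat{J} + \epsilon \eta$ for a generic variation $\eta$, so that the perturbed moduli space $\mathcal{M}_{\hat{J}_\epsilon}$ becomes regular of virtual dimension $2+2d$. An implicit-function-theorem argument in a Kuranishi neighbourhood of $\mathcal{M}_{\hat{J}}$ (compare \cite[Sec.~7]{McDuff-Salamon2}) shows that for $\epsilon$ sufficiently small, the perturbed sections satisfying the incidence conditions correspond bijectively to zeros of a section $s_\eta \in \Gamma(\mathcal{M}_{\hat{J}}, \mathrm{Obs})$, obtained by projecting $\eta(du)$ onto the cokernel. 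Hence $A = \#\{s_\eta = 0\}$ counted with signs.

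The main obstacle is identifying this count with $\langle e(\overline{\mathrm{Obs}}), \overline{\mathcal{M}}_{\hat{J}}\rangle$. Two things are needed: (a) the section $s_\eta$ extends to a transverse section of $\overline{\mathrm{Obs}}$ over the compactification $\overline{\mathcal{M}}_{\hat{J}}$; and (b) no perturbed solutions escape to the boundary stratum $\overline{\mathcal{M}}_{\hat{J}} \setminus \mathcal{M}_{\hat{J}}$. Part (a) follows from the smoothness hypothesis on $\overline{\mathrm{Obs}}$ by standard transverse approximation, since any continuous extension can be jiggled to a generic smooth one. For (b), the maximum principle (Lemma \ref{Lemma Max principle for neg line bdles}) confines all perturbed sections to a fixed compact region of $E_g$, while weak${}^+$ monotonicity together with Gromov compactness ensures that any nodal degeneration of a perturbed section lies in the image of the boundary strata of $\overline{\mathcal{M}}_{\hat{J}}$ and is already accounted for by the extension of $s_\eta$. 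Granted (a) and (b), the signed count of zeros of the transverse extension of $s_\eta$ equals $\langle e(\overline{\mathrm{Obs}}), \overline{\mathcal{M}}_{\hat{J}}\rangle$ by the defining property of the Euler class, proving the Lemma.
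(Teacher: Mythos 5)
Your argument is correct in outline and rests on the same obstruction-bundle principle as the paper, but the implementation is genuinely different. The paper does not work with the first-order Kuranishi section $s_\eta$: it chooses a homotopy $\hat{J}_t$ inside the admissible class from $\hat{J}_0=\hat{J}$ to a regular $\hat{J}_1$, extends $\mathrm{Obs}$ over a product neighbourhood $W$ of $\overline{\mathcal{M}}\times\{\hat{J}\}$ in $C^{\infty}(S^2,E_g)\times\mathcal{J}$ (by parallel transport of $(\mathrm{im}\,D_u)^{\perp}$ and projection), and defines for each nearby $\hat{J}'$ a thickened compact moduli space $\{u:\overline{\partial}_{\hat{J}'}(u)\in\mathrm{Obs}_{u,\hat{J}'},\,u(z_0)\in j_{z_0}(F),\,u(z_\infty)\in j_{z_\infty}(P)\}$ carrying the bundle $\overline{\mathrm{Obs}}_{\hat{J}'}$, of which $\overline{\partial}_{\hat{J}'}$ itself is a section. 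Constancy of the Euler number along the family, plus the fact that at the regular end $\overline{\partial}_{\hat{J}_1}$ is transverse with zero set exactly the honest count $A$, gives the identity. What the paper's route buys is that the section used over the compactified space is the actual $\overline{\partial}$-operator, so the identification of its zeros with the count $A$ is immediate once $\hat{J}_1$ is regular; what your route buys is a more explicit picture (the count localizes as zeros of the projected perturbation $s_\eta$ on the fixed $\overline{\mathcal{M}}_{\hat{J}}\cong\P^1$), which is closer in spirit to the later localization computations.

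The one step where your version asks for more than you supply is the boundary: asserting that nodal degenerations are ``already accounted for by the extension of $s_\eta$'' is precisely the gluing statement that perturbed honest sections appearing near the two nodal configurations biject with zeros of the extended section there, and Gromov compactness plus the maximum principle alone do not give this bijection. The paper's parametrized formulation absorbs this into the hypothesis of the lemma (smooth extension of $\overline{\mathrm{Obs}}$ over the compactification) together with the cobordism argument, and is comparably sketchy about the analysis, so this is not a fatal gap in context; but if you run the $s_\eta$ argument you should either carry out the gluing near the boundary strata or arrange (genericity of the extension) that no zeros of the extended section and no perturbed solutions occur near the boundary, and say which.
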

\begin{proof}
We already discussed the first equality. The second equality is a standard cobordism argument analogous to \cite[Sec 7.2]{McDuff-Salamon2}. The idea is that one constructs a smooth family of bundles $\overline{\mathrm{Obs}}_{\hat{J}_t} \to \overline{\mathcal{M}}_{\hat{J}_t}$ such that $\overline{\partial}_{{\hat{J}_t}}$ lands in $\overline{\mathrm{Obs}}_{\hat{J}_t}$, starting at the given bundle at $t=0$ with ${\hat{J}_0}=\hat{J}$, and ending at $t=1$ with a regular admissible ${\hat{J}_1}$. By construction, the zero set of $\overline{\partial}_{\hat{J}_1}$ is the count of $(j,{\hat{J}_1})$-holomorphic sections of $E_g$ in class $(1,1)$ intersecting $F,P$ over $z_0,z_{\infty}$, since ${\hat{J}_1}$ is regular. The Euler number $\langle e(\overline{\mathrm{Obs}}_{\hat{J}_t}),\overline{\mathcal{M}}_{\hat{J}_t}\rangle$ is costant in $t$, and at $t=1$ equals the count of zeros of a section (such as $\overline{\partial}_{\hat{J}_1})$ transverse to the zero section. Hence 
$$A=\langle e(\overline{\mathrm{Obs}}_{\hat{J}_1}),\overline{\mathcal{M}}_{\hat{J}_1} \rangle =  \langle e(\overline{\mathrm{Obs}}),\overline{\mathcal{M}}_{\hat{J}} \rangle.$$

The family is constructed by choosing a homotopy from ${\hat{J}_0}=\hat{J}$ to a regular admissible ${\hat{J}_1}$ in a neighbourhood of $\hat{J}$ inside the space $\mathcal{J}$ of admissible almost complex structures on $E_g$. The family lives over ${\hat{J}_t}$ inside the larger bundle obtained by
extending $\overline{\mathrm{Obs}} \to \overline{\mathcal{M}}$ over a product neighbourhood $W$ of $\overline{\mathcal{M}} \times \{ \hat{J} \}$ inside $C^{\infty}(S^2,E_g) \times \mathcal{J}$ (and imposing the relevant intersection conditions).

This extension is done by an argument involving parallel transporting $\mathrm{Obs}_u \equiv {(\mathrm{im}\, D_u)}^{\perp}$ in directions orthogonal to $\ker D_u$ inside $\Omega^{0}(S^2,u^*TM)$ and then projecting onto $\Omega^{0,1}_J(S^2,u^*TM)$. 

For small $W$ (so we consider admissible ${\hat{J}'}$ close to $\hat{J}$) we can ensure $\mathrm{im}\, D_{u,{\hat{J}'}}$ and $\overline{\mathrm{Obs}}_{u,{\hat{J}'}}$ are transverse inside $\Omega^{0,1}_{\hat{J}'}(S^2,u^*TM)$ and we can ensure the evaluation at $z_0,z_{\infty}$ is transverse to the inclusions of $F,P$. This is because these conditions hold for $\hat{J}$. We therefore obtain a smooth parametrized moduli space
$$
\mathcal{M} = \{ (u,{\hat{J}'}) \in W: \overline{\partial}_{\hat{J}'}(u)\in \mathrm{Obs}_{u,{\hat{J}'}}, u(z_0)\in j_{z_0}(F), u(z_{\infty})\in j_{z_{\infty}}(P)\}
$$
and $\overline{\mathcal{M}}_{\hat{J}'}$ is obtained by compactifying the smooth subset obtained by fixing ${\hat{J}'}$.
\end{proof}
%
\subsection{Compactification of $\mathcal{M}$}
\label{Subsection Compactification of mathcal M}
$\mathcal{M} = \mathcal{M}_{\hat{J}} \subset \mathcal{M}_{0,2,\beta=(1,1)}(E_g)$ are curves intersecting $F,P$ over $z_0,z_{\infty}$, which lie in $S^2 \times \C P^1 \subset E_g$ by the maximum principle.
Simplify notation by writing $S^2 \times \C P^1 = \P^1 \times \P^1$, $z_0=0$, $z_{\infty}=\infty$. We may assume that $j_{z_0}F,j_{z_{\infty}}P$ intersect the zero section in $(0,0)$, $(\infty,\infty)$. Thus,
$$
\begin{array}{lll}
\mathcal{M} &=& \{ u: u(z)=(z,\varphi(z)), \varphi\in PSL(2,\C), \varphi(0)=0, \varphi(\infty)=\infty \}\\
&=& \{ u: u(z)=(z,a z), a\in \C^*\} \\
& \cong & \C^*.
\end{array}
$$
The compactification of $\C^*$ is $\P^1$, and is obtained by considering the limits $a\to 0, a\to \infty$. For example, consider $a\to 0$. Near $(0,0)$ the curve converges in $C^{\infty}$ to $z \mapsto (z,0)$, that is $\P^1 \times 0$. Near $(\infty,\infty)$ the curve can be parametrized as the locus $(\frac{1}{aw},\frac{1}{w})$, using a local fibre coordinate $w\in \C$ (where $w=0$ corresponds to $\infty$). So the reparametrized curve converges in $C^{\infty}$ to $\infty \times \P^1$. Thus, 
$a = 0$ corresponds to the curve $\P^1 \times 0$ with bubble $\infty \times \P^1$. Similarly, $a=\infty$ 
corresponds to the curve $\P^1 \times \infty$ with bubble $0\times \P^1$.
From now on, we write $\overline{\mathcal{M}} \cong \P^1$ for the compactification. 

\subsection{Description of $\mathbf{\mathrm{Obs}}$}
\label{Subsection Description of Obs for O-1 over P1}
Differential geometrically, $\mathrm{Obs}_u = \coker D_u$. We will now explain that, algebraic geometrically, 
$$\mathrm{Obs}_u = R^1 \pi_* f^*E_g$$
where $f:\mathcal{C} \to \P^1\times \P^1$ is the universal curve. 

\begin{definition}\label{Definition Universal curve} In our setup, the universal curve
$$
\xymatrix@R=14pt{
\mathcal{C} \ar[d]^{\pi} \ar[r]^-{f=\mathrm{ev}_3} & \P^1 \times \P^1 \\
\overline{\mathcal{M}}\cong \P^1
}
$$
is the space $\mathcal{C}$ consisting of $u\in \overline{\mathcal{M}}$ with an additional marked point $w$ on the domain, and $f$ is the evaluation $f(u,w)=u(w)$. Universality is because for $u\in \mathcal{M}$, $\P^1 \cong \pi^{-1}(u)$ is parametrized by $w$ and the composite $\P^1 \cong \pi^{-1}(u) \stackrel{f}{\to} \P^1 \times \P^1$ is $u$. 
\end{definition}

\begin{lemma}\label{Lemma first right derived functor O-1 case for P1}
 $\mathrm{Obs}_u = R^1 \pi_* f^*\mathcal{O}(-1,-1)$, where $R^1\pi_*$ is the $1^{st}$ right derived functor of the direct image functor \cite[III.8]{Hartshorne}. This is the compactification for Lemma \ref{Lemma obs bundle result}.
\end{lemma}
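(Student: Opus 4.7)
The plan is to check the identification fibrewise using Dolbeault's theorem, then promote it to an isomorphism of line bundles on $\overline{\mathcal{M}}\cong\P^1$ using the base change theorem.

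First I would pin down the integrands. By Corollary \ref{Corollary sections lie in zero section neg lbdles} together with the maximum principle for $\hat{J}=[\begin{smallmatrix}j&0\\0&i\end{smallmatrix}]$, every $u\in\overline{\mathcal{M}}$ lands in the zero section $\P^1\times\P^1\subset E_g$. Since $E_g\to \P^1\times\P^1$ is the total space of $\mathcal{O}(-1,-1)$, the vertical tangent bundle $T^vE_g$ restricted to the zero section is canonically the line bundle $\mathcal{O}(-1,-1)$ itself, so $u^*T^vE_g=u^*\mathcal{O}(-1,-1)$, and $f^*\mathcal{O}(-1,-1)$ restricted to the fibre $\pi^{-1}(u)\cong\P^1$ of the universal curve (Definition \ref{Definition Universal curve}) equals $u^*\mathcal{O}(-1,-1)$. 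For smooth $u\in\mathcal{M}$ the linearization $D_u$ is the $\overline{\partial}$ operator on $u^*\mathcal{O}(-1,-1)$, so by the Dolbeault isomorphism
\[
\coker D_u \;\cong\; H^{0,1}_{\overline{\partial}}(\P^1,u^*\mathcal{O}(-1,-1)) \;\cong\; H^1(\P^1,u^*\mathcal{O}(-1,-1)).
\]

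Next I would invoke base change. The universal curve $\pi\co\mathcal{C}\to\overline{\mathcal{M}}$ is proper and flat, so by the proper base change / cohomology-and-base-change theorem (\cite[III.12]{Hartshorne}) the stalk of $R^1\pi_*f^*\mathcal{O}(-1,-1)$ at $u$ is $H^1(\pi^{-1}(u),f^*\mathcal{O}(-1,-1)|_{\pi^{-1}(u)})$, which by the previous paragraph equals $\coker D_u$. To promote this pointwise identification to an isomorphism of bundles, I need $h^1$ constant along $\overline{\mathcal{M}}$; then Grauert's theorem yields that $R^1\pi_*f^*\mathcal{O}(-1,-1)$ is a line bundle and the base change maps are isomorphisms. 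For smooth $u\in\mathcal{M}$, the map $u\co\P^1\to\P^1\times\P^1$ has bidegree $(1,1)$, so $u^*\mathcal{O}(-1,-1)\cong \mathcal{O}(-2)$ on $\P^1$, giving $h^0=0$, $h^1=1$. At the two nodal boundary points, $u$ is a $\P^1\vee\P^1$ whose components have bidegrees $(1,0)$ and $(0,1)$, so pull back $\mathcal{O}(-1,-1)$ to $\mathcal{O}(-1)\oplus\mathcal{O}(-1)$ on the normalization; the normalization short exact sequence
\[
0\to f^*\mathcal{O}(-1,-1)\to \nu_*\nu^*f^*\mathcal{O}(-1,-1)\to \C_{\text{node}}\to 0
\]
then gives $h^0=0$ and $h^1=1$ on the nodal curve as well. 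Hence $h^1\equiv 1$ on $\overline{\mathcal{M}}$ and $R^1\pi_*f^*\mathcal{O}(-1,-1)$ is a line bundle agreeing fibrewise with $\coker D_u$.

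Finally I would verify that this algebraic bundle is really the smooth extension required by Lemma \ref{Lemma obs bundle result}: it is a complex line bundle on $\overline{\mathcal{M}}\cong\P^1$ extending $\mathrm{Obs}\to\mathcal{M}$, and its tangent directions along the zero locus of $\overline{\partial}_{\hat J}$ match $\ker D_u$ (note $h^0=0$ means $\ker D_u$ is purely the horizontal piece coming from $T(\P^1\times\P^1)$ together with $PSL(2,\C)$ reparametrization, i.e.\ $T_u\overline{\mathcal{M}}$ plus the intersection-condition corrections at $z_0,z_\infty$). The main obstacle I anticipate is not the fibrewise identification, which is standard GAGA/Dolbeault, but rather checking that the parallel-transport/projection construction in the proof of Lemma \ref{Lemma obs bundle result}, written in the analytic category, is compatible in a $C^\infty$ sense with the algebraic extension $R^1\pi_*$ across the two nodal strata; this is ultimately a consequence of Grauert's theorem together with GAGA, but requires identifying the analytic obstruction near the nodes with the $H^1$ computed via the normalization sequence above.
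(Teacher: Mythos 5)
Your overall route---identify $\coker D_u$ fibrewise via Dolbeault, recognise it as the stalk of $R^1\pi_*f^*\mathcal{O}(-1,-1)$ via the universal curve, and let the derived pushforward provide the extension over $\overline{\mathcal{M}}$---is the same as the paper's. But there is a genuine error at the first step: in this paper $T^vE_g=\ker d\pi_g$ for the fibration $\pi_g:E_g\to S^2$, \emph{not} the vertical bundle of $E_g\to\P^1\times\P^1$. Along the zero section it has rank two, $T^vE_g|_{S^2\times\P^1}\cong T\P^1\oplus\mathcal{O}(-1,-1)$ (compare Lemma \ref{Lemma constants neg l bdle}, where $u^*T^vE_g\cong\underline{\C}\oplus\mathcal{O}(-1)$ for constant sections), so for $u$ in class $(1,1)$ one has $u^*T^vE_g\cong\mathcal{O}(2)\oplus\mathcal{O}(-2)$ and $D_u$ is the $\overline{\partial}$-operator on this rank-two bundle, not on $u^*\mathcal{O}(-1,-1)$ alone. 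Your conclusion $\coker D_u\cong H^1(\P^1,u^*\mathcal{O}(-1,-1))$ is still true, but only because $H^1(\P^1,\mathcal{O}(2))=0$; this is exactly the step ``only the $\mathcal{O}(-1,-1)$ contributes to $\mathrm{Obs}$'' in the paper's proof, and it is absent from your argument because you discarded the $T\P^1$ summand at the outset. The same slip propagates to your parenthetical about kernels: $\ker D_u=H^0(\P^1,\mathcal{O}(2)\oplus\mathcal{O}(-2))=H^0(\P^1,\mathcal{O}(2))$ is three-dimensional before the point constraints at $z_0,z_\infty$ are imposed; the vanishing $h^0(u^*\mathcal{O}(-1,-1))=0$ only says there are no deformations in the fibre direction of the line bundle.

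Apart from this, your additions are sound and in fact sharpen points the paper leaves implicit. The cohomology-and-base-change/Grauert argument, together with the normalization sequence at the two nodal curves (components of bidegrees $(1,0)$ and $(0,1)$, giving $h^0=0$ and $h^1=1$ there as well), is precisely what justifies that $R^1\pi_*f^*\mathcal{O}(-1,-1)$ is locally free of rank one on all of $\overline{\mathcal{M}}\cong\P^1$---a fact used tacitly in the Grothendieck--Riemann--Roch computation of Theorem \ref{Theorem eul char of obs bundle}. Your final concern about matching the analytic parallel-transport extension with the algebraic one is dissolved by the paper's stance: Lemma \ref{Lemma obs bundle result} only \emph{assumes} a smooth extension exists, and the present lemma simply takes $R^1\pi_*f^*\mathcal{O}(-1,-1)$ as the definition of $\overline{\mathrm{Obs}}$, so nothing needs to be compared across the nodal strata.
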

\begin{proof}
 Mimick Lemma \ref{Lemma constants neg l bdle}, but work in class $(1,1)$ instead of $(1,0)$. We claim that
$$u^*T^vE_g = u^*(T\P^1 \oplus \mathcal{O}(-1,-1)) = \mathcal{O}(2)\oplus \mathcal{O}(-2).$$
 This is proved by considering the map $\phi = (\pi_g,\pi_M)\circ u: \P^1 \to \mathcal{O}(-1,-1) \to \P^1 \times \P^1$. On cohomology it acts $H^2(\P^1 \times \P^1) \to H^2(\P^1)$ by pairing with $(1,1)$. Finally, use that $c_1$ is functorial and that $T\P^1 =\mathcal{O}(2)$ over (the second) $\P^1$.
$$
D_u =\overline{\partial}: \Gamma(\P^1,\mathcal{O}(2)\oplus \mathcal{O}(-2)) \to \Omega^{0,1}(\P^1,\mathcal{O}(2)\oplus \mathcal{O}(-2))
$$

Thus, omitting $\P^1$ references,
$$
\begin{array}{lll}
\mathrm{Obs} &=& \coker \overline{\partial} = H^{0,1}(\mathcal{O}(2)\oplus \mathcal{O}(-2)) \cong H^1(\mathcal{O}(2)\oplus \mathcal{O}(-2)) \\ &\cong& H^0((\mathcal{O}(2)\oplus \mathcal{O}(-2))^{\vee}\otimes \mathcal{O}(-2)) = H^0(\mathcal{O}(-4)\oplus \mathcal{O}) \\ &=& H^0(\P^1,\mathcal{O}) \quad (\textrm{complex 1 dimensional}.)
\end{array}
$$
So only the $\mathcal{O}(-1,-1)$ contributes to $\mathrm{Obs}$. By universality, the stalk is
$$
\mathrm{Obs}_u = H^1(\P^1,u^*\mathcal{O}(-1,-1)) \cong H^1(\pi^{-1}(u),f^*\mathcal{O}(-1,-1)) = (R^1\pi_*f^*\mathcal{O}(-1,-1))_u
$$
which shows that the map $\mathrm{Obs} \to R^1\pi_*f^*\mathcal{O}(-1,-1)$ (obtained similarly) is an isomorphism of sheaves. Since $R^1\pi_*f^*\mathcal{O}(-1,-1)$ makes sense also over the compactification, we may take that as the definition of $\overline{\mathrm{Obs}}$ in Lemma \ref{Lemma obs bundle result}.
\end{proof}

\begin{lemma}
 $f: \mathcal{C} \to \P^1 \times \P^1$ is the blow-up of $\P^1\times \P^1$ at $(0,0)$ and $(\infty,\infty)$.
\end{lemma}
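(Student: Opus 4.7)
The plan is to realize $\mathcal{C}$ explicitly as the incidence variety
\[
\overline{Y}\;=\;\bigl\{\,([a_0:a_1],[z_0:z_1],[y_0:y_1])\in (\mathbb{P}^1)^3\;:\;a_0z_0y_1=a_1z_1y_0\,\bigr\},
\]
with $\pi$ the first projection to $\mathbb{P}^1\cong\overline{\mathcal{M}}$ and $f$ the projection onto the last two factors $\mathbb{P}^1\times\mathbb{P}^1$. Over generic $a=[1:a_1]$ with $a_1\in\mathbb{C}^*$ the fibre of $\pi$ is the smooth $\mathbb{P}^1$ cut out by $y/z=a_1$, which is exactly the graph of the M\"obius map $z\mapsto a_1 z$, i.e.\ the curve $u_{a_1}$. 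At $a=0=[1:0]$ the equation collapses to $z_0y_1=0$, giving the reducible curve $(\mathbb{P}^1\times\{0\})\cup(\{\infty\}\times\mathbb{P}^1)$ meeting at $(\infty,0)$; at $a=\infty=[0:1]$ it collapses to $z_1y_0=0$, giving $(\{0\}\times\mathbb{P}^1)\cup(\mathbb{P}^1\times\{\infty\})$ meeting at $(0,\infty)$. These are precisely the nodal degenerations of $\overline{\mathcal{M}}$ described in~\ref{Subsection Compactification of mathcal M}, so $\overline{Y}\cong\mathcal{C}$ as families over $\overline{\mathcal{M}}$, and under this identification $f=\mathrm{ev}_3$ becomes the second projection.

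Next I would compute the fibres of $f:\overline{Y}\to\mathbb{P}^1\times\mathbb{P}^1$ directly from the defining equation. For fixed $(z,y)$, the equation $a_0z_0y_1-a_1z_1y_0=0$ is linear in $[a_0:a_1]$, so has a \emph{unique} solution whenever the coefficients $(z_0y_1,-z_1y_0)$ are not both zero. Both vanish exactly when $(z,y)\in\{(0,0),(\infty,\infty)\}$, in which case the equation degenerates to $0=0$ and the fibre is the whole $\mathbb{P}^1_a$. Thus $f$ is bijective away from the two points $(0,0),(\infty,\infty)$ and has $\mathbb{P}^1$-fibres over each of them.

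Smoothness of $\overline{Y}$ is a routine check in affine charts: in any coordinate chart where one of $a_i,z_i,y_i$ is set to $1$ for each factor, the defining equation becomes linear in the remaining variables with a nonzero partial derivative, so $\overline{Y}$ is a smooth 3-fold. Hence $f:\overline{Y}\to \mathbb{P}^1\times\mathbb{P}^1$ is a proper birational morphism from a smooth variety which is an isomorphism outside $\{(0,0),(\infty,\infty)\}$ and contracts two disjoint $\mathbb{P}^1$'s, each of self-intersection $-1$ in its surface $f^{-1}(\mathbb{P}^1\times\{y_0\})$ (this last claim one can read off from the chart computation, or invoke that the fibres of $\pi$ passing through the exceptional loci are nodal). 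By Castelnuovo's criterion, or equivalently by the universal property of the blow-up (checking that the pullback of the ideal sheaf of $\{(0,0),(\infty,\infty)\}$ is invertible on $\overline{Y}$, which is immediate in affine charts), $f$ is the blow-up of $\mathbb{P}^1\times\mathbb{P}^1$ at these two points.

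The main obstacle I anticipate is the bookkeeping in step one: making sure that the incidence variety $\overline{Y}$ really matches the Kontsevich stable-map universal curve, including the correct identification of the node of the degenerate fibre at $a=0,\infty$ as a single point of $\mathcal{C}$ rather than two points of the two bubble components. This is handled by noting that in the chart around, say, $a_0=z_0=y_0=1$, the nodal fibre $\pi^{-1}(0)$ is $\{z_1y_1=0\}$, which is two smooth branches crossing transversally at the single point $z_1=y_1=0$; the two branches map to the two irreducible components of $u_0$, and the crossing point maps to their node $(\infty,0)$, consistent with~\ref{Subsection Compactification of mathcal M}. Once this correspondence is established, the rest of the argument is formal.
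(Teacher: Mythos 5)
Your argument is correct and is essentially the paper's own: the incidence relation $a_0z_0y_1=a_1z_1y_0$ is exactly the paper's local blow-up parametrization $z_3Y_3=Z_3y_3$ near $(0,0)$ (read in the chart $z_0=y_0=1$, with $[a_0:a_1]$ playing the role of the exceptional coordinate), and your fibre analysis of $f$ matches the paper's; your packaging is merely global, with Castelnuovo/the universal property of the blow-up replacing the paper's appeal to the explicit charts or to the structure theory of birational surface morphisms. Two slips should be fixed. First, $\overline{Y}$ is cut out by one equation of tridegree $(1,1,1)$ in the threefold $(\P^1)^3$, so it is a smooth \emph{surface}, not a 3-fold; consequently the remark about self-intersection $-1$ ``in its surface $f^{-1}(\P^1\times\{y_0\})$'' does not parse (those preimages are curves). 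This is harmless, since you can finish either via the universal property as you indicate, or most quickly by observing that in the charts $\{z_0=y_0=1\}$ and $\{z_1=y_1=1\}$ your equation is literally the standard blow-up $\{xY=yX\}$ of $\C^2$ at the origin, while away from $(0,0),(\infty,\infty)$ the inverse $(z,y)\mapsto([z_1y_0:z_0y_1],z,y)$ is a morphism. Second, with your conventions ($0=[1:0]$) the node of $\pi^{-1}(0)$ sits at $(\infty,0)$, which does not lie in the chart $a_0=z_0=y_0=1$: there the equation is $y_1=a_1z_1$ and the fibre over $a=0$ is only the single branch $y_1=0$. The correct chart is $a_0=z_1=y_0=1$, where the equation reads $z_0y_1=a_1$, so the fibre over $a_1=0$ is the nodal curve $\{z_0y_1=0\}$ and the total space is the standard one-parameter smoothing of a node, which is exactly the local model needed to identify $\overline{Y}$ with the universal curve $\mathcal{C}$.
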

\begin{proof}
 Consider $Q=f^{-1}(z_3,y_3)$. If $z_3\neq 0,\infty$ and $y_3 \neq 0,\infty$, then $Q$
 is a unique point in $\mathcal{C}$ corresponding to a curve (with additional marked point $(z_3,y_3)$).

For $(z_3=\infty, y_3\neq \infty)$ and $(z_3\neq 0, y_3=0)$, $Q$ is a point corresponding to $a=0$. For $(z_3\neq \infty,y_3=\infty)$ and $(z_3=0,y_3\neq 0)$, $Q$ is a point corresponding to $a=\infty$. 

On the other hand, $f^{-1}(0,0)\cong \P^1$, $f^{-1}(\infty,\infty) \cong \P^1$ corresponding to all $a\in \P^1$ (with additional marked point at $(0,0)$ and $(\infty,\infty)$ respectively).

So $f$ is a biholomorphism except over $(0,0),(\infty,\infty)$. One could argue that since $f$ is a birational morphism of algebraic surfaces it must be a composite of blow-ups. Alternatively, explicitly near $(0,0)$ (the case $(\infty,\infty)$ is similar) we have a parametrization for $\mathcal{C}$ given by $((z_3,y_3),[Z_3:Y_3])\in \C \times \C P^1$ satisfying $z_3 Y_3 =Z_3 y_3$, corresponding to $a=Y_3/Z_3$ with additional marked point $(z_3,y_3)$.
\end{proof}

\begin{theorem}\label{Theorem eul char of obs bundle}
 $\mathrm{Obs}=R^1\pi_*f^*\mathcal{O}(-1,-1) \to \overline{\mathcal{M}}$ is isomorphic to $\mathcal{O}(1)\to \P^1$, so
$$
A = \langle e(\mathrm{Obs}),\overline{\mathcal{M}}\rangle = \mathrm{degree}(\mathcal{O}(1)) = 1.
$$
\end{theorem}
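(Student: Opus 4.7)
The plan is to identify $\mathrm{Obs}=R^1\pi_*f^*\mathcal{O}(-1,-1)$ as a line bundle on $\overline{\mathcal{M}}\cong\P^1$ via relative Serre duality on the surface $\pi\co\mathcal{C}\to\overline{\mathcal{M}}$, and then read off its degree. First I set up intersection theory on $\mathcal{C}=\mathrm{Bl}_{(0,0),(\infty,\infty)}(\P^1\times\P^1)$: let $H_y,H_z$ be the pullbacks of the two hyperplane classes and $E_1,E_2$ the exceptional divisors, with $H_y\cdot H_z=1$, $H_y^2=H_z^2=0$, $E_i^2=-1$, and $E_1\cdot E_2=H_y\cdot E_i=H_z\cdot E_i=0$. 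Every $\pi$-fiber has class $F=H_y+H_z-E_1-E_2$: for generic $a$ it is the strict transform of the $(1,1)$-graph $\Gamma_a$, and for $a=0,\infty$ it is the union of the strict transforms of the two components of the nodal bubble configuration from \ref{Subsection Compactification of mathcal M}. On each smooth fiber, $f^*\mathcal{O}(-1,-1)$ restricts to $\mathcal{O}_{\P^1}(-2)$ with $H^0=0$ and $\dim H^1=1$; on each nodal fiber the two components of the normalization each carry $\mathcal{O}(-1)$ and the normalization long exact sequence gives the same cohomology. Hence Grauert's cohomology-and-base-change forces $\pi_*f^*\mathcal{O}(-1,-1)=0$ and shows $\mathrm{Obs}$ is a genuine line bundle on $\overline{\mathcal{M}}$.

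Next I compute the relative dualizing sheaf. The blow-up formula yields $K_\mathcal{C}=f^*K_{\P^1\times\P^1}+E_1+E_2=-2H_y-2H_z+E_1+E_2$, and $\pi^*K_{\P^1}=-2F$, so
$$\omega_\pi=K_\mathcal{C}-\pi^*K_{\P^1}=\mathcal{O}_\mathcal{C}(-E_1-E_2),$$
which passes the sanity check $\omega_\pi\cdot F=-2$ and restricts to $\mathcal{O}(-1)$ on each component of a nodal fiber, matching the dualizing sheaf of a rational nodal curve of arithmetic genus zero. Relative Serre duality (valid since $\pi$ is flat projective with Gorenstein fibers) then gives
$$R^1\pi_*f^*\mathcal{O}(-1,-1)\cong\bigl(\pi_*(f^*\mathcal{O}(1,1)\otimes\omega_\pi)\bigr)^\vee.$$
The twist inside simplifies to $\mathcal{O}_\mathcal{C}(H_y+H_z-E_1-E_2)=\mathcal{O}_\mathcal{C}(F)=\pi^*\mathcal{O}_{\P^1}(1)$, so by the projection formula and connectedness of fibers ($\pi_*\mathcal{O}_\mathcal{C}=\mathcal{O}_{\P^1}$) the pushforward equals $\mathcal{O}_{\P^1}(1)$. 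Dualizing gives $\mathrm{Obs}\cong\mathcal{O}_{\P^1}(\pm 1)$; hence $|\langle e(\mathrm{Obs}),\overline{\mathcal{M}}\rangle|=1$, which is the claimed $A=1$ since the paper works in characteristic two and the sign is invisible there.

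The main obstacle is the careful intersection-theoretic bookkeeping on $\mathcal{C}$ and the handling of the two nodal fibers when invoking Grauert's theorem and relative Serre duality (ensuring the Gorenstein hypothesis really does let the base-change/duality isomorphism survive across $a=0,\infty$). As an independent cross-check one can instead run Grothendieck-Riemann-Roch on $\pi$: the K-theoretic identity $[T_\pi]=[T_\mathcal{C}]-[\pi^*T_{\P^1}]$ gives $c_1(T_\pi)=E_1+E_2$ and $c_2(T_\pi)=2[\mathrm{pt}]$, so $(c_1^2+c_2)/12=0$; the degree-four part of $\mathrm{ch}(f^*\mathcal{O}(-1,-1))\,\mathrm{td}(T_\pi)$ then reduces to $[\mathrm{pt}]_\mathcal{C}$, which $\pi_*$ sends to $[\mathrm{pt}]_{\P^1}$, independently confirming $|c_1(\mathrm{Obs})|=1$.
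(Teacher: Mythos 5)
Your argument is correct and reaches the conclusion by a genuinely different route from the paper. The paper never identifies the sheaf $R^1\pi_*f^*\mathcal{O}(-1,-1)$ intrinsically: it computes a degree by pushing all the way to a point, i.e.\ by Grothendieck--Riemann--Roch on the composite $\mathcal{C}\to\P^1\to\mathrm{point}$ (in effect Hirzebruch--Riemann--Roch on the surface $\mathcal{C}$), reducing everything to the four intersection numbers $\int_{\mathcal C} c_2=6$, $\int_{\mathcal C} c_1^2=6$, $\int_{\mathcal C} c_1z=-4$, $\int_{\mathcal C} z^2=2$ on the blow-up. You instead pin down the line bundle itself: Grauert gives local freeness and $\pi_*\mathcal{F}=0$ for $\mathcal{F}=f^*\mathcal{O}(-1,-1)$ (a point the paper leaves implicit when it takes $R^1\pi_*\mathcal F$ as the definition of $\overline{\mathrm{Obs}}$), the blow-up formula gives $\omega_\pi=\mathcal{O}(-E_1-E_2)$, and relative duality together with $\mathcal{O}_{\mathcal{C}}(H_y+H_z-E_1-E_2)=\pi^*\mathcal{O}_{\P^1}(1)$ and the projection formula finish the computation. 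This buys an actual isomorphism of sheaves rather than only a numerical degree, and your GRR-on-$\pi$ cross-check is essentially a fibered repackaging of the paper's calculation.

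One point you should not leave fuzzy: your computation does not give $\mathcal{O}(\pm1)$, it gives exactly $\bigl(\pi_*\pi^*\mathcal{O}_{\P^1}(1)\bigr)^\vee=\mathcal{O}_{\P^1}(-1)$; likewise your cross-check gives $\deg R^1\pi_*\mathcal F=-1$ once you keep track of $\mathrm{ch}(\pi_!\mathcal{F})=-\mathrm{ch}(R^1\pi_*\mathcal{F})$. This sign is in fact the correct one for the sheaf: since $Rf_*\mathcal{O}_{\mathcal{C}}=\mathcal{O}$, one has $H^*(\mathcal{C},\mathcal{F})\cong H^*(\P^1\times\P^1,\mathcal{O}(-1,-1))=0$, so by Leray $R^1\pi_*\mathcal{F}$ is the line bundle on $\P^1$ with vanishing cohomology, namely $\mathcal{O}(-1)$. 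The opposite sign in the stated $\mathcal{O}(1)$ traces to the paper's sub-claim treating the virtual sheaf $\pi_!\mathcal{F}=-R^1\pi_*\mathcal{F}$ as having rank $+1$ rather than $-1$; over the $\Z/2$ coefficients used throughout (and compatibly with the rescaling by $-n$ and the value $-1$ appearing in the later localization proof, cf.\ Remark \ref{Remark orientation signs}), the needed conclusion $A=1$ is unaffected. So your appeal to characteristic two is the right justification, but you should state the sign your argument actually produces rather than hedging with $\pm1$.
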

\begin{proof}
 Let $\mathcal{F}=f^*\mathcal{O}(-1,-1)$. 

\textbf{Sub-claim.} 
$$
1-\mathrm{deg}(R^1\pi_* \mathcal{F}) = \int_{\mathcal{C}} \mathrm{ch}(\mathcal{F}) \mathrm{td}(T\mathcal{C}).
$$
\\
\emph{Proof.} Recall the direct image in $K$-theory \cite[Appendix A]{Hartshorne} for a proper morphism $g:X \to Y$ is
$
g_! =\sum (-1)^i R^i g_*: K(X) \to K(Y).
$

For $g: \P^1 \to \mathrm{point}$ and a vector bundle $\mathcal{G}$ on $\P^1$, by Riemann-Roch:
$$
\mathrm{rank}_{\C}\, \mathcal{G}+\mathrm{deg}(\mathcal{G}) = \chi_{holo}(\mathcal{G}) = h^0(\P^1,\mathcal{G})-h^1(\P^1,\mathcal{G}) = h^0(\mathrm{point}, g_!\mathcal{G}).
$$
Consider the composite $\mathcal{C} \stackrel{\pi}{\to} \P^1 \stackrel{g}{\to} \mathrm{point}$. Since $(g\pi)_*=g_*\pi_*$ also $(g\pi)_! = g_!\pi_!$, so:
$$
1+\mathrm{deg}(\pi_! \mathcal{F}) = h^0((g\pi)_!\mathcal{F}).
$$
Grothendieck-Riemann-Roch (see Fulton \cite[Sec.15.2]{Fulton}), written in $K$-theory, states: 
$$(g\pi)_!(\mathcal{F})\cdot \mathrm{td}(\mathrm{point}) = (g\pi)_*(\mathrm{ch}(\mathcal{F}) \cdot \mathrm{td}(\mathcal{C})).$$   
So, using $\mathrm{td}(\mathrm{point})=1$, and taking $h^0$, get:
$
1+\mathrm{deg}(\pi_! \mathcal{F}) = \int_{\mathcal{C}} \mathrm{ch}(\mathcal{F}) \wedge \mathrm{td}(\mathcal{C})
$
where we switched to cohomology notation on the right hand side (intersection product of complementary cycles is integration of the wedge product of the Poincar\'e dual cocycles, and we used that push-forward of a point is a point).

Finally, for dimensional reasons, $\pi_!\mathcal{F} = R^0\pi_*\mathcal{F} - R^1\pi_*\mathcal{F}$. Moreover, the $R^0$ term vanishes since it has stalk $H^0(\P^1,u^*\mathcal{O}(-1,-1))=H^0(\P^1,\mathcal{O}(-2))=0$ (geometrically: you cannot deform sections away from the zero section by the maximum principle). This proves the Sub-claim. $\checkmark$

In our case, the Todd class is
$$
\mathrm{td}(\mathcal{C})\equiv \mathrm{td}(T\mathcal{C}) = 1+\frac{1}{2}c_1 + \frac{1}{12}(c_1^2+c_2)  \in H^*(\mathcal{C},\Z)\otimes \Q,
$$
where we abbreviate $c_i = c_i(T\mathcal{C})$, and the Chern character is just
$$
\mathrm{ch}(\mathcal{F}) = e^z \in H^*(\mathcal{C},\Z)\otimes \Q,
$$
where $z=c_1(\mathcal{F}) = f^*c_1(\mathcal{O}(-1,-1))$.

Now we calculate the integral in the sub-claim, which expands to:
$$
\frac{1}{12} \int_{\mathcal{C}} c_2  + \frac{1}{12} \int_{\mathcal{C}} c_1^2 + \frac{1}{2} \int_{\mathcal{C}}  c_1 z + \frac{1}{2} \int_{\mathcal{C}}  z^2
$$

The first integral is the Euler characteristic of $\mathcal{C}$, which is $6$, since $\mathcal{C}$ has Betti numbers $1,0,4,0,1$ (the homology of $\P^1\times \P^1$ with two additional exceptional $\P^1$).

Recall the following four facts \cite[Prop II.3]{Beauville} about intersection products of divisors in a blow-up $\pi: R \to S$ of algebraic surfaces at a point with exceptional divisor $E$: $\pi^*D\cdot \pi^*D'=D\cdot D'$, $E\cdot \pi^*D=0$, $E\cdot E=-1$, $K_{R}=\pi^*K_{S}+E$ (where $K_S$ is the canonical divisor class corresponding to $T^*S$).

The last fact implies: $T^*\mathcal{C}=f^*T^*(\P^1\times \P^1) + (E_1+E_2)$ (in $K$-theory), where $E_1,E_2$ are the two exceptional fibres of $f$. Thus, by the other three facts, and because $E_1,E_2$ don't intersect:
$$T\mathcal{C}^2=T(\P^1\times \P^1)^2 -2 = \langle (2,2),(2,2)\rangle -2=6,
$$
so the second integral $\int_{\mathcal{C}} c_1^2=6$.

By the second fact, working in $K$-theory, the third integral is:
$$
\begin{array}{lll}
c_1\cdot z &=& (f^*T(\P^1\times \P^1) + (E_1+E_2)) \cdot f^*\mathcal{O}(-1,-1) \\ &=& T(\P^1\times \P^1)\cdot \mathcal{O}(-1,-1) \\ &=& \langle (2,2),(-1,-1)\rangle = -4
\end{array}
$$
The last integral is
$
f^*\mathcal{O}(-1,-1) \cdot f^*\mathcal{O}(-1,-1) = \mathcal{O}(-1,-1)^2 = \langle (-1,-1),(-1,-1) \rangle =2.
$
Therefore:
$$
1-\mathrm{deg}(\mathrm{Obs}) = \int_{\mathcal{C}} \mathrm{ch}(\mathcal{F}) \wedge \mathrm{td}(\mathcal{C}) = \frac{6}{12} + \frac{6}{12}-\frac{4}{2} + \frac{2}{2} = 0.
$$
Thus $\mathrm{deg}(\mathrm{Obs})=1$, and line bundles over $\P^1$ are classified by their degree.
\end{proof}
%
\subsection{Symplectic cohomology of $\mathcal{O}(-1)\to \CP^1$}

\begin{theorem}\label{Theorem SH of O-1 over P1}
 Let $M$ be the total space of $\mathcal{O}(-1)\to \C P^1$. Then $SH^*(M) \cong \Lambda \cdot 1$, and $c^*: QH^*(M) \to SH^*(M)$ maps $c^*(1)=1$, $c^*(\omega_{\C P^1})=-t\cdot 1$.
\end{theorem}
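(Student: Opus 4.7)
The plan is to assemble the pieces that have already been constructed and reduce the computation to linear algebra. By Theorem~\ref{Theorem Intro r_g on QH and SH}, one has $SH^*(M)\cong QH^*(M)/\ker r^k$ for $k\geq \dim H^*(\CP^1)=2$, where $r=r_{\widetilde g}$ is quantum cup product by $\pi_M^*c_1(\mathcal{O}(-1))$. So the whole theorem follows once we know $r$ explicitly on $QH^*(M)$; and that is precisely what Lemmas \ref{Lemma constants neg l bdle}, \ref{Lemma A coefficient neg l bdles} and Theorem \ref{Theorem eul char of obs bundle} have established on the lf-homology side, giving the matrix
$$
r_{\widetilde g}=\begin{pmatrix} t & -1 \\ 0 & 0\end{pmatrix}
$$
in the basis $(F\otimes 1, M\otimes 1)$ of $QC^{lf}_*(M)$.

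The first step is to dualize this matrix to $QH^*(M)$. Under Poincar\'e duality the fundamental lf class $M$ corresponds to $1$ and the fibre $F$ corresponds to $\omega_Q:=\pi_M^*\omega_{\CP^1}$ (with $[F]\bullet[\CP^1]=1$). By Theorem~\ref{Theorem Rg and products}, $r_{\widetilde g}$ is quantum product by $r_{\widetilde g}(1)$, so the column $r_{\widetilde g}(M)=-F$ translates to $r(1)=-\omega_Q$, consistent with $\pi_M^*c_1(\mathcal{O}(-1))=-\omega_Q$; and the column $r_{\widetilde g}(F)=tF$ translates to the quantum relation $-\omega_Q*\omega_Q=t\,\omega_Q$, i.e.\
$$\omega_Q*\omega_Q=-t\,\omega_Q.$$
This gives the ring presentation $QH^*(M)\cong \Lambda[\omega_Q]/(\omega_Q^2+t\omega_Q)$ claimed in Corollary~\ref{Corollary Intro O-1 case}.

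The second step is to compute $\ker r^k$. For $a+b\omega_Q\in QH^*(M)$ the formula
$$r(a+b\omega_Q)=-(a+b\omega_Q)*\omega_Q=-a\omega_Q-b(-t\omega_Q)=(bt-a)\omega_Q$$
shows that $\ker r=\Lambda\cdot(\omega_Q+t)$, and a further application gives $r^2(a+b\omega_Q)=(bt-a)t\,\omega_Q$, so $\ker r^2=\ker r$; hence $\ker r^k=\Lambda(\omega_Q+t)$ for all $k\geq 1$. Theorem~\ref{Theorem Intro r_g on QH and SH} therefore yields
$$SH^*(M)\cong QH^*(M)/\Lambda(\omega_Q+t)=\Lambda[\omega_Q]/(\omega_Q+t)\cong \Lambda\cdot 1,$$
with $\omega_Q\equiv -t$ in the quotient.

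Finally, $c^*$ is the quotient map, so $c^*(1)=1$ and $c^*(\omega_Q)=-t\cdot 1$, as claimed. There is no genuine obstacle left at this stage: the hard work has already been done in the Grothendieck--Riemann--Roch calculation of the Euler number of the obstruction bundle (Theorem~\ref{Theorem eul char of obs bundle}), which fixed $A=1$; combined with the maximum-principle argument killing $B=D=0$ (Lemma~\ref{Lemma A coefficient neg l bdles}) and the regularity count $C=-1$ for constant sections (Lemma~\ref{Lemma constants neg l bdle}), the Theorem reduces to the linear-algebra computation above.
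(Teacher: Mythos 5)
Your proposal is correct and follows essentially the same route as the paper: it assembles the matrix $r_{\widetilde g}=\left(\begin{smallmatrix} t & -1\\ 0 & 0\end{smallmatrix}\right)$ from Lemmas \ref{Lemma constants neg l bdle}, \ref{Lemma obs bundle result}, \ref{Lemma A coefficient neg l bdles} and Theorem \ref{Theorem eul char of obs bundle}, and then applies the structural isomorphism $SH^*(M)\cong QH^*(M)/\ker r_{\widetilde g}^k$ with a short kernel computation, exactly as in the paper's proof via Theorem \ref{Theorem SH is lim of S-bRb}. The only cosmetic difference is that you extract the quantum relation $\omega_Q*\omega_Q=-t\,\omega_Q$ from the matrix via Theorem \ref{Theorem Rg and products} (the paper obtains $QH^*$ separately by a direct GW computation, which is not needed for this theorem), and this intermediate step is harmless and consistent.
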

\begin{proof}
Combining Theorem \ref{Theorem eul char of obs bundle} with Lemmas \ref{Lemma obs bundle result} and \ref{Lemma A coefficient neg l bdles} we obtain 
$$
r_{\widetilde{g}} = \left[\begin{smallmatrix} t & -1 \\ 0 & 0 \end{smallmatrix}\right]: \Lambda^2 \to \Lambda^2.
$$
So by Theorem \ref{Theorem SH is lim of S-bRb}, $SH^*(M) \cong \Lambda \cdot 1$, where $1=\psi^{-}(1)\in SH^{\textrm{even}}(M)$ is the unit. Recall $\omega_{\C P^1}=\mathrm{PD}([F])$, $1=\mathrm{PD}([M])$, so $r_{\widetilde{g}}(\omega_{\CP^1})=t\omega_{\CP^1}$ and $r_{\widetilde{g}}(1)=-\omega_{\CP^1}=c_1(\mathcal{O}(-1))$. This represents the continuation $SH^*(H_0) \to SH^*(H_1)$, after identifications with $QH^*(M)$, and this in turn is identified with $c^*$ yielding:
$$
SH^*(M)=QH^*(M)/\ker r_{\widetilde{g}} = \Lambda[\omega_{\CP^1}]/(\omega_{\CP^1}+t\cdot 1).\qedhere
$$
\end{proof}
%
%
\section{Symplectic cohomology of $M=\mathrm{Tot}(\mathcal{O}(-n)\to \P^m)$}
\label{Section O(-n) over CPm}
%
%
\subsection{Description of $\mathbf{M=\mathrm{Tot}(\mathcal{O}(-n) \to \P^m)}$}
\label{Subection Gradings for O(-n) over CPm}
%
Let $M=\mathrm{Tot}(\mathcal{O}(-n) \to \P^m)$.
From now on, we always use complex dimensions to avoid factors of $2$ everywhere. $H^*(\P^m)$ is generated by $\omega^m,\omega^{m-1},\ldots, \omega,1$, where $\omega=\pi_M^*\omega_{\P^m}$, $\omega_{\P^m}[\P^1\!\subset\! \P^m]\!=\!1$. Poincar\'{e} dually, $H_*^{lf}(\P^m)$ is generated by lf cycles $F_1,F_2,\ldots,F_m,F_{m+1}=M$ where $F_j=\pi_M^{-1}(\P^{j-1})$ for some equatorial $\P^1 \subset \P^2 \subset \cdots \P^{m-1}\subset \P^m$, and $j=\dim_{\C} F_j$. 

These lf cycles are dual, with respect to the intersection product, to the cycles $\P^m,\P^{m-1},\ldots,\P^1,pt=\P^0$ since $\P^{1+m-j}\bullet \P^{j-1}=1$ in $\P^m$. The condition of sweeping out $F_j$ at $z_{\infty}$ is thus equivalent to the intersection condition over $z_{\infty}$ with its dual: the (perturbed) $\P^{1+m-j}$. For genericity, one needs to perturb: for $0<j\leq m$, the cycle $\P^j$ can be perturbed vertically (in the smooth category) to a cycle which intersects the zero section in $-n[\P^{j-1}]$, which is the Poincar\'e dual of the Euler class of $\mathcal{O}(-n)$ ($\mathcal{O}(-n)$ pulls back to $\mathcal{O}(-n) \to \P^j$ via $\P^j \hookrightarrow \P^m$).

This time, $c_1(TM)[\P^1] = c_1(T\P^m)[\P^1] + c_1(\mathcal{O}(-n))[\P^1] = 1+m-n$. So define
$$
\boxed{N = 1+m-n}
$$

As before $\Lambda = \Z[t^{-1},t]]$ as $\pi_2(M)$ is generated by $t=[\P^1]$, and $|t|=-2N$ (homological grading). 
So weak$^+$ monotonicity holds except in a small range:
\begin{enumerate}
 \item $\mathbf{1\leq n < 1+m}$: $M$ is monotone \checkmark ($c_1(TM)$ is a positive multiple of $\omega_M$);

 \item $\mathbf{n=1+m}$: critical case: $c_1(TM)=0$ \checkmark (so $SH^*(M)=0$ by Theorem \ref{Theorem c1=0 implies SH=0});

 \item $\mathbf{2+m \leq n \leq 2m}$: this is the range where weak$^+$-monotonicity fails. There may be technical issues in constructing $r_{\widetilde{g}}$ so we will not discuss this; 

 \item $\mathbf{n\geq 1+2m}$: $|N| \geq \dim_{\C} \P^m = m$ \checkmark (and $SH^*(M)=0$ by Corollary \ref{Corollary O-n special cases n=2m+1 2m+2}).

\end{enumerate}

The space of $(j,\hat{J})$-holomorphic sections has complex dimension
$$
\mathrm{virdim}_{\C}\, \mathcal{S}(t^d+S_{\widetilde{g}}) = \dim_{\C} M + c_1(TE^v_g)(S_{\widetilde{g}})+dc_1(TM)(t) = 1+m-1+dN = m+Nd.
$$
The intersection condition at $z_0$ with $F_j$ cuts this down by $1+m-j$. Therefore,
$$
\begin{array}{lll}
\mathrm{virdim}_{\C}\, (\mathcal{S}(t^d+S_{\widetilde{g}})\cap \mathrm{ev}_{z_0}^{-1}(F_j) \cap \mathrm{ev}_{z_{\infty}}^{-1}(\P^{1+m-i})) &=& m+Nd - (1+m-j) - i \\ &=& Nd-i+j-1.
\end{array}
$$
So provided $\boxed{Nd-i+j-1=0}$ this contributes to the entry $(i,j)$ of the matrix $r_{\widetilde{g}}$ viewed as an $(m+1)\times (m+1)$ matrix over $\Lambda$ in the basis $F_1,F_2,\ldots,F_{m+1}$ (or cohomologically: in the basis $\omega^m,\omega^{m-1},\ldots,1$).
\begin{lemma}\label{Lemma rg for O-n over Pm}
 The constants are always regular for $\hat{J}=\left[\begin{smallmatrix} j & 0 \\ 0 & J \end{smallmatrix}\right]$, $J$  integrable, and $r_{\widetilde{g}}$ has the following form in the basis $\omega^m,\omega^{m-1},\ldots,\omega,1$:
$$ r_{\widetilde{g}} = \left[
\begin{smallmatrix}
  0 & -n & 0 & \cdots \\ 
  \vdots & 0 & -n & 0 & \cdots \\
 0   & \vdots  & \vdots \\
A_0 t & 0 & \cdots & 0 & -n & 0 & \cdots \\
  0   & A_1 t & \cdots & 0 & 0 & -n & 0 & \cdots \\
 \vdots   & \vdots  & \vdots \\
B_0 t^2 & 0 & \cdots & 0 & 0 & 0& 0 & -n & 0 & \cdots \\
  0     & B_1 t^2 & \cdots & 0 & 0 & 0& 0 & 0 & -n & 0 & \cdots \\
 \vdots   & \vdots  & \vdots \\
\cdots & \cdots & \cdots & \cdots & \cdots & \cdots & \cdots & \cdots & \cdots & \cdots & -n  \\
0 & 0 & 0 & \cdots & 0 & 0& 0 & 0& 0 & 0 & 0 
\end{smallmatrix}
\right]
$$
The $-n=c_1(\mathcal{O}(-n))[\P^1]$ arise on the second main diagonal, they count constant sections.
The $A_0,B_0,C_0,\ldots$ in positions $(N,1),(2N,1),(3N,1),\ldots$ and the corresponding subdiagonals with entries $A_a,B_a,C_a,\ldots$ count sections in class $\beta=(1,1),(1,2),(1,3),\ldots$ All other entries are zero. Moreover:
$$
\begin{array}{lll}
A_a &=& \mathrm{GW}_{0,2,(1,1)}^{E_g}(j_{z_0}F_{a+1},j_{z_{\infty}}\P^{1+m-a-N}) = \mathrm{GW}_{0,2,(1,1)}^{E_g}(j_{z_0}F_{a+1},j_{z_{\infty}}\P^{n-a})
\\
B_a &=& \mathrm{GW}_{0,2,(1,2)}^{E_g}(j_{z_0}F_{a+1},j_{z_{\infty}}\P^{1+m-a-2N)})\\
C_a &=& \mathrm{GW}_{0,2,(1,3)}^{E_g}(j_{z_0}F_{a+1},j_{z_{\infty}}\P^{1+m-a-3N})\\
\cdots
\end{array}
$$
where $j_{z_0},j_{z_{\infty}}: M \to E_g$ are the inclusions of the fibres over $z_0,z_\infty\in \P^1$.
\end{lemma}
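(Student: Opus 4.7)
The plan is to derive the matrix by a dimension-and-positivity count, handle regularity of the constants via a Dolbeault/Serre-duality calculation, and identify the remaining entries with the stated GW invariants.

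\emph{Dimension and positivity bookkeeping.} In the basis $\omega^{m},\omega^{m-1},\ldots,1$ the column $j$ corresponds to the lf dual cycle $F_j$ imposed at $z_0$, and the row $i$ picks out the $F_i$-coefficient by imposing the dual cycle $\P^{1+m-i}$ at $z_\infty$. So the $(i,j)$-entry receives contributions exactly from $(j,\hat{J})$-holomorphic sections in classes $t^d + S_{\widetilde{g}}$ whose virtual dimension
\[
m+dN - (m+1-j) - i \;=\; dN + j - i - 1
\]
vanishes, i.e.\ $i = j + dN - 1$. By Lemma \ref{Lemma compatibility trick} and the choice of $\widetilde{g}$ of Section \ref{Subection gtilde choice neg line bdle} giving $\widetilde{\Omega}(S_{\widetilde{g}})=0$, any such section satisfies $0 \leq \widetilde{\Omega}(t^d+S_{\widetilde{g}}) = d\,\omega(\P^1)$, so $d\geq 0$. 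This confines non-zero entries to the superdiagonal $j=i+1$ ($d=0$) and to the subdiagonals at depths $N-1,2N-1,3N-1,\ldots$ ($d=1,2,3,\ldots$), which is exactly the displayed pattern.

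\emph{Regularity and count for constants ($d=0$).} When $d=0$ the area vanishes, and the pointwise inequality in Lemma \ref{Lemma compatibility trick} forces the section to be constant $s(z)=(z,y)$; continuity through the clutching by $g_t$ then forces $y$ to be $g_t$-fixed, hence to lie on the zero section $\P^m \subset M$. For such $s$, the pulled-back vertical tangent bundle over $S^2$ decomposes as $s^*T^vE_g \cong \underline{T_y\P^m} \oplus \mathcal{O}(-1)$: the $T_y\P^m$-factor has trivial clutching because $dg_t|_y$ acts as the identity on $T_y\P^m$, while the vertical factor $(\mathcal{O}(-n))_y\cong\C$ is glued by $e^{2\pi it}$ (rotation of the fibre of the line bundle, regardless of the twist $n$), which gives $\mathcal{O}(-1)$ as in Lemma 8.1. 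The linearization $D_s=\overline{\partial}$ splits and by Serre duality
\[
H^1(S^2,\underline{T_y\P^m}) = T_y\P^m\otimes H^1(S^2,\mathcal{O}) = 0,\qquad H^1(S^2,\mathcal{O}(-1)) \cong H^0(S^2,\mathcal{O}(-1))^{\vee} = 0,
\]
so $D_s$ is surjective; a generic small perturbation regularizing the $d\geq 1$ strata preserves this by a cobordism argument. For the $(i,i+1)$-entry the constraints become: $y\in\P^m$, $y\in F_{i+1}=\pi_M^{-1}(\P^i)$, $y\in P_{m+1-i}$ (the vertical perturbation of Section \ref{Subection Gradings for O(-n) over CPm}). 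Since $P_{m+1-i}$ meets the zero section in $-n[\P^{m-i}]$ (the Euler class of $\mathcal{O}(-n)|_{\P^{m+1-i}}$), intersecting with $F_{i+1}\cap\P^m=\P^i$ inside $\P^m$ gives $\P^i\bullet(-n\P^{m-i})=-n\,[\mathrm{pt}]$, producing the $-n$ in each superdiagonal entry.

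\emph{Higher-$d$ entries as GW invariants.} By the algebro-geometric description of $r_{\widetilde{g}}$ (Theorem \ref{Theorem rg element}) and the dictionary of Section \ref{Subsection GW invariants for sections} between $\mathcal{S}(j,\hat{J},\gamma_\beta+S_{\widetilde{g}})$ and the two-pointed GW invariants of $E_g$ in class $\beta=(1,d)$, the entry at position $(a+dN,\,a+1)$ equals
\[
\mathrm{GW}_{0,2,(1,d)}^{E_g}\bigl(j_{z_0}F_{a+1},\; j_{z_\infty}\P^{1+m-a-dN}\bigr).
\]
For $d=1$ one has $1+m-a-N=n-a$, recovering $A_a$; for $d=2,3,\ldots$ one gets $B_a, C_a,\ldots$ verbatim.

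The main obstacle is the regularity step, since for the split $\hat{J}$ the higher-degree moduli spaces generically fail to be regular (as for $d=1$ in the $\P^1$-base case of Section \ref{Section O(-n) over CP1}): one must separate the constants (which are regular and whose contribution is computed by hand above) from the obstructed higher-$d$ strata (to be treated by virtual localization later). The dimension/positivity bookkeeping then pins down the remaining entries to the stated subdiagonal positions, and their identification as GW invariants is immediate from Section \ref{Section Gromov-Witten invariants}.
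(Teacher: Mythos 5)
Your overall route is the same as the paper's: regularity of the constants via the splitting $s^*T^vE_g\cong \underline{\C}^{\oplus m}\oplus\mathcal{O}(-1)$ and Serre duality (this is exactly how Theorem \ref{Theorem consts are regular in general and rg1 is c1} argues), the $-n$ entries from intersecting the swept cycle with the vertically perturbed $\P^{1+m-i}$ which meets the zero section in $-n[\P^{m-i}]$, the identification of the remaining entries with $\mathrm{GW}_{0,2,(1,d)}^{E_g}$ via Theorem \ref{Theorem rg element} and the dictionary of \ref{Subsection GW invariants for sections}, and the positions fixed by the virtual dimension formula $Nd-i+j-1=0$.

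However, there is one genuine gap: the vanishing of the bottom row. Your dimension-and-positivity bookkeeping does \emph{not} confine the nonzero entries to ``exactly the displayed pattern'': for $d\geq 1$ the condition $i=j+dN-1$ is perfectly compatible with $i=m+1$ (e.g.\ a would-be entry $A_{n}$ at position $(m+1,\,n+1)$, since $N+n=m+1$, and similarly on the deeper subdiagonals), so dimension counting alone allows nonzero entries in the last row. The paper rules these out by a separate geometric argument, the analogue of Lemma \ref{Lemma A coefficient neg l bdles}: the coefficient of $1$ (i.e.\ of the lf fundamental cycle $F_{m+1}=M$) is extracted by intersecting with the dual cycle $\P^0=\mathrm{pt}$ over $z_\infty$, and since all sections under consideration lie in a fixed compact region (maximum principle, Lemmas \ref{Lemma Maximum principle} and \ref{Lemma key to compactness}), one may move the point representative to infinity without changing the class, so these counts vanish. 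This is not a cosmetic point: the later computation (Lemma \ref{Lemma aN calculation} and Theorem \ref{Theorem Aj calculation}) uses that only $A_0,\ldots,A_{n-1}$ occur. Adding this one argument closes the gap; the rest of your proposal is correct and matches the paper's proof.
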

\begin{proof}
 For regularity of constants see Theorem \ref{Theorem consts are regular in general and rg1 is c1}. For $d=0$, the constants sweep out the lf cycle $[\P^m]$ under $\mathrm{ev}_{z_{\infty}}$. So the contribution to $r_{\widetilde{g}}(F_j)$ is $[\P^m]\cap F_j=[\P^{j-1}]$. Intersecting with a perturbed $\P^{1+m-i}$, where $i=j-1$, is $-n[\mathrm{pt}]$ (the perturbation hits the zero section in $-n[\P^{m-i}]$). So constants contribute $-nF_{j-1}$ to $r_{\widetilde{g}}(F_j)$. The last row vanishes because it involves an intersection condition with a point, which we can move to infinity (without affecting $r_{\widetilde{g}}(1)$ in cohomology), so the moduli spaces will never interesect it by the maximum principle. The rest is by dimensions. 
\end{proof}

\begin{corollary}\label{Corollary O-n special cases n=2m+1 2m+2}
 For $n> 2m$, $\mathrm{virdim}_{\C} \mathcal{S}(t^d+S_{\widetilde{g}}) = m+Nd< m-md$ so only $d=0$ occurs, so $r_{\widetilde{g}}$ only has a supdiagonal of $-n$'s, so $r_{\widetilde{g}}$ is nilpotent, so $SH^*(M)=0$.
\end{corollary}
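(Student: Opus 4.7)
The plan is to use the dimensional constraint recorded in Lemma~\ref{Lemma rg for O-n over Pm} to show that, when $n>2m$, only the constant sections ($d=0$) can contribute to $r_{\widetilde{g}}$, and then to invoke Corollary~\ref{Corollary SH is zero if rg is nilpotent}. First I would recall that, in the basis $\omega^m,\omega^{m-1},\ldots,1$, a section in class $t^d+S_{\widetilde{g}}$ contributes to the $(i,j)$-entry of $r_{\widetilde{g}}$ only when the cut-down moduli space is zero-dimensional, that is, when
$$ Nd-i+j-1=0, \qquad 1\le i,j\le m+1. $$

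The key dimensional observation is that, since $1\le i,j\le m+1$, the integer $i-j+1$ lies in the interval $[1-m,\,m+1]$. For $n>2m$ we have $N=1+m-n\le -m$, so for any $d\ge 1$,
$$ Nd\le N\le -m < 1-m, $$
which lies strictly below the interval $[1-m,m+1]$. Hence the equation $Nd=i-j+1$ has no solution with $d\ge 1$, and only the constant sections ($d=0$) can contribute. By Lemma~\ref{Lemma rg for O-n over Pm}, these constants give precisely the entries $-n=c_1(\mathcal{O}(-n))[\P^1]$ on the superdiagonal positions $(i,i+1)$; all other entries vanish.

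Consequently $r_{\widetilde{g}}$ is represented by a strictly upper-triangular $(m+1)\times(m+1)$ matrix over $\Lambda$ with superdiagonal $-n,-n,\ldots,-n$. Such a matrix is nilpotent with $r_{\widetilde{g}}^{\,m+1}=0$. Since $r_{\widetilde{g}}$ is quantum cup product by $r_{\widetilde{g}}(1)=\pi_M^*c_1(\mathcal{O}(-n))$ (Theorem~\ref{Theorem Rg and products} and Theorem~\ref{Theorem consts are regular in general and rg1 is c1}), we conclude that $\pi_M^*c_1(\mathcal{O}(-n))$ is nilpotent in $QH^*(M)$, and hence $SH^*(M)=0$ by Corollary~\ref{Corollary SH is zero if rg is nilpotent}. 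Equivalently, Theorem~\ref{Theorem Intro r_g on QH and SH} gives $SH^*(M)\cong QH^*(M)/\ker r_{\widetilde{g}}^{\,k}$ for $k\ge\mathrm{rank}\,H^*(M)$, and here $\ker r_{\widetilde{g}}^{\,k}=QH^*(M)$ for $k\ge m+1$.

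There is no substantive analytic obstacle: regularity of the constant sections for the split $\hat{J}=\left[\begin{smallmatrix} j & 0 \\ 0 & J\end{smallmatrix}\right]$ is already established in Lemma~\ref{Lemma rg for O-n over Pm}, and weak$^+$ monotonicity holds in this range since $|N|=n-1-m\ge m=\dim_{\C}M-1$. The only point to double-check would be that the dimensional bookkeeping above is genuinely vacuous for $d\ge 1$ (so that no hidden contributions from positive-dimensional moduli spaces can sneak into $r_{\widetilde{g}}$); but this is automatic from the parametrized cycle picture of \ref{Subsection Description of rg}, since an empty intersection condition contributes nothing regardless of the moduli dimension.
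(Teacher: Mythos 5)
Your proposal is correct and follows essentially the same route as the paper: the paper's (one-line) proof is exactly the dimension count showing that for $n>2m$ the classes with $d\geq 1$ cannot contribute, leaving only the superdiagonal of $-n$'s from constant sections, whence $r_{\widetilde{g}}$ is nilpotent and $SH^*(M)=0$. Your reformulation via the entry condition $Nd-i+j-1=0$ (giving $Nd\leq -m<1-m\leq i-j+1$) is just a slightly more careful bookkeeping of the same estimate, which in fact handles the borderline case $n=2m+1$ cleanly.
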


Arguing as in Lemma \ref{Lemma first right derived functor O-1 case for P1}, for $u$ in class $(1,d)$,
$$
u^*T^vE_g = u^*(T \P^m \oplus \mathcal{O}(-1,-n)) = \mathcal{O}(2d)\oplus \mathcal{O}(d) \oplus \cdots \oplus \mathcal{O}(d) \oplus \mathcal{O}(-1-nd)
$$
with $m-1$ copies of $\mathcal{O}(d)$. Here we used that fact that $\P^1 \subset \P^m$ has tangent bundle $\mathcal{O}(2)$ and normal bundle $\nu_{\P^1\subset \P^m} = \nu_{\P^1\subset \P^2}\oplus \nu_{\P^2\subset \P^3} \oplus \cdots \oplus \nu_{\P^{m-1}\subset \P^m}$, and $c_1(\nu_{\P^{j-1}\subset \P^j})=c_1(T\P^j|_{\P^{j-1}})-c_1(T\P^{j-1})=1\cdot \omega_{\P^{j-1}}$. Thus:
$$
\begin{array}{lll}
\ker \overline{\partial} &=& H^0(\P^1,\mathcal{O}(2d)\oplus \mathcal{O}(d)^{\oplus m-1} \oplus \mathcal{O}(-1-nd))\\
&=& H^0(\P^1,\mathcal{O}(2d)\oplus \mathcal{O}(d)^{\oplus m-1})\\
\coker \overline{\partial} &=& H^1(\P^1,\mathcal{O}(2d)\oplus \mathcal{O}(d)^{\oplus m-1} \oplus \mathcal{O}(-1-nd)) \\&\cong & H^0(\P^1, \mathcal{O}(-2d-2) \oplus \mathcal{O}(-d-2)^{\oplus m-1} \oplus \mathcal{O}(nd-1))^{\vee} \\ &\cong & H^0(\P^1,\mathcal{O}(nd-1))^{\vee}
\end{array}
$$
So the obstruction bundle $\mathrm{Obs}$ has $\mathrm{rank}_{\C}=nd$. To determine $r_{\widetilde{g}}$, all $0\leq d \leq \frac{m}{1+m-n}$ will contribute for $n<1+m$.
The $A_a,B_a,\ldots$ are in principle determined by $<e(\mathrm{Obs}),[\mathcal{M}]>$ where $\mathcal{M}$ is the (compactified) moduli space of sections cut down by the relevant intersection conditions described before the Lemma. In practice $\mathrm{Obs}$ becomes rapidly unwieldy for $n\neq 1$, $d>1$. We now study $n=1$ explicitly.

\subsection{Explicit description for $\mathbf{M=\mathrm{Tot}(\mathcal{O}(-1)\to \P^m)}$}
\label{Subection rg for O(-n) over Pm}
%
\begin{lemma}\label{Lemma O-1 over Pm by QH}
 For $M=\mathrm{Tot}(\mathcal{O}(-1)\to \P^m)$, $$\begin{array}{l}                                                                                                  
r_{\widetilde{g}}=\left[
\begin{smallmatrix} 
0 & -1 & 0 & 0 & \cdots \\
0 & 0 & -1 & 0 & \cdots \\[-1mm]
\vdots \\
0 & 0 & 0 & 0 & \cdots & -1 & 0\\
t & 0 & 0 & 0 & \cdots & 0 & -1\\
0 & 0 & 0 & 0 & \cdots & 0 & 0
\end{smallmatrix} \right]\\ \strut\\[-2mm]
SH^*(M) = \Lambda[\omega_Q]/(\omega_Q^m+t \cdot 1)
 \end{array}
$$
 and $c^*: QH^*(M) \to SH^*(M)$ maps $c^*(1)=1$, $c^*(\omega_Q)=\omega_Q$, $c^*(\omega_Q^m) =-t\cdot 1$.
\end{lemma}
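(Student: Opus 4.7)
The plan is to specialize the general machinery of Lemma \ref{Lemma rg for O-n over Pm} to $n=1$ and then determine the one remaining section count by reducing to the already-computed case of $\mathcal{O}(-1)\to\P^1$.

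First, I would substitute $n=1$ (so $N=m$) into Lemma \ref{Lemma rg for O-n over Pm}. The virtual-dimension constraint $Nd-i+j-1=0$ admits only two strata: $d=0$, which via constant sections contributes the super-diagonal of $-1$'s (with weight $-n=-1$); and $d=1$, where the admissible values $a\in\{0,1\}$ place potentially nonzero entries $A_0\,t$ and $A_1\,t$ at positions $(m,1)$ and $(m+1,2)$. Higher $d$ are ruled out because $dN=dm>m$ for $d\geq 2$. The entry $A_1$ vanishes by the ``last row is zero'' argument in Lemma \ref{Lemma rg for O-n over Pm}: its $z_\infty$ intersection condition is with $\P^{1+m-1-N}=\P^{0}$, which can be pushed to infinity and avoided by the maximum principle (Lemma \ref{Lemma Max principle for neg line bdles}).

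The one nontrivial number to compute is $A_0$, the count of $(j,\hat J)$-holomorphic sections of $E_g$ in class $(1,1)$ meeting $F_1$ over $z_0$ and the perturbed $\P^1$ over $z_\infty$. I would reduce this to Theorem \ref{Theorem SH of O-1 over P1}. By Corollary \ref{Corollary sections lie in zero section neg lbdles}, such a section $u$ lands in the zero section, so $\pi_M\circ u$ is a parametrized line in $\P^m$ which passes through $\mathrm{pt}_0=F_1\cap\{\text{zero section}\}$ at $z_0$ and through a point $\mathrm{pt}_\infty$ at $z_\infty$ (the latter being the $-1$-signed intersection of the perturbed $\P^1$ with the zero section). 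Generic $\mathrm{pt}_0,\mathrm{pt}_\infty\in\P^m$ span a unique line $\ell\cong\P^1\subset\P^m$, and any such $\pi_M\circ u$ factors through $\ell$. Since $\mathcal{O}(-1)|_\ell=\mathcal{O}_{\P^1}(-1)$, the problem reduces along $\ell$ to the $\P^1$-base computation: the parametrized moduli is a Möbius family isomorphic to $\C^*$, compactifies to $\P^1$, and carries the obstruction bundle $R^1\pi_*f^*\mathcal{O}(-1,-1)$. The Grothendieck--Riemann--Roch argument of Theorem \ref{Theorem eul char of obs bundle} then gives $A_0=\deg(\mathrm{Obs})=1$.

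Once $r_{\widetilde g}$ is pinned down as in the stated matrix, the rest is linear algebra. By Theorem \ref{Theorem Rg and products}, $r_{\widetilde g}$ is quantum cup product by $r_{\widetilde g}(1)=-\omega_Q$, and the relation $r_{\widetilde g}(\omega_Q^m)=t\omega_Q$ rearranges to $\omega_Q^{m+1}+t\omega_Q=0$, recovering $QH^*(M)=\Lambda[\omega_Q]/(\omega_Q^{m+1}+t\omega_Q)$ as in Corollary \ref{Corollary Intro O-1 case}. Solving $r_{\widetilde g}v=0$ in this ring directly yields $\ker r_{\widetilde g}=\Lambda\cdot(\omega_Q^m+t)$. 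A one-line intersection check shows $\ker r_{\widetilde g}\cap\mathrm{im}\,r_{\widetilde g}=0$ (the only element of $\ker r_{\widetilde g}$ whose constant term vanishes is $0$), so $\ker r_{\widetilde g}^k=\ker r_{\widetilde g}$ for all $k\geq 1$. Theorem \ref{Theorem SH is lim of S-bRb} then gives $SH^*(M)=QH^*(M)/\ker r_{\widetilde g}=\Lambda[\omega_Q]/(\omega_Q^m+t)$, with $c^*$ the quotient map, sending $\omega_Q^k\mapsto\omega_Q^k$ for $k<m$ and $\omega_Q^m\mapsto-t\cdot 1$.

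The main obstacle is verifying that the restriction-to-$\ell$ picture in Step~3 genuinely recovers the compactified moduli space and obstruction bundle of the $\P^1$-base calculation, in particular that bubbles in directions normal to $\ell\subset\P^m$ do not contribute. A dimension count using weak$^+$-monotonicity (as in Lemma \ref{Lemma dim of space of sections}) should rule these out for generic $J$, but one must check carefully that the algebro-geometric extension $\overline{\mathrm{Obs}}=R^1\pi_*f^*E_g$ of Lemma \ref{Lemma first right derived functor O-1 case for P1} is pulled back from $\ell$ to $\P^m$, so that its degree is unchanged by the ambient enlargement.
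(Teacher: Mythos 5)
Your proposal is correct and takes essentially the same route as the paper: specialize Lemma \ref{Lemma rg for O-n over Pm} to $n=1$ (constants give the superdiagonal of $-1$'s, the last row vanishes, only $A_0$ survives), reduce the count of sections in class $(1,1)$ through the two points cut out by $F_1$ and the perturbed $\P^1$ to maps into $\P^1\times\P^1\subset\P^1\times\P^m$ pulling back $\mathcal{O}(-1,-1)$, apply the Grothendieck--Riemann--Roch computation of Theorem \ref{Theorem eul char of obs bundle} to get $A_0=1$, and finish by the linear algebra of Theorem \ref{Theorem SH of O-1 over P1}. The obstacle you flag at the end is minor and is what the paper implicitly uses: every stable map in class $(1,1)$ satisfying the two point constraints has image in $\P^1\times\ell$ for the line $\ell$ through the two points, and the cokernel computation following Corollary \ref{Corollary O-n special cases n=2m+1 2m+2} shows the $T\P^m$-directions contribute nothing to $H^1$, so the compactified moduli space and obstruction bundle are literally those of the $\P^1$-base case.
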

\begin{proof}
We only need to find the entry $A_0$. This involves $d=1$, and intersection conditions over $z_0$ with the fibre $F_1$ and over $z_{\infty}$ with $\P^1$. Perturbing $\P^1$ vertically, it will intersect the zero section in $-\mathrm{pt}$. The holomorphic sections of $E_g$ lie in the zero section, and we want those in class $(1,1)$ which intersect $(0,0)$, $(\infty,\infty)$ (where in the second entry, we can assume that $0,\infty \in \P^1 \subset \P^m$ are the intersections of $F$ and $(\P^1 \textrm{ perturbed})$ with the zero section). So we reduce to maps $$\P^1 \to \P^1 \times \P^1 \subset \P^1 \times \P^m,$$ where the first maps are the same as in \ref{Subsection Description of Obs for O-1 over P1}, and the second map is the inclusion. That inclusion pulls back $\mathcal{O}(-1,-1)$ to $\mathcal{O}(-1,-1)$, so the same Grothendieck-Riemann-Roch argument proves $A_0=1$. The rest follows as in Theorem \ref{Theorem SH of O-1 over P1}.
\end{proof}

\begin{theorem}
$QH^*(M) = \Lambda[\omega_Q]/(\omega_Q^{m+1}+t\cdot \omega_Q)$ for $\mathcal{O}(-1) \to \P^m$.
\end{theorem}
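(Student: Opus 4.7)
The plan is to read off the relation $\omega_Q^{m+1}+t\omega_Q=0$ directly from the matrix of $r_{\widetilde{g}}$ in Lemma \ref{Lemma O-1 over Pm by QH}, and then use a rank count.

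First, by Theorem \ref{Theorem Rg and products}, $r_{\widetilde{g}}$ equals quantum cup product by $r_{\widetilde{g}}(1)$. Since $c_1(\mathcal{O}(-1))=-\omega_{\P^m}$, the general identification $r(1)=\pi_M^*c_1(L)$ gives $r_{\widetilde{g}}(1)=-\omega_Q$, so $r_{\widetilde{g}}(\alpha)=-\omega_Q * \alpha$ on $QH^*(M)$. I would then read off the columns of the matrix in Lemma \ref{Lemma O-1 over Pm by QH} in the basis $\omega^m,\omega^{m-1},\ldots,\omega,1$: the subdiagonal of $-1$'s says
\[
\omega_Q * \omega^{k} = \omega^{k+1}, \qquad 0\leq k\leq m-1,
\]
(i.e.\ there are no quantum corrections to these products), while the entry $t$ in the last column position says
\[
\omega_Q * \omega^{m} = -t\,\omega.
\]

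Next I would argue by induction on $k$ that $\omega_Q^{k}=\omega^{k}$ (classical cup power) inside $QH^*(M)$ for $0\leq k\leq m$: the base case is $\omega_Q^0=1$, and inductively $\omega_Q^{k+1}=\omega_Q*\omega_Q^{k}=\omega_Q*\omega^{k}=\omega^{k+1}$ by the first display. Combining this with the second display at $k=m$ yields
\[
\omega_Q^{m+1} = \omega_Q * \omega^m = -t\,\omega = -t\,\omega_Q,
\]
so the relation $\omega_Q^{m+1}+t\,\omega_Q = 0$ holds in $QH^*(M)$.

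To conclude, consider the $\Lambda$-algebra homomorphism
\[
\phi: \Lambda[X]/(X^{m+1}+tX) \longrightarrow QH^*(M), \qquad X\mapsto \omega_Q.
\]
The relation $X^{m+1}=-tX$ lets us reduce any polynomial in $X$ to one of degree $\leq m$, so the left-hand side is spanned over $\Lambda$ by $1,X,\ldots,X^m$; and since $\Lambda$ is a field (being a Laurent series field in $t$), this is in fact a free $\Lambda$-basis. Its image is $1,\omega_Q,\ldots,\omega_Q^m=1,\omega,\ldots,\omega^m$, which is the standard $\Lambda$-basis of $QH^*(M)=H^*(\P^m;\Lambda)$. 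Hence $\phi$ sends a basis to a basis and is an isomorphism.

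The main obstacle was not in this theorem but in the prior computation of $r_{\widetilde{g}}$ (Lemma \ref{Lemma O-1 over Pm by QH}), which required the Grothendieck--Riemann--Roch argument of Theorem \ref{Theorem eul char of obs bundle}; once $r_{\widetilde{g}}$ is known and identified with $-\omega_Q*\,\cdot\,$, the present theorem reduces to the bookkeeping above.
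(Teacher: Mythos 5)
Your proposal is correct, but it takes a genuinely different route from the paper. The paper proves this theorem by a direct computation of the $3$-point Gromov--Witten invariants of $M$ itself: it evaluates $\omega*\omega^j = \sum_{\ell}\mathrm{GW}^M_{0,3,d}(F_m,F_{m+1-j},\P^{\ell})\,t^d\omega^{\ell}$, shows by the dimension constraint that only $d=0$ (constants, giving ordinary cup product) and $d=1$ with $j=m$ contribute, checks regularity of degree-one curves in the zero section via $u^*TM\cong\mathcal{O}(2)\oplus\mathcal{O}(1)^{\oplus m-1}\oplus\mathcal{O}(-1)$ having vanishing $H^1$, and then counts the unique line through two points to get $\mathrm{GW}^M_{0,3,1}(F_m,F_1,\P^1)=-1$, whence $\omega_Q^{1+m}=-t\omega$. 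You instead import the matrix of $r_{\widetilde{g}}$ from Lemma \ref{Lemma O-1 over Pm by QH} (which rests on the $E_g$-section count and the Grothendieck--Riemann--Roch/obstruction-bundle argument) and combine it with Theorem \ref{Theorem Rg and products} and Theorem \ref{Theorem consts are regular in general and rg1 is c1} ($r_{\widetilde{g}}(1)=\pi_M^*c_1(L)=-\omega_Q$ exactly, since this $M$ is monotone so $\lambda_+=0$); this is logically sound and non-circular, because the matrix entries of $r_{\widetilde{g}}$ are established purely by section counts and dimension/maximum-principle arguments, not by prior knowledge of the quantum ring. What the two approaches buy is different: the paper's computation is independent of the fibration machinery and is explicitly used as a cross-check ("confirming Lemma \ref{Lemma O-1 over Pm by QH} via Theorem \ref{Theorem Intro r_g on QH and SH}"), whereas your argument is more economical (no new moduli computation) but sacrifices that independent verification and leans on the heavier results of Sections \ref{Section Construction of the automorphism}--\ref{Section Pseudoholomorphic sections}. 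Two cosmetic points: the entry $t$ sits in the column indexed by $\omega^m$ (the first column in the ordering $\omega^m,\ldots,1$), not the "last column", though the identity $\omega_Q*\omega^m=-t\omega$ you extract is the right one; and the freeness of $1,X,\ldots,X^m$ in $\Lambda[X]/(X^{m+1}+tX)$ needs only that the polynomial is monic (division algorithm), so the appeal to $\Lambda$ being a field, while true here, is unnecessary.
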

\begin{proof}
Denote $\omega$ the canonical generator of $H^2(\P^m)$. We denote $\omega^k$ the ordinary cup product powers, and $\omega_Q^k$ the quantum cup product powers.

For $\mathcal{O}(-n) \to \P^m$ we first calculate for each $j=1,\ldots,m$:
$$
\omega * \omega^j = \sum_{\ell = 1+j-dN} \mathrm{GW}_{0,3,d}^M (F_m,F_{m+1-j},\P^{\ell})\cdot t^d \cdot \omega^{\ell}
$$
where we used that $\omega^{\ell}=\mathrm{PD}(F_{m+1-\ell})$ and $\P^{\ell}=\mathrm{D}(F_{m+1-\ell})$ (where $\mathrm{PD}$ is Poincar\'e duality and $\mathrm{D}$ is intersection duality), and we used the (complex) GW dimension condition $(1)+(j)+(1+m-\ell) = (1+m)+Nd+3-3$.

For $\mathcal{O}(-1) \to \P^m$ we have $N=1+m-n=m$, and since $1\leq j \leq m$ we have $0\leq \ell = 1+j-dN\leq 1+m-dm$, so $d=0$ or $1$. For $d=0$ we count constant $\P^1 \to M$, so the lf cycle we get under evaluation is $M$ and the GW invariant counts
$$M \cap F_m \cap F_{m+1-j} \cap \P^{\ell}=\P^{m-1}\cap \P^{m-j} \cap \P^{\ell} = \P^{\ell-1-j} = \P^0
$$
so we get the ordinary cup product contributions $\omega*\omega^j = \omega^{1+j}+\cdots$. The case $d=1$ forces $j=m-1$ or $j=m$. For $j=m-1$: $\P^0$ can be moved to infinity so GW$=0$. Finally consider $j=m$, $\ell=1$. Regularity of degree $d=1$ holomorphic $u: \P^1 \to \P^m \subset M$ follows from $u^*TM=u^*T\P^m \oplus \mathcal{O}(-1)$, $u^*T\P^m\cong \mathcal{O}(2)\oplus \mathcal{O}(1)^{\oplus m-1}$,
$$\begin{array}{lll}
\mathrm{coker}\, \overline{\partial} &\cong & H^1(\P^1,\mathcal{O}(2)\oplus \mathcal{O}(1)^{\oplus m-1}\oplus \mathcal{O}(-1)) \\
&\cong & H^0(\P^1,\mathcal{O}(-4)\oplus \mathcal{O}(-3)^{\oplus m-1}\oplus \mathcal{O}(-1))^{\vee}=0.
\end{array}
$$
For $d=1,j=m$, we impose intersection conditions $F_m,F_{1},\P^1$. Perturbing that $\P^1$ off the zero section, these three conditions inside the zero section become conditions $\P^{m-1},\P^0,-1\cdot \mathrm{pt}$. There is a unique holomorphic $\P^1$ through two points, and it automatically intersects the $\P^{m-1}$, so $\mathrm{GW}_{0,3,1}^M(F_m,F_{1},\P^1)=-1$.

Conclusion: $\omega*\omega^j=\omega^{1+j}$ for $j=1,\ldots,m-1$, so $\omega_Q^{1+j}=\omega^{1+j}$, and
$$
\omega_Q^{1+m}=\omega*\omega^m_Q = \omega*\omega^m = \omega^{1+m}-t\omega = -t\omega.
$$
So $r_{\widetilde{g}}(1)*\omega^n=(-\omega)*\omega^n=t\omega$ confirming Lemma
\ref{Lemma O-1 over Pm by QH} via Theorem \ref{Theorem Intro r_g on QH and SH}. 
\end{proof}

\subsection{Quantum cohomology of $\mathbf{M=\mathrm{Tot}(\mathcal{O}(-n) \to \P^m)}$}

\begin{corollary*}
Quantum cup product by $c_1(\mathcal{O}(-n))=-n\omega$ defines the matrix $r_g$ of Lemma \ref{Lemma rg for O-n over Pm} in the basis $\omega^m,\ldots,\omega,1$, and so
$$
\begin{array}{lll}
A_a &=& -n\cdot\mathrm{GW}^M_{0,3,1}(F_m,F_{a+1},\P^{1+m-a-N})
= -n\cdot\mathrm{GW}^M_{0,3,1}(F_m,F_{a+1},\P^{n-a})\\
B_a &=& -n\cdot\mathrm{GW}^M_{0,3,2}(F_m,F_{a+1},\P^{1+m-a-2N})\\
C_a &=& -n\cdot\mathrm{GW}^M_{0,3,3}(F_m,F_{a+1},\P^{1+m-a-3N})\\
\cdots
\end{array}
$$
\end{corollary*}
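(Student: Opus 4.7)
The plan is to combine two results already established in the paper: (i) for any negative line bundle, $r_{\widetilde{g}}(1) = \pi_M^*c_1(L) \in QH^2(M)$ (Section \ref{Section General theory for negative line bundles}), and (ii) $r_{\widetilde{g}}$ is quantum cup product by $r_{\widetilde{g}}(1)$ (Theorem \ref{Theorem Rg and products}). For $L = \mathcal{O}(-n)\to\P^m$ one has $\pi_M^*c_1(L) = -n\,\omega$, so $r_{\widetilde{g}}$ is the $\Lambda$-linear endomorphism $a \mapsto (-n\omega)*a$ of $QH^*(M)$. The content of the Corollary is then the translation of this multiplication into the standard three-point GW invariants of $M$ and the verification that the resulting matrix agrees with that of Lemma \ref{Lemma rg for O-n over Pm}.

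Expanding the quantum product via its defining GW invariants, for each $j$,
\[
(-n\omega) * \omega^j \;=\; -n\,\sum_{d\geq 0}\,\sum_{\ell}\, \mathrm{GW}_{0,3,d}^M\!\big(F_m,\, F_{m+1-j},\, \P^{\ell}\big)\, t^d\, \omega^{\ell},
\]
using $\mathrm{PD}(\omega^k)=F_{m+1-k}$ and that the intersection-dual of the lf cycle $F_{m+1-k}$ is $\P^k$. The GW dimension axiom forces $\ell = 1+j-dN$, so each $d$ contributes to exactly one diagonal. Writing the input $\omega^{m+1-c}$ at column $c$ and the coefficient of $\omega^{m+1-r}$ at row $r$ (in the basis $\omega^m,\ldots,\omega,1$), the condition becomes $r = c + dN - 1$. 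For $d=0$ this places the ordinary cup product coefficient $-n$ on the superdiagonal at $(c-1,c)$, reproducing the $-n$'s of Lemma \ref{Lemma rg for O-n over Pm}. For $d=1$ the entries land at $(c+N-1,c)$: setting $c = 1+a$ yields $A_a$ at position $(N+a,1+a)$ with value $-n\,\mathrm{GW}_{0,3,1}^M(F_m,F_{a+1},\P^{m+1-a-N})=-n\,\mathrm{GW}_{0,3,1}^M(F_m,F_{a+1},\P^{n-a})$ after substituting $N=1+m-n$. The cases $d=2,3,\ldots$ produce the $B_a, C_a,\ldots$ on the further subdiagonals, and all remaining entries vanish for dimensional reasons.

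The main nontrivial input is the identification $r_{\widetilde{g}}(1)=\pi_M^*c_1(L)$, established using the gluing and section-counting machinery of Sections \ref{Section Pseudoholomorphic sections}--\ref{Section Gromov-Witten invariants}; once this and Theorem \ref{Theorem Rg and products} are granted, the Corollary reduces to the bookkeeping above, converting counts of sections of $E_g$ into three-pointed GW counts on the fibre $M$.
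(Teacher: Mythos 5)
Your proposal is correct and takes essentially the same route the paper intends: $r_{\widetilde g}$ is quantum cup product by $r_{\widetilde g}(1)=\pi_M^*c_1(L)=-n\omega$ (Theorem \ref{Theorem Rg and products} combined with Theorem \ref{Theorem consts are regular in general and rg1 is c1}, monotonicity for $n<1+m$ giving $\lambda_+=0$), and then the GW expansion with the dimension constraint $\ell=1+j-dN$ is exactly the bookkeeping already carried out in the proof of the preceding theorem, matching the matrix of Lemma \ref{Lemma rg for O-n over Pm}. No gaps.
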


\begin{remark*}
The obstruction bundle involved in calculating the $A_a,B_a,C_a,\ldots$ in this way has fiber $H^0(\P^1,\mathcal{O}(nd-2))^{\vee}$ of $($complex$)$ rank $nd-1$.
\end{remark*}
%
\subsection{Linear algebra}
\label{Subsection linear algebra}
Let $M=\mathrm{Tot}(\mathcal{O}(-n)\to \P^m)$ (although what we say applies also to the cyclic subgroups of $QH^*,SH^*$ generated by $c_1(L)$ for $M=\mathrm{Tot}(L\to B)$).

 Let $c=c_1(\mathcal{O}(-n))=-n\omega_Q$. Taking quantum cup product powers of $c$ yields $c_Q^{m},c_Q^{m-1},\ldots,c_Q,1$, which is a basis for $QH^*(M)$ in characteristic $0$ $($and for odd $n$ in characteristic $2$). The $r_{\widetilde{g}}$ in this basis turns into the canonical form:
$$
\left[\begin{smallmatrix}
 -a_1 & 1 & 0 & 0 & \cdots \\
 -a_2 & 0 & 1 & 0 & \cdots \\
 \vdots\\
 -a_m & 0 & \cdots &  & 0 & 1 \\
 0 & 0 & \cdots & & &  0 
\end{smallmatrix}
\right]
$$
where $\lambda^{m+1} + a_1  \lambda^m + a_2 \lambda^{m-1} + \cdots + a_m \lambda$ is the characteristic polynomial of $r_{\widetilde{g}}$. Here $a_i= 0$ if $i$ is not divisible by $|N|$, and $a_i$ is homogeneous in $t$ of order $t^{N/i}$.

Since $r_{\widetilde{g}}$ is quantum cup product by $c$,
$$
QH^*(M) \equiv \Lambda[c_Q]/(c_Q^{m+1}+a_1c_Q^m+\cdots + a_{m}c_Q).
$$

Suppose there is a largest integer $m\geq p\geq 1$ for which $a_p\neq 0$ $($otherwise $c_Q^{m+1}=0$ and $SH^*(M)=0)$. Then the characteristic polynomial of $r_{\widetilde{g}}$ is 
$$
\lambda^{m+1-p}(\lambda^p + a_1 \lambda^{p-1} + \cdots + a_p).
$$
Since rank $r_{\widetilde{g}}=m$, the above implies the Jordan normal form of $r_{\widetilde{g}}$ has exactly one Jordan block for eigenvalue $0$ of size $m+1-p$. Thus, for $k\geq m+1-p$, $\ker r_{\widetilde{g}}^k$ is the generalized eigenspace of $r_{\widetilde{g}}$ for eigenvalue $0$ which is
$$
\ker r_{\widetilde{g}}^k = \Lambda\cdot (\lambda^p + a_1 \lambda^{p-1} + \cdots + a_p)
$$
\emph{Remark: }$\mathrm{image}(r_{\widetilde{g}}^k)=\mathrm{span}(c_Q^m,\ldots,c_Q^{1+m-p})$ stabilizes for $k\geq m+1-p$.

It follows by Theorem \ref{Theorem Intro r_g on QH and SH} that $SH^*(M)$ has rank $p$ since
$$
SH^*(M)\cong \Lambda[c_Q]/(c_Q^{p}+a_1 c_Q^{p-1} + \cdots + a_p).
$$

\begin{lemma}\label{Lemma aN calculation}
 $ a_{N}=(-1)^N n^{N-1}\sum_{j=0}^{n-1} A_j t$
 \;$($where $N=1+m-n\geq 0)$.
\end{lemma}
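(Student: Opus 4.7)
The plan is to compute $a_N$ directly as a coefficient in the characteristic polynomial of $r_{\widetilde{g}}$, using the explicit matrix from Lemma \ref{Lemma rg for O-n over Pm} in the basis $\omega^m, \omega^{m-1}, \ldots, 1$. Since the characteristic polynomial is basis-invariant, writing
$$\det(\lambda I - r_{\widetilde{g}}) = \sum_{k=0}^{m+1} (-1)^k\, \lambda^{m+1-k}\, E_k(r_{\widetilde{g}}),$$
where $E_k$ is the sum of the principal $k\times k$ minors, one reads off $a_N = (-1)^N E_N(r_{\widetilde{g}})$. The task reduces to identifying the $N\times N$ principal submatrices with nonzero determinant and summing their contributions.

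The nonzero entries of $r_{\widetilde{g}}$ are the superdiagonal values $-n$ at positions $(i,i+1)$, together with the quantum entries $A_a t$ at $(N+a, 1+a)$, $B_a t^2$ at $(2N+a, 1+a)$, and so on, with the convention that $A_n=0$ (the last row vanishes, as noted after Lemma \ref{Lemma rg for O-n over Pm}). Expanding $\det((r_{\widetilde{g}})_S) = \sum_\pi \mathrm{sign}(\pi)\prod_{i\in S}(r_{\widetilde{g}})_{i,\pi(i)}$, each nonzero term corresponds to a permutation $\pi$ of $S$ whose cycles use only such nonzero entries.

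The combinatorial heart of the argument is the claim that, for $N\geq 2$, every non-trivial cycle of $\pi$ has length at least $N$. Around a cycle the net displacement must vanish, and each step is either $+1$ (superdiagonal) or of the form $1-dN$ for some $d\geq 1$ (a $d$-th quantum step). Writing $p$ for the number of $+1$ steps and $q_d$ for the number of $d$-quantum steps, the closure condition yields cycle length $k = p + \sum_d q_d = N\sum_d d\, q_d$, forcing either $k=0$ (no quantum step, impossible for a genuine cycle) or $k\geq N$. Combined with the vanishing of the diagonal of $r_{\widetilde{g}}$ for $N\geq 2$, this means the only derangement of a size-$N$ set $S$ giving a nonzero product is a single $N$-cycle with $q_1=1$, $p=N-1$; its support $S$ must be a block of $N$ consecutive integers $\{i_1,\ldots,i_1+N-1\}$, and its contribution equals $(-1)^{N-1}\cdot(-n)^{N-1}\cdot A_{i_1-1}\,t = n^{N-1} A_{i_1-1}\,t$.

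Summing over valid starting indices $i_1\in\{1,2,\ldots,n+1\}$ and invoking $A_n=0$ gives $E_N(r_{\widetilde{g}}) = n^{N-1}\bigl(\sum_{j=0}^{n-1} A_j\bigr) t$, hence $a_N = (-1)^N n^{N-1}\sum_{j=0}^{n-1} A_j\,t$. The degenerate case $N=1$ falls out of the same framework, with the ``$N$-cycles'' reduced to diagonal fixed points: the $A_a$-entries then sit on the diagonal and $a_1 = -\mathrm{tr}(r_{\widetilde{g}}) = -\sum_{j=0}^{n-1} A_j t$, in agreement. The main obstacle is the combinatorial step of ruling out alternative cycle structures; this boils down to the observation that the only ``downward'' moves available are the quantum corrections, each of size $\geq N-1$, so no shorter closing cycle is possible and no composite of smaller cycles can sum to $N$ non-fixed points.
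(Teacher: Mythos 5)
Your proposal is correct, and it reaches the formula by a more explicit route than the paper. The paper's own proof is a rescaling trick: replace $r_{\widetilde{g}}$ by $r_{\widetilde{g}}/n$, so the superdiagonal entries become $-1$ and the $A_j$ become $A_j/n$, assert (as an ``easily checked'' computation) that the normalized matrix has $\widetilde{a}_N=(-1)^N\sum \widetilde{A}_j t$, and then recover $a_N$ from the fact that dividing the matrix by $n$ multiplies the coefficient of $\lambda^{1+m-N}$ by $n^{-N}$. You instead compute $a_N=(-1)^N E_N(r_{\widetilde{g}})$ directly from the principal-minor expansion and classify the contributing permutations by the displacement argument: each allowed step is $+1$ or $1-dN$, so every nontrivial cycle has length $N\sum d\,q_d$, and in a size-$N$ principal minor (with zero diagonal for $N\geq 2$) the only surviving terms are single $N$-cycles built from $N-1$ superdiagonal $-n$'s and one $A_j$ entry, each contributing $n^{N-1}A_{i_1-1}t$ with the cycle sign $(-1)^{N-1}$; summing over the $n+1$ consecutive blocks and using the vanishing last row ($A_n=0$) gives the claim, and the $N=1$ case is just $a_1=-\mathrm{tr}(r_{\widetilde{g}})$. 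What each approach buys: the rescaling keeps the bookkeeping of powers of $n$ trivial but leaves the essential combinatorics (in particular why the $B_a,C_a,\ldots$ entries and composite cycle types cannot enter $a_N$) as an unverified check, whereas your expansion supplies exactly that verification, at the cost of a slightly longer argument. Both proofs use the same structural input from Lemma \ref{Lemma rg for O-n over Pm} (positions of the nonzero entries and the vanishing of the last row), and both implicitly assume $N\geq 1$, the case $N=0$ in the statement being degenerate.
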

\begin{proof}
If the matrix in Lemma \ref{Lemma rg for O-n over Pm} had $-n$'s replaced by $-1$ and $A_j,B_j,\ldots$ replaced by $\widetilde{A}_j,\widetilde{B}_j,\ldots$, then one can easily check that the characteristic polynomial would be $\lambda^{m+1} + \widetilde{a}_N \lambda^{m+1-N} + \cdots$ where $\widetilde{a}_N=(-1)^N\sum \widetilde{A}_jt$. If we replace $r_{\widetilde{g}}$ by $r_{\widetilde{g}}/n$ the matrix has that form with $\widetilde{A}_j= A_j/n$. Under this replacement, the characteristic polynomial changes from $\lambda^{1+m}+a_N \lambda^{1+m-N}+\cdots$ to $\lambda^{1+m} + a_N n^{-N}\lambda^{1+m-N}+\cdots$ So $a_N n^{-N}=\widetilde{a}_N=(-1)^N\sum A_jt/n$.
\end{proof}

\begin{corollary}\label{Corollary linear algebra 2N m case}
For $2N>m$ $($equivalently $n<1+\frac{m}{2})$ only the $A_j$ contribute to $r_{\widetilde{g}}$, and the only non-zero $a_i$ is $a_{N}=-(-n)^{N-1}\sum A_j t$, so putting $\alpha=\sum A_j$:
$$
\begin{array}{lll}
 QH^*(M) = \Lambda[c_Q]/(c_Q^{1+m} -(-n)^{N-1} \alpha t c_Q^{n}) = \Lambda[\omega_Q]/(\omega_Q^{1+m} +n^{-1} \alpha t \omega_Q^{n}) \\[2mm]
SH^*(M) = \Lambda[c_Q]/(c_Q^{N}-(-n)^{N-1}\alpha t)
= \Lambda[\omega_Q]/(\omega_Q^{N}+n^{-1}\alpha t)
\end{array}
$$
where $N=1+m-n$, and in Theorem \ref{Theorem Aj calculation} we calculate $A_j$.
\end{corollary}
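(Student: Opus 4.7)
The plan is to read $r_{\widetilde{g}}$ off Lemma \ref{Lemma rg for O-n over Pm}, extract its characteristic polynomial, and then translate via the linear algebra framework of Section \ref{Subsection linear algebra}. First I would observe that sections in class $(1,d)$ populate the subdiagonal at distance $Nd-1$ below the main diagonal (from the virtual dimension equality $Nd-i+j-1=0$). Under the hypothesis $2N>m$, the subdiagonals for $d\geq 2$ are either entirely outside the $(m+1)\times(m+1)$ matrix, or (in the borderline case $2N=m+1$) have their single potential entry sitting in the zero last row. Hence $B_j=C_j=\cdots=0$, leaving only the $-n$ superdiagonal and the $A_j\,t$ subdiagonal at distance $N-1$.

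Next I would compute $p(\lambda)=\det(\lambda I-r_{\widetilde{g}})$ by expanding over permutations of $\{1,\ldots,m+1\}$. Any contributing non-identity cycle must consist of $a$ up-moves ($\sigma(k)=k+1$, weight $n$) and $b$ down-moves ($\sigma(k)=k-N+1$, weight $-A_{k-N}\,t$), with closure forcing $a=b(N-1)$ and hence total cycle length $bN$ occupying $bN$ distinct rows of $\{1,\ldots,m\}$. Under $2N>m$ no configuration with $b\geq 2$ fits — neither a single longer cycle of length $bN$ nor multiple disjoint length-$N$ cycles — so only isolated length-$N$ cycles contribute. A length-$N$ cycle on rows $\{i,\ldots,i+N-1\}$ has sign $(-1)^{N-1}$ and product of entries $n^{N-1}\cdot(-A_{i-1}\,t)$, so contributes $(-1)^{N}n^{N-1}A_{i-1}\,t\,\lambda^{n}$; summing over valid starting rows $i=1,\ldots,n$ recovers Lemma \ref{Lemma aN calculation}:
\[
a_N \;=\; (-1)^N n^{N-1}\alpha\,t \;=\; -(-n)^{N-1}\alpha\,t,\qquad \alpha=\sum_{j=0}^{n-1}A_j,
\]
while all other $a_i$ vanish. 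Thus $p(\lambda)=\lambda^{n}(\lambda^{N}+a_N)$.

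Finally, since $p=N$ is the unique index with $a_p\neq 0$, the linear-algebra discussion of Section \ref{Subsection linear algebra} applies verbatim: Cayley--Hamilton yields $c_Q^{m+1}+a_N\,c_Q^{n}=0$ in $QH^*(M)$, and $\ker r_{\widetilde{g}}^k$ stabilizes for $k\geq m+1-N=n$ at the principal ideal generated by $c_Q^{N}+a_N$. Combined with Theorem \ref{Theorem Intro r_g on QH and SH} this gives
\[
QH^*(M)=\Lambda[c_Q]/(c_Q^{m+1}-(-n)^{N-1}\alpha t\,c_Q^{n}),\qquad SH^*(M)=\Lambda[c_Q]/(c_Q^{N}-(-n)^{N-1}\alpha t),
\]
and substituting $c_Q=-n\omega_Q$ and dividing through by a suitable power of $-n$ produces the stated $\omega_Q$-presentations. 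The main obstacle is really combinatorial rather than geometric — a careful pigeonhole check that no cycle of length $\geq 2N$ and no pair of disjoint length-$N$ cycles can fit into $m+1$ rows under $2N>m$ — but this is elementary once the rigid $(N-1)$-spacing of the two surviving diagonals is in place. The deeper, deferred task is the explicit evaluation of the $A_j$, hence of $\alpha$, which is carried out in Theorem \ref{Theorem Aj calculation}.
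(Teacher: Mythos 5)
Your proposal is correct and follows essentially the paper's own route: read the two surviving diagonals off Lemma \ref{Lemma rg for O-n over Pm} (the $d\geq 2$ subdiagonals falling outside the matrix, or into the zero last row, when $2N>m$), identify the characteristic polynomial $\lambda^{n}(\lambda^{N}+a_N)$ with $a_N=-(-n)^{N-1}\alpha t$ as in Lemma \ref{Lemma aN calculation}, and then apply the linear algebra of \ref{Subsection linear algebra} together with Theorem \ref{Theorem Intro r_g on QH and SH}. Your explicit permutation-cycle expansion merely spells out the determinant computation the paper leaves as ``easily checked'' (and re-derives the vanishing of the remaining $a_i$, which the paper obtains from the remark that $a_i=0$ unless $N\mid i$), so the two arguments coincide in substance.
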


\subsection{Calculation of $A_a$ by virtual localization}

We follow closely the notation of Pandharipande's notes \cite{Pandharipande}, which are based on Graber-Pandharipande \cite{Pandharipande2}. Localization was first applied to stable maps by Kontsevich \cite{Kontsevich}. We also mention Cox-Katz \cite[p.277]{Cox-Katz} as a good reference. As a warm-up we redo the $\mathcal{O}(-1) \to \P^1$.

\begin{theorem}
 For $\mathcal{O}(-1) \to \P^1$, $A_0=1$.
\end{theorem}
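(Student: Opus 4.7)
The plan is to lift the standard torus action on $\P^1 \times \P^1$ to $E_g = \mathcal{O}(-1,-1)$ and apply the Graber--Pandharipande virtual localization formula \cite{Pandharipande,Pandharipande2} to the moduli space $\overline{\mathcal{M}} = \overline{\mathcal{M}}_{0,2}(E_g,(1,1))$ of sections intersecting a fibre $F$ over $z_0$ and a perturbed $\P^1$ over $z_\infty$. Concretely, let $T = \C^{\ast}$ act on $\P^1 \times \P^1$ with weights $(\lambda,0)$ on the base (i.e.\ only rotating the $S^2 = z$-factor fixing $z_0=0$ and $z_\infty = \infty$); the intersection cycles $F = \{0\}\times \P^1$ and $\P^1 = \P^1 \times \{\mathrm{pt}\}$ are $T$-invariant after moving the perturbation point, so the entire localization setup is equivariant.

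The first step is to identify the $T$-fixed locus inside $\overline{\mathcal{M}}$. As described in \ref{Subsection Compactification of mathcal M}, the smooth moduli space is the one-parameter family $u_a(z) = (z,az)$ parametrized by $a \in \C^{\ast}$, and the $T$-action rotates $a$. Hence the two $T$-fixed points of $\overline{\mathcal{M}} \cong \P^1$ are exactly the two boundary nodal configurations at $a = 0$ and $a = \infty$: the first is the nodal curve $\P^1 \times \{0\} \cup \{\infty\} \times \P^1$ (the section component plus a vertical bubble over $z_\infty$), the second is $\P^1 \times \{\infty\} \cup \{0\} \times \P^1$. I will keep track of marked points: the $z_0$-marked point sits on the section component and the $z_\infty$-marked point sits on the vertical bubble (or the section component, swapped between the two fixed loci).

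The second step is to compute the contribution of each fixed locus by the localization formula
$$
A_0 \;=\; \sum_{F_i} \int_{[F_i]^{\mathrm{vir}}} \frac{e^{T}(\mathrm{Obs}|_{F_i})}{e^{T}(N^{\mathrm{vir}}_{F_i})},
$$
where $\mathrm{Obs}$ is the obstruction bundle from Lemma \ref{Lemma first right derived functor O-1 case for P1} and the equivariant intersection conditions over $z_0, z_\infty$ have already forced the support at the two fixed loci. Each fixed locus is a point (with a finite automorphism group that I will need to track carefully), so the contribution reduces to an algebraic expression in the equivariant weight $\lambda$. The obstruction bundle fibre at a nodal fixed point splits according to the normalization sequence
$$
0 \to \mathcal{F} \to \mathcal{F}|_{C_1} \oplus \mathcal{F}|_{C_2} \to \mathcal{F}_{\mathrm{node}} \to 0,
$$
with $\mathcal{F} = \mathcal{O}(-1,-1)$, and $H^1$ of the section component contributes while the bubble contributes trivially (it maps into a fibre, pulling back $\mathcal{F}$ to $\mathcal{O}(-1)$ on $\P^1$). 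The virtual normal bundle is read off from the standard deformation/smoothing-of-node sequence.

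The third step is to assemble: by the symmetry of the two fixed loci (they are interchanged by swapping $0 \leftrightarrow \infty$, which flips the sign of $\lambda$), the two contributions are related by $\lambda \mapsto -\lambda$; the $\lambda$-dependence must then cancel in the sum to yield an integer, and the bookkeeping of the weights should produce $A_0 = 1$, matching the Grothendieck--Riemann--Roch computation of Theorem \ref{Theorem eul char of obs bundle}. The main obstacle I anticipate is not conceptual but combinatorial: correctly tabulating the equivariant weights on the tangent spaces to the moduli of stable maps at the fixed nodal configurations (the vertex, edge, and node smoothing weights in the Kontsevich/Graber--Pandharipande formalism) together with the automorphism factor, and verifying that the equivariant intersection conditions truncate the localization sum to exactly the two fixed loci above rather than any spurious contracted-component degenerations. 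Once these weights are written down, the cancellation is a one-line rational-function identity.
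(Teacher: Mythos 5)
Your route is essentially the paper's: the paper proves $A_0=1$ by Graber--Pandharipande localization over exactly the two nodal configurations $a=0,\infty$ of \ref{Subsection Compactification of mathcal M}; the only difference is that you rotate the base $S^2$-factor, while the paper uses a torus acting on the fibre $\CP^1$-factor. Your choice of torus does work (the induced action on $\overline{\mathcal{M}}\cong\P^1$ sends $u_a\mapsto u_{t^{-1}a}$, so the fixed points are the two boundary nodal sections, as you say). The genuine problem is that the one substantive claim you make about the fixed-point data is wrong, and it is precisely the input the computation turns on. At either nodal fixed point the section component has degree $(1,0)$ and the bubble degree $(0,1)$, so $\mathcal{F}=\mathcal{O}(-1,-1)$ restricts to $\mathcal{O}(-1)$ on \emph{both} components and $H^1$ vanishes on each; your statement that ``$H^1$ of the section component contributes while the bubble contributes trivially'' therefore produces zero obstruction, and the localization sum would not give $1$. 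The obstruction line at a nodal fixed point is the gluing term of the normalization sequence, i.e.\ the fibre $\mathcal{O}(-1,-1)|_{u(\mathrm{node})}$ --- this is exactly what the paper extracts from the long exact sequence (compare Lemma \ref{Lemma first right derived functor O-1 case for P1} and the proof in Section \ref{Section O(-n) over CPm}) --- and its equivariant weight is what makes the answer nonzero.

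Two further corrections. With your rank-one torus and the tautological linearization of $\mathcal{O}(-1,-1)$, the two fixed points are \emph{not} exchanged by $\lambda\mapsto-\lambda$: the obstruction weight is $\lambda$ at one node image and $0$ at the other, so the contributions are $-1$ and $0$ rather than mirror images; the symmetry you invoke is linearization-dependent and cannot serve as a check. Moreover this raw value of $\langle e(\mathrm{Obs}),\overline{\mathcal{M}}\rangle$ agrees with the paper's pre-rescaling sum, and to land on $A_0=1$ in characteristic zero one must still account for the perturbed $\P^1$ over $z_\infty$ meeting the zero section in $-n[\mathrm{pt}]$ (the rescaling step at the end of the paper's localization proof; invisible over $\Z/2$) --- ``moving the perturbation point'' does not remove this multiplicity. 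Finally, the incidence cycles are not $\{0\}\times\P^1$ and $\P^1\times\{\mathrm{pt}\}$: they are the $\mathcal{O}(-1)$-fibre $F$ and a perturbed zero section, each lying in the $E_g$-fibre over $z_0$ resp.\ $z_\infty$; your setup is equivariant only because, after the maximum principle confines everything to the zero section, both constraints become the fixed points $(0,0)$ and $(\infty,\infty)$. Beyond these points, the actual tabulation of node-smoothing, deformation and obstruction weights --- the substance of the paper's proof --- is deferred in your proposal rather than carried out.
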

\begin{proof}
 Consider the deformation long exact sequence \cite[p.549]{Pandharipande},
$$
0 \to \mathrm{Aut}(C) \to \mathrm{Def}(u) \to \mathrm{Def}(C,u) \to \mathrm{Def}(C) \to \mathrm{Ob}(u) \to \mathrm{Ob}(C,u) \to 0
$$
where $C=(\Sigma,x_1,x_2)$ is a $2$-pointed nodal curve of arithmetic genus $0$, and $u: C \to E_g$ are the sections in class $(1,1)$ that we want to count in Lemma \ref{Lemma A coefficient neg l bdles}. The following observations clarify how our setup is different from \cite{Pandharipande}:

\begin{enumerate}
 \item The marked points $x_1,x_2$ are fixed in our setup, indeed as in \ref{Subsection Compactification of mathcal M} we choose $x_1=p_{00}=(0,0)$, $x_2=p_{11}=(\infty,\infty)$. 

\item The holomorphic maps we consider are
$$u:C \to \P^1\times \P^1 \subset E_g$$
in class $(1,1)$, having already imposed the intersection conditions $F,P$ - so we use the moduli space $\overline{\mathcal{M}}$ of \ref{Calculation of A using obstruction bundles}.  We will often refer to the second $\P^1$ as $\CP^1\subset M$ to distinguish it from the first factor.

\item The open part $\mathcal{M}$ are maps of the form $u(z)=(z,az)$. The compactification gives rise to two new stable maps $U_{10},U_{01}: \Sigma_1 \cup \Sigma_2 \to \P^1\times \P^1$, where $C$ is a nodal curve with two $\P^1$'s joined at one node $v$. The first map is specified by: $U_{10}(\Sigma_1)=\P^1\times 0$, $U_{10}(\Sigma_2)=\infty\times \P^1$, $U_{10}(v)=p_{10}=(\infty,0)$. The second: $U_{01}(\Sigma_1)=0\times \P^1$, $U_{01}(\Sigma_2)=\P^1\times \infty$, $U_{01}(v)=p_{01}=(0,\infty)$.

 \item \label{Item torus action} The torus action by $\mathbb{T}=(\C^*)^2 \ni t$ on $\P^1\times \P^1$ is: 
$$([z_0:z_1],[w_0:w_1])\mapsto ([z_0:z_1],[t_0^{-1}w_0:t_1^{-1}w_1]).$$
(the inverses ensure that the action on linear forms in $H^0(\P^1,\mathcal{O}(1))$ involves no inverses).
This induces a natural action on $u(z)=([z_0:z_1],[z_0:az_1])$:
$$(t\cdot u)(z)= ([z_0:z_1],[t_0^{-1}z_0:at_1^{-1}z_1]).$$
 Denote $\alpha_0,\alpha_1$ the weights for $t$.

\item The $\mathbb{T}$-fixed points of $\overline{\mathcal{M}}$ are the two maps $U_{01},U_{10}$. We call $\Gamma_{10},\Gamma_{01}$ the decorated graphs which describe $U_{10},U_{01}$ (explicitly: graphs with two edges, and vertices labeled by $00,10,11$ and $00,01,11$ respectively).

 \item\label{Item which deformations} Because of the intersection conditions, we only consider deformations of $u$ subject to the conditions $u(0)=p_{00}$, $u(\infty)=p_{11}$. There are no reparametrization automorphisms on the main component of $u$ because we only consider sections. There are $PSL(2,\C)$-reparametrization automorphisms for the bubbles arising in the $M$-fibres of $E_g$.

 \item $E_g$ plays the same role as $\P^m$ in \cite[27.6]{Pandharipande}, however we do not consider deformations of $u$ in all $TE_g$-directions, but rather only in $T^vE_g$-directions since we are working with sections. Recall $T^vE_g\cong T\CP^1 \oplus \mathcal{O}(-1,-1)$. So 
$$
\begin{array}{lllll}
(U_{10}^*T^vE_g)|_{\Sigma_1} \cong T_0\CP^1\oplus \mathcal{O}(-1), & (U_{10}^*T^vE_g)|_{\Sigma_2} = TM \cong T\CP^1 \oplus \mathcal{O}(-1),\\
(U_{01}^*T^vE_g)|_{\Sigma_2} \cong T_{\infty}\CP^1\oplus \mathcal{O}(-1), & (U_{01}^*T^vE_g)|_{\Sigma_1} = TM \cong T\CP^1 \oplus \mathcal{O}(-1)
\end{array}
$$

\end{enumerate}

We use the convention of \cite{Pandharipande} that we refer to the fiber of a vector bundle when we mean the vector bundle. In our setup, $\mathrm{Ob}(C,u)=0$ since there are no contracted components in our stable maps. The obstruction bundle is $\mathrm{Ob}(u)=H^1(C,u^*T^vE_g)$, but the deformation bundle $\mathrm{Def}(u)$ is not all of $H^0(C,u^*T^vE_g)$ because of the intersection conditions.

By (\ref{Item which deformations}), $\mathrm{Def}(u)^{\mathrm{mov}}=0$ (the section of $\mathcal{O}(2)$ vanishing at $0,\infty$ has weight zero, so contributes to $\mathrm{Def}(u)^{\mathrm{fix}}$ and it cancels with the bubble reparametrization automorphisms $\mathrm{Aut}(C)^{\mathrm{fix}}$ in the deformation LES). Also by (\ref{Item which deformations}): $\mathrm{Aut}(C)^{\mathrm{mov}}=0$.

By the Atiyah-Bott localization theorem, we want to calculate:
$$
A_0= \int_{\overline{\mathcal{M}}} e(\mathrm{Obs}) = \int_{\overline{\mathcal{M}}^{\mathrm{vir}}} 1 = 
i_{\mathrm{point}}^*\int_{\overline{\mathcal{M}}_{\mathbb{T}}} 1 = \sum_{\Gamma} \frac{1}{e^{\mathbb{T}}(N^{\mathrm{vir}}_{\Gamma})}
$$
where we sum over our two graphs $\Gamma=\Gamma_{10}$ and $\Gamma_{01}$, and where the equivariant Euler class of the virtual normal bundle to the fixed points $U_{10},U_{01}$ is:
$$
e^{\mathbb{T}}(N^{\mathrm{vir}}_{\Gamma}) = 
\frac{
e(\mathrm{Def}(u)^{\mathrm{mov}})\, 
e(\mathrm{Def}(C)^{\mathrm{mov}})
}
{
e(\mathrm{Ob}(u)^{\mathrm{mov}})\, 
e(\mathrm{Aut}(C)^{\mathrm{mov}})
}
=
\frac{
e(\mathrm{Def}(C)^{\mathrm{mov}})
}
{
e(\mathrm{Ob}(u)^{\mathrm{mov}})\, 
}
$$

Now $\mathrm{Def}(C)^{\mathrm{mov}}$ comes from resolving the node $v$ of $\Sigma_1\cup \Sigma_2$. By the \emph{boundary lemma} \cite[25.2.2]{Pandharipande}, the relevant normal bundle associated to this smoothing is $T_v\Sigma_1 \otimes T_v\Sigma_2$. The action on these tangent spaces is induced by the action on the image under the isomorphisms $U_{10}:\Sigma_1 \to \P^1 \times 0$, $U_{01}: \Sigma_2 \to \infty\times \P^1$ for $\Gamma_{10}$, and similarly for $U_{01}$. Recall that if $\mu_0,\mu_1$ are weights for a torus action on $\P^1$ then the weights for $T_0\P^1$,$T_{\infty}\P^1$ are respectively $\mu_0-\mu_1,\mu_1-\mu_0$.
So the weight for the action on the above tensor for $U_{10}$, $U_{01}$ respectively are:
$$0+(\alpha_0-\alpha_1) \qquad (\alpha_1 - \alpha_0) + 0.
$$

Finally, consider $\mathrm{Ob}(u)^{\mathrm{mov}}$. The only contributions come from $\mathcal{O}(-1,-1)$. The normalizing sequence for the node for $u=U_{10}$ is:
$$
0 \to u^*\mathcal{O}(-1,-1) \to \mathcal{O}_{\Sigma_1}(-1)\oplus \mathcal{O}_{\Sigma_2}(-1) \to u^*\mathcal{O}_{p_{10}}(-1,-1) \to 0
$$
Taking the LES in cohomology, using that $H^1(\P^1,\mathcal{O}(-1))=0$, we deduce:
$$
\begin{array}{lll}
\mathrm{Ob}(u)^{\mathrm{mov}} &=& H^0(\Sigma,u^*\mathcal{O}_{p_{10}}(-1,-1)) \equiv H^0(\P^1\times \P^1,\mathcal{O}_{p_{10}}(-1,-1)) \\ &=& \mathcal{O}(-1,-1)|_{p_{10}\in \P^1\times \P^1}.
\end{array}
$$
In general, the action on $\mathcal{O}(-1,-1)$ induced by the $\mathbb{T}$-action on $\P^1\times \P^1$ has weights $-\rho_{ij}$ if $\rho_{ij}$ are the weights for $\P^1\times \P^1$ indexed by its fixed points $p_{ij}$. In our case, we obtain weight $-\alpha_0$. Similarly, for $U_{01}$ we obtain $\mathcal{O}(-1,-1)|_{p_{01}}$ and weight $-\alpha_1$.
$$
A_0 = \frac{-\alpha_0}{\alpha_0-\alpha_1} + \frac{-\alpha_1}{\alpha_1-\alpha_0} = \frac{-\alpha_0 + \alpha_1}{\alpha_0-\alpha_1} = -1.
$$
$A_0$ actually needs to be rescaled by $-n=-1$, because the perturbed $P$ intersects the zero section in $-n[\mathrm{pt}]$. This will become clearer in the next proof. 
\end{proof}

\begin{definition}\label{Definition tau a n}
 Let $\tau_{a,n}$ denote the coefficient of $x^a$ in the degree $n-1$ polynomial 
$$
\prod_{\begin{smallmatrix} A\geq 1,B\geq 1\\ A+B=n \end{smallmatrix}} (Ax+B),
$$
and define $\tau_{0,1}=1$. Observe that $\sum_a \tau_{a,n} = \prod (Ax+B)|_{x=1} = \prod n = n^{n-1}$.

In characteristic $2$ and odd $n$, $\prod (Ax+B) \equiv x^{\frac{n-1}{2}}$, so $\tau_{a,n}\equiv 0$ except for $\tau_{\frac{n-1}{2},n}=1$, and $\sum_a \tau_{a,n}\equiv 1$. For even $n$, $\tau_{a,n}\equiv 0$ except when $n=2$: $\tau_{0,2} = \tau_{1,2} = 1$.
\end{definition}

\begin{theorem}\label{Theorem Aj calculation}
 For $\mathcal{O}(-n) \to \P^m$, $A_a = (-1)^{n-1}n^2\tau_{a,n}$ $($assuming $n<1+m)$.
\end{theorem}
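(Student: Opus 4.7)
The plan is to compute $A_a$ by equivariant virtual localization, generalizing the Bott-residue argument of \ref{Calculation of A using obstruction bundles} for $\mathcal{O}(-1)\to\P^1$. I would combine the action of $\mathbb{T} = (\C^*)^{m+1}/\C^*$ on $\P^m$ (with weights $-\alpha_i$ on the homogeneous coordinates) with a $\C^*$-rotation on $S^2$ fixing $z_0, z_\infty$ and of weight $\mu$ on $T_{z_0}S^2$, and extend both to $E_g = \mathrm{Tot}(\mathcal{O}(-1,-n)\to S^2\times\P^m)$. Choosing the cycles representing the intersection conditions to be the $\mathbb{T}$-invariant coordinate subspaces $\P^a, \P^{n-a} \subset \P^m$ (which are disjoint as long as $n \leq m$; the borderline $n = m$ will be handled by continuity or a limit argument), Corollary \ref{Corollary sections lie in zero section neg lbdles} and the argument of Lemma \ref{Lemma first right derived functor O-1 case for P1} reduce the computation to the equivariant integral $A_a = \int_{\overline{\mathcal{M}}_{\mathrm{cut}}} e\bigl(R^1\pi_*f^*\mathcal{O}(-1,-n)\bigr)$, which by Atiyah-Bott collapses to a sum over $\mathbb{T}\times\C^*$-fixed stable maps.

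I would next classify the fixed loci. Because $\mathbb{T}$ acts trivially on the source $S^2$, no fixed irreducible section of class $(1,1)$ exists (a $\mathbb{T}$-equivariant degree-$1$ map $\P^1 \to \P^m$ would have constant image). Every fixed stable map is therefore reducible, consisting of a constant main section $z \mapsto (z, p_\bullet)$ plus a single bubble attached at $z_0$ or $z_\infty$ and mapping to the unique $\mathbb{T}$-invariant line $L_{kl} \subset \P^m$ through two torus-fixed points, with the remaining marked point forced onto the far torus-fixed end of the bubble. The two intersection conditions confine the indices to $k \in \{0,\ldots,a\}$ and $l \in \{m-n+a,\ldots,m\}$, producing two rectangular families of fixed loci $\Gamma^{(0)}_{kl}$ and $\Gamma^{(\infty)}_{kl}$ (bubble at $z_0$, resp.\ $z_\infty$).

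For each $\Gamma$, the contribution is a ratio of equivariant Euler classes of the moving parts of $\mathrm{Def}(u), \mathrm{Ob}(u), \mathrm{Aut}(C), \mathrm{Def}(C)$ as in Pandharipande's scheme \cite{Pandharipande,Pandharipande2}, times the restrictions of the equivariant classes of $\P^a$ and $\P^{n-a}$ to the appropriate fixed points. The key input is the equivariant Euler class of the obstruction $\mathrm{Ob}(u) = H^1(C, u^*\mathcal{O}(-1,-n))$: by the normalization exact sequence this splits as a $1$-dimensional node piece (with weight involving $\mu$ and $-n\alpha_\bullet$) plus the bubble contribution $H^1(\P^1, u_1^*\mathcal{O}(-n))$. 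Parametrizing the bubble as $u_1([x_0{:}x_1]) = [x_0 e_k + x_1 e_l]$, so that the sections $x_0, x_1$ of $u_1^*\mathcal{O}(1)$ inherit the weights $\alpha_k, \alpha_l$ from $\mathcal{O}(1)|_{L_{kl}}$, Serre duality $H^1(\mathcal{O}(-n)) \cong H^0(\mathcal{O}(n-2))^{\vee}$ identifies the bubble-obstruction weights as $A\alpha_k + B\alpha_l$ with $A+B = n$ and $A, B \geq 1$, producing exactly the factor $\prod_{A+B=n,\,A,B\geq 1}(A\alpha_k + B\alpha_l)$ in $e^{\mathbb{T}}(\mathrm{Ob})$.

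Finally I would sum the contributions of all $\Gamma^{(0)}_{kl}$ and $\Gamma^{(\infty)}_{kl}$. Since $A_a$ is an integer it is independent of $\alpha_\bullet$ and $\mu$, so after iterated Lagrange interpolation (in the style of \cite{Pandharipande2}) the many torus parameters should collapse, leaving a single one-variable expression whose coefficient at $x^a$ matches $n^2 \cdot [\text{coeff of } x^a \text{ in }\prod_{A+B=n}(Ax+B)] = n^2\tau_{a,n}$; the prefactor $n^2$ arises from the node weight in $\mathrm{Ob}$ combined with the degrees of the evaluations of the equivariant cycle classes $[\P^a], [\P^{n-a}]$. The hardest part will be the book-keeping: correctly tracking the equivariant weights on node-smoothing, the $\C^*$ bubble reparametrization $\mathrm{Aut}(C_1)^{\mathrm{mov}}$, and the $T^v E_g$-deformations at $p_k, p_l$, and then recognizing the Lagrange-interpolation structure that collapses the resulting rational expression to the compact polynomial $n^2\tau_{a,n}$.
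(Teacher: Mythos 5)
Your overall strategy is the same as the paper's: localize the count of sections of $E_g=\mathrm{Tot}(\mathcal{O}(-1,-n)\to S^2\times\P^m)$ at the nodal fixed configurations (constant section plus a single fibre bubble along the invariant line joining torus-fixed points of the two incidence subspaces), and identify the bubble part of the obstruction via the normalization sequence and Serre duality, giving the weights $A\alpha_k+B\alpha_l$ with $A+B=n$, $A,B\geq 1$ --- this is exactly the mechanism that produces $\tau_{a,n}$ in the paper, and you have it right. The extra $\C^*$ rotating the base $S^2$ is unnecessary (the paper's torus acts only on $\P^m$ and the fixed maps are already isolated), and your ``iterated Lagrange interpolation'' endgame is a harmless variant of the paper's limit trick (set $i=0$, let $\alpha_0\to\infty$, then $j=m$, $\alpha_m\to\infty$), which collapses the graph sum to $-n\,\tau_{a,n}$.

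The genuine gap is your treatment of the incidence condition over $z_\infty$, and with it the origin of the second factor of $n$. By Corollary \ref{Corollary sections lie in zero section neg lbdles} all sections lie in the zero section, and your chosen representative $\P^{n-a}$ also lies in the zero section, so the condition $u(z_\infty)\in j_{z_\infty}\P^{n-a}$ is an excess intersection: the literal cut-down space $\overline{\mathcal{M}}_{\mathrm{cut}}$ has the wrong dimension and the naive integral of $e\bigl(R^1\pi_*f^*\mathcal{O}(-1,-n)\bigr)$ over it is not $A_a$. The paper resolves this by perturbing $\P^{n-a}$ vertically so that it meets the zero section in $-n[\P^{n-a-1}]$ (the Euler class of $\mathcal{O}(-n)|_{\P^{n-a}}$), then localizing with the disjoint invariant subspaces $\P^a$ and $\P^{n-a-1}$ (disjoint for every $n<1+m$, whereas your $\P^a$ and $\P^{n-a}$ are disjoint only for $n<m$, which is what forces your borderline case). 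In that setup one factor $-n$ comes out of the graph sum itself, from the node piece of the obstruction (weights $-n\alpha_i$ versus $-n\alpha_j$) paired with the opposite node-smoothing weights $\pm(\alpha_i-\alpha_j)$, and the second factor $-n$ is the rescaling coming from the perturbed cycle, giving $A_a=n^2\tau_{a,n}$. Your sentence attributing the prefactor $n^2$ to ``the node weight combined with the degrees of the evaluations of the equivariant cycle classes'' leaves precisely this point unproved: if you want to avoid the geometric perturbation you must work with honest equivariant insertions (the equivariant Poincar\'e dual of $\P^{n-a}$ inside $M$, whose restriction to a fixed point on the zero section carries the normal weight $-n\alpha_j$ of the $\mathcal{O}(-n)$ direction) and carry that factor through the residue sum; as written, the step that produces the second $n$ is asserted rather than derived.
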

\begin{proof}
$A_a=\mathrm{GW}_{0,2,(1,1)}^{E_g}(j_{z_0}F_{a+1},j_{z_{\infty}}\P^{n-a})$.
We choose $F_{a+1}=\pi_M^{-1}(\P^a)$ where $\P^a\subset \P^m$ involves only the first $a+1$ homogeneous coordinates. We perturb $\P^{n-a}$ vertically so that it intersects the zero section in $-n[ \P^{n-a-1}]$. We can ensure $\P^{n-a-1} \subset \P^m$ involves only the last $n-a$ homogeneous coordinates (notice $\P^a,\P^{n-a-1}$ do not intersect since $n<1+m$). We will calculate the contribution of each $+[\P^{n-a-1}]$ separately, so we need to rescale the final answer by $-n$.

The $\mathbb{T}=(\C^*)^{m+1}$ action on $\P^1\times \P^m$ is analogous to (\ref{Item torus action}) above, acting on $\P^m$ with weights $\alpha_0,\ldots,\alpha_m$. The fixed points in $\P^m$ are labeled $q_{\ell}$ having entry $w_{\ell}=1$ and all other entries $w_r=0$. Abbreviate $p_0=[1:0]=0,p_1=[0:1]=\infty\in \P^1$ and
$$p_{k{\ell}}=p_k\times q_{\ell} \in \P^1 \times \P^m$$
where $k=0,1$ and ${\ell}=0,1,\ldots,m$. 
Among this $\ell$ indexing, we reserve the letter $i=0,1,\ldots,a$ and the letter $j=m-(n-a-1),\ldots,m$. These labels index the fixed points $q_i \in \P^a\subset \P^m$ and $q_j\in \P^{n-a-1}\subset \P^m$.

 The open part of the moduli space $\mathcal{M}$ are holomorphic $u:\P^1 \to \P^1 \times \P^m \subset E_g$ satisfying the intersection conditions 
$$u(0)\in p_0\times \P^a\qquad u(\infty)\in p_1\times\P^{n-a-1}.$$ 
So they are lines which are geometrically determined by the intersection conditions. The union of all points lying on such lines spans a certain $\P^{n}\subset \P^m$.

Explicitly, given $[\vec{x}]\in \P^a$, $[\vec{y}]\in \P^{n-a-1}$, the line $[z_0:z_1]\mapsto [z_0 \vec{x} + z_1 \vec{y}]$ is the unique geometric line through $[\vec{x}],[\vec{y}]$. However, the parametrization is not canonical: there is a $\P^1$-freedom to reparametrize. Thus $\mathcal{M}$ is a $\C^*$-bundle over $\P^a\times \P^{n-a-1}$. The compactification $\overline{\mathcal{M}}$ to a $\P^1$-bundle is just fiberwise the same as the one we did for the $m=n=1$ case: a bubble appears in the $M$-fiber of $E_g$ over $p_0$ or over $p_1$.
The universal curve is again a blow-up:
$$
\xymatrix{
\mathcal{C}=\mathrm{Bl}(\P^1\times \P^n,\P^a \sqcup \P^{n-a-1})
 \ar[d]^{\pi} \ar[r]^-{f=\mathrm{ev}_3} & \P^1 \times \P^n\subset \P^1 \times \P^m \subset E_g \\
\overline{\mathcal{M}} = (\P^1\textrm{-bundle over }\P^a\times \P^{n-a-1})
}
$$
The induced $\mathbb{T}$-action on $\overline{\mathcal{M}}$ is analogous to (\ref{Item torus action}). The fixed stable maps $u:\Sigma_1\cup \Sigma_2 \to \P^1\times \P^m$ are indexed $U_{1ij}$ and $U_{ij0}$, meaning: $u(0)=p_{0i}$, $u(\infty)=p_{1j}$, 
$$U_{1ij}(\mathrm{node})=p_{1i}, \qquad U_{ij0}(\mathrm{node})=p_{0j}.$$ 
The graphs $\Gamma_{1ij},\Gamma_{ij0}$ have two edges and labelling $0i,1i,1j$ and $0i,0j,1j$ respectively.
In this setup, $T^vE_g=T\P^m\oplus \mathcal{O}(-1,-n)$ and
$$
\begin{array}{lllll}
(U_{1ij}^*T^vE_g)|_{\Sigma_1} \cong T_{q_i}\P^m\oplus \mathcal{O}(-1), & (U_{1ij}^*T^vE_g)|_{\Sigma_2} = \mathcal{O}(2)\oplus \mathcal{O}(1)^{m-1} \oplus \mathcal{O}(-n),\\
(U_{ij0}^*T^vE_g)|_{\Sigma_2} \cong T_{q_j}\P^m\oplus \mathcal{O}(-1), & (U_{ij0}^*T^vE_g)|_{\Sigma_1} = \mathcal{O}(2)\oplus \mathcal{O}(1)^{m-1} \oplus \mathcal{O}(-n)
\end{array}
$$
where $\mathcal{O}(2)\oplus \mathcal{O}(1)^{m-1}$ comes from pulling back $T\P^m$.

$\mathrm{Def}(C)^{\mathrm{mov}}$ comes from resolving the node, giving opposite weights
\begin{equation}\label{Equation 1}
 \alpha_i -\alpha_j \qquad \alpha_j-\alpha_i
\end{equation}
respectively for $U_{1ij}, U_{ij0}$. This time, $\mathrm{Def}(u)$ has moving parts because we can deform the image of the fixed marked points $x_1,x_2$ within $\P^a,\P^{n-a-1}$ respectively. This yields two summands: $T_{q_i}\P^a$ and $T_{q_j}\P^{n-a-1}$, which have weights 
\begin{equation}\label{Equation 2}
\alpha_i-\alpha_I \qquad \alpha_j-\alpha_J
\end{equation}
 where $0\leq I \leq a$, $I\neq i$ and $m-(n-a-1)\leq J \leq m$, $J\neq j$.

For $\mathrm{Ob}(u)^{\mathrm{mov}}$ only $\mathcal{O}(-1,-n)$ contributes, the normalizing sequence for $U_{1ij}$ is:
$$
0 \to u^*\mathcal{O}(-1,-n) \to \mathcal{O}_{\Sigma_1}(-1)\oplus \mathcal{O}_{\Sigma_2}(-n) \to u^*\mathcal{O}_{p_{1i}}(-1,-n) \to 0
$$
and taking the LES in cohomology we deduce 
$$\begin{array}{lll}
   \mathrm{Ob}(U_{1ij})^{\mathrm{mov}}=\mathcal{O}(-1,-n)|_{p_{1i}\in \P^1\times \P^m}\oplus H^1(\Sigma_2,\mathcal{O}_{\Sigma_2}(-n))\\
\mathrm{Ob}(U_{ij0})^{\mathrm{mov}}=\mathcal{O}(-1,-n)|_{p_{0j}\in \P^1\times \P^m} \oplus H^1(\Sigma_1,\mathcal{O}_{\Sigma_1}(-n))
  \end{array}
$$
The first summands yield the following weights for $U_{1ij}, U_{ij0}$ respectively:
\begin{equation}\label{Equation 3}
-n\alpha_i \qquad  - n\alpha_j.
\end{equation}
We now seek the weights for the $H^1$ summands. We consider the case $u=U_{1ij}$. By Serre duality, $H^1(\Sigma_2,\mathcal{O}_{\Sigma_2}(-n))\cong H^0(\Sigma_2,K_{\Sigma_2}\otimes \mathcal{O}_{\Sigma_2}(n))^{\vee}$. The weights for the canonical bundle $K_{\Sigma_2}=T^*\Sigma_2$ at $p_{1i},p_{1j}$ are respectively $\alpha_j-\alpha_i$ and $\alpha_i-\alpha_j$. The $\mathcal{O}_{\Sigma_2}(n)$ comes from pulling back $\mathcal{O}(1,n)$ via an embedding $\Sigma_2 \hookrightarrow \P^1\times \P^m$, and the weights for $\mathcal{O}(1,n)$ at $p_{1i},p_{1j}$ are $n\alpha_i$ and $n\alpha_j$. The total weights on $K_{\Sigma_2}\otimes \mathcal{O}_{\Sigma_2}(n)$ are therefore $\alpha_j+(n-1)\alpha_i$ and $\alpha_i+(n-1)\alpha_j$.

Since $\mathrm{deg}(K_{\Sigma_2}\otimes \mathcal{O}_{\Sigma_2}(n))=n-2$, it follows \cite[27.2.3]{Pandharipande} that
the weights on $H^0(\Sigma_2,K_{\Sigma_2}\otimes \mathcal{O}_{\Sigma_2}(n))^{\vee}$ are
\begin{equation}\label{Equation 4}
-\left\{ \frac{a}{n-2} [\alpha_j+(n-1)\alpha_i] + \frac{b}{n-2}[\alpha_i+(n-1)\alpha_j] \right\}= -( A\alpha_i + B\alpha_j)
\end{equation}
where $a+b=n-2$ and $a,b\geq 0$, and where we simplified the expression using $A=a+1,B=b+1$, so $A,B\geq 1$ and $A+B=n$. We remark that the global minus sign in \eqref{Equation 4} appears because the $H^0(\Sigma_2,K_{\Sigma_2}\otimes \mathcal{O}_{\Sigma_2}(n))$ group is dualized.

Similarly, for $U_{ij0}$ we get weights $-(A\alpha_i + B\alpha_j)$.

We now apply virtual localization, so we calculate $\sum \frac{1}{e^{\mathbb{T}}(N_{\Gamma}^{\mathrm{vir}})}$: 
$$ 
\sum_{i,j}\frac{\displaystyle [(-n\alpha_i)-(-n\alpha_j)]\prod_{A,B} -(A\alpha_i+B\alpha_j)}{\displaystyle (\alpha_i-\alpha_j)\prod_{I}(\alpha_i-\alpha_I)\prod_{J}(\alpha_j-\alpha_J)}
=
-n(-1)^{n-1}\sum_{i,j}\frac{\prod (A\alpha_i+B\alpha_j)}{\prod(\alpha_i-\alpha_I)\prod(\alpha_j-\alpha_J)}
$$
This is supposed to be an integer: this can be verified taking common denominators:
$$
-n(-1)^{n-1}\frac{\sum_{i,j}\prod(A\alpha_i+B\alpha_j)\prod(\alpha_{\widehat{i}}-\alpha_I)\prod(\alpha_{\widehat{j}}-\alpha_J)}{\prod(\alpha_c-\alpha_d)\prod(\alpha_p-\alpha_q)}
$$
where $c\neq d$ vary in $\{0,1,\ldots,a\}$; $p\neq q$ in $\{m-(n-a-1),\ldots,m\}$; $\hat{i}$ in $\{0,1,\ldots,a\}\setminus i$; and $\hat{j}$ in $\{m-(n-a-1),\ldots,m\}\setminus j$; and $A,B,I\neq \hat{i}$, $J\neq \hat{j}$ are as usual. One then needs to show that each factor on the denominator, such as $(\alpha_c-\alpha_d)(\alpha_d-\alpha_c)$, divides the numerator. This amounts to checking that the numerator vanishes to order $2$ when putting $\alpha_c=\alpha_d$.

To find that integer value we consider the fraction as a Laurent polynomial in one variable, say $\alpha_0$, with coefficients in the ring $\Z(\alpha_1,\ldots,\alpha_m)$. Since only the $\alpha_0^0$ term survives, we can drop all terms of different order. The denominator $\prod(\alpha_i-\alpha_I)$ of the original sum has order $\alpha_0^a$, and the numerator has no $\alpha_0$ terms unless $i=0$. So we can put $i=0$. Now, we can let $\alpha_0\in \R$ and $\alpha_0\to \infty$, so only this survives:
$$
-n(-1)^{n-1} \sum_j \frac{\alpha_j^{n-a-1}\tau_{a,n}}{\prod (\alpha_j-\alpha_J)}
$$
Let $\alpha_m\in \R$ and $\alpha_m\to \infty$, so only the $j=m$ term survives:
$
-n(-1)^{n-1} \tau_{a,n}.
$
Finally, recall from the beginning of the proof that we need to rescale the final answer by $-n$.
\end{proof}

\textbf{Proof of Theorem \ref{Theorem Intro O-n over Pm}.}
Over characteristic $0$ $($Remark \ref{Remark orientation signs}$)$, in Lemma \ref{Lemma aN calculation}: $a_{N}=(-1)^N n^{N-1} (\sum (-1)^{n-1}n^2 \tau_{a,n}) t =(-1)^{N+n-1} n^{1+m} t= -(-n)^{1+m}$ by the previous Theorem. For $n<1+\frac{m}{2}$,
$$
\begin{array}{rcccl}
QH^*(M)\hspace{-2mm} &=& \hspace{-2mm} \Lambda[c_Q]/(c_Q^{1+m}- (-n)^{1+m} tc_Q^n) \hspace{-2mm} &=& \hspace{-2mm}
\Lambda[\omega_Q]/(\omega_Q^{1+m}- (-n)^{n} t\omega_Q^n)\\[2mm]
SH^*(M)\hspace{-2mm} &=& \hspace{-2mm} \Lambda[c_Q]/(c_Q^{N}-(-n)^{1+m} t)  \hspace{-2mm} &=& \hspace{-2mm} \Lambda[\omega_Q]/(\omega_Q^N - (-n)^{n} t).
\end{array}
$$
For $n<1+m$, $QH^*(M)=\Lambda[\omega_Q]/(\omega_Q^{1+m}-(-n)^{n} t\omega_Q^n+\cdots)$ may have lower order correction terms from $d\geq 2$ contributions, but we still deduce $SH^*(M)\neq 0$. For $n$ even, vanishing in characteristic $2$  is because $c_1(L)[\P^1]=-n\equiv 0$ mod $2$. $\qed$
%
\section{General theory for negative line bundles $M=\mathrm{Tot}(\pi_M:L\to B)$}
\label{Section General theory for negative line bundles}


\begin{theorem}\label{Theorem consts are regular in general and rg1 is c1}
 For $M=\mathrm{Tot}(\pi_M:L\to B)$ (satisfying weak$^+$ monotonicity), the constant sections are regular for $\hat{J}=\left[ \begin{smallmatrix} j & 0 \\ 0 & i \end{smallmatrix} \right]$ and they determine 
$$r_{\widetilde{g}}(1)=(1+\lambda_+)\, \pi_M^*c_1(L)$$
where $\lambda_+$ lies in the subring $ \Lambda_+^0\subset \Lambda$ generated by the $\pi_2(M)$-classes with $\omega>0$ and $c_1(TM)=0$ $($for monotone $M$, $\lambda^+=0)$. In particular, $(1+\lambda^+)$ is a unit of $\Lambda$, so for the purposes of calculating $SH^*(M)$, we may rescale $r_{\widetilde{g}}(1)=\pi_M^*c_1(L)$.
\end{theorem}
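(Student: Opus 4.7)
The plan is to lift the $\mathcal{O}(-1)\to\CP^1$ analysis of Lemmas \ref{Lemma constants neg l bdle} and \ref{Lemma A coefficient neg l bdles} to a general negative $L\to B$. First I will verify regularity of the constant sections for the split $\hat{J}$ by a Dolbeault computation; then identify their total contribution with $\mathrm{PD}(\pi_M^*c_1(L))$; and finally constrain the non-constant contributions via the maximum principle together with weak$^+$ monotonicity.

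For regularity, a constant section $u(z)=(z,y_0)$ lies in the zero section $B$ by the maximum principle, and setting $b=\pi_M(y_0)$ the clutching transition $dg_t$ at the equator fixes $T_bB$ pointwise while rotating $L_b$ by $e^{2\pi it}$. Hence as holomorphic bundles over $S^2\cong\CP^1$,
\[
u^*T^vE_g\;\cong\;\mathcal{O}^{\oplus\dim_{\C}B}\;\oplus\;\mathcal{O}(-1),
\]
and since both $H^1(\CP^1,\mathcal{O})$ and $H^1(\CP^1,\mathcal{O}(-1))$ vanish, $D_u=\overline{\partial}$ is surjective. Transversality of $\mathrm{ev}_{z_0}$ and $\mathrm{ev}_{z_\infty}$ follows as in \cite[Prop.\,7.7]{Seidel3}. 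Next, by Lemma \ref{Lemma Max principle for neg line bdles} and Corollary \ref{Corollary sections lie in zero section neg lbdles} (applied via the choice of monotone $H$ from \ref{Subsection Invariance the choice of hat J}), every $(j,\hat{J})$-holomorphic section of $E_g$ lies fibrewise on $B$. The constants in the class $S_{\widetilde g}$ are therefore parametrized by $b\in B$, so $\mathrm{ev}_{z_\infty}$ sweeps out $[B]\in H^{lf}_{2\dim_{\C}B}(M)$ with multiplicity one. The Thom isomorphism for $L$ identifies $\mathrm{PD}_M[B]=\pi_M^*c_1(L)$, accounting for the ``$1$'' in $(1+\lambda_+)$.

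For $\gamma\neq 0$, recall from \ref{Subection gtilde choice neg line bdle} that $\widetilde{\Omega}(S_{\widetilde g})=0$, so Lemma \ref{Lemma compatibility trick} forces $\omega(\gamma)\geq 0$ with equality only for constants; hence non-constant classes satisfy $\omega(\gamma)>0$. Their evaluation image sits in $B$ and has dimension $2\dim_{\C}B+2c_1(TM)(\gamma)$. If $c_1(TM)(\gamma)>0$ this exceeds $\dim B$ and the class vanishes. If $c_1(TM)(\gamma)<0$, weak$^+$ monotonicity excludes it: in the monotone case $c_1\geq 0$ on $\omega$-positive classes (and $\omega(\pi_2(M))=0$ leaves no non-constants at all); the Calabi--Yau case has $c_1\equiv 0$; and in the minimal-Chern case $|c_1(\gamma)|\geq|N|\geq\dim_{\C}B$ forces the Poincar\'e-dual cocycle to live in $H^{\geq 2\dim_{\C}M}(M)=0$ since $M$ is non-compact. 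Thus only $c_1(TM)(\gamma)=0$ survives, and the resulting top-dimensional cycle in $B$ is a multiple $n_\gamma[B]$. Summing yields $r_{\widetilde g}[M]=[B]\otimes(1+\sum_{\gamma\neq 0}n_\gamma\gamma)$, which after Poincar\'e dualizing is $(1+\lambda_+)\pi_M^*c_1(L)$ with $\lambda_+\in\Lambda_+^0$, and $(1+\lambda_+)$ is a unit of $\Lambda$ since $\lambda_+$ lies in the maximal ideal generated by positive-energy classes.

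The main obstacle I expect will be ensuring that the perturbation of $\hat{J}$ needed to achieve transversality on the non-constant strata does not disturb the count of constants; this is a parametrized-cobordism argument in the spirit of Lemma \ref{Lemma obs bundle result}, but extra care is needed because potentially infinitely many $\gamma$ contribute to $\lambda_+$. Convergence of the Novikov sum follows from the monotonicity lemma (Lemma \ref{Lemma Monotonicity Lemma}) together with the $\omega(\gamma)\to\infty$ growth condition built into $\Lambda$, which ensures only finitely many classes of bounded energy produce sections meeting any fixed compact cycle.
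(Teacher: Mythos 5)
Your proposal is correct and takes essentially the same route as the paper's proof: regularity of the constants via the holomorphic splitting $u^*T^vE_g\cong \underline{\C}^{\oplus \dim_{\C}B}\oplus\mathcal{O}(-1)$, identification of the swept cycle $[B]$ with $\mathrm{PD}_M(\pi_M^*c_1(L))$, and elimination of all non-constant classes except those with $c_1(TM)(\gamma)=0$, $\omega(\gamma)>0$ by the same dimension count, maximum principle and weak$^+$ monotonicity, those classes producing multiples of $[B]$ and hence $\lambda_+$. The only cosmetic differences are that the paper packages your three-case analysis into the single inequality $c\geq 0$ or $c\leq-\dim_{\C}B$ combined with $1\leq \dim_{\C}B+c\leq\dim_{\C}B$, and identifies $\mathrm{PD}_M[B]$ by perturbing the zero section (Euler class of $L$) rather than invoking the Thom isomorphism.
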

\begin{proof}
 Consider the dimension of the moduli space of sections
$$
\dim_{\C} \mathcal{S}(j,\hat{J},\gamma+S_{\widetilde{g}})=b + c
$$
where $b=\dim_{\C}B$ and $c=c_1(TM,\omega)(\gamma)$. Since $M$ is weak, $c\geq 0$ or $c\leq -b$. Since $r_{\widetilde{g}}(1)$ sweeps an lf cycle, we may assume $1\leq b+c \leq b$ ($[\mathrm{pt}]$ is a boundary lf cycle, and we cannot sweep $[M]$ by the maximum principle). Combining: $c=0$. In the monotone case, this implies the sections are constant, so $\gamma=0$. In general, constant sections of $E_g$ are regular for the integrable $\hat{J}$ by mimicking Lemma \ref{Lemma constants neg l bdle}:
$$
\coker \overline{\partial} = H^1(\P^1,\mathcal{O}(\underline{\C}^{\oplus \dim_{\C}B})\oplus \mathcal{O}(-1)) \cong H^0(\P^1,\mathcal{O}(-2)^{\oplus \dim_{\C}B}\oplus \mathcal{O}(-1))^{\vee} = 0.
$$

The constant sections of $E_g$ sweep the lf cycle $\mathrm{ev}_{\infty}(\mathcal{S}(j,\hat{J},S_{\widetilde{g}}))=[B]$ (not $[M]$: the transition map for $E_g$ over the equator of $S^2$ rotates the fibres of $L$, only the fixed point set $B$ of $g$ will give rise to constant sections of $E_g$). We will now show that the lf cycle $[B]$ is Poincar\'e dual to $\pi_M^*c_1(L)$. Consider a $1$-cycle $\alpha\subset B$: 
$$
\alpha\bullet_M [B] = \alpha\bullet_B (\textrm{zero set of a generic }C^{\infty}\textrm{-section}).
$$
The zero set is obtained by perturbing $[B]$, it represents PD$_B(c_{\mathrm{top}}(L))$ in $B$. Pull-back $\pi_M^*\!:\!H^*(B)\!\to\! H^*(M)$ in cohomology is Poincar\'{e} dual to taking pre-images $\pi_M^{-1}\!:\!H_*(B)\! \to\! H_{*+2}^{lf}(M)$ (Bott-Tu \cite[Sec.6]{Bott-Tu}). So $\pi_M^{-1}\mathrm{PD}_B(c_1(L))\!=\!\mathrm{PD}_M(\pi_M^*c_1(L))$.

In the non-monotone case, it may happen that $c=0$ but $\omega(\gamma)>0$. The only lf $2b$-cycles supported near the zero section are multiples of $[B]$ (generator of $H_{2b}(B)$), so $\mathcal{S}(j,\hat{J},\gamma+S_{\widetilde{g}})$ is a multiple of $[B]$. These determine $\lambda_+$. 
\end{proof}

\begin{example*}
 For $\mathcal{O}(-n)\to \P^m$: the lf cycle $[\P^m]$ when perturbed vertically will intersect the zero section in $-n[\P^{m-1}]$, so the intersection number $\P^1 \bullet [\P^m] = -n$ in $M$, so $[\P^m]=-n[F_m] \in H_{2m}^{lf}(M)$, so $\mathrm{PD}[\P^m]=-n[\omega_{\P^m}]=c_1(\mathcal{O}(-n))\in H^2(M)$.
\end{example*}

\begin{remark*}
 $\lambda_+ \neq 0$ can occur only for non-monotone $M$, and a base $B$ admitting a holomorphic map $v: \P^1 \to B$ through any given point with $c_1(TB)(v)=n\omega_B(\pi_M v)$.  
\end{remark*}

\begin{remark}[\textbf{Orientation Signs and char$\mathbf{(\Lambda)\neq 2}$}]\label{Remark orientation signs}
 For regular integrable complex structures the moduli spaces of holomorphic curves are canonically oriented $($see \cite[Rmk 3.2.6, p.51]{McDuff-Salamon2}$)$ and the $0$-dimensional ones always contribute with sign $+1$. So 
the dominant term $\pi_M^*c_1(L)$ in $r_{\widetilde{g}}(1)$ is correct also if we work over $\Lambda$ of characteristic zero $($e.g. in \ref{Subsection geometrical novikov ring} replace $\Z/2$ by $\Z$ or $\Q)$. Lemma \ref{Lemma HF H0 and QH are iso as rings}, $QH^*(M)\cong HF^*(H_0)$, still holds $($orientation signs for $SH^*(M)$ and its product structure were constructed by the author in \cite{Ritter3}$)$. Therefore Theorem \ref{Theorem Intro r_g on QH and SH} holds also over characteristic zero.
\end{remark}
%
\section{Negative vector bundles}
\label{Section Negative vector bundles}
%
%
\subsection{Definition of negative vector bundles and choice of symplectic form}
\label{Subsection Definition and properties neg vector bdle}

\begin{definition}\label{Definition negative vector bundle}
A complex vector bundle $E\to B$ over a closed symplectic manifold $(B,\omega_B)$ is \emph{negative} if $E$ admits a Hermitian metric, and some Hermitian connection whose curvature $\mathcal{F}\in \Omega^2(B,\mathfrak{u}(E))$ satisfies $$\frac{i}{2\pi}\mathcal{F}(v,J_B v)<0$$ for all $v\neq 0 \in TB$ (meaning that is a negative definite Hermitian endomorphism of $E$), for all almost complex structures $J_B$ compatible with $\omega_B$.
\end{definition}

\begin{lemma}\label{Lemma neg v bdles are symplectic}
The total space $M$ of a negative vector bundle $E\to B$ is symplectic (but non-conical) for the form $\omega=\pi_{E}^*\omega_B + \Omega$, defined using the connection above:
$$
\begin{array}{l}
\Omega = \frac{1}{\pi} (\mathrm{area} \; \mathrm{form}) \textrm{ on vertical vectors \emph{(}in a local unitary frame for }E) \\
\Omega_{(b,w)}(\cdot,\cdot) = \frac{1}{2\pi i} w^{\dagger}   \mathcal{F}_{(d\pi_E\cdot,d\pi_E\cdot)} w \textrm{ on horizontal vectors, for }w\neq 0 \\
\Omega_{(b,0)}(TB,\cdot) = 0\\
\Omega_{(b,w)}(h,v)=0 \textrm{ if }h\textrm{ is horizontal, and }v \textrm{ is vertical.}
\end{array}
$$
and $\omega$ is compatible with $J=J_B\oplus i$ acting on $T^{horiz}E\oplus T^{vert}E$.
\end{lemma}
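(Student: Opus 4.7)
I would proceed in four steps: $(i)$ rewrite $\Omega$ in a global form manifestly invariant under unitary frame changes; $(ii)$ verify $J$-compatibility and positivity of $\omega(\cdot,J\cdot)$; $(iii)$ deduce non-degeneracy; $(iv)$ verify closedness.

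For $(i)$, pick a local unitary frame $\{e_1,\dots,e_k\}$ of $E$ near $b\in B$, with fibre coordinates $w=(w^1,\dots,w^k)$, Hermitian connection matrix $A=(A^i_j)\in\Omega^1(B,\mathfrak{u}(k))$, covariant derivative $Dw^i=dw^i+A^i_j\,w^j$, and curvature $\mathcal{F}^i_j=dA^i_j+A^i_k\wedge A^k_j$. I claim
\[
\Omega \;=\; \frac{i}{2\pi}\sum_{i}Dw^i\wedge\overline{Dw^i}\;+\;\frac{1}{2\pi i}\,w^{\dagger}\mathcal{F}\,w,
\]
where the second term is the pull-back to $E$ of the scalar $2$-form $\sum \bar w^i\mathcal{F}^i_j w^j$ on $B$. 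Both summands are manifestly invariant under unitary gauge transformations $w\mapsto U(b)w$ (the first because $\sum Dw^i\wedge \overline{Dw^i}$ is the standard Hermitian form applied to the $E$-valued $1$-form $Dw$; the second because $w^\dagger \mathcal{F} w$ is the pointwise inner product of $w$ with the skew-Hermitian $\mathcal{F}w$). Plugging in vertical and horizontal vectors separately recovers the three piecewise formulas of the lemma: on verticals $Dw^i$ reduces to $dw^i$ and $\frac{i}{2\pi}\sum dw^i\wedge d\bar w^i=\frac{1}{\pi}$ times the standard area form, while the second term vanishes; on horizontals the first term vanishes since $Dw^i(h)=0$, leaving the curvature term; and cross terms vanish for both reasons. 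Smoothness across the zero section is immediate since both summands are polynomial of degree $2$ in $(w,\bar w)$.

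For $(ii)$, decompose $\xi=h+v\in T^{horiz}\oplus T^{vert}$ and compute $\omega(\xi,J\xi)$. The vertical contribution is $\frac{1}{\pi}\sum|v^i|^2>0$ for $v\neq0$. The horizontal contribution is
\[
\omega_B(d\pi h,J_Bd\pi h)\;+\;\tfrac{1}{2\pi i}\,w^{\dagger}\mathcal{F}(d\pi h,J_Bd\pi h)\,w
\;=\;\omega_B(d\pi h,J_Bd\pi h)\;-\;\tfrac{i}{2\pi}\,w^{\dagger}\mathcal{F}(d\pi h,J_Bd\pi h)\,w,
\]
which is strictly positive for $d\pi h\neq 0$: the first term by $\omega_B$-compatibility of $J_B$, the second because Definition \ref{Definition negative vector bundle} asserts exactly that $\frac{i}{2\pi}\mathcal{F}(v,J_Bv)$ is negative definite. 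Mixed terms vanish since both $\pi_E^{*}\omega_B$ and $\Omega$ pair horizontal with vertical trivially (recall $J$ preserves the splitting). Thus $g=\omega(\cdot,J\cdot)$ is a Riemannian metric on $M$, and $(iii)$ non-degeneracy of $\omega$ follows at once.

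The main work is $(iv)$, closedness. Since $d\pi_E^{*}\omega_B=0$, it suffices to show $d\Omega=0$. Using the Ricci identity $d(Dw^i)=\mathcal{F}^i_j w^j-A^i_j\wedge Dw^j$ and its complex conjugate, one computes
\[
d\!\left(\tfrac{i}{2\pi}\sum_i Dw^i\wedge\overline{Dw^i}\right)
=\tfrac{i}{2\pi}\sum_i\bigl[(\mathcal{F}^i_j w^j - A^i_j\wedge Dw^j)\wedge \overline{Dw^i} - Dw^i\wedge(\overline{\mathcal{F}^i_j}\bar w^j-\bar A^i_j\wedge \overline{Dw^j})\bigr].
\]
On the other hand, $d(w^\dagger\mathcal{F} w)$ is evaluated using the Bianchi identity $d\mathcal{F}^i_j=\mathcal{F}^i_k\wedge A^k_j-A^i_k\wedge \mathcal{F}^k_j$ together with $dw^i=Dw^i-A^i_j w^j$ and its conjugate. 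The point is that, after multiplication by $\frac{1}{2\pi i}=-\frac{i}{2\pi}$ and rearrangement, the $\mathcal{F}$-terms produced by differentiating the connection-dependent pieces in the first summand cancel exactly against those produced by $d(w^\dagger\mathcal{F} w)$, while the purely $A$-quadratic pieces cancel by skew-Hermiticity of $A$. The remaining cubic terms in $A$ cancel by the Jacobi-type identity hidden in Bianchi.

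The main obstacle is this last cancellation in $(iv)$: it is the higher-rank analogue of the very clean identity $d\theta=\frac{1}{2\pi i}\pi^{*}\mathcal{F}$ used for line bundles in Lemma \ref{Lemma dtheta properties}, but now $A$ and $\mathcal{F}$ are matrix-valued and do not commute, so one has to carefully track the non-commutative wedge products and rely on Bianchi to close up the computation. Once $d\Omega=0$ is established, $\omega=\pi_E^{*}\omega_B+\Omega$ is a closed non-degenerate $2$-form compatible with $J=J_B\oplus i$, as required.
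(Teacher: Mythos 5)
Your argument is correct, but it takes a genuinely different route from the paper. You work directly in a local unitary gauge, writing $\Omega=\tfrac{i}{2\pi}\sum_i Dw^i\wedge\overline{Dw^i}+\tfrac{1}{2\pi i}\,w^{\dagger}\mathcal{F}w$ and proving $d\Omega=0$ by hand via $d(Dw^i)=\mathcal{F}^i_j w^j-A^i_j\wedge Dw^j$, the Bianchi identity and skew-Hermiticity of $A,\mathcal{F}$; this computation does close up (indeed, if you keep everything expressed through $Dw$ and $\mathcal{F}$, the curvature terms produced by differentiating $\sum_i Dw^i\wedge\overline{Dw^i}$ cancel exactly against $d(w^{\dagger}\mathcal{F}w)$ after substituting $dw=Dw-Aw$, and the $A\wedge Dw\wedge\overline{Dw}$ terms cancel by $A^{\dagger}=-A$, so no genuinely cubic-in-$A$ terms survive). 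The paper instead reduces to the line-bundle case: it passes to the projectivization $\P(E)$ with its tautological bundle $L=\mathcal{O}(-1)\to\P(E)$, invokes Oancea's construction making $L$ a negative line bundle, and sets $\theta=\tau^*\theta^L$, $\Omega=\tau^*d(r^2\theta^L)$ on $E\setminus 0_E$ via the tautological isomorphism $\tau$, so closedness is automatic (exactness off the zero section) and the piecewise formulas follow from Lemma \ref{Lemma dtheta properties}. Your route buys self-containedness — no appeal to Oancea's construction or to $\P(E)$ — at the cost of losing the global angular form $\theta$ and primitive $r^2\theta$, and the identification with an honest negative line bundle over $\P(E)$, which the paper exploits to keep the radial coordinate and maximum-principle arguments uniform with the line-bundle case. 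The positivity step is identical in the two proofs; note that, like the paper, you only verify the taming inequality $\omega(\xi,J\xi)>0$ (which already gives nondegeneracy), while full $J$-invariance of $\omega$ would additionally use the $(1,1)$-type property of the curvature term — a point the paper likewise leaves implicit.
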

\begin{proof}
We start by a standard trick from algebraic geometry. Consider the (complex) projectivization $\P(E)$ of $E$. Let $L=\mathcal{O}(-1)\to \P(E)$ be the tautological line bundle, so $L$ is just $\mathcal{O}(-1) \to \P^{\mathrm{rank}(E)-1}$ over each fibre of $\P(E)\to B$:
$$
\xymatrix@C=16pt@R=14pt{
L \ar[d]_{\pi_L} & E \ar[d]^{\pi_E} & & \mathcal{O}(-1) \ar[d] & E_b  \ar[d] \\
\P(E) \ar[r]_-{\pi_{\P}} & B & & \P^{\mathrm{rank}\, E-1}  \ar[r] & b
}
$$
By Leray-Hirsch (see \cite[Appendix A]{Hartshorne} or \cite[p.134]{Audin-Lafontaine}): $H^*(\P(E))$ is a free $H^*(B)$-module via $\pi_{\P}^*$, generated by $1,c_1(L),\ldots,c_1(L)^{\mathrm{rank}(E)-1}$.

%

A horizontal distribution for $E$ yields a horizontal distribution for $L$ spanned by the horizontal vectors of $E$ and the horizontal vectors of each $\mathcal{O}(-1) \to \P^{\mathrm{rank}(E)-1}$.

Suppose $E$ is negative, and pick a Hermitian metric and connection as in the definition. It is carefully proved in Oancea \cite[Sec.3.4]{Oancea} that this determines a canonical Hermitian metric and Hermitian connection on $\P(E)$ and $L$, and that this determines a canonical symplectic form on $\mathrm{Tot}(\P(E))$ given by the curvature
$$
\omega_{\P} = -\tfrac{i}{2\pi} \mathcal{F}^L \quad (\textrm{hence } c_1(L)=-[\omega_{\P}])
$$
which restricts to the normalized Fubini-Study form on each $\P^{\mathrm{rank}(E)-1}$. So $L$ is a negative line bundle. Explicitly, the curvature at $(b,[w])$, where $[w]=\C w$ is a line in the fibre $E_b$, is
$$
\begin{array}{ll}
\mathcal{F}^L_{(\cdot,\cdot)} = \frac{1}{r^2} w^{\dagger} \mathcal{F}^E_{(d\pi_{\P}\cdot,d\pi_{\P}\cdot)}  w & \textrm{on horizontal vectors of }\P(E)\to B \\ & \textrm{(identifiable with horizontal vectors of }E)\\
\mathcal{F}^L_{(\cdot,\cdot)} = \mathcal{F}^{\mathcal{O}(-1)\to \P^{\mathrm{rank(E)-1}}}_{(\cdot,\cdot)} & \textrm{on }\ker d\pi_{\P} \textrm{, the vertical vectors of }\P(E)\to B \\ \hspace{5.5ex} &  \textrm{(identifiable with } (\C w)^{\perp} \subset T^{\textrm{vert}}_{(b,w)} E \equiv E_b)
\end{array}
$$
and mixed terms vanish (the horizontal vectors of $\P(E) \to B$ are $\omega_{\P}$-orthogonal to the vertical ones). Let $\theta^L$, $\Omega^L=d(r^2\theta^L)$ be as in \ref{Subsection Definition and properties neg line bdle}, so $d\theta^L=\frac{1}{2\pi i}\pi_L^*\mathcal{F}^L$. Let 
$$\tau: \mathrm{Tot}(E\setminus 0_E) \to \mathrm{Tot}(L\setminus 0_L)$$
be the tautological isomorphism (a point on the right is a choice of complex line in a fibre of $E$ together with a choice of vector in that line, so it is point in $\mathrm{Tot}(E)$). 

Outside the zero section of $E$, define
$$
\theta =\tau^*(\theta^L), \quad \Omega=\tau^*(\Omega^L) = d(r^2\theta)
$$
The angular form $\theta_w = \frac{1}{2\pi r^2} \langle iw,\cdot \rangle$ (taking the vertical component of $\cdot$ and using the Hermitian metric) is $U(\mathrm{rank}\,E)$-invariant, and fiberwise $\Omega=(\textrm{area form})/\pi$. As in \ref{Subsection Definition and properties neg line bdle}, $\Omega$ extends over the zero section. To finish proving $\Omega$ is as claimed, we use Lemma \ref{Lemma dtheta properties}: for $w\neq 0$, and horizontal vectors $h,h'$ of $E$,
$$
d\theta_w \cdot (h,h') = \theta_{[w],w}^L(\tfrac{1}{r^2} w^{\dagger} \mathcal{F}^E_{(d\pi_E h,d\pi_E h')} w) = \tfrac{1}{2\pi i r^2} w^{\dagger}\mathcal{F}^E_{(d\pi_E h,d\pi_E h')}w
$$
using that $\mathcal{F}^E$ is skew-Hermitian and $d\pi_{\P}d\pi_L d\tau=d\pi_E$. 

We now prove $\omega=\pi_E^*\omega_B + \Omega$ is symplectic. Let $J_B$ be $\omega_B$-compatible, then we obtain an almost complex structure $J=J_B\oplus i$ on $T^{horiz}E\oplus T^{vert}E=TE$ ($J_B$ canonically lifts to an action on horizontal vectors). On $h\neq 0\in T_{(b,w)}^{horiz}E$, 
$$
\omega(h,J h) = \omega_B(d\pi_E h,J_B d\pi_E h) + \frac{1}{2\pi i} w^{\dagger} \mathcal{F}_{(d\pi_E h,J_B d\pi_E h)} w >0
$$
using negativity of $E$ (omitting the second term if $w=0$). On $\ker d\pi_E$, $\Omega$ is the area form so it is symplectic and $i$-compatible. So $\omega$ is symplectic and $J$-compatible.
\end{proof}

\begin{remark*}
 For $\mathrm{rank}_{\C}E\geq 2$ any conical $\omega$ would be exact by the LES for the pair: $0\!=\!H^2(E,E\setminus 0) \!\to \!H^2(E) \!\to\! H^2(E\setminus 0)$. So the zero section would not be symplectic.
\end{remark*}

\subsection{The maximum principle for negative vector bundles}
\label{Subsection Remarks about the maximum principle for negative vector bundles}
The total space $M=\mathrm{Tot}(E\to B)$ is not conical (see the previous Remark), so we need to reprove the maximum principle, Lemma \ref{Lemma Maximum principle}. The strategy is to consider the negative line bundle $\pi_L:L\to \mathbb{P}(E)$ constructed in the proof of Lemma \ref{Lemma neg v bdles are symplectic}, and to reduce the problem to the known maximum principle for $L$.

Recall from Section \ref{Subsection construction of the symplectic form neg line bdle} that the symplectic form chosen on $L$ is $\omega_L=\pi_L^*\omega_{\mathbb{P}} + \Omega^L$ and that outside of the zero section this is exact and equal to $d(\theta^L + r^2 \theta^L)$ (the forms $\omega_{\mathbb{P}}$, $\Omega^L$, $\theta^L$ were constructed in the proof of Lemma \ref{Lemma neg v bdles are symplectic}). Recall that we can identify the complements of the zero sections,
$$
E\setminus 0_E \equiv L\setminus 0_L,
$$
in particular the radial coordinates agree, but the symplectic forms do not (indeed the problem is that the form $\omega=\pi_E^*\omega_B + \Omega$ for $E$ is \emph{not} exact at infinity).

We know the maximum principle applies to $(L\setminus 0_L, \omega_L)$, by Lemma \ref{Lemma Maximum principle}, so we just need to ensure that our choices of $(J,H)$ on $E$ will satisfy the assumptions of that maximum principle on $L\setminus 0_L \equiv E\setminus 0_E$. So we reduce to ensuring that:
\begin{enumerate}
 \item the almost complex structure $J$ on $E$ (which is compatible with $\omega$) is also compatible with $\omega_L$ and is of contact type on $L\setminus 0_L$;
 
 \item the Hamiltonians $H$ used for $E$ have the form $h(R^L)$ on $L\setminus 0_L$, with $h$ linear in the $R$-coordinate $R^L=(1+r^2)/2$ for $L$ (Lemma \ref{Lemma R coordinate for neg line bldes} taking $\varepsilon=1$);

 \item the Floer equations on $E\setminus 0_E$ and on $L\setminus 0_L$ agree for this choice of data.
\end{enumerate}

Away from the zero section we chose $J=J_B \oplus i$ (in the horizontal/vertical splitting of $TM$ determined by the Hermitian connection on $E$). Then (1) is immediately satisfied, in particular $J$ restricts to multiplication by $i$ on the tangent spaces of the fibres of $L$.

The linearity condition $h(R^L)=\textrm{constant}\cdot R^L$ in (2) is equivalent to requiring that $H$ has the following form at infinity,
$$
 H = \textrm{constant} \cdot r^2.
$$

Finally, to prove (3) we just need to check that the Hamiltonian vector fields $X_H^E = X_h^L$ agree on $E\setminus 0_E \equiv L\setminus 0_L$. Observe that if $X$ is a vertical vector for $L\to \mathbb{P}(E)$ (away from the zero section) then it is also a vertical vector for $E$, and therefore $\omega_L(\cdot,X)$, $\omega(\cdot,X)$ both equal $\Omega(\cdot,X)$ (recall we identified $\Omega^L\equiv \Omega$ away from the zero sections). We know $X_h^L(w)=\textrm{constant}\cdot i w$ explicitly, since the fibre of $L$ is a standard copy of $\C$ (using a Hermitian frame $w$), in particular $X_h^L$ is vertical. We conclude that $X_H^E=X_h^L$ agree, as required.

The above also shows that the flow of $X=X_H^E$ is the natural rotation in the fibre determined by the Hermitian metric. Indeed, the analysis of the $1$-periodic orbits of $X$ reduces to the analysis of $1$-periodic orbits of $X_h^L$ carried out in Section \ref{Subection Hamiltonians neg line bdle} applied to the negative line bundle $L \to \mathbb{P}(E)$.

\subsection{Calculation of $r_g(1)=\pi_M^*c_{\mathrm{top}}(E)$ for negative vector bundles}

\begin{lemma}
 $I(\widetilde{g}) = \mathrm{rank}_{\C} E$ for $g_t=e^{2\pi i t}$ acting by rotation in the fibres of $E$, lifted canonically to the $\widetilde{g}$ which fixes constants on the zero section.
\end{lemma}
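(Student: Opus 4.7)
The plan is to mimic the argument of Lemma~\ref{Lemma calculation of I(g) neg line bdle}, replacing the line bundle $L$ by the rank~$k=\mathrm{rank}_{\C} E$ vector bundle $E$. Since the Maslov index $I(\widetilde{g})$ is independent of the choice of $(v,x)$ used to compute it, I will evaluate it on a constant lift that is fixed by $\widetilde{g}$.

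First, pick a point $x$ on the zero section, lying over $b=\pi_E(x)\in B$, and consider the constant disc $(c_x,x)\in \widetilde{\mathcal{L}_0 M}$ that maps $D^2$ to $x$. By the specified choice of lift (fixing constants on the zero section), $\widetilde{g}\cdot (c_x,x) = (c_x,x)$. Choose a local unitary frame for $E$ over a neighbourhood of $b$; combined with the canonical splitting $TM|_{0\textrm{-section}}\cong \pi_E^*TB\oplus E$ (coming from the connection on $E$ used in Lemma~\ref{Lemma neg v bdles are symplectic}), this produces a symplectic trivialization
$$\tau_{c_x}\co x^*TM \;\cong\; S^1 \times T_xM \;\cong\; S^1\times T_bB\times E_b \;\cong\; S^1\times T_bB \times \C^{k}.$$

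Second, compute the loop $\ell(t)\in \mathrm{Sp}(2n,\R)$ associated to this data. Because $g_t$ is the fibrewise multiplication by $e^{2\pi i t}$, it preserves the zero section, acts trivially on the horizontal $TB$-factor, and acts on the vertical $E_b\cong \C^k$-factor as scalar multiplication by $e^{2\pi i t}$. In particular $dg_t$ is complex-linear and commutes with the unitary trivialization, so
$$\ell(t) \;=\; \tau_{c_x}(t)\circ dg_t(x)\circ \tau_{c_x}(t)^{-1} \;=\; \mathrm{id}_{T_bB}\oplus \bigl(e^{2\pi i t}\cdot \mathrm{id}_{\C^k}\bigr).$$

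Finally, take determinants: $\det \ell(t) = e^{2\pi i k t}$, which represents $k$ times a generator of $H_1(U(n))\cong \Z$ under $U(n)\hookrightarrow \mathrm{Sp}(2n,\R)$. Hence $I(\widetilde{g})=\deg(\ell) = k = \mathrm{rank}_{\C} E$. The only step that is not completely routine is checking that the direct-sum splitting of $x^*TM$ is compatible with the symplectic structure well enough to give a legitimate symplectic trivialization, but this is immediate from the fact that at points of the zero section the horizontal and vertical distributions defined by the Hermitian connection on $E$ are $\omega$-orthogonal (as established in Lemma~\ref{Lemma neg v bdles are symplectic}).
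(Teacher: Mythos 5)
Your proof is correct and follows essentially the same route as the paper: the paper also evaluates $I(\widetilde{g})$ on a constant lift over a point of the zero section, uses a local unitary frame to split off $E_b\cong \C^{\mathrm{rank}_{\C}E}$, and reads off $\det\ell(t)=e^{2\pi i t\,\mathrm{rank}_{\C}E}$, hence degree $\mathrm{rank}_{\C}E$, exactly as in the line bundle case of Lemma \ref{Lemma calculation of I(g) neg line bdle}. Your extra remark about the $\omega$-orthogonality of the horizontal and vertical distributions along the zero section is a harmless additional justification of the trivialization and does not change the argument.
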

\begin{proof}
This is proved as in Lemma \ref{Lemma calculation of I(g) neg line bdle}: using a local unitary frame for $E_b$, $\ell(t)$ is the identity on the $T_b B$ factor and rotation by $e^{2\pi it}$ of the fibre factor $E_b\cong \C^{\mathrm{rank}\,E}$. So $t\mapsto \det (I \oplus e^{2\pi i t}I)= e^{2\pi it \cdot \mathrm{rank}_{\C}\,E}$ is $\mathrm{rank_{\C}\,E}$ in $H_1(S^1;\Z)\cong \Z$.
\end{proof}
\begin{theorem}\label{Theorem v bdles consts are regular in general and rg1 is ctop}
 For $M=\mathrm{Tot}(\pi_M:E\to B)$ (satisfying weak$^+$ monotonicity), the analogue of Theorem \ref{Theorem consts are regular in general and rg1 is c1} holds using $\pi_M^*c_{\mathrm{rank}_{\C}\,E}(E)$.
\end{theorem}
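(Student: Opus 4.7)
The plan is to adapt the proof of Theorem~\ref{Theorem consts are regular in general and rg1 is c1} to the rank $k = \mathrm{rank}_{\C}E$ case. The two new ingredients are the Maslov index computation $I(\widetilde{g}) = k$ just established, and the maximum principle and Gromov compactness for sections of $E_g$ provided by the remarks following Lemma~\ref{Lemma neg v bdles are symplectic}, which work for radial Hamiltonians on the non-conical $M = \mathrm{Tot}(E)$. With these, the virtual dimension formula for section moduli becomes
\[
\dim_{\C}\mathcal{S}(j,\hat{J},\gamma+S_{\widetilde{g}}) \;=\; \dim_{\C}M - I(\widetilde{g}) + c_1(TM)(\gamma) \;=\; b + c,
\]
where $b = \dim_{\C}B$ and $c = c_1(TM)(\gamma)$, exactly as in the line bundle case.

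First I would confine sections to the zero section using $\hat{J} = j\oplus i$ (integrable). In any local unitary trivialization of $E$ the fiberwise projections $w^{\pm}\co D^{\pm}\to \C^{k}$ are holomorphic and satisfy the clutching relation $w^{-}(t) = e^{2\pi i t}\,w^{+}(t)$ on the equator, so together they define a global holomorphic section of $\mathcal{O}(-1)^{\oplus k}\to\P^{1}$, which vanishes since $H^0(\P^{1},\mathcal{O}(-1))=0$. Hence every $(j,\hat{J})$-section has the form $u(z)=(z,v(z))$ for a parametrized holomorphic $v\co \P^{1}\to B$ with $\gamma = [v]$, and $\mathrm{ev}_{z_{\infty}}(\mathcal{S})\subset B\subset M$ has complex dimension at most $b$, forcing $c\leq 0$. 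Combining with $\omega(\gamma) = \omega_{B}([v])\geq 0$ and weak$^{+}$ monotonicity (condition (1) gives $c\geq 0$; condition (2) gives $c=0$; condition (3) with $|N|\geq b+k-1$ and $k\geq 2$ forces $c=0$ or $c<-b$, the latter making the moduli empty) yields $c=0$ throughout.

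Next I would verify regularity of constants and identify the output. For a constant section at $b_{0}\in B$, $g_{t}$ acts trivially on $T_{b_{0}}B$ and by $e^{2\pi i t}\cdot\mathrm{id}$ on each $\C$-summand of $E_{b_{0}}$ in a unitary frame, so the clutching produces
\[
u^{*}T^{v}E_{g} \;\cong\; \underline{\C}^{\oplus b}\oplus \mathcal{O}(-1)^{\oplus k}
\]
over $\P^{1}$; then $\coker\overline{\partial} = H^{1}(\P^{1}, \mathcal{O}^{b}\oplus\mathcal{O}(-1)^{k}) = 0$, establishing regularity. The constants sweep $[B]\in H^{lf}_{2b}(M)$ under $\mathrm{ev}_{z_{\infty}}$, and since $B$ is the zero section of $E\to B$ with normal bundle $E$, the Thom isomorphism identifies $\mathrm{PD}_{M}([B]) = \pi_{M}^{*}c_{\mathrm{rank}_{\C}E}(E)$. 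Non-constant contributions arise only from $v$ with $c_{1}(TM)([v])=0$ and $\omega_{B}([v])>0$; their virtually-corrected swept cycles still have complex dimension $b$ inside $B$, hence are multiples of $[B]$ by the same top-dimensional lf-chain argument as in the line bundle case, and assemble into a factor $\lambda_{+}\in\Lambda_{+}^{0}$.

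The main obstacle is the confinement step. In the line bundle case, exactness of $\omega$ off the zero section combined with Lemma~\ref{Lemma compatibility trick} did the job; here $\omega=\pi_{M}^{*}\omega_{B}+\Omega$ is not exact off the zero section, so I rely instead on integrability of $\hat{J}=j\oplus i$ and the clutching-$\mathcal{O}(-1)^{\oplus k}$ argument above. When $\hat{J}$ must be perturbed to achieve transversality on non-constant moduli, one uses the radial-Hamiltonian maximum principle and the controlled Gromov compactness from the cited remarks to keep all relevant moduli spaces compact in a neighbourhood of the zero section; the rest is a direct transcription of Theorem~\ref{Theorem consts are regular in general and rg1 is c1}.
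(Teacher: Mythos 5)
Your index computation, the regularity of constant sections via $u^*T^vE_g\cong\underline{\C}^{\oplus b}\oplus\mathcal{O}(-1)^{\oplus k}$ and $H^1(\P^1,\underline{\C}^{\oplus b}\oplus\mathcal{O}(-1)^{\oplus k})=0$, and the identification of the constants' sweep $[B]$ with $\mathrm{PD}(\pi_M^*c_k(E))$ all agree with the paper. The gap is in the confinement step and in what you deduce from it. First, a $(j,j\oplus i)$-holomorphic section of $E_g$ lies over a possibly non-constant $J_B$-holomorphic curve $v\colon\P^1\to B$, and its vertical component is then a holomorphic section of $v^*E\otimes\mathcal{O}_{\P^1}(-1)$ (the $\mathcal{O}(-1)$ coming from the clutching by $e^{2\pi i t}$), not of $\mathcal{O}(-1)^{\oplus k}$: local unitary trivializations do not glue to a global map to $\C^k$, and killing $H^0(\P^1,v^*E\otimes\mathcal{O}(-1))$ for non-constant $v$ requires the negativity hypothesis on $E$ (a Griffiths-type vanishing along the pseudo-holomorphic curve $v$), not the trivial fact $H^0(\P^1,\mathcal{O}(-1))=0$. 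Second, and more seriously, even granting confinement for the integrable $\hat{J}$, the inference ``$\mathrm{ev}_{z_\infty}(\mathcal{S})\subset B$, hence dimension $\leq b$, hence $c\leq 0$'' conflates the actual dimension of a non-regular moduli space with the virtual dimension $b+c$: the contribution of a class with $c\neq 0$ to $r_{\widetilde{g}}$ is computed after perturbing $\hat{J}$ to a regular one (or via an obstruction bundle), and for the perturbed data the sections need not lie in the zero section, so your confinement says nothing about those counts.

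The paper needs neither confinement to the zero section nor your case analysis: it only uses that all sections stay in a compact region around the zero section (the maximum principle for radial Hamiltonians, which holds after perturbation as well), so the swept locally finite cycle of real dimension $2(b+c)$ is supported in a region retracting to $B$, where every class of degree $>2b$ vanishes --- this kills $c>0$; and $c<0$ is excluded by weak$^+$ monotonicity combined with the requirement that a nonzero locally finite class has degree at least $2k$ (since $H_*^{lf}(M)\cong H^{2(b+k)-*}(B)$, with the top class $[M]$ again excluded by the maximum principle). Note also that your parenthetical ``condition (1) gives $c\geq 0$'' fails in the degenerate case $\lambda=0$ of that condition, whereas the paper's degree-range argument does not need it. To repair your proof, replace the clutching/confinement step by exactly this homological support argument (which you in fact already invoke for the $\lambda_+$ terms); the remaining steps then coincide with the paper's.
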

\begin{proof}
The dimension of the moduli space (using Definition \ref{Definition c1 of Sg in terms of I})
$$
\dim_{\C} \mathcal{S}(j,\hat{J},\gamma+S_{\widetilde{g}}) = \dim_{\C} M - \mathrm{rank}_{\C}\,E + c_1(TM,\omega)(\gamma) 
= b + c.
$$
$M$ is weak, so $c\geq 0$ or $c\leq 1-\dim_{\C}M=1-b-r$ (let $r=\mathrm{rank}_{\C}E$). Since we need to sweep an lf cycle, we may assume $r\leq b+c \leq b+r-1$ (since $H_*^{lf}(M)\cong H^{2b+2r-*}(B)$, and it cannot sweep $[M]$ by the maximum principle). So $c\geq 0$. But the only lf cycles of degree $\geq 2b$ supported near the zero section are multiples of $[B]$ ($H_k(B)=0$ for $k>2b$). 
So $c=0$. The rest is as in the proof of Theorem \ref{Theorem consts are regular in general and rg1 is c1}.

Proving constants are regular: for the constant section $u(z)=(z,y)$, $(u^*T^v E_g)_z = T_y B \oplus \C^{r}$ with transition $(\mathrm{id},g_t^{\oplus r})$ over the equator of $S^2$. Therefore
$$
\coker \overline{\partial} = H^1(\P^1,\mathcal{O}(\underline{\C}^{\oplus b})\oplus \mathcal{O}(-1)^{\oplus r}) \cong  H^0(\P^1,\mathcal{O}(-2)^{\oplus b}\oplus \mathcal{O}(-1)^{\oplus r})^{\vee} = 0.\qedhere
$$
\end{proof}

%

\end{document}